\title{The large mass limit of $G_2$ and Calabi-Yau monopoles}
\author{Yang Li}
\date{\today}
\newtheorem{thm}{Theorem}[section]
\newtheorem{lem}[thm]{Lemma}
\theoremstyle{definition}
\newtheorem{eg}[thm]{Example}
\newtheorem{cor}[thm]{Corollary}
\newtheorem{rmk}[thm]{Remark}
\newtheorem{prop}[thm]{Proposition}
\newtheorem{Def}[thm]{Definition}
\newtheorem{Question}{Question}
\newtheorem*{Notation}{Notation}
\newtheorem*{Main Setting}{Main Setting}
\newtheorem*{Acknowledgement}{Acknowledgement}
\newcommand{\cf}{\emph{cf.} }
\newcommand{\R}{\mathbb{R}}
\newcommand{\Z}{\mathbb{Z}}
\newcommand{\norm}[1]{\left\lVert#1\right\rVert}
\newcommand{\Lap}{\Delta}
\DeclareMathOperator{\Tr}{Tr}
\def\Xint#1{\mathchoice
	{\XXint\displaystyle\textstyle{#1}}%
	{\XXint\textstyle\scriptstyle{#1}}%
	{\XXint\scriptstyle\scriptscriptstyle{#1}}%
	{\XXint\scriptscriptstyle\scriptscriptstyle{#1}}%
	\!\int}
\def\XXint#1#2#3{{\setbox0=\hbox{$#1{#2#3}{\int}$ }
		\vcenter{\hbox{$#2#3$ }}\kern-.6\wd0}}
\def\dashint{\Xint-}
\begin{document}
	\maketitle

	\begin{abstract}
	We develop a structure theory for the limit of $SU(2)$ $G_2$-monopoles (resp. Calabi-Yau monopoles) on a principal $SU(2)$-bundle over an asymptotically conical $G_2$-manifolds (resp. Calabi-Yau 3-folds) as the mass parameter tends to infinity, while the topologial data for the bundle stays fixed. We show how to extract a singular abelian $G_2$-monopole (resp. Calabi-Yau monopole) with Dirac singularity along a calibrated cycle in the large mass limit, and we prove an energy identity for monopole bubbles.
	\end{abstract}

	\section{Introduction and background}

	Donaldson-Segal \cite[Section 6.3]{DonaldsonSegal}  proposed a programme relating gauge theory on a complete noncompact $G_2$-manifold $M$ (resp. Calabi-Yau 3-fold $M$), to calibrated geometry on $M$. 
	We denote $n=\dim M$, so $n=7$ in the $G_2$-case, and $n=6$ in the Calabi-Yau case.

	\subsection{Curvature concentration for $G_2$/Calabi-Yau monopoles}

	Let $(M,\phi)$ be a $G_2$-manifold with the parallel positive 3-form $\phi$ and the 4-form $\psi=*\phi$, and $P$ be a principal $G$-bundle over $M$. In this paper we take $G=SU(2)$, and the Lie algebra $\mathfrak{g}=su(2)$. (The case $G=SO(3)$ works almost the same way).
	 We consider the pair $(A,\Phi)$ consisting of a connection $A$ on $P$ with curvature $F$, and a smooth section $\Phi$ of the adjoint bundle $\mathfrak{g}_P=P\times_{Ad} \mathfrak{g}$, known as the Higgs field. The pair is called a \emph{$G_2$-monopole}, if 
	\begin{equation}\label{eqn: G2monopole}
		*(F\wedge \psi)= \nabla\Phi.
	\end{equation}	
	In particular this satisfies the Euler-Lagrange equation of  both the \emph{Yang-Mills-Higgs functional}
	\begin{equation}\label{eqn:YMfunctional}
	\mathcal{E}(A,\Phi)= \frac{1}{2} \int_M |F|^2+ |\nabla \Phi|^2,
	\end{equation}
	and the \emph{intermediate energy functional}
	\begin{equation}\label{eqn:intermediateenergy}
	\mathcal{E}^\psi( A,\Phi  )= \frac{1}{2}\int_M  |F\wedge \psi|^2 + |\nabla \Phi|^2.
	\end{equation}
	An integration by parts argument shows that if $\Phi$ is not parallel, then $M$ is noncompact.

		There is a closely analogous equation on Calabi-Yau 3-folds $(M,\omega,\Omega)$, where $\omega$ is the Calabi-Yau metric, and $\Omega$ is the nowhere vanishing holomorphic volume form. A pair $(A,\Phi)$ of a connection on $P$ and a Higgs field is called a Calabi-Yau monopole, if
	\begin{equation}
		F\wedge \omega^2=0,\quad *\nabla \Phi= F\wedge \text{Re} \Omega.
	\end{equation}
	In particular this satisfies the Euler-Lagrange equation of  both the Yang-Mills-Higgs (YMH) functional (\ref{eqn:YMfunctional}), and the intermediate energy functional
	\begin{equation}\label{eqn:intermediateenergyCY}
		\mathcal{E}^{\Omega}(A,\Phi)=  \frac{1}{2}\int_M  |F\wedge \text{Re}\Omega|^2 + |\nabla \Phi|^2.         
	\end{equation}

	\begin{eg}\label{eg:flatmodel}
	The simplest example comes from dimensional reduction: $M=Q\times \R^3$ where $Q \simeq \R^4$ is a a \emph{coassociative} 4-plane in the $G_2$ case, and $Q\simeq \R^3$ is a \emph{special Lagrangian} 3-plane in the Calabi-Yau case. We  take $(A,\Phi)$ as the pullback of a monopole over $\R^3$. Suppose the monopole has mass $m\gg 1$ and fixed charge $k\in \Z$. At large distance  from  $Q$, the structure group asymptotically reduces to $U(1)$, with first Chern class $k$ over the large $S^2$ linking $Q$. The asymptotic far from $Q$ is modelled on a singular solution to the $U(1)$-monopole equation, known as \emph{Dirac singularity}:
	\[
	\Phi\sim (m- \frac{  k  }{2 \text{dist}(\cdot, Q)} )  \begin{pmatrix}
		i & 0\\
		0& -i 
	\end{pmatrix} ,
		\quad 
		F \sim - *_3 d(   \frac{  k  }{2 \text{dist}(\cdot, Q)} )  \begin{pmatrix}
			i & 0\\
			0& -i 
		\end{pmatrix} . 
	\]
	For $ \text{dist}(\cdot, Q)     \gtrsim m^{-1}$, the magnitude of the curvature and the Higgs field are of the order
	\[
	|\Phi|= m+ O( \frac{1}{\text{dist}(\cdot, Q)   }  ), \quad   |F|=  |\nabla \Phi|= O( \frac{1}{\text{dist}(\cdot, Q) ^2} ).
	\]
	For $\text{dist}(\cdot, Q)  \lesssim m^{-1}$, we have $|F|= |\nabla \Phi|= O(m^2)$. Inside any ball $B(p,r)$ with $p$ centred on $Q$ and $r\gg m^{-1}$, the $L^2$-curvature is of the order
	\[
	\int_{B(p,r)} |F|^2\sim  mr^{n-3}.
	\]
	Most of the $L^2$-curvature contribution is concentrated in the $O(m^{-1})$ neighbourhood of $Q$.

	\end{eg}

	More generally, 
	Donaldson-Segal proposed the following conjectural mechanism of curvature concentration for $G_2$-monopoles (resp. Calabi-Yau monopoles). Let $Q\subset M$ be a compact \emph{coassociative} submanifold (resp. \emph{special Lagrangian} submanifold), and take the mass parameter $m\gg 1$. On each normal 3-plane in the $O(m^{-1})$ tubular neighbourhood of $Q$,  the restriction of $(A,\Phi)$ is well approximated by a monopole of charge $k$ and mass $m$. The variation of the monopoles along $Q$ is governed by a section of the bundle $\underline{M}_{k}\to Q$ with fibres modelled on the hyperk\"ahler moduli space of centred charge $k$ monopoles, such that the section satisfies a Fueter type equation. Away from the $O(m^{-1})$ tubular neighbourhood, the structure group asymptotically reduces to $U(1)$, and $(A,\Phi)$ is then effectively approximated by an abelian $G_2$-monopole (resp. Calabi-Yau monopole) with Dirac singularity along $Q$. Taking a sequence of such $(A,\Phi)$ as $m\to +\infty$, the $L^2$-curvature in a fixed neighbourhood of $Q$ is of order $m$, and the normalised curvature density $m^{-1} |F|^2 dvol$ is expected to concentrate along $Q$, and converge weakly to the volume measure of $Q$ up to a constant factor. It is worth noting that this \emph{$L^2$-curvature goes to infinity} as $m\to +\infty$, which is a source of analytic difficulty from the viewpoint of Yang-Mills theory.

	Donaldson-Segal suggest that there is a one parameter family of enumerative problems for $G_2$-monopoles (resp. Calabi-Yau monopoles) parametrised by the mass $m$. As $m\to +\infty$, the limit turns into an enumerative problem for coassociative submanifolds (resp. special Lagrangians), with suitably defined weights counting some additional data such as Fueter sections.

	\begin{rmk}
	Donaldson-Segal are inspired by Taubes' relation between the Seiberg-Witten (SW) and Gromov-Witten (GW) invariants of a symplectic 4-manifold. Taubes considered a 1-parameter family of perturbed SW equations, with perturbation term $t\omega$ proportional to the symplectic form $\omega$ (analogous to the $G_2$-monopoles with a mass parameter). As $t\to +\infty$, the curvature of the SW solution concentrates near a pseudoholomorphic curve $\Sigma$ (analogous to the coassociative submanifold $Q$), and the local behaviour on the 2-planes normal to $\Sigma$ is modelled on the vortices in $\R^2$, and the variation of the vortices along $\Sigma$ is governed by a holomorphic section of a bundle over $\Sigma$ whose fibres are modelled on the moduli space of charge $k$ vortices (analogous to the monopole moduli space). 
	\end{rmk}

\begin{rmk}
	The Donaldson-Segal programme fits into the more general phenomenon that minimal surfaces emerge from the limits of a family of variational problems, as some parameter varies. A classical instance is that the critical points  $u_\epsilon: X\to \R$ of the  \emph{Allen-Cahn functional} in the $H^1$-Sobolev space,
	\[
	F_\epsilon(u) (v):= \int_X (\epsilon |dv|^2+ \frac{1}{4\epsilon^2}  (1-v^2)^2 ),
	\]
	can converge as $\epsilon\to 0$ in $L^1$ to a BV function $u_0: X\to \{ \pm 1 \}$, such that the interface is a two sided minimal hypersurface in $X$ \cite{PacardRitore}.  More recently, Stern et al. \cite{Stern}\cite{SternPigati} made a deep study of the self-dual Yang-Mills-Higgs energy
	\[
	E_\epsilon(u, \nabla):= \int_X |\nabla u|^2+ \epsilon^2 |F|^2+ \frac{1}{ 4 \epsilon^2} (1-|u|^2)^2,
	\]
	where $(u,\nabla)$ consists of a section $u$ of a given Hermitian line bundle $L\to X$, and a $U(1)$-connection $\nabla $ on $L$, and $F$ denotes the curvature of $\nabla$. Among other achievements, they extracted a stationary integral $(n-2)$-varifold inside $X$, from the weak limit of the energy measures as $\epsilon\to 0$.

	There are two major distinctive features of the Donaldson-Segal programme: first, the parameter $\epsilon$ is explicitly put into the functional in the Allen-Cahn or self-dual YMH setting, while the mass parameter $m:= \limsup_{x\to \infty} |\Phi|(x)$ is part of the \emph{asymptotic boundary condition}, rather than a parameter in the equation. Second, the Donaldson-Segal programme concerns $SU(2)$-gauge theory, which is  \emph{non-abelian}.

\end{rmk}

\begin{rmk}
(***)
After the completion of this paper, the author is made aware of the preprint by Parise-Pigati-Stern \cite{ParisePigatiStern} from one month ago, which studies a boundary value problem version of the Donaldson-Segal programme, and shows morally that the $\epsilon$-rescaled $SU(2)$ Yang-Mills-Higgs energy 
\[
\int_X \epsilon^{-1} |\nabla \Phi|^2+ \epsilon |F|^2
\]
$\Gamma$-converges to the mass functional for $(n-3)$-cycles in the $\epsilon\to 0$ limit. In particular, they show how to extract an $(n-3)$-dimensional integral current $Q$ from the large mass limit of Yang-Mills-Higgs solutions. In the setting of $G_2$/Calabi-Yau monopoles, they prove a calibration inequality for $Q$, which is saturated if and only if $Q$ is coassociative (resp. special Lagrangian).

\end{rmk}

\begin{rmk}
(***) After the completion of this paper, Oliveira and Fadel informed the author that they have some independent unpublished work in progress towards the Donaldson-Segal programme. We look forward to their future paper on the subject.
\end{rmk}

	\subsection{Asymptotic geometry of $G_2$/Calabi-Yau monopoles}\label{sect:Oliveira}

The main progress on the Donaldson-Segal programme is by Fadel-Nagy-Oliveira \cite{Oliveira1}. We will focus on the structure group $G=SU(2)$. The inner product on the Lie algebra $\mathfrak{g}=su(2)$ follows the convention $|a|^2= \frac{1}{2} \Tr (a^\dagger a)=-\frac{1}{2} \Tr(a^2)$.

\begin{Def}\label{Def:ACG2}
An irreducible  $G_2$-manifolds $(M,\phi)$ is called asymptotically conical (AC)  with rate $\nu_0<0$, if there is a $G_2$-cone $C(\Sigma)= (r_0,\infty)_{r}\times \Sigma$ with nearly K\"ahler cross section $(\Sigma, \omega_\Sigma, \Omega_\Sigma)$, 
\[
\phi_C= r^2 dr\wedge \omega_\Sigma+ r^3 \text{Re} \Omega_\Sigma, \quad \psi_C= r^4 \frac{\omega_\Sigma^2} {2}- r^3 dr\wedge \text{Im}\Omega_\Sigma,
\]
such that the complement of a compact set  is diffeomorphic to $C(\Sigma)$, and via this diffeomorphism
\[
|\nabla_C^j(  \phi- \phi_C  )|_{  g_C } =O( r^{\nu_0-j}) , \quad r(x)\to +\infty, \quad \forall j\geq 0.
\]
For ease of notation, we will extend $r(x)$ to a global smooth function on $M$ satisfying $\min_M r(x)>1$.
\end{Def}

\begin{Def}
An irreducible Calabi-Yau 3-fold $(M,\omega,\Omega)$ is asymptotically conical  (AC) with rate $\nu_0<0$, if there is a Calabi-Yau cone $C(\Sigma) = (r_0,\infty)_{r}\times \Sigma$ with Sasaki-Einstein cross section $(\Sigma, \lambda_\Sigma, \omega_{\Sigma, 1} ,\omega_{\Sigma, 2},\omega_{\Sigma, 3})$, 
\[
\omega_C=    rdr\wedge \lambda_\Sigma+ r^2\omega_{\Sigma,1}, \quad \Omega_C= (r^2dr+ r^3\sqrt{-1} \lambda_\Sigma)\wedge(\omega_{\Sigma,2}+ \sqrt{-1}\omega_{\Sigma,3})
\]
such that the complement of a compact set is diffeomorphic to $C(\Sigma)$, and  via this diffeomorphism
\[
|\nabla_C^j(   \omega- \omega_C  )|_{  g_C } =O( r^{\nu_0-j}) ,  \quad  |\nabla_C^j(   \Omega- \Omega_C  )|_{  g_C } =O( r^{\nu_0-j}) \quad r(x)\to +\infty,  \forall j\geq 0.
\]
\end{Def}

The main results in \cite{Oliveira1} concern the asymptotic geometry of a smooth $G_2$-monopole $(A,\Phi)$ on AC $G_2$-manifolds, but the arguments apply almost verbatim to Calabi-Yau monopoles on AC Calabi-Yau manifolds, so we will state them uniformly.

\begin{enumerate}
	\item  (Finite mass) If $(A,\Phi)$ has finite intermediate energy (\ref{eqn:intermediateenergy}),  and $F\in L^\infty(M)$, then the following limit exists:
	\[
	\lim_{ r(x)\to \infty} |\Phi(x)|=m.
	\]
	We call $m$ the \emph{mass} of the $G_2$-monopole (resp. Calabi-Yau monopole). Since the function $|\Phi|^2$ is subharmonic, $\Phi$ is identically zero if $m=0$. We shall focus on $m>0$.

	\item  (Asymptotic decay) Assume furthermore that $|F|\to 0$ as $r(x)\to +\infty$. Then along the asymptotically conical end, $|\nabla \Phi| \lesssim r(x)^{-(n-1)}$, and $|[ \Phi, \nabla \Phi  ]|+ |[\Phi, F]|$ decays exponentially in $r(x)$.

	\item (Pseudo-Hermitian-Yang-Mills limit) Furthermore, if $|F|=O(r(x)^{-2})$, then there is a principal $G$-bundle $P_\infty$ over $\Sigma$, together with a pair $(A_\infty, \Phi_\infty)$ such that $\Phi_\infty$ is a nonzero parallel section of the adjoint bundle for $P_\infty$ over $\Sigma$, and $A_\infty$ solves the pseudo-HYM equation in the $G_2$ setting,
	\[
	F_{A_\infty} \wedge \Omega_\Sigma =0,\quad F_{A_\infty}\wedge \omega_\Sigma^2=0,
	\]
	and respectively in the Calabi-Yau setting,
	\[
	F_{A_\infty} \wedge \omega_{\Sigma,2}=0,\quad F_{A_\infty} \wedge \omega_{\Sigma,3}=0,\quad  F_{A_\infty}\wedge \lambda_\Sigma\wedge \omega_{\Sigma,1}=0,
	\]
	and $(A, \Phi)|_{\{R\}\times \Sigma}\to (A_\infty, \Phi_\infty)$ uniformly as $R\to +\infty$ up to gauge.

	The parallel Higgs field reduces the limiting connection $A_\infty$ to the structure group $U(1)\subset SU(2)$, with first Chern class $\beta\in H^2(\Sigma)$, called the \emph{monopole class} of $(A,\Phi)$.

	\item (Bolgomolny trick)  The intermediate energy admits the topological formula in the $G_2$-case,
	\begin{equation}\label{eqn:intermediateenergy2}
\mathcal{E}^\psi(A,\Phi)= \int_M |\nabla \Phi|^2= \int_M |F\wedge \psi|^2= 2\pi m \langle \beta \cup \psi|_\Sigma, [\Sigma]\rangle,
	\end{equation}
	and respectively the following formula holds in the Calabi-Yau case,
	\begin{equation}
		\label{eqn:intermediateenergyCY2}
 \mathcal{E}^\Omega(A,\Phi)=\int_M |\nabla \Phi|^2= \int_M |F\wedge \text{Re}\Omega|^2= 2\pi m \langle \beta \cup \text{Re}\Omega|_\Sigma, [\Sigma]\rangle.
	\end{equation}
	Here $\psi|_\Sigma$ denotes the cohomology class defined by $\psi$ restricted to $\{R\}\times \Sigma$ for any large enough $R$.
	
	In the $G_2$ case, this formally follows from the Stokes formula and the fact that $d\Tr (\Phi F)= \Tr( \nabla \Phi\wedge F)$: 
	\[
	\begin{split}
		-2\int_M |\nabla \Phi|^2=	\int_M \Tr(  \nabla \Phi\wedge *\nabla \Phi  )=& \lim_{R\to \infty}\int_{ r(x)\leq R}  \Tr( \nabla \Phi\wedge F\wedge \psi)
		\\
		=& \lim_{R\to \infty} \int_{ r(x)=R}  \Tr (\Phi F\wedge \psi)
		\\
		=& \int_\Sigma \Tr (\Phi_\infty F_\infty\wedge \psi)
	\end{split}
	\]
	Locally on $\Sigma$, we can write the parallel section
	$\Phi_\infty=\text{diag}(im,-im)$ in  the matrix form , while $F_{A_\infty}= \text{diag}(F_L,-F_L)$, such that $\frac{-i}{2\pi }F_L$  represents the first Chern class $\beta$ of the $U(1)$-bundle  (which is $-c_1(L)$ in our convention), so the RHS evaluates to $-4\pi m \langle \beta \cup \psi|_\Sigma, [\Sigma]\rangle.$ The Calabi-Yau case is entirely analogous.

	\begin{rmk}\label{rmk:SO(3)1}
		For $G=SO(3)$, the Lie algebra is the same, so the same formula goes through. The caveat is that unless $P$ lifts to an $SU(2)$-bundle, in general only $L^{\otimes 2}$ is a well defined complex line bundle on $\Sigma$, and
	$2\beta:=[ -\frac{i}{\pi}F_L  ]\in H^2(\Sigma,\Z)$ is the first Chern class of a $U(1)$-subbundle of the $SO(3)$-bundle.
	\end{rmk}

	\item Based on these a priori estimates, \cite{Oliveira1} developed a Fredholm deformation theory for smooth $G_2$-monopoles with prescribed pseudo-HYM asymptote at infinity.
\end{enumerate}

		\begin{eg}
		Oliveira \cite{Oliveiraexample1}\cite{Oliveiraexample2}  constructed symmetric examples of a 1-parameter family of Calabi-Yau monopoles (resp. $G_2$-monopoles) $(A,\Phi)$  with $m\to +\infty$, on the Stenzel $T^*S^3$ (resp. Bryant-Salamon $\Lambda^2_-(\mathbb{CP}^2)$ and $\Lambda^2_-(S^4)$), such that 
		the $L^2$-curvature concentrates along the zero section $S^3$ (resp. $\mathbb{CP}^2$, $S^4$), and on the complement of the zero section the solutions $(A,\Phi)$ converges to an abelian Calabi-Yau monopole (resp. abelian $G_2$-monopole) with Dirac singularity along the zero section.

	\end{eg}

	\subsection{Large mass limit and calibrated cycles}\label{sect:largemasslimit}

	The goal of this paper is to make progress on the Donaldson-Segal programme, by relating the large mass limiting behaviour of the $G_2$-monopole (resp. Calabi-Yau monopole), to coassociative cycles (resp. special Lagrangian cycles) in the sense of Harvey-Lawson \cite{HarveyLawson}.

	\begin{Def}
		Inside a $G_2$-manifold $(M,\phi)$,
	a \emph{coassociative cycle} $Q$ is a 4-dimensional closed integral current, such that  $\norm{Q}$-a.e.
	the oriented tangent planes are calibrated by $\psi$, and the multiplicity function takes value in positive integers.
	\end{Def}

		\begin{Def}
		Inside a Calabi-Yau 3-fold $(M,\omega, \Omega)$,
		a \emph{special Lagrangian cycle} $Q$ is a 3-dimensional closed integral current, such that  $\norm{Q}$-a.e.
		the oriented tangent planes are calibrated by $\text{Re}(\Omega)$, and the multiplicity function takes value in positive integers.

	\end{Def}

	\begin{Main Setting}
			Let $(M,\phi)$ (resp. $(M,\omega, \Omega)$) be an asymptotically conical $G_2$-manifold (resp. Calabi-Yau 3-fold). Let $(A_i,\Phi_i)$ be a sequence of $G_2$-monopoles (resp. Calabi-Yau monopoles) satisfying the \emph{qualitative} hypotheses in \cite{Oliveira1}, namely the curvature is in $L^\infty$ and has quadratic asymptotic decay $|F_{A_i}|=O(r(x)^{-2})
			$	as $r(x)\to +\infty$, and the intermediate energy is finite. 
Consequently by \cite{Oliveira1}, 
$
		\lim_{r(x)\to +\infty }  |\Phi_i(x)|=m_i\geq 0,
$
		the asymptotic limit is pseudo-HYM,
		and the intermediate energy admits the topological formula (\ref{eqn:intermediateenergy2}) (resp. (\ref{eqn:intermediateenergyCY2})). We will assume that $m_i\to +\infty$ as $i\to +\infty$, and suppose the monopole class $\beta\in H^2(\Sigma)$ and the second Chern class of the underlying $SU(2)$ bundle $P$ are independent of $i$. We shall usually suppress the sequential index $i$ below. All constants are independent of large $m$.
	\end{Main Setting}

Our goal is to develop a structure theory of $G_2$-monopoles (resp. Calabi-Yau monopoles) when the mass $m$ is large, and describe the limit as $m\to +\infty$. 
	We can decompose the curvature $F\in \Omega^2(M, \mathfrak{g}_P)$ into the parts parallel to $\Phi$ and orthogonal to $\Phi$ in the Lie algebra factor: $F= F^\parallel+ F^\perp$. The parallel part is proportional to $\frac{\Tr(\Phi F)}{4\pi m}$ or $\frac{\Tr(\Phi F)}{4\pi |\Phi|}$, and its $L^1_{loc}$-convergence theory is summarized in the following theorem:

	\begin{thm}\label{mainthm:L1}
			There exists some $L^2$-harmonic 2-form $\sigma\in L^2\mathcal{H}^2(M)\simeq H^2_c(M)$ depending on $(A, \Phi)$ with $\norm{\sigma}_{L^2}\leq Cm$, such that the following convergence results hold:
	
	\begin{enumerate}
		\item (Strong convergence, see Prop. \ref{prop:weaklimitChernform}, Lem. \ref{lem:singularabelianmonopole}) After passing to subsequence, the following strong $L^1_{loc}$ limits exist as $i\to +\infty$:
		\[
		\begin{cases}
		\tilde{F}_\infty:= \lim_{i\to +\infty} (- \frac{\Tr(\Phi F)}{4\pi m}+ \sigma )= \lim_{i\to +\infty} (- \frac{\Tr(\Phi F)}{4\pi |\Phi| }+ \sigma ),
		\\
		\Phi_\infty:= \lim_{i\to +\infty} \frac{ |\Phi|^2-m^2}{2m}.
		\end{cases}
		\]
		We let $F_{\infty}= 2\pi \tilde{F}_\infty.$

		\item (Singular abelian $G_2$-monopole/Calabi-Yau monopole, see Lem. \ref{lem:singularabelianmonopole})  The limit $(F_\infty,\Phi_\infty)$ satisfies the following system in the distributional sense: in the $G_2$-monopole case, 
		\[
			\begin{cases}
				d F_\infty = 2\pi Q, 
				\\
				F_\infty \wedge \psi= *d\Phi_\infty,
			\end{cases}
\]
		while in the Calabi-Yau monopole case,
		\[
			\begin{cases}
				d F_\infty = 2\pi Q, 
				\\
				F_\infty \wedge \text{Re}\Omega= *d\Phi_\infty.
			\end{cases}
		\]

		\item (Calibrated cycle, see Thm. \ref{thm:weaklimitstructure}, Cor. \ref{cor:homology}) The $(n-3)$-current $Q$ is a coassociative cycle (resp. special Lagrangian cycle), contained inside a compact subset of $M$, and is represented by a smooth submanifold with integer multiplicity except on a subset of Hausdorff dimension at most $n-5$. 
		
		The homology class of  $[Q]\in H_{n-3}(M)$ is Poincar\'e dual to the image of the monopole class $\beta$ under the connecting map $H^2(\Sigma)\to H^3_c(M)$ in the 
		long exact sequence for cohomology with real coefficient for the pair $(M,\Sigma)$,
		\begin{equation*}
			\ldots \to H^{k-1}(\Sigma)\to H_c^k(M)\to H^k(M)\to H^k(\Sigma)\to \ldots. 
		\end{equation*}
		The total mass of $Q$ is finite, and admits the formula
		\[
		\text{Mass}(Q)= 
		\begin{cases}
			\int_Q \psi =	\langle  \beta \cup \psi|_\Sigma, [\Sigma]\rangle, \quad & \text{$G_2$ case},
			\\
			\int_Q \text{Re}\Omega =	\langle  \beta \cup \text{Re}(\Omega)|_\Sigma, [\Sigma]\rangle, \quad & \text{Calabi-Yau case}.
		\end{cases}
		\]

		\item (Smoothness and asymptotic decay, see Thm. \ref{thm:weaklimitstructure})  The pair $(F_\infty, \Phi_\infty)$ is smooth away from the support of $Q$. Near the infinity of $M$, then $\Phi_\infty= O(r(x)^{2-n} )$, and $\tilde{F}_\infty$ is asymptotic to  the harmonic 2-form on $\Sigma$ in the monopole class $\beta$.

		\item ($U(1)$-bundle and integrality, see Prop. \ref{prop:weaklimitChernform}, Cor. \ref{cor:homology}) We can require the $L^2$-harmonic forms $\sigma$ to represent integral classes in $H^2_c(M)$. Then 
		the closed form $\tilde{F}_\infty$ represents an integral class in $H^2(M\setminus \text{supp}(Q), \R)$. In particular, there exists a  connection $A_\infty$ on some $U(1)$-bundle over $M\setminus \text{supp}(Q)$ whose curvature 2-form is $F_\infty$.

	\end{enumerate}

	\end{thm}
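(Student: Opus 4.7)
The plan is to extract the limit from the Bogomolny identity $d\Tr(\Phi F)=\Tr(\nabla\Phi\wedge F)$ together with the topological formula $(\ref{eqn:intermediateenergy2})$ (resp. $(\ref{eqn:intermediateenergyCY2})$), which forces $\norm{\nabla\Phi}_{L^2}^2=O(m)$. The maximum principle applied to the subharmonic function $|\Phi|^2$ gives $|\Phi|\leq m$, so Cauchy--Schwarz implies $\Tr(\Phi F)/m$ has $L^1_{loc}$ bounds uniform in $m$. Weak-$*$ compactness in the space of measures produces a candidate limit; the Bogomolny identity and the monopole equation pass to distributional limits to show it is closed up to a defect $(n-3)$-current. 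Subtracting the $L^2$-harmonic projection $\sigma\in L^2\mathcal{H}^2(M)\simeq H^2_c(M)$ removes the cohomological obstruction and upgrades weak to strong $L^1_{loc}$ convergence, producing $\tilde F_\infty$. The same strategy applied to the normalised $(|\Phi|^2-m^2)/2m$, whose Laplacian $|\nabla\Phi|^2/m$ is uniformly bounded in $L^1$, produces $\Phi_\infty$ by elliptic regularity for the Poisson equation.

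For part (2), passing $d\Tr(\Phi F)=\Tr(\nabla\Phi\wedge F)$ and $*(F\wedge\psi)=\nabla\Phi$ (resp. the Calabi--Yau analogue) to distributional limits yields the singular abelian monopole system, with $Q:=(2\pi)^{-1}dF_\infty$ the closed defect current supported on the locus where the non-abelian $F^\perp$ component bubbles. Identifying $Q$ as a calibrated cycle in part (3) is the heart of the argument. The key pointwise input is the calibration inequality for $\Tr(\Phi F)\wedge\psi$, which saturates only on coassociative (resp. special Lagrangian) tangent planes; combined with the concentration of $|\Phi|$ at the value $m$ away from the bubbling set, this forces $\norm{Q}$-almost every tangent plane to be calibrated. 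Stationarity of $Q$ follows from diffeomorphism invariance of the monopole equations. Rectifiability and integer multiplicity come from flux integrality: for each small 2-sphere linking the singular set, the flux of $\tilde F_\infty$ is a Chern number of a $U(1)$-reduction of $P$ and hence an integer. Allard--Almgren regularity for stationary integral currents calibrated by a smooth form then gives smoothness outside a closed set of Hausdorff dimension $\leq n-5$, and the homology class $[Q]$ is identified by pairing $\tilde F_\infty$ against cycles dual to $\beta$ using the pseudo-HYM asymptote on the end.

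Parts (4) and (5) are short epilogues. Elliptic regularity for the distributional abelian monopole system gives smoothness of $(F_\infty,\Phi_\infty)$ away from $\text{supp}(Q)$; separation of variables on the asymptotic cone against the pseudo-HYM limit yields the decay rate $\Phi_\infty=O(r^{2-n})$ and matches $\tilde F_\infty$ with the harmonic representative of $\beta$ on $\Sigma$. Integrality in part (5) is obtained by choosing $\sigma$ to represent a class in the integer lattice $H^2_c(M,\Z)\subset H^2_c(M,\R)$; the strong $L^1_{loc}$ limit $\tilde F_\infty$ then has integer periods on every compact 2-cycle in the complement of $\text{supp}(Q)$, and a Chern--Weil construction provides the $U(1)$-bundle together with the connection $A_\infty$.

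The hard part is unambiguously part (3). Because the $L^2$-curvature blows up as $m\to\infty$, classical Yang--Mills--Uhlenbeck compactness is unavailable and one cannot control $F^\perp$ directly; identifying the defect measure as a calibrated cycle (rather than merely a stationary varifold) requires using the pointwise calibration inequality sharply in tandem with the integrality of flux. A secondary difficulty is showing that $\text{supp}(Q)$ is contained in a compact subset of $M$, which hinges on the asymptotic pseudo-HYM structure of $(A_i,\Phi_i)$ ruling out energy escape near infinity under the quadratic curvature decay hypothesis.
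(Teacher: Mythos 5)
Your overall architecture for parts (1), (2), (4) and the first half of (5) matches the paper's: the Bogomolny identity plus the intermediate energy formula give the uniform $L^1_{loc}$ bounds, the harmonic form $\sigma$ is subtracted to control the cohomological ambiguity (in the paper this is Prop.~\ref{prop:parallelcurvatureL1}, resting on the weighted Hodge-theory Lemma~\ref{lem:2form2}), strong $L^1_{loc}$ convergence comes from $L^1$ control of $(d+d^*)$ of the difference plus a compactness argument, and the limiting system, smoothness away from $\mathrm{supp}(Q)$, and the decay rates follow by passing Lemma~\ref{lem:traceeqn} to the limit and applying elliptic regularity. One small omission: you do not address why $-\Tr(\Phi F)/4\pi m$ and $-\Tr(\Phi F)/4\pi|\Phi|$ have the same limit; this needs the comparison Lemma~\ref{lem:mollifiedcurvaturecomparison}, which in turn uses the measure bound on the curvature concentration locus and the $L^1$ clustering of $|\Phi|$ near $m$.

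The genuine gap is in part (3), and it is exactly the part you flag as hard. Your rectifiability-and-integrality argument — ``for each small 2-sphere linking the singular set, the flux of $\tilde F_\infty$ is a Chern number and hence an integer'' — is circular as stated. Before rectifiability is established you do not know that $\mathrm{supp}(Q)$ has Hausdorff dimension $n-3$ (or even that it is a proper subset of $M$; a general weak limit of the 3-currents $d\Tr(\Phi F)/4\pi m$ could have dense support), so there are no ``small spheres linking the singular set'' to integrate over. Moreover $\tilde F_\infty$ is only an $L^1_{loc}$ form, so its integral over any fixed 2-cycle is not defined, and for finite $m$ the form $-\Tr(\Phi F)/4\pi m$ is not closed and has no integral periods. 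The paper has to work hard here: it mollifies the Chern form so that $d\tilde F$ is supported near $\mathcal{C}_{\Lambda_0}$ with $|d\tilde F|\le Cm^{-1}|F||\nabla\Phi|$, runs a Whitney-type stopping-time decomposition and a deformation lemma to replace $d\tilde F$ by a polyhedral $(n-3)$-chain with a \emph{uniform bound on the $\mathcal{H}^{n-3}$-measure of its support}, and only then invokes the Ambrosio--Kirchheim closure theorem to get a rectifiable (real-coefficient) limit. Integrality is then extracted by averaging the flux over a family of perturbed 2-spheres (so that the integral makes sense for an $L^1$ form and generic members miss the concentration set with quantified probability), and integer multiplicity of $Q$ requires a further blow-up argument. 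None of these steps is supplied or replaced by an alternative in your proposal.

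Your route to the calibration property (a pointwise calibration inequality for $\Tr(\Phi F)\wedge\psi$ saturated on coassociative planes, plus stationarity from diffeomorphism invariance, plus Allard--Almgren) is genuinely different from the paper's, which instead classifies the blow-up limit $(f_p,\phi_p,Q_p)$ at $\mathcal{H}^{n-3}$-a.e.\ point of $Q$ as the translation-invariant Dirac model and reads off coassociativity from the vanishing of the $(3,1)$-type component of $\psi$ on $T_pQ$. A calibration-inequality approach is viable (it is essentially the Parise--Pigati--Stern strategy), but you would need to actually prove the inequality and, more importantly, you would still need integrality before Almgren's regularity applies, which returns you to the gap above. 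Finally, compactness of $\mathrm{supp}(Q)$ in the paper comes a posteriori from the finite total mass of $Q$ together with the monotonicity formula for minimal surfaces, not directly from ruling out energy escape at infinity.
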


	\begin{rmk}
	A conceptually significant aspect of this convergence theorem, is that assuming the existence of the sequence $(A,\Phi)$, then it produces a coassociative (resp. special Lagrangian) cycle, within the prescribed homology class. This reduces the highly  non-perturbative existence question for these calibrated cycles, to a question in gauge theory. This strategy is morally analogous to producing holomorphic curves by showing the nontriviality of some Seiberg-Witten invariant, using Taubes's famous work that $GW=SW$.
	\end{rmk}

	\begin{rmk}
	In the $G=SO(3)$ variant case, the same conclusions hold, except for those concerning integrality:  we have instead that $2\tilde{F}_\infty$ represents an integral class in $H^2(M\setminus \text{supp}(Q))$, and $2F_\infty$ is the curvature 2-form of some $U(1)$-connection.
	\end{rmk}

The perpendicular component $F^\perp$ turns out to be much smaller compared to $F^\parallel$ in the $L^1$-average sense; the intuition is that for very large $m$, the $G_2$-monopole (resp. Calabi-Yau monopole) $(A,\Phi)$ \emph{behaves as if the structure group is the abelian group} $U(1)$, in the \emph{generic region}. On the other hand, there is some non-generic curvature concentration locus $\mathcal{C}_{\Lambda_0}$, where $(A,\Phi)$ is genuinely non-abelian.   We show the \emph{concentration-decay dichotomy}: 
 the $s$-tubular neighbourhoods of $B_s(\mathcal{C}_{\Lambda_0}):=  \{   x\in M: \text{dist}(x, \mathcal{C}_{\Lambda_0}) \leq s  \}$  have small measure when $s$ is small, while the $L^2$ integral of $F^\perp$ \emph{decays exponentially} in the distance to $\mathcal{C}_{\Lambda_0}$.


\begin{thm}\label{mainthm:Fperp}
There is a subset $\mathcal{C}_{\Lambda_0}\subset M$, such that the perpendicular component $F^\perp$ satisfies the following when $m$ is large enough:

\begin{enumerate}
	\item  ($L^1_{loc}$-estimate, see Cor. \ref{cor:L1Fperp}) For any $r>1$, 
	\[
		\int_{ \{   r(x)<r  \} }   |F^\perp|\leq C m^{-1/2}(1+ m^{-1} r^{n-4} ).
	\]
In particular $F^\perp$ converges in $L^1_{loc}$-topology to zero as $i\to +\infty$.

	\item  (Measure bound, see Prop. \ref{prop:measurebound}) Whenever $m^{-1/2}\gg s\geq m^{-1}$, we have the Minkowski measure estimate	\[
	\text{Vol}( B_s( \mathcal{C}_{\Lambda_0}) \cap \{ r(x)<r \}    )
	\leq    C(ms^2 )^2  (1+ m^{-1} r^{n-4}).
	\]

	\item  ($L^2$-Exponential decay, see Prop. \ref{prop:expdecay2})   If $r>1$ and $s< r/C'$, then 
	\[
	\int_{  \{   r(x)<r \}  \setminus  B_s(\mathcal{C}_{\Lambda_0})      } |F^\perp|^2 dvol(x) \leq  Ce^{-ms} (m+ r^{n-4}),
	\]
	where $C,C'$ are independent of $m, s$. 
\end{enumerate}

\end{thm}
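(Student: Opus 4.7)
My plan is to define the concentration set as $\mathcal{C}_{\Lambda_0} := \{x \in M : m - |\Phi(x)| \geq \Lambda_0\}$ for a fixed constant $\Lambda_0$ independent of $m$, so that on the complement $|\Phi|^2 \geq (m - \Lambda_0)^2 \geq m^2/4$ once $m$ is large. All three parts will rest on a Bochner--Weitzenböck inequality of the form
\[
\Delta |F|^2 \geq c_1 |\Phi|^2 |F^\perp|^2 + 2|\nabla_A F|^2 - C\bigl(|\nabla \Phi|^2 |F| + |F|^2\bigr),
\]
which I derive by combining the Bianchi identity $d_A F = 0$, the Yang--Mills--Higgs Euler--Lagrange equation $d_A^\ast F = -[\Phi,\nabla\Phi]$ (automatic for monopoles since $d\psi = 0$ and $d\,\text{Re}\,\Omega = 0$), the standard Weitzenböck formula $\Delta_A = \nabla_A^\ast \nabla_A + \mathcal{R}$, and the $SU(2)$-algebraic identity $[\Phi,[\Phi, F^\perp]] = -|\Phi|^2 F^\perp$ which turns $|\Phi|^2$ into a positive mass term on the perpendicular component. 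The Calabi--Yau case is handled by the same argument with $\psi$ replaced by $\text{Re}\,\Omega$.

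For Part 3 (exponential decay), I would carry out an Agmon-type weighted estimate: pair the Bochner inequality with $\chi^2 e^{2\alpha m d(x)}$, where $d(x) := \text{dist}(x, \mathcal{C}_{\Lambda_0})$ and $\chi$ is a smooth cutoff supported in $\{r(x) < r\}$. For $\alpha$ chosen small enough, the gradient of the exponential weight is dominated by the mass term $c_1 |\Phi|^2 |F^\perp|^2 \gtrsim m^2 |F^\perp|^2$ on the complement of $\mathcal{C}_{\Lambda_0}$. Integration by parts, together with the intermediate-energy bound $\int|\nabla\Phi|^2 = O(m)$ and the AC asymptotic $|F| = O(r(x)^{-2})$ used to control the annular contribution near infinity, should yield
\[
\int_{\{r<r\}\setminus\mathcal{C}_{\Lambda_0}} e^{\alpha m d(x)} |F^\perp|^2 \leq C(m + r^{n-4}),
\]
and restricting to $\{r<r\}\setminus B_s(\mathcal{C}_{\Lambda_0})$, where $e^{\alpha m d} \geq e^{\alpha m s}$, produces the stated $Ce^{-ms}(m + r^{n-4})$ bound after absorbing $\alpha$ into the constants.

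For Part 2 (Minkowski volume), I would combine a potential-theoretic lower bound with a covering argument. The nonnegative function $v := m - |\Phi|$ is superharmonic, with $-\Delta v = |\nabla^\perp\Phi|^2/|\Phi|$ (where $\nabla^\perp\Phi = \nabla\Phi - d|\Phi| \cdot \hat{\Phi}$) a positive source of total mass $O(1)$, since $\int|\nabla\Phi|^2/|\Phi|$ is controlled by $\int|\nabla\Phi|^2/m = O(1)$ in the bulk where $|\Phi| \gtrsim m$. The condition $v(x) \geq \Lambda_0$ for $x \in \mathcal{C}_{\Lambda_0}$ should force, by a dyadic Green's-function decomposition, a lower bound $\int_{B_s(x)}|\nabla\Phi|^2 \geq c\Lambda_0 m s^{n-2}$ at some scale, hence $\int_{B_s(x)}|F|^2 \geq c\Lambda_0 m s^{n-2}$ via $|\nabla\Phi|^2 = |F\wedge\psi|^2 \lesssim |F|^2$. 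Covering $\mathcal{C}_{\Lambda_0}\cap\{r<r\}$ by disjoint $s$-balls and bounding the covering number using the global $L^2$ estimate $\int_{B_r}|F|^2 \leq C(m + r^{n-4})$ will yield the claimed $C(ms^2)^2(1 + m^{-1}r^{n-4})$.

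Part 1 (integral bound) then follows by splitting $\int_{\{r<r\}}|F^\perp|$ across the $s$-tubular neighbourhood of $\mathcal{C}_{\Lambda_0}$ and applying Cauchy--Schwarz to each piece (with Parts 2 and 3 respectively), optimizing $s \sim m^{-1}\log m$. The main obstacle will be the Bochner inequality itself: getting a clean $|\Phi|^2 |F^\perp|^2$ mass term requires handling the non-commutation of the parallel/perpendicular decomposition in $\mathfrak{g}$ with $\nabla_A$, and absorbing the cross-term $\langle [\nabla\Phi, \nabla\Phi], F\rangle$ using the orthogonal decomposition $\nabla\Phi = d|\Phi|\cdot\hat{\Phi} + |\Phi|\nabla\hat{\Phi}$ together with the monopole equation, which gives $|\Phi|^2|\nabla\hat{\Phi}|^2 = |\nabla^\perp\Phi|^2$ controlled by $|F^\perp|^2$.
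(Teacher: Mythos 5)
There is a genuine gap, and it sits at the very first step: your choice of the concentration set $\mathcal{C}_{\Lambda_0}=\{m-|\Phi|\geq \Lambda_0\}$ with $\Lambda_0$ a fixed constant cannot work. In the flat model (Example \ref{eg:flatmodel}) one has $m-|\Phi|\sim \frac{k}{2\,\mathrm{dist}(\cdot,Q)}$, so your set is the \emph{fixed-size} tube $\{\mathrm{dist}(\cdot,Q)\lesssim k/\Lambda_0\}$, whose volume in a unit ball is of order $\Lambda_0^{-3}$, independent of $m$. This is incompatible with the Minkowski bound of Part 2, which forces $\mathrm{Vol}(\mathcal{C}_{\Lambda_0}\cap\{r(x)<r\})\lesssim m^{-2}$ (take $s=m^{-1}$). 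No argument can rescue Part 2 for this set. The paper instead defines $\mathcal{C}_{\Lambda}$ by a Green-potential condition on the curvature density, $\int_M |F|^2\max(d(x,p),m^{-1})^{2-n}\,dvol>\Lambda^{-1}m^2$ (Def. \ref{Def:curvatureconcentrationlocus}); the threshold $m^2$ (rather than $m$, which is what a fixed drop $m-|\Phi|\geq\Lambda_0$ buys you through the Green representation of $|\Phi|^2-m^2$) is exactly what makes the Fubini argument in Prop. \ref{prop:measurebound} produce the $m^{-2}$ volume.

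The second, related problem is in your Agmon argument for Part 3. To absorb the bad terms in the Bochner inequality for the perpendicular component (the terms $C(|Rm|+|F|)|[F,\Phi]|^2$ and $C|\sum_i[\nabla_iF,\nabla_i\Phi]|\,|[F,\Phi]|$ of Lemma \ref{lem:commutator1}) into the mass term $4|\Phi|^2|[\Phi,F]|^2$, you need the \emph{pointwise} bounds $|F|\ll m^2$ and $|\nabla F|\ll m^3$ on the region where you run the estimate (Cor. \ref{cor:commutator1}). The condition $|\Phi|\geq m-\Lambda_0$ gives you none of this: a priori the curvature could concentrate at a scale $\ll m^{-1}$ while $|\Phi|$ stays close to $m$, and there the differential inequality does not close. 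This is precisely why the paper's definition of the bad set is phrased in terms of local $L^2$-curvature: Lemma \ref{lem:curvatureconcentrationlocus1} shows that on the complement of $\mathcal{C}_{\Lambda_0}$ both $\int_{B(y,2m^{-1})}|F|^2\leq\epsilon_3 m^{4-n}$ and $|\Phi|\geq m/2$ hold, and the first condition feeds $\epsilon$-regularity (Prop. \ref{prop:epsilonregularity}) to supply the pointwise bounds. Your Part 1 strategy (Cauchy--Schwarz across a tubular decomposition) matches the paper's Cor. \ref{cor:L1Fperp} and would be fine once Parts 2 and 3 are in place, but as written the whole chain rests on a set that is both too large for the measure bound and too weak to control the curvature pointwise.
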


	While the abelianization effect dominates in the generic region, as measured by the local $L^1$-norm, the nonabelian nature of the $SU(2)$ structure group persists in the region of curvature concentration, as measured by the local $L^2$-norm. We assume the setting of Thm. \ref{mainthm:L1}. It turns out that as $i\to +\infty$, the $L^2$-curvature density is concentrated on the support of the calibrated cycle $Q$, and the weak limit of the $L^2$-curvature density is uniquely determined by $Q$.

	Let $Q_{sm}$ denote the smooth locus of the support of $Q$, and using the normal exponential map, we identify a tubular neighbourhood  $\mathcal{U}$ of $Q_{sm}$ with an open subset of the normal bundle. Inside $\mathcal{U}$, we can decompose the curvature form as
	\[
	F= F^{nn}+ F^{tn}+ F^{tt},
	\]
	where the superscript $t$ stands for `tangential factors', while $n$ stands for `normal factors'. Likewise we write $\nabla \Phi= \nabla^t \Phi+ \nabla^n \Phi$.

\begin{thm}\label{mainthm:L2}(See Prop. \ref{prop:measurelimit2})
The following holds for the limit of $L^2$-curvature densities:

\begin{enumerate}
	\item (Weak limit) We have the weak convergence of measures
	\[
	\begin{split}
	&	\lim_{i\to +\infty} \frac{1}{2\pi m} |\nabla \Phi|^2dvol = 	\lim_{i\to +\infty} \frac{1}{2\pi m} |F|^2dvol 
		\\= &
	\begin{cases}
		Q\wedge \psi,\quad &\text{$G_2$-monopole case},
		\\
		Q\wedge \text{Re}\Omega, \quad & \text{Calabi-Yau monopole case}.
	\end{cases}
	\end{split}
	\]
	
	\item  (Tangential contributions)  We have 
	\[
	\lim_{i\to +\infty} m^{-1} |F^{nt}|^2 dvol= \lim_{i\to +\infty} m^{-1} |F^{tt}|^2 dvol = \lim_{i\to +\infty} m^{-1} |\nabla^t \Phi|^2dvol=0.
	\]

	\item (Approximate monopoles on normal fibres)
	\[
	\lim_{i\to+\infty}   \int_\mathcal{U} m^{-1} |F^{nn}- *_3 \nabla^n \Phi|^2 dvol =0,
	\]
	where $*_3$ denotes the Hodge star on the 3-dimensional normal fibres.

\end{enumerate}

\end{thm}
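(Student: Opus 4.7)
My plan is to prove part (1) by a global Bolgomolny integration by parts, reducing to the $L^1_{loc}$-convergence from Theorem~\ref{mainthm:L1}(1), and then to deduce parts (2) and (3) from a local bubble analysis on $O(m^{-1})$-scale normal fibres over $Q_{sm}$.

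For part (1), the starting point is the pointwise identity
\[
|\nabla\Phi|^2\,dvol = -\tfrac{1}{2}\Tr(\nabla\Phi \wedge F)\wedge\psi = -\tfrac{1}{2}\,d\Tr(\Phi F)\wedge\psi,
\]
coming from the $G_2$-monopole equation $*(F\wedge\psi)=\nabla\Phi$ and the Bianchi identity $d\Tr(\Phi F)=\Tr(\nabla\Phi\wedge F)$. Pairing against a compactly supported $f\in C^\infty_c(M)$ and integrating by parts using $d\psi=0$ yields
\[
\frac{1}{2\pi m}\int_M f|\nabla\Phi|^2\,dvol = \int_M df\wedge \frac{\Tr(\Phi F)}{4\pi m}\wedge\psi.
\]
By Theorem~\ref{mainthm:L1}(1), the integrand converges in $L^1_{loc}$ to $df\wedge(-\tilde F_\infty+\sigma)\wedge\psi$; using $d\sigma=0$ (since $\sigma$ is $L^2$-harmonic) and $d\tilde F_\infty=Q$ distributionally in a second integration by parts gives the limit $\int_M f\,Q\wedge\psi$, establishing the $|\nabla\Phi|^2$ half of (1). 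To pass from $|\nabla\Phi|^2$ to $|F|^2$, I would combine the $G_2$-algebraic identity $|\nabla\Phi|^2=3|F_7|^2$ (since $F_{14}\wedge\psi=0$) with the Chern--Weil fact that $\int_M\Tr(F\wedge F)\wedge\phi$ is topological and boundary-controlled, hence $O(1)$ in $m$. These two ingredients pin down $\int|F_7|^2$ and $\int|F_{14}|^2$ separately and give $\int f(|F|^2-|\nabla\Phi|^2)=O(1)$, negligible after dividing by $2\pi m$. The Calabi--Yau case is parallel with $\psi\mapsto\text{Re}\,\Omega$ and $\phi\mapsto\omega$.

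For (2) and (3), I would localise to a tubular neighbourhood $\mathcal{U}$ of $Q_{sm}$ via the normal exponential map and work on the $O(m^{-1})$ scale. The guiding heuristic from Example~\ref{eg:flatmodel} is that rescaling each normal $3$-fibre by $m$ recovers a centred $SU(2)$ monopole on $\R^3$ of charge equal to the local multiplicity of $Q$. On the rescaled limit the Bogomolny equation $F=*_3\nabla\Phi$ holds exactly, and the data vary smoothly along $Q$, so in the unrescaled picture the normal components $F^{nn},\nabla^n\Phi$ have pointwise size $O(m)$ inside the $O(m^{-1})$-fibre, while the tangential components $F^{tt},F^{tn},\nabla^t\Phi$ are only $O(1)$. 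Integrating against $m^{-1}\,dvol$ over the neighbourhood of width $O(m^{-1})$, the normal squared norms contribute at order $O(1)$ and saturate the weak limit from (1); the tangential squared norms contribute $O(m^{-2})$ and vanish, giving (2); and the pointwise Bogomolny defect $m^{-1}|F^{nn}-*_3\nabla^n\Phi|^2$ vanishes in $L^1_{loc}$ by the exactness of the $3$D Bogomolny relation on the rescaled limit, giving (3). Rigorously, I would extract the $3$D monopole limit through each $q\in Q_{sm}$ by interior Yang--Mills--Higgs compactness on small cylinders $B^Q(q,\rho)\times$~fibre, using (1) for the energy bound and Theorem~\ref{mainthm:Fperp}(2)--(3) to exclude stray bubbles outside $\mathcal{C}_{\Lambda_0}$.

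The main obstacle will be the uniform extraction of the local $3$D monopole limit along $Q_{sm}$, together with the control of the singular locus $Q\setminus Q_{sm}$. The Hausdorff $(n-5)$-bound from Theorem~\ref{mainthm:L1}(3) and the Minkowski measure bound from Theorem~\ref{mainthm:Fperp}(2) together confine the pathological regions to a negligibly small set, reducing everything to the smooth locus of $Q$ where the local model applies. A subtle point is that the charge of the rescaled limit on each normal fibre must coincide with the local multiplicity of $Q$; this should follow from the integrality of the monopole class in Theorem~\ref{mainthm:L1}(5) applied to small linking $S^2$'s transverse to $Q_{sm}$. Once these ingredients are combined with the weak convergence from (1), the normal/tangential decomposition yields (2) and (3) by direct comparison with the rescaled $3$D monopole model.
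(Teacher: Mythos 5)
Your treatment of the $|\nabla\Phi|^2$ half of part (1) is essentially the paper's Lemma \ref{lem:measurelimit1} and is fine; the remaining claims have two genuine gaps.

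First, the passage from $|\nabla\Phi|^2$ to $|F|^2$ does not work as proposed. The algebraic identity $|F|^2-|\nabla\Phi|^2=|F^{14}|^2-2|F^7|^2$, a multiple of $*(\Tr(F\wedge F)\wedge\phi)$, is correct, but $\int_M f\,\Tr(F\wedge F)\wedge\phi$ is \emph{not} topological once a cutoff $f$ is inserted: the global Chern--Weil cancellation between regions where $|F^{14}|^2$ dominates and regions where $|F^7|^2$ dominates says nothing about a fixed compact set, and the only a priori local bound is $\int_K|F^{14}|^2=O(m)$, which is not negligible after dividing by $m$. Indeed the paper obtains the local $L^1$-smallness of $m^{-1}\Tr(F\wedge F)$ as a \emph{consequence} of the measure identities (item 3 of Prop.~\ref{prop:measurelimit2}), not as an input to them.

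Second, the bubble-extraction route to (2) and (3) is circular. The assertion that the restriction of $(A,\Phi)$ to normal fibres is modelled on rescaled $3$-dimensional monopoles at scale $m^{-1}$ is the content of the energy identity Theorem~\ref{thm:energyidentity}, whose proof \emph{uses} Prop.~\ref{prop:measurelimit2} and Cor.~\ref{cor:fibrewisestrongL1} as essential inputs; a priori the $L^2$-curvature could be dispersed throughout the $o(1)$-neighbourhood of $Q$ at scales much larger than $m^{-1}$, in which case the rescaled limits on normal fibres are trivial and neither $\epsilon$-regularity nor compactness produces a nontrivial monopole. What actually closes the argument is soft: on each normal fibre one has the Bogomolny completion of the square
\[
|*_3F^{nn}-\nabla^n\Phi|^2\,dvol_{N_yQ}-\Tr(\nabla\Phi\wedge F)|_{N_yQ}=\bigl(|F^{nn}|^2+|\nabla^n\Phi|^2\bigr)\,dvol_{N_yQ};
\]
integrating against $\pi^*(\rho\,dvol_Q)$ and using the weak convergence $d(-\Tr(\Phi F)/4\pi m)\to Q$ gives the lower bound $\tfrac12(\mu_{|F^{nn}|^2}+\mu_{|\nabla^n\Phi|^2})\lfloor_{Q_{sm}}\geq Q\wedge\psi$, hence total measure at least $\mathrm{Mass}(Q)=\langle\beta\cup\psi|_\Sigma,[\Sigma]\rangle$. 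Since Lemma~\ref{lem:measurelimit1} bounds the total masses of $\mu_{|F|^2}$ and $\mu_{|\nabla\Phi|^2}$ above by the same number, every inequality in the chain is an equality: the tangential components and the Bogomolny defect carry zero limiting measure, and $\mu_{|F|^2}=\mu_{|F^{nn}|^2}=Q\wedge\psi$, which also supplies the missing $|F|^2$ half of part (1). This squeeze replaces the fibrewise monopole model entirely at this stage of the argument.
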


Furthermore, as $m\to +\infty$, then along most of the normal fibres to $Q_{sm}$, the restriction of $(A,\Phi)$ is modelled on rescalings of monopoles over $\R^3$, and 
almost all the $L^2$-curvature in any fixed large open subset of $M$ can be accounted for by \emph{monopole bubbling} in the normal 3-planes along the support of $Q$. This intuition is captured by the more technical  `energy identity'  in Thm. \ref{thm:energyidentity}.

	\begin{rmk}
	It should be noted that the full Donaldson-Segal programme is much more ambitious than our main theorems. We make an extended discussion in Section \ref{sect:open} on some open analytic questions. 
	\end{rmk}


	\section{Some estimates on $G_2$/Calabi-Yau monopoles}

	\subsection{Yang-Mills-Higgs equation and monotonicity formula}

	Both $G_2$-monopoles and Calabi-Yau monopoles are special instances of critical points of the Yang-Mills-Higgs functional, namely they solve the Euler-Lagrange equation, called the Yang-Mills-Higgs equation.

	\begin{Notation}
	We will denote the rough Laplacian as $\Lap=\nabla^*\nabla =-\nabla_i\nabla_i$, with the same notation for functions, forms and covariant versions.  The structure group is chosen to be $G=SU(2)$, and the inner product on the Lie algebra $su(2)$ is $|a|^2=\frac{1}{2}\Tr (a^\dagger a)=- \frac{1}{2}\Tr(a^2)$ (beware of the slightly different convention in \cite{Oliveira1}). We will occasionally comment on the $G=SO(3)$ variant case.   For any adjoint valued differential forms $\alpha, \beta$, we write
	\[
	[\alpha\wedge \beta]= \sum_{I,J} [\alpha_I, \beta_J] dx^I \wedge dx^J.
	\]
	If either $\alpha$ or $\beta$ is an adjoint valued scalar function, then we drop the wedge notation and write $[\alpha, \beta]=[\alpha\wedge \beta]$. 
	\end{Notation}

	\begin{lem}\label{lem:YMH}\cite[Lem. 2.7]{Oliveira1}
	Let $(A,\Phi)$ be a $G_2$-monopole (resp. Calabi-Yau monopole). Then it solves the Yang-Mills-Higgs equation
	\begin{equation}
	\Lap \Phi=0,\quad d_A^* F=[ \nabla \Phi, \Phi ].
	\end{equation}
	\end{lem}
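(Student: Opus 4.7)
My plan is to derive both Yang--Mills--Higgs equations from the first-order monopole system by differentiating it and invoking the Bianchi identity $d_A F = 0$ together with the closure of the calibrating forms ($d\phi = d\psi = 0$ in the $G_2$ case; $d\omega = d\,\text{Re}\,\Omega = 0$ in the Calabi--Yau case). I will detail the $G_2$ case; the Calabi--Yau case is handled identically, with $\text{Re}\,\Omega$ replacing $\psi$ and the auxiliary condition $F \wedge \omega^2 = 0$ substituting for one component of the $G_2$ algebra.

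\emph{Higgs equation.} Rewrite the $G_2$-monopole equation (\ref{eqn: G2monopole}) as the 6-form identity $F \wedge \psi = *\nabla\Phi$. Applying $d_A$ to both sides, the left-hand side reduces to $d_A F \wedge \psi + F \wedge d\psi = 0$ by Bianchi and $d\psi = 0$; the right-hand side is $d_A(*\nabla\Phi)$, a top-degree form equal (up to a sign) to $(d_A^* \nabla \Phi)\,\text{vol}_M = -(\Lap \Phi)\,\text{vol}_M$. This forces $\Lap \Phi = 0$.

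\emph{Yang--Mills equation.} Use the $G_2$ decomposition $\Omega^2 = \Omega^2_7 \oplus \Omega^2_{14}$, in which the operator $T(F) := *(F \wedge \phi)$ has eigenvalues $+2$ and $-1$ respectively, so that $*F = \tfrac{1}{2} F_7 \wedge \phi - F_{14} \wedge \phi$. Since $d\phi = 0$ and Bianchi gives $d_A F_{14} = -d_A F_7$, one obtains
\[
d_A * F = \tfrac{3}{2}\, d_A F_7 \wedge \phi, \qquad d_A^* F = *d_A * F = \tfrac{3}{2}\,*(d_A F_7 \wedge \phi).
\]
The monopole equation identifies $F_7 \in \Omega^2_7$ with the 1-form $\nabla\Phi$ via the parallel isomorphism $F_7 \mapsto *(F_7 \wedge \psi)$; differentiating this identification and using $d_A \nabla \Phi = d_A^2 \Phi = [F, \Phi]$ together with the $G_2$ contraction identity $\phi_{ija}\phi^{abk} = \delta^b_i\delta^k_j - \delta^b_j\delta^k_i - \psi^{bk}{}_{ij}$ converts $*(d_A F_7 \wedge \phi)$ into a bracket expression involving $[F, \Phi]$. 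Applying the monopole equation once more (to trade $F_7$ back for $\nabla\Phi$) collapses the right-hand side to $[\nabla\Phi, \Phi]$, yielding $d_A^* F = [\nabla\Phi, \Phi]$.

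\emph{Main obstacle.} The Higgs equation is immediate. The principal work is in the second step, which is pure algebra once the parallel structure has been used to commute $d_A$ past wedging and starring by $\phi,\psi$; the only nontrivial choice is selecting the contraction that simultaneously isolates $d_A^*F$ on the left and $[\nabla\Phi,\Phi]$ on the right. The Calabi--Yau analogue runs in parallel, with the decomposition of real 2-forms on a Calabi--Yau 3-fold into $\langle\omega\rangle$, primitive $(1,1)$, and $\text{Re}(\Omega^{2,0})$ replacing the $G_2$ split, and $F \wedge \omega^2 = 0$ eliminating the $\langle\omega\rangle$-component so that $F$ lies in the analog of $\Omega^2_7 \oplus \Omega^2_{14}$ on which the same style of contraction argument applies.
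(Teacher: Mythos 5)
The paper does not prove this lemma at all: it is quoted directly from \cite[Lem.\ 2.7]{Oliveira1}, so there is no in-text argument to compare against. Your derivation is the standard one and is correct in structure: the Higgs equation follows immediately from applying $d_A$ to $F\wedge\psi=*\nabla\Phi$ and using the Bianchi identity together with $d\psi=0$, and the Yang--Mills equation follows from the eigenvalue decomposition $\Omega^2=\Omega^2_7\oplus\Omega^2_{14}$, Bianchi, and $d_Ad_A\Phi=[F,\Phi]$. One point deserves care in the second step. The pointwise identification $\Omega^1\cong\Omega^2_7$ is given by contraction with the parallel form $\phi$ (i.e.\ $F_7=\tfrac13\iota_{(\nabla\Phi)^\sharp}\phi$), and $d_A$ does \emph{not} pass through a contraction in terms of $d_A\nabla\Phi$ alone: differentiating $\iota_{(\nabla\Phi)^\sharp}\phi$ produces the full covariant Hessian $\nabla^2\Phi$, of which $[F,\Phi]$ is only the antisymmetric part. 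The clean way to execute your plan is to convert the identification into a wedge identity before differentiating, e.g.\ $F_7\wedge\phi=\pm\tfrac23\,\nabla\Phi\wedge\psi$, so that $d_A(F_7\wedge\phi)=\pm\tfrac23\,[F,\Phi]\wedge\psi$ by Bianchi and $d\psi=0$; then $[F,\Phi]\wedge\psi=[F\wedge\psi,\Phi]=*[\nabla\Phi,\Phi]$ by the monopole equation, and taking $*$ gives $d_A^*F=[\nabla\Phi,\Phi]$. This is exactly the ``apply the monopole equation once more'' step you describe, so the gap is one of phrasing rather than substance; the Calabi--Yau case goes through verbatim with $\mathrm{Re}\,\Omega$ in place of $\psi$ and $F\wedge\omega^2=0$ eliminating the $\omega$-component.
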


	Solutions for the Yang-Mills-Higgs equation satisfy a well known \emph{monotonicity formula} for the normalised energy on balls. We work in the Main Setting, and apply this to the balls in our AC  $G_2$-manifolds (resp. Calabi-Yau 3-folds). Recall that $r(x)$ is a positively valued smooth function on $M$, which asymptotically gives the radial coordinate on the conical end $C(\Sigma)$.

	\begin{prop}\label{prop:monotonicity1}
		\cite[Thm. 4]{UhlenbeckSmith}  Let $(M,\phi)$ be an AC $G_2$-manifold (resp. Calabi-Yau 3-fold), and let $(A,\Phi)$ be a $G_2$-monopole (resp. Calabi-Yau monopole). Then for any $p\in M$, the normalised energy $e^{Cs/r(p)}s^{4-n} \int_{B(p,s)} |F|^2$ is monotone increasing for $0<s<r(p)/C'$. 		 In particular for radius $s\leq  r(p)/3$, we have
		\[
		s^{4-n}\int_{B(p, s)} ( |F|^2 + |\nabla \Phi|^2)  dvol\leq C r(p)^{4-n} \int_{B(p,r(p)/3) } ( |F|^2 + |\nabla \Phi|^2)  dvol.
		\]
	\end{prop}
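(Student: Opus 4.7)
The plan is to adapt the standard Yang--Mills--Higgs monotonicity formula (originally Price's identity in the pure Yang--Mills case, used by Uhlenbeck in the removable singularity theorem) to the curved ambient setting, via the stress-energy tensor approach. First I would form the YMH stress-energy tensor
\[
T_{ij} = \langle F_{ik}, F_j{}^k\rangle - \tfrac{1}{4}|F|^2 g_{ij} + \langle \nabla_i\Phi, \nabla_j\Phi\rangle - \tfrac{1}{2}|\nabla\Phi|^2 g_{ij},
\]
and verify by a Bianchi-identity computation, using the YMH equations of Lemma \ref{lem:YMH}, that $\nabla^i T_{ij} = 0$. Next, in geodesic normal coordinates around $p$, I would contract $T$ against the radial vector field $X = \rho\,\partial_\rho$ with $\rho(x) = \mathrm{dist}(x,p)$. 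Integration by parts on $B(p,s)$ converts $\int_{B(p,s)} T_{ij}\nabla^i X^j$ into the boundary flux $\int_{\partial B(p,s)} T_{ij}\nu^i X^j$, and the coarea formula then produces a differential identity for the scaled energy $s^{4-n}\int_{B(p,s)}(|F|^2 + |\nabla\Phi|^2)$.

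On flat $\R^n$ one has $\nabla^i X^j = \delta^{ij}$, so $T_{ij}\nabla^i X^j = \tfrac{4-n}{4}|F|^2 + \tfrac{2-n}{2}|\nabla\Phi|^2$, while the boundary flux contributes nonnegative terms proportional to $|i_{\hat X}F|^2$ and $|\nabla_{\hat X}\Phi|^2$; together these give the classical sharp Euclidean YMH monotonicity. On an AC $G_2$-manifold (or Calabi--Yau 3-fold) the same identity holds up to an error term
\[
R_s = \int_{B(p,s)} T_{ij}\bigl(\nabla^i X^j - \delta^{ij}\bigr),
\]
where $\nabla X - \mathrm{Id}$ measures the failure of $X$ to be conformal Killing. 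The key geometric input is the AC hypothesis (Definition \ref{Def:ACG2}): the Riemann tensor satisfies $|\mathrm{Rm}_g| \leq C r(p)^{-2}$ on the ball $B(p, r(p)/C')$ once $C'$ is large enough, which by a standard Jacobi-field computation in geodesic normal coordinates gives a pointwise error bound for $\nabla X - \mathrm{Id}$ in terms of $\rho/r(p)$. Absorbing $R_s$ into the differential identity produces
\[
\frac{d}{ds}\!\left(s^{4-n}\!\int_{B(p,s)}\! (|F|^2 + |\nabla\Phi|^2)\right) \geq -\frac{C}{r(p)}\cdot s^{4-n}\!\int_{B(p,s)}\!(|F|^2 + |\nabla\Phi|^2),
\]
and multiplying by the integrating factor $e^{Cs/r(p)}$ converts this into the stated almost-monotonicity.

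The main technical obstacle is the bookkeeping in $R_s$: commuting covariant derivatives against $\partial_\rho$ generates several curvature-coupled terms (the Hessian of $\rho$, Christoffel corrections, and contributions from $[\nabla_A,d]$-commutators), and each must be pointwise bounded by $|\mathrm{Rm}|$ times the relevant components of the YMH energy density, so that they aggregate into the single factor $s/r(p)$ in the exponential. Once the differential inequality is in hand, the stated consequence follows by comparing the monotone quantity at scale $s$ with its value at scale $r(p)/3$: on the range $s \leq r(p)/3$ the exponential factor is at most $e^{C/3}$, so the scaled energy at scale $s$ is dominated by its value at scale $r(p)/3$ up to a universal constant, which is absorbed into the $C$ of the statement.
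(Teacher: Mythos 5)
Your proposal is correct and is essentially the standard stress-energy/Price monotonicity argument that underlies the cited result \cite[Thm. 4]{UhlenbeckSmith}; the paper itself offers no proof of this proposition and simply quotes that theorem. The only point worth flagging is that your derivation naturally yields almost-monotonicity of $s^{4-n}\int_{B(p,s)}(|F|^2+|\nabla\Phi|^2)$ rather than of $s^{4-n}\int_{B(p,s)}|F|^2$ alone as stated, but for $G_2$/Calabi--Yau monopoles these quantities are uniformly comparable by Lemma \ref{lem:Liealgebra} (since $|\nabla\Phi|^2\leq 3|F|^2$), so the ``in particular'' conclusion follows either way.
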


	We need an effective bound in terms of the mass $m$ and monopole class $\beta\in H^2(\Sigma)$. This follows from a \emph{calibration argument}.  Recall that a $G_2$-monopole $(A,\Phi)$ has \emph{intermediate energy}
	\[
	E^\psi= \int_M |\nabla \Phi|^2= \int_M |F\wedge \psi|^2 = 3 \int_M |F^7|^2= 2\pi m \langle \beta\cup [\psi]|_\Sigma, \Sigma\rangle,
	\]
	where $F=F^7+ F^{14}$ is the type decomposition of the curvature according to $\Lambda^2=\Lambda^2_7\oplus \Lambda^2_{14}$.
	On the other hand, the \emph{second Chern form}\footnote{Here the trace for elements of $su(2)$ is zero, so $\Tr(F\wedge F)$ represents the second Chern class up to constant. In the $SO(3)$-variant, it represents the first Pontrjagin class up to constant.} is related to the energy via the formula
	\begin{equation}\label{eqn:Chernform}
	-\frac{1}{2}\Tr(F\wedge F)\wedge \phi =2 |F^7|^2- |F^{14}|^2.
	\end{equation}
	In the Calabi-Yau monopole analogue,
	\[
	E^\Omega=  \int_M |\nabla \Phi|^2= \int_M |F\wedge \text{Re}\Omega |^2 = 2 \int_M |F^6|^2= 2\pi m \langle \beta\cup [\text{Re}\Omega]|_\Sigma, [\Sigma]\rangle,
	\]
	where $F=F^6+F^8$ is the type decomposition of the curvature according to $\Lambda^2=\Lambda^2_6\oplus \Lambda^2_1\oplus \Lambda^2_8$ (here the 1-dimensional component vanishes by $F\wedge \omega^2=0$), and the second Chern form is related to the energy via the formula
	\[
	-\frac{1}{2}\Tr(F\wedge F)\wedge \omega= |F^6|^2- |F^8|^2.
	\]
	We will always assume a uniform bound on the second Chern class of the underlying $SU(2)$-bundle $P$.

	\begin{lem}\label{lem:monotonicity2}
		For $r\geq r_0$ sufficiently large, we have a uniform bound 
		\[
		\int_{r(x)\leq r } |F|^2  dvol(x)  \leq \begin{cases}
			E^\psi+ Cr^{n-4}, \quad  & \text{for $G_2$-monopoles},
			\\
 E^\Omega + Cr^{n-4}, \quad& \text{for Calabi-Yau monopoles},
		\end{cases}   
		\]
		where the constant $C$ depends only on the ambient manifold, the monopole class $\beta$, and the second Chern class of the bundle $P$, but not on the mass $m$. 
	\end{lem}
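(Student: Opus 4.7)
The plan is a calibration / Chern-Weil argument. I focus on the $G_2$-case; the Calabi-Yau case is entirely analogous, with $\phi$ replaced by $\omega$ and the type decomposition $F=F^7+F^{14}$ replaced by $F=F^6+F^8$ (the $\Lambda^2_1$-component of $F$ is absent by $F\wedge\omega^2=0$). From $|F|^2=|F^7|^2+|F^{14}|^2$ and the second Chern form identity (\ref{eqn:Chernform}), one extracts the pointwise identity of top-degree forms
\[
|F|^2\,dvol \;=\; 3|F^7|^2\,dvol + \tfrac{1}{2}\Tr(F\wedge F)\wedge\phi.
\]
Integrating over $\{r(x)\leq r\}$ and using $3\int_M|F^7|^2=E^\psi$ from (\ref{eqn:intermediateenergy2}) reduces the lemma to the uniform bound
\[
\Bigl|\int_{r(x)\leq r}\Tr(F\wedge F)\wedge\phi\Bigr| \;\leq\; Cr^{n-4}\qquad(r\geq r_0)
\]
with $C$ independent of the mass $m$.

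To control this Chern-Weil integral, I would fix a smooth reference connection $A_{\text{ref}}$ on $P$ whose restriction to the conical end is pulled back from an abelian $U(1)$-connection on $\Sigma$ representing the fixed monopole class $\beta$, extended smoothly to the compact core of $M$. Since the second Chern class of $P$ and the class $\beta$ are both fixed, such an $A_{\text{ref}}$ is a fixed object with $|F_{\text{ref}}|_{g_C}=O(r^{-2})$ on the end and constants depending only on $(M,\phi)$ and $\beta$. The Chern-Weil transgression
\[
\Tr(F\wedge F) - \Tr(F_{\text{ref}}\wedge F_{\text{ref}}) \;=\; d\Upsilon,\qquad \Upsilon = \Tr\bigl(a\wedge(F+F_{\text{ref}})\bigr) + O(|a|^3),\quad a:=A-A_{\text{ref}},
\]
combined with $d\phi=0$ and Stokes on $\{r(x)\leq r\}$, gives
\[
\int_{r(x)\leq r}\Tr(F\wedge F)\wedge\phi \;=\; \int_{r(x)\leq r}\Tr(F_{\text{ref}}\wedge F_{\text{ref}})\wedge\phi + \int_{r(x)=r}\Upsilon\wedge\phi.
\]
The bulk reference term is bounded pointwise by $C|F_{\text{ref}}|^2|\phi|$, which integrates to at most $C(1+r^{n-4})$ thanks to the $O(r^{-2})$ decay of $F_{\text{ref}}$.

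The main obstacle is to bound the boundary transgression $\int_{r(x)=r}\Upsilon\wedge\phi$ by $Cr^{n-4}$ uniformly in $m$. Invoking the pseudo-HYM asymptote (item 3 of Section \ref{sect:Oliveira}), for $r$ sufficiently large one may work in a gauge where $a = A - A_{\text{ref}}$ is uniformly bounded as a 1-form on the link $\{r(x)=r\}\simeq\Sigma$ with respect to $g_\Sigma$. The cone scaling then yields $|a|_{g_C}=O(r^{-1})$ and $|F|_{g_C},|F_{\text{ref}}|_{g_C}=O(r^{-2})$, hence $|\Upsilon|_{g_C}=O(r^{-3})$; together with $|\phi|$ bounded in the induced slice metric and the slice having $(n-1)$-volume $O(r^{n-1})$, this yields the desired bound. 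Making the gauge choice and the $L^\infty$-bound on $a$ uniform across the sequence is the delicate point; it should follow from Uhlenbeck compactness applied to the restrictions of $A_i$ to large annular conical regions, together with the compactness of the moduli space of pseudo-HYM connections on $\Sigma$ of class $\beta$.
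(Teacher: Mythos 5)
Your reduction to bounding $\int_{r(x)\leq r}\Tr(F\wedge F)\wedge\phi$ via (\ref{eqn:Chernform}) and the intermediate energy formula is exactly the paper's starting point, but the way you then control the Chern--Weil integral has a genuine gap, and it sits precisely at the step you flag as ``delicate''. The hypotheses in the Main Setting are explicitly \emph{qualitative}: $|F_{A_i}|=O(r(x)^{-2})$ and the convergence to the pseudo-HYM asymptote hold with constants and threshold radii that may depend on $i$ (equivalently on $m_i$). So the pointwise bounds $|F|_{g_C}=O(r^{-2})$ and $|a|_{g_C}=O(r^{-1})$ on the slice $\{r(x)=r\}$ are only available for $r\geq r_0(i)$ with $r_0(i)$ possibly tending to infinity, whereas the lemma requires a single $r_0$ and a constant $C$ independent of $m$. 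Your proposed repair --- Uhlenbeck compactness on large annuli plus compactness of the pseudo-HYM moduli --- is circular: to apply Uhlenbeck gauge fixing or $\epsilon$-regularity uniformly on an annulus $\{r\lesssim r(x)\lesssim 2r\}$ you need a uniform $L^2$ curvature bound there, which is exactly what the lemma is trying to establish (this is why Cor.~\ref{cor:monotonicity} is stated only \emph{after} Lemma~\ref{lem:monotonicity2}). Worse, nothing proved at this stage prevents the curvature concentration set from crossing the slice $\{r(x)=r\}$ for some $i$, in which case $|F|\sim m^2$ there and the boundary transgression $\int_{r(x)=r}\Upsilon\wedge\phi$ is not $O(r^{n-4})$ uniformly in $m$.

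The paper's proof is designed to sidestep exactly this. It writes $\phi=d\tau$ outside a compact set with $\tau\approx\frac{1}{3}r\iota_{\partial_r}\phi_C$, so that Stokes expresses the excess $I(r)-E^\psi-\tilde C$ (with $\tilde C$ a fixed characteristic-number pairing against a compactly supported class) as a boundary integral of $\frac12\Tr(F\wedge F)\wedge\tau$; a pointwise \emph{calibration inequality} then dominates this boundary term by $\frac{r}{3}I'(r)(1+Cr^{\nu_0})$, with no gauge choice and no pointwise control of $F$ needed. The resulting differential inequality is integrated by ODE comparison against the asymptotic $\limsup_{r\to\infty}r^{-3}I(r)\leq C(\beta)$, which is used only \emph{in the limit} $r\to\infty$ for each fixed monopole, so the non-uniform rate of convergence to the pseudo-HYM asymptote is harmless. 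If you want to keep your transgression framework, you would similarly have to replace the pointwise estimate of the boundary term by something that only uses $I'(r)$ or averaged (good-slice) information and then close a differential inequality --- at which point you have essentially reconstructed the paper's argument.
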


	\begin{proof}
	We focus on the AC $G_2$-manifolds with decay rate $\nu_0$ (\cf Def. \ref{Def:ACG2} for the notations).
There is an explicit primitive for the conical 3-form $\phi_C$:
	\[
	d( r\iota_{\partial_r} \phi_C)= \mathcal{L}_{r\partial_r} \phi_C= 3\phi_C.
	\]
	By the Poincar\'e lemma on the conical end, we can find a 2-form $\tau$ supported in the conical end region,  such that $d\tau=\phi$ on the complement of a fixed compact subset  $\{ r(x)<r_0 \}\subset M$, and $\tau-\frac{1}{3} r\iota_{\partial_r} \phi_C = O(r^{\nu_0+1})$ .

		Let $I(r)= 	\int_{r(x)\leq r } |F|^2  dvol(x) $, then by the coarea formula and the AC condition, for $r\geq r_0$ large enough, we have
		\[
		I'(r)= \int_{r(x)=r}   \frac{ |F|^2}{ |\nabla r|}  d\text{Area}_g\geq  \int_{r(x)=r} |F|^2 (1-Cr^{\nu_0})  d\text{Area}_{g_C}.
				\]
		By taking $r_0$ large enough, we may assume $Cr^{\nu_0}<\frac{1}{2}$.
		On the other hand, 
		\[
	\tilde{C}=	\int_{r(x)\leq r} 	\frac{1}{2}\Tr(F\wedge F)\wedge (\phi- d\tau )
		\]
		is the pairing between the second Chern class with a fixed class in $H^3_c(M)$, so is uniformly bounded by assumption, and independent of $r\geq r_0$. By the Stokes formula, 
	\[
	\begin{split}
			\int_{r(x)=r} 	\frac{1}{2}\Tr(F\wedge F)\wedge\tau
			=&  \int_{r(x)\leq r} \frac{1}{2}\Tr(F\wedge F)\wedge d\tau
			\\
			=&  \int_{ r(x)\leq r} 	\frac{1}{2}\Tr(F\wedge F)\wedge \phi -\tilde{C}
			\\
			=&  I(r)- 3 \int_{r(x)\leq r} |F^7|^2 -\tilde{C}\geq I(r)- E^\psi- \tilde{C}.
	\end{split}
	\]
		where the final line uses the Chern form formula (\ref{eqn:Chernform}) and the intermediate energy formula above. We now apply the pointwise \emph{calibration type inequality} on the link $\Sigma$,
		\[
		\frac{1}{2}\Tr(F\wedge F)\wedge  \iota_{\partial_r} \phi_C \leq |F|^2 d\text{Area}_{g_C},
		\]
		to deduce for $r\geq r_0$ the following differential inequality:
		\begin{equation}
		\begin{split}
		I(r)- E^\psi-\tilde{C}\leq \frac{r}{3} \int_{r(x)=r} |F|^2 (1+ Cr^{\nu_0}) d\text{Area}_{g_C} 
		\leq  \frac{r}{3} I'(r) (1+Cr^{\nu_0}).
		\end{split}
		\end{equation}
		Thus whenever $I(r)> E^\psi+\tilde{C}$, we have
		\[
		\frac{d}{dr} \log (I(r)- E^\psi-\tilde{C}) \geq \frac{3}{r} - Cr^{-1+\nu_0}.
		\]
		Suppose $I(r_1)\geq E^\psi+ \tilde{C}+ A r_1^3$ for some $r_1\geq r_0$, and $A>0$. Then by ODE comparison, 
		\[
		I(r)- E^\psi-\tilde{C} \geq A r^3 \exp (- C\int_{r_1}^r t^{-1+\nu_0} dt    )\geq A r^3 \exp(-C r_1^{\nu_0}), \quad \forall r\geq r_1,
		\]
		and in particular the LHS is always positive for $r\geq r_1$. In contrast, the asymptotic geometry of $G_2$-monopoles is modelled on an abelian monopole in the class $\beta\in H^2(\Sigma)$, so 
		\[
		\limsup_{r\to +\infty}  r^{-3} I(r) \leq C(\beta)
		\]
		depending only on the monopole class on the nearly K\"ahler cross section, but not on the mass $m$. Combining these two bounds shows that $A$ is a priori bounded, so we conclude 
		\[
		I(r)-E^\psi-\tilde{C}\leq C r^3,\quad \forall r\geq r_0.
		\]
The $G_2$ case of the Lemma follows by absorbing the constant $\tilde{C}$ into $C$. The Calabi-Yau case is completely analogous.
	\end{proof}

	Combining Lemma \ref{lem:monotonicity2} and Prop. \ref{prop:monotonicity1}, and the intermediate energy formula, we arrive at the following:
	
	\begin{cor}\label{cor:monotonicity}
For any $p\in M$ and radius $s\lesssim r(p)$, we have the Morrey norm estimate:
		\[
		s^{4-n}\int_{B(p, s)} ( |F|^2 + |\nabla \Phi|^2)  dvol\leq C(m r(p)^{4-n} +1).
		\]
		The constant depends only on the ambient geometry, the monopole class $\beta$, and the second Chern class of $P$.
	\end{cor}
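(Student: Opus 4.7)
The plan is to combine the three ingredients directly: the monotonicity formula of Prop. \ref{prop:monotonicity1} reduces the Morrey-type bound on small balls to an integrated bound on the large ball $B(p, r(p)/3)$, and then Lem. \ref{lem:monotonicity2} together with the intermediate energy formula controls the large-ball integral in terms of $m$ and $r(p)$.

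First I would apply Prop. \ref{prop:monotonicity1} to a ball $B(p,s)$ with $s \leq r(p)/3$, which yields
\[
s^{4-n}\int_{B(p,s)}(|F|^2+|\nabla\Phi|^2)\,dvol \leq C\,r(p)^{4-n}\int_{B(p,r(p)/3)}(|F|^2+|\nabla\Phi|^2)\,dvol.
\]
Since $r(x) \geq 1$ on $M$ and $r$ is Lipschitz of constant $\simeq 1$ on the AC end (and bounded on the compact core), there is a uniform constant such that $B(p, r(p)/3) \subset \{r(x) \leq C\,r(p)\}$. The case $s \lesssim r(p)$ but $s > r(p)/3$ is covered by absorbing $s \simeq r(p)$ into the constant and using monotonicity up to a fixed multiplicative factor.

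Next I would apply Lem. \ref{lem:monotonicity2} on $\{r(x) \leq C r(p)\}$, which gives
\[
\int_{\{r(x) \leq C r(p)\}}|F|^2\,dvol \leq E^{\psi} + C\,r(p)^{n-4},
\]
and, in the Calabi-Yau case, the analogous bound with $E^{\Omega}$. For the $|\nabla\Phi|^2$ term, the intermediate energy formulas (\ref{eqn:intermediateenergy2}) and (\ref{eqn:intermediateenergyCY2}) give $\int_M |\nabla\Phi|^2 = E^{\psi}$ (resp.\ $E^{\Omega}$) globally, so this is controlled by the same constant. Since $E^{\psi} = 2\pi m \langle \beta \cup \psi|_\Sigma, [\Sigma]\rangle \leq C\,m$ with $C$ depending only on the monopole class $\beta$ and the ambient geometry, we obtain
\[
\int_{B(p,r(p)/3)}(|F|^2+|\nabla\Phi|^2)\,dvol \leq C\,m + C\,r(p)^{n-4}.
\]
Multiplying by $r(p)^{4-n}$ yields the bound $C(m\,r(p)^{4-n} + 1)$, as desired.

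There is no serious obstacle here: the argument is a bookkeeping exercise, combining one local statement (Prop. \ref{prop:monotonicity1}) with one global one (Lem. \ref{lem:monotonicity2} and the Bogomolny formula). The only mild subtlety is ensuring that the constant $C'$ in "$s \lesssim r(p)$" is chosen small enough that the monotonicity formula applies with bounded exponential factor $e^{Cs/r(p)}$, and that the large ball $B(p, r(p)/3)$ is contained in a region where Lem. \ref{lem:monotonicity2} is applicable — both are ensured by the AC structure and the fact that $r \geq 1$ globally.
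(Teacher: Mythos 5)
Your proof is correct and is exactly the paper's argument: the paper gives no separate proof of this corollary, stating only that it follows by combining Prop.~\ref{prop:monotonicity1}, Lemma~\ref{lem:monotonicity2}, and the intermediate energy formula, which is precisely the bookkeeping you carry out.
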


We will  frequently encounter the following Green integrals, which can be bounded by applying the monotonicity formula estimate to the dyadic annuli.

\begin{cor}\label{cor:monotonicity2}
For any $p\in M$, we have the bound
\[
\int_M   \frac{ |F|^2+ |\nabla \Phi|^2}{  \max( d(x,p), s)^{n-2}    } dvol(x)  \leq 
\begin{cases}
C(m r(p)^{4-n}+1) s^{-2}  ,\quad & 0<s\lesssim r(p),
\\
C(ms^{4-n}+1) s^{-2},\quad & s\gtrsim r(p).
\end{cases}
\]
\end{cor}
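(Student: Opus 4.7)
The plan is a standard dyadic decomposition combined with the two integral bounds that are already in hand: the local Morrey bound of Corollary \ref{cor:monotonicity}, which is sharp for balls of radius $\lesssim r(p)$, and the global ``total energy on $\{r(x)\leq r\}$'' bound of Lemma \ref{lem:monotonicity2}, which has to take over once the balls get large. I would split the integration domain as
\[
M = B(p,s)\ \cup\ \bigcup_{k\geq 1} A_k,\qquad A_k := B(p,2^k s)\setminus B(p,2^{k-1}s),
\]
so that on $B(p,s)$ the denominator equals $s^{n-2}$ and on $A_k$ it is bounded below by $(2^{k-1}s)^{n-2}$. The problem reduces to estimating $\int_{B(p,2^k s)}(|F|^2+|\nabla\Phi|^2)$ and summing a geometric series.

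In Case 1 ($0<s\lesssim r(p)$), let $k_0$ be the largest integer with $2^{k_0}s\leq r(p)/C'$. For $0\leq k\leq k_0$ the Morrey estimate of Corollary \ref{cor:monotonicity} applies directly, giving
\[
(2^k s)^{4-n}\int_{B(p,2^k s)}(|F|^2+|\nabla\Phi|^2)\leq C(m\, r(p)^{4-n}+1),
\]
so the $k$-th shell contributes $\lesssim 2^{-2k}s^{-2}(m\,r(p)^{4-n}+1)$, which sums to the asserted bound. For $k>k_0$, the ball $B(p,2^ks)$ lies inside $\{r(x)\leq C 2^k s\}$, so Lemma \ref{lem:monotonicity2} combined with the global intermediate-energy bound $\int_M|\nabla\Phi|^2=E^\psi\lesssim m$ yields $\int_{B(p,2^ks)}(|F|^2+|\nabla\Phi|^2)\lesssim m+(2^ks)^{n-4}$. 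Thus the $k$-th shell contributes $\lesssim m(2^ks)^{-(n-2)}+(2^ks)^{-2}$; summing over $k>k_0$ is controlled by the first term $2^{k_0}s\sim r(p)$, producing $\lesssim m\, r(p)^{-(n-2)}+r(p)^{-2}$. Since $s\lesssim r(p)$ gives $r(p)^{-2}\leq s^{-2}$ and $m\,r(p)^{-(n-2)}= m\,r(p)^{4-n}r(p)^{-2}\leq s^{-2}\,m\,r(p)^{4-n}$, this too is absorbed into the target bound.

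In Case 2 ($s\gtrsim r(p)$) there is no ``small-shell'' regime: every $B(p,2^ks)$ lies inside $\{r(x)\leq C 2^k s\}$, and Lemma \ref{lem:monotonicity2} (together with $\int|\nabla\Phi|^2\lesssim m$) gives $\int_{B(p,2^ks)}(|F|^2+|\nabla\Phi|^2)\lesssim m+(2^ks)^{n-4}$ for every $k\geq 0$. The contribution of $B(p,s)$ is then $\lesssim s^{-(n-2)}(m+s^{n-4})=s^{-2}(ms^{4-n}+1)$, and the shells contribute a geometric series dominated by this same quantity.

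The main obstacle, if any, is the bookkeeping at the transition $2^{k_0}s\sim r(p)$ in Case 1 — one must check that the tail estimate from Lemma \ref{lem:monotonicity2} actually matches the leading term from Corollary \ref{cor:monotonicity} — but this is nothing more than the two elementary inequalities $r(p)^{-2}\leq s^{-2}$ and $m\,r(p)^{-(n-2)}\leq s^{-2}\,m\,r(p)^{4-n}$ noted above. No new analytic input is required beyond what has already been established.
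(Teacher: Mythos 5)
Your proof is correct and is exactly the argument the paper intends: the corollary is stated without a written proof, only the remark that it follows "by applying the monotonicity formula estimate to the dyadic annuli," which is precisely your decomposition into $B(p,s)$ and the shells $A_k$, using Corollary \ref{cor:monotonicity} on scales up to $\sim r(p)$ and Lemma \ref{lem:monotonicity2} together with $\int_M|\nabla\Phi|^2\leq Cm$ beyond that. The bookkeeping at the transition scale is handled correctly.
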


	\subsection{$\epsilon$-regularity for Yang-Mills-Higgs equation}

	The Yang-Mills-Higgs equation satisfies a well known $\epsilon$-regularity theorem.

	\begin{prop}\label{prop:epsilonregularity}
		\cite[Prop. 3.5]{Oliveira1} Let $(A,\Phi)$ be a solution of the Yang-Mills-Higgs equation on a geodesic ball $B(p,s)$ which is $C^3$-uniformly equivalent to the Euclidean ball. There is a constant $\epsilon_1>0$ depending only on the ambient geometry, such that if
		\[
		s^{-(n-4)} \int_{B(p,s)} |F|^2+ |\nabla \Phi|^2 \leq \epsilon_1,
		\]
		then  on  the smaller ball $B(p,s/2)$,
		\[
		\sup_{B(p,s/2)}   (s^2|F| + s^2 |\nabla \Phi|+ s|\nabla F| ) \leq Cs^{-(n-4)} \int_{B(p,s)} |F|^2+ |\nabla \Phi|^2.
		\]
	\end{prop}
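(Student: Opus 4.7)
The plan is to combine Bochner--Weitzenb\"ock inequalities with small-energy Moser iteration. I begin by rescaling $g\to s^{-2}g$ to reduce to $s=1$: under this scaling the quantities $|F|^2 dvol$ and $|\nabla\Phi|^2 dvol$ each pick up a factor $s^{n-4}$, so both the hypothesis $\int_{B(p,s)} u \leq \epsilon_1 s^{n-4}$ and the conclusion $\sup_{B(p,s/2)}(s^2|F|+s^2|\nabla \Phi|+s|\nabla F|) \leq C\int_{B(p,s)} u$ (with $u:=|F|^2+|\nabla\Phi|^2$) become scale-invariant, and the rescaled metric remains $C^3$-close to Euclidean on $B(p,1)$.

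The next step is to derive a pointwise subsolution inequality for $u$. Using $\Delta\Phi=0$ and $d_A^* F=[\nabla\Phi,\Phi]$ from Lemma \ref{lem:YMH}, the Bianchi identity $d_A F=0$, and the key identity $\langle F,[[F,\Phi],\Phi]\rangle=-|[F,\Phi]|^2$ obtained from Jacobi and ad-invariance of the Killing form, a standard Weitzenb\"ock computation gives
\[
\frac{1}{2}\Delta|F|^2+|\nabla_A F|^2+|[F,\Phi]|^2 \leq C\bigl(|F|^3+|F||\nabla\Phi|^2+|F|^2\bigr),
\]
\[
\frac{1}{2}\Delta|\nabla\Phi|^2+|\nabla_A^2\Phi|^2 \leq C\bigl(|F||\nabla\Phi|^2+|\nabla\Phi|^2\bigr),
\]
where the $|F|^2$ and $|\nabla\Phi|^2$ terms come from ambient Riemann/Ricci curvature. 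Summing, $u$ satisfies the master inequality $\Delta u \leq C(u^{3/2}+u)$.

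Now I run Moser iteration on $u$. Testing against $u^{q-1}\eta^2$ for nested cutoffs $\eta$, integrating by parts, and applying the Sobolev inequality to $u^{q/2}\eta$ produces the usual reverse-H\"older chain, provided the super-linear term $\int u^{q+1/2}\eta^2$ can be absorbed. To control it, I would use the scale-invariant Morrey estimate $r^{4-n}\int_{B(x,r)} u \leq C\epsilon_1$ valid on all subballs $B(x,r)\subset B(p,1)$, which follows from the monotonicity formula (Prop.~\ref{prop:monotonicity1}) combined with the small energy hypothesis. H\"older then bounds $\int u^{q+1/2}\eta^2$ by a small constant (proportional to $\epsilon_1^{1/2}$) times $\|u^{q/2}\eta\|_{L^{2n/(n-2)}}^2$, which is absorbed into the Sobolev term on the left-hand side once $\epsilon_1$ is small enough. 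Dyadic iteration then yields $\sup_{B(p,1/2)} u \leq C\int_{B(p,1)} u$, proving the $L^\infty$ bounds on $|F|$ and $|\nabla\Phi|$.

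Finally, for the bound on $|\nabla F|$, once $|F|, |\nabla\Phi|$ are bounded in $L^\infty(B(p,3/4))$, the Bochner inequality for $|\nabla_A F|^2$ reduces to a linear elliptic differential inequality with bounded coefficients, and a further Moser iteration (or alternatively standard $W^{2,p}$ interior estimates for $d_A^* F$ in Uhlenbeck's Coulomb gauge) delivers the desired $\sup_{B(p,1/2)}|\nabla F|$ estimate. The main technical obstacle throughout is the critical cubic nonlinearity $|F|^3$, which sits exactly at the borderline of Moser iteration in dimensions $n\geq 5$; absorbing it requires both the smallness of $\epsilon_1$ and the Morrey bound from monotonicity, and the argument fails without either.
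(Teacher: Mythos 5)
The paper does not prove this proposition; it is quoted verbatim from \cite[Prop.\ 3.5]{Oliveira1}, so there is no in-paper argument to compare against. Your sketch is the standard route to such $\epsilon$-regularity statements (scale to $s=1$, Bochner--Weitzenb\"ock subsolution inequality $\Lap u\leq C(u^{3/2}+u)$ for $u=|F|^2+|\nabla\Phi|^2$, small-energy Moser iteration, then linear elliptic estimates for $\nabla F$), and the skeleton is sound; note only that the rescaling must be the Yang--Mills--Higgs scaling $(A,\Phi)\mapsto(\lambda_s^*A,\,s\lambda_s^*\Phi)$, since otherwise $|F|^2\,dvol$ and $|\nabla\Phi|^2\,dvol$ scale differently. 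The one step you gloss over is the absorption of the critical term: plain H\"older gives $\int u^{q+1/2}\eta^2\leq \norm{u^{1/2}}_{L^n}\norm{u^{q/2}\eta}_{L^{2n/(n-2)}}^2$, and $\norm{u}_{L^{n/2}}$ is \emph{not} controlled by the small-energy hypothesis (only $\int u$ is small). What actually closes the iteration is the Morrey bound you cite from the monotonicity formula: $\sup_{x,r} r^{4-n}\int_{B(x,r)}u\leq C\epsilon_1$ implies $\sup_{x,r} r^{2-n}\int_{B(x,r)}u^{1/2}\leq C\epsilon_1^{1/2}$ by Cauchy--Schwarz, and then the Morrey--Sobolev (Fefferman--Phong type) inequality gives $\int u^{1/2}v^2\leq C\epsilon_1^{1/2}\int|\nabla v|^2$ for $v=u^{q/2}\eta$, which is the term absorbed into the Sobolev left-hand side. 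With that substitution (or with the alternative Heinz rescaling-at-the-maximum trick, which avoids Moser iteration entirely), your argument is complete and matches how this result is proved in the cited literature.
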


	\subsection{Higgs field estimate}

	Let $(A,\Phi)$ be a $G_2$-monopole (resp. Calabi-Yau monopole) on an AC $G_2$-manifold (resp. Calabi-Yau 3-fold) as in the Main Setting.
	The Yang-Mills-Higgs equation implies the Poisson equation 
		\begin{equation}\label{eqn:LapPhisquare}
		\Lap (|\Phi|^2-m^2) = -2|\nabla \Phi|^2.
	\end{equation}
	We now quantify how  $|\Phi|$ clusters around the mass $m$.

\begin{lem}\label{lem:smallPhi}
The following estimates hold for the Higgs field $\Phi$:
\begin{enumerate}
	\item  There is an upper bound $|\Phi|\leq m$. 
	
	\item The magnitude for $|\Phi|$ satisfies
	\begin{equation*}
		||\Phi|^2(p)-m^2| \leq    C\int_M \frac{1}{ d(x,p)^{n-2}  } |\nabla \Phi|^2dvol(x) ,\quad \forall p\in M.
	\end{equation*}
	
	\item Moreover, for balls $B(p,s)$ with radius $s<r(p)/3$, we have the $L^1$-gradient estimate
	\begin{equation}\label{eqn:smallPhi2}
		\int_{B(p,s)} | |\Phi|^2-m^2|+   s|\nabla |\Phi|^2| 
		\leq C s^n\int_{M} \frac{ |\nabla \Phi|^2   }{ \max(d(x,p), s)^{n-2}  } dvol(x) .
	\end{equation}
	and the weak $L^1$ Hessian estimate
	\[
	\norm{ \nabla^2( |\Phi|^2) }_{L^{1,\infty}(B(p,s)) } \leq C s^{n-2}\int_{M} \frac{ |\nabla \Phi|^2   }{ \max(d(x,p), s)^{n-2}  } dvol(x) .
	\]
\end{enumerate}

\end{lem}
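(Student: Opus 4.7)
I will work with the auxiliary function $v:=|\Phi|^2-m^2$, which by (\ref{eqn:LapPhisquare}) satisfies the Poisson equation $\Lap v=-2|\nabla\Phi|^2$ on $M$, together with the asymptotic vanishing $v\to 0$ as $r(x)\to\infty$ supplied by the finite mass statement in \cite{Oliveira1}. The main external input is the standard positive Green's function $G$ for $\Lap$ on the AC manifold $(M,g)$, satisfying the pointwise upper bounds $G(x,y)\leq C\,d(x,y)^{-(n-2)}$ and $|\nabla_y G(x,y)|\leq C\,d(x,y)^{-(n-1)}$; both are available for AC manifolds of dimension $n\geq 3$, which are nonparabolic with Euclidean-type Green's function behaviour near the pole and at infinity.

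Part (1) is immediate from the maximum principle: in the convention $\Lap=\nabla^*\nabla$, the identity $\Lap v=-2|\nabla\Phi|^2\leq 0$ exhibits $v$ as subharmonic, and the asymptotic condition $v\to 0$ at infinity forces $v\leq 0$ globally, i.e.\ $|\Phi|\leq m$. For Part (2), I would invoke the Green's function representation
\[
m^2-|\Phi|^2(p)=2\int_M G(x,p)\,|\nabla\Phi|^2(x)\,dvol(x),
\]
which is justified because the intermediate energy identity in \cite{Oliveira1} yields $|\nabla\Phi|^2\in L^1(M)$, and then apply the pointwise upper bound on $G$.

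For Part (3), the two $L^1$ bounds on $|v|$ and on $s|\nabla v|$ over $B(p,s)$ follow by differentiating the Green representation (for the gradient term) and swapping the order of integration via Fubini. The key auxiliary calculation is the elementary estimate
\[
\int_{B(p,s)}d(x,q)^{-(n-2)}\,dvol(q)\leq C\,s^n\max(d(x,p),s)^{-(n-2)},
\]
obtained by treating $x\in B(p,2s)$ and $x\notin B(p,2s)$ separately, together with the analogous estimate at exponent $n-1$ for the gradient; the extra factor of $s$ in the gradient bound is absorbed using $s\leq\max(d(x,p),s)$. The weak $L^1$ Hessian estimate then follows from the endpoint Calder\'on-Zygmund theory for the Poisson equation on almost-Euclidean balls:
\[
\norm{\nabla^2 v}_{L^{1,\infty}(B(p,s))}\leq C\bigl(\norm{|\nabla\Phi|^2}_{L^1(B(p,2s))}+s^{-2}\norm{v}_{L^1(B(p,2s))}\bigr),
\]
whose right-hand side is controlled by the Green integral via the previous two bounds.

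The principal subtlety to watch is uniformity of constants. The hypothesis $s<r(p)/3$ places us at scales where $B(p,2s)$ is $C^3$-uniformly close to a Euclidean ball after rescaling by $s$, so local Calder\'on-Zygmund and Green's function derivative estimates apply with constants depending only on the AC background geometry, while the global bound on $G$ supplies the nonlocal control. In particular all constants are independent of the mass $m$, as required in the Main Setting.
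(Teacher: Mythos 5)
Your proposal is correct and follows essentially the same route as the paper: maximum principle for item 1, Green representation (justified by $|\nabla\Phi|^2\in L^1$ and the decay of $|\Phi|^2-m^2$) for item 2, and Fubini plus Calder\'on--Zygmund/elliptic regularity on the almost-Euclidean ball for item 3. The only (harmless) deviation is that you obtain the $L^1$ gradient bound by differentiating the Green kernel, whereas the paper gets it together with the weak $L^1$ Hessian bound from interior elliptic estimates for the Poisson equation on $B(p,3s/2)$; both are standard and yield the same conclusion.
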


\begin{proof}
\textbf{Item 1}. Since $m=\lim_{r(x)\to +\infty}|\Phi|$, and $\Lap (|\Phi|^2-m^2)\leq 0$, the maximum principle  implies $|\Phi|\leq m$. 

\textbf{Item 2}. 	In the Poisson equation (\ref{eqn:LapPhisquare}),
the forcing term satisfies the $L^1$ bound $\int_M  |\nabla \Phi|^2\leq Cm$ by the intermediate energy formula (\ref{eqn:intermediateenergy2})(\ref{eqn:intermediateenergyCY2}), and asymptotically $|\Phi|^2-m^2\to 0$ as $r(x)\to +\infty$.
These imply the Green representation formula
\[
(|\Phi|^2-m^2)(p)= -2\int_M G(p,x) |\nabla \Phi|^2(x) dvol(x).
\]
On asymptotically conical manifolds with $n>2$, the Green function is uniformly equivalent to $d(p,x)^{2-n}$, hence item 2 follows.

	
\textbf{Item 3}. 
The ball $B(p,3s/2)$ is contained either in a fixed compact region, or in an annulus $\{    r\lesssim r(x)\lesssim 2r\}$.  Applying Fubini theorem to item 2, we see
\[
\begin{split}
& \int_{B(p,3s/2)}  ||\Phi|^2-m^2 |(y) dvol(y)  
\\
& \leq C\int_{B(p,3s/2)} \int_M \frac{1}{ d(x,y)^{n-2}  } |\nabla \Phi|^2(x) dvol(x) dvol(y)
\\
& \leq   C s^n\int_M \frac{1}{ \max(d(x,p), s)^{n-2}  } |\nabla \Phi|^2dvol(x) .
\end{split}
\]
Elliptic regularity for the Poisson equation (\ref{eqn:LapPhisquare}) on $B(p,3s/2)$ implies
\[
\begin{split}
&	s^2\norm{ \nabla^2( |\Phi|^2) }_{L^{1,\infty}(B(p,s)) } +s \int_{B(p,s)} |\nabla |\Phi|^2| 
	\\
\leq & C\int_{B(p,3s/2)} ||\Phi|^2-m^2 | + Cs^2 \int_{B(p,3s/2)} |\Lap |\Phi|^2|
\\
\leq & C s^n\int_M \frac{1}{ \max(d(x,p), s)^{n-2}  } |\nabla \Phi|^2dvol(x) .
\end{split}
\]
\end{proof}

	\begin{rmk}\label{rmk:smallPhi}
	In particular, we can cover the dyadic annuli by a bounded number of balls with radius comparable to $r(p)$, to deduce
		\[
\int_{ r/2\leq  r(x)\leq r} |m^2-|\Phi|^2 | + r  |   \nabla |\Phi|^2|     |  dvol(x) \leq Cm r^2 ,\quad \forall r>1.
	\]
	Summing over dyadic scales, we see
	\[
	\int_{   r(x)\leq r} |m^2-|\Phi|^2 | + r  |   \nabla |\Phi|^2|   dvol(x) \leq Cm r^2 .
	\]
Since $|m^2- |\Phi|^2| = (m-|\Phi|)(m+ |\Phi|)\geq m |m-|\Phi||$, this implies
	\[
	\sup_{r>1} 	r^{n-2}\dashint_{  r(x)\leq r} |m-|\Phi| | dvol(x) \leq C .
	\]
For fixed $m$, assuming $|F|$ has quadratic decay, then elliptic regularity recovers the result in \cite[Page 31-42]{Oliveira1} that $m-|\Phi|(x)=O(r(x)^{2-n})$. 
	\end{rmk}

	\subsubsection{An integral Hessian estimate}

	Here we prove an integral Hessian estimate, which is not directly used in the main theorems, but seems interesting in its own right.

\begin{lem}
	For any solution of the Yang-Mills-Higgs equation, we have
	\[
	\Lap( \nabla \Phi)= [ [\nabla \Phi,\Phi], \Phi   ]- 2*[*F\wedge \nabla \Phi].
	\]
	In particular,
	\[
	\frac{1}{2} \Lap|\nabla \Phi|^2 + |\nabla^2\Phi|^2= \langle \nabla \Phi, \Lap (\nabla \Phi)\rangle= -2\langle \nabla \Phi, *[*F\wedge \nabla \Phi]\rangle - |[\Phi, \nabla \Phi]|^2.
	\]
\end{lem}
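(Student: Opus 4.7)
The plan is to view $\nabla\Phi = d_A\Phi$ as an adjoint-valued $1$-form and combine the Hodge-Weitzenbock formula with the two Yang-Mills-Higgs equations. First, the equation $\Lap\Phi = 0$ reads $d_A^*(d_A\Phi) = 0$, so $\nabla\Phi$ is $d_A^*$-closed. Second, the standard curvature identity $d_A^2\sigma = [F,\sigma]$ for sections of $\mathfrak{g}_P$ immediately yields $d_A(\nabla\Phi) = [F,\Phi]$ as an adjoint-valued $2$-form.

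Next, I would invoke the Weitzenbock formula for the Hodge Laplacian on adjoint-valued $1$-forms, which reads $(d_A d_A^* + d_A^* d_A)\alpha = \nabla^*\nabla\alpha + \mathrm{Ric}(\alpha) + *[*F\wedge\alpha]$. The Ricci term vanishes identically because both $G_2$-manifolds and Calabi-Yau $3$-folds are Ricci-flat, which is the key structural input that makes the final formula free of any base curvature term. Applying the Weitzenbock to $\alpha = \nabla\Phi$ and using $d_A^*(\nabla\Phi) = 0$ reduces matters to the identity $\nabla^*\nabla(\nabla\Phi) = d_A^*[F,\Phi] - *[*F\wedge\nabla\Phi]$.

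Expanding $d_A^*[F,\Phi]$ by the product rule gives, in components, $-[\nabla^j F_{ji},\Phi] - [F_{ji},\nabla^j\Phi]$. The YMH equation $d_A^* F = [\nabla\Phi,\Phi]$ rewrites the first bracket as $-[[\Phi,\nabla_i\Phi],\Phi]$, which by antisymmetry of the Lie bracket in the first slot equals $[[\nabla_i\Phi,\Phi],\Phi]$. For the second bracket, the Hodge star identity $*(*F\wedge\alpha) = \iota_{\alpha^\sharp}F$ (which extends bilinearly to the bracket convention of the paper) yields $(*[*F\wedge\nabla\Phi])_i = [F_{ji},\nabla^j\Phi]$, so $d_A^*[F,\Phi] = [[\nabla\Phi,\Phi],\Phi] - *[*F\wedge\nabla\Phi]$. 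Combining with the Weitzenbock reduction above produces $\Lap(\nabla\Phi) = [[\nabla\Phi,\Phi],\Phi] - 2*[*F\wedge\nabla\Phi]$, the factor of $2$ arising from two matched contributions.

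For the second displayed equality, the Bochner identity in the paper's sign convention $\Lap = -\nabla_i\nabla_i$ gives $\langle\alpha,\Lap\alpha\rangle = \tfrac{1}{2}\Lap|\alpha|^2 + |\nabla\alpha|^2$, which is precisely the middle equality when applied to $\alpha=\nabla\Phi$. Pairing the first identity with $\nabla\Phi$ and using that $\mathrm{Ad}$-invariance of the inner product on $\mathfrak{g}$ makes $\mathrm{ad}_\Phi = [\Phi,\cdot]$ skew-adjoint, one finds $\langle\nabla\Phi,[[\nabla\Phi,\Phi],\Phi]\rangle = \langle\nabla\Phi,\mathrm{ad}_\Phi^2\nabla\Phi\rangle = -|[\Phi,\nabla\Phi]|^2$. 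The main bookkeeping hurdle will be keeping the signs of the Hodge-star and bundle-curvature terms in the Weitzenbock consistent so that they combine cleanly into the stated factor of $2$; once this is in place, together with the Ricci-flatness of the ambient $G_2$/Calabi-Yau manifold, the remainder of the derivation is routine Lie-algebraic and tensorial manipulation.
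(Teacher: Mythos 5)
Your argument is correct and is essentially the same route the paper takes: the paper's proof is a one-line citation to \cite[Lem.\ 2.8]{Oliveira1} invoking "the Yang-Mills-Higgs equation, the Bochner formula and Lie algebra identities," which is precisely the Weitzenb\"ock-plus-YMH-plus-$\mathrm{ad}$-invariance computation you carry out in full (with Ricci-flatness killing the base curvature term). Your version simply supplies the details the paper delegates to the reference, and the sign bookkeeping you flag does close up consistently with the stated factor of $2$.
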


\begin{proof}
	This follows from the Yang-Mills-Higgs equation, the Bochner formula and Lie algebra identities, see \cite[Lem. 2.8]{Oliveira1}.
\end{proof}

\begin{cor}\label{cor:LapgradPhi}
	For any solution of the Yang-Mills-Higgs equation,
	\begin{equation}\label{eqn:LapgradPhi}
		\Lap |\nabla \Phi| + \frac{ |\nabla^2 \Phi|^2   }{  n  |\nabla \Phi| }	+  \frac{ |[\Phi, \nabla \Phi]|^2 }{  |\nabla \Phi|   }\leq C|F||\nabla \Phi|.
	\end{equation}
\end{cor}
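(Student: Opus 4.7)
The plan is to combine the Bochner-type identity in the preceding lemma with a refined Kato inequality for $\nabla\Phi$. With the paper's convention $\Lap=\nabla^*\nabla$, I would first apply the elementary identity $\tfrac{1}{2}\Lap(f^2)=f\Lap f-|\nabla f|^2$ to $f=|\nabla\Phi|$ on the open set $\{|\nabla\Phi|>0\}$. Substituting this into the preceding lemma and dividing by $|\nabla\Phi|$ yields
\[
\Lap|\nabla\Phi|+\frac{|\nabla^2\Phi|^2-|\nabla|\nabla\Phi||^2}{|\nabla\Phi|}+\frac{|[\Phi,\nabla\Phi]|^2}{|\nabla\Phi|}=-\frac{2\langle\nabla\Phi,\,*[*F\wedge\nabla\Phi]\rangle}{|\nabla\Phi|},
\]
and Cauchy--Schwarz together with the boundedness of the adjoint bracket bounds the right-hand side by $C|F||\nabla\Phi|$. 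It remains to produce the coefficient $1/n$ in front of $|\nabla^2\Phi|^2$ on the left.

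That coefficient is the content of a refined Kato inequality applied to the $\mathfrak{g}_P$-valued 1-form $u:=\nabla\Phi$. The Yang--Mills--Higgs equation $\Lap\Phi=0$ from Lemma \ref{lem:YMH} says precisely that $u$ is coclosed, $d_A^*u=0$, while $u=d_A\Phi$ satisfies $d_Au=d_A^2\Phi=[F,\Phi]$, so $u$ is also closed up to a curvature correction. When the correction vanishes $u$ is $d_A$-harmonic, and the classical Calderbank--Gauduchon--Herzlich refined Kato inequality gives
\[
|\nabla|u||^2\le\tfrac{n-1}{n}|\nabla u|^2,
\]
equivalently $|\nabla^2\Phi|^2-|\nabla|\nabla\Phi||^2\ge\tfrac{1}{n}|\nabla^2\Phi|^2$. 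Inserting this into the displayed identity produces the claimed inequality on $\{|\nabla\Phi|>0\}$, and one would extend it across the zero set in the usual way by the $\epsilon$-regularisation $|\nabla\Phi|\leadsto\sqrt{|\nabla\Phi|^2+\epsilon^2}$ followed by $\epsilon\to 0$ in the distributional sense.

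The principal technical obstacle is that in the non-abelian case $d_Au=[F,\Phi]\neq 0$, so the refined Kato computation picks up cross terms between the symmetric trace-free and antisymmetric parts of $\nabla^2\Phi$ (the latter being exactly $\tfrac{1}{2}[F,\Phi]$). These cross terms admit a pointwise bound by $C|F||\Phi|\,|\nabla^2\Phi|$, and an elementary Young's inequality absorbs them either into the bound $C|F||\nabla\Phi|$ on the right-hand side or into the bracket term $|[\Phi,\nabla\Phi]|^2/|\nabla\Phi|$ already present on the left, without destroying the clean $1/n$ coefficient.
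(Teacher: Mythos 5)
Your route is structurally the paper's: both start from the Bochner inequality of the preceding lemma, pass from $\Lap|\nabla\Phi|^2$ to $\Lap|\nabla\Phi|$ via the regularisation $\sqrt{|\nabla\Phi|^2+\delta}$, and keep the $1/n$ coefficient through a refined Kato inequality with constant $\tfrac{n-1}{n}$. Since $|\nabla|\nabla\Phi||^2=\sum_i|\langle\nabla_i\nabla\Phi,\nabla\Phi/|\nabla\Phi|\rangle|^2$, your Kato step and the paper's are literally the same inequality; the difference is only in how it is justified. The paper argues pointwise, componentwise in $su(2)$, choosing $e_1=\nabla\Phi/|\nabla\Phi|$ and using the trace-free property of the Hessian coming from $\Lap\Phi=0$. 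You instead import the Calderbank--Gauduchon--Herzlich inequality for closed and coclosed $1$-forms and must then correct for the failure of closedness, $d_A(\nabla\Phi)=[F,\Phi]\neq 0$.

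That correction is where your argument has a genuine gap. The antisymmetric part of the covariant Hessian has norm comparable to $|[F,\Phi]|$, so after Young's inequality the refined Kato computation leaves a deficit of order $|[F,\Phi]|^2/|\nabla\Phi|=4|\Phi|^2|F^\perp|^2/|\nabla\Phi|$, and neither of the two homes you propose for it works. First, Lemma \ref{lem:Liealgebra} runs in the wrong direction: $3|[F,\Phi]|^2\geq |[\nabla\Phi,\Phi]|^2$, so the deficit \emph{dominates} the bracket term $|[\Phi,\nabla\Phi]|^2/|\nabla\Phi|$ already present on the left rather than being absorbed by it; concretely, $\nabla\Phi$ is determined by the $\Lambda^2_7$-component of $F$ through the monopole equation, so a perpendicular curvature component of type $\Lambda^2_{14}$ contributes fully to $[F,\Phi]$ but not at all to $[\nabla\Phi,\Phi]$. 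Second, absorbing the deficit into $C|F||\nabla\Phi|$ would require $|\Phi|^2|F^\perp|^2\leq C|F||\nabla\Phi|^2$ pointwise, which has no reason to hold when $|\Phi|$ is of order $m\gg 1$. So the cross terms cannot simply be discarded ``without destroying the clean $1/n$ coefficient''; you need to control the relevant column norms of the trace-free Hessian directly, which is what the paper's elementary argument with $e_1=\nabla\Phi/|\nabla\Phi|$ and the harmonicity of $\Phi$ is designed to do, rather than treating the curvature of $d_A$ as a perturbation of the harmonic-$1$-form case.
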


\begin{proof}
	The above Lemma implies
	\[
	\frac{1}{2} \Lap|\nabla \Phi|^2 + |\nabla^2\Phi|^2 +   |[\Phi, \nabla \Phi]|^2 \leq C|F| |\nabla \Phi|^2.
	\]
	By computing the Laplacian of $\sqrt{|\nabla \Phi|^2+ \delta}$ for $\delta>0$, we deduce
	\[
	\begin{split}
		& 	\Lap \sqrt{|\nabla \Phi|^2+ \delta} + \frac{1}{  \sqrt{|\nabla \Phi|^2+ \delta}  }(|\nabla^2 \Phi|^2- \sum_i | \langle \nabla_i \nabla\Phi,  \frac{ \nabla \Phi}{  \sqrt{|\nabla \Phi|^2+ \delta}   }\rangle |^2   )	+  \frac{ |[\Phi, \nabla \Phi]|^2 }{   \sqrt{|\nabla \Phi|^2+ \delta}    }
		\\
		\leq &  C|F| \frac{  |\nabla \Phi|^2}{   \sqrt{|\nabla \Phi|^2+ \delta}}\leq C|F| |\nabla \Phi|.
	\end{split}
	\]

	Now we claim the Kato inequality
	\[
	\sum_i  | \langle \nabla_i \nabla\Phi,  \frac{ \nabla \Phi}{  \sqrt{|\nabla \Phi|^2+ \delta}   }\rangle |^2  
	\leq \sum_i  | \langle \nabla_i \nabla\Phi,  \frac{ \nabla \Phi}{  |\nabla \Phi|   }\rangle |^2 \leq \frac{n-1}{n}|\nabla^2 \Phi|^2.
	\]
	Here $\Phi$ is valued in the adjoint $su(2)$, but we can separately treat the 3 components of $su(2)$, so the problem reduces to real valued $\Phi$. Without loss $e_1= \frac{ \nabla \Phi}{  |\nabla \Phi|   }$, and we observe 
	\[
	\sum_{i\neq 1}|\nabla_1 \nabla_i \Phi|^2 \leq \frac{1}{2} \sum_{i\neq j} |\nabla_i \nabla_j \Phi|^2,
	\]	
	while the harmonicity $\Lap \Phi=0$ implies by Cauchy-Schwarz that
	\[
	|\nabla_1\nabla_1 \Phi|^2 =|\sum_{i>1} \nabla_i\nabla_i \Phi|^2 \leq (n-1) \sum_{i>1}|\nabla_i \nabla_i \Phi|^2, 
	\]
	whence $  \frac{n}{n-1} |\nabla_1\nabla_1 \Phi|^2\leq \sum_1^n |\nabla_i \nabla_i \Phi|^2$. Summing up the two contributions,
	\[
	\sum_i  | \langle \nabla_i \nabla\Phi,  \frac{ \nabla \Phi}{  |\nabla \Phi|   }\rangle |^2=	|\nabla_1 \nabla \Phi|^2 \leq \frac{n-1}{n}  \sum_{i,j} |\nabla_i \nabla_j \Phi|^2 ,
	\]
	so the Kato inequality follows.

	It follows that 
	\[
	\Lap \sqrt{|\nabla \Phi|^2+ \delta} + \frac{ |\nabla^2 \Phi|^2   }{  n\sqrt{|\nabla \Phi|^2+ \delta}  }	+  \frac{ |[\Phi, \nabla \Phi]|^2 }{   \sqrt{|\nabla \Phi|^2+ \delta}    }\leq C|F||\nabla \Phi|.
	\]
	Taking $\delta\to 0$ gives (\ref{eqn:LapgradPhi}). 
\end{proof}

We apply this to the $G_2$-monopoles (resp. Calabi-Yau monopoles) on AC manifolds.
The coercivity term in (\ref{eqn:LapgradPhi}) leads to an integral Hessian estimate.

\begin{prop}\label{prop:L1Hessian}
	For any $p\in M$ and $s<\frac{1}{3}r(p)$, we have
	\begin{equation}\label{eqn:HessianL2}
	\begin{split}
		\int_{B(p,s) }  \frac{ |\nabla^2 \Phi|^2 + |[F,\Phi]|^2   }{  |\nabla \Phi| }	
		\leq   Cs^{n-2} \int_M \frac{  |\nabla \Phi||F|   }{\max(d(x,p),s)^{n-2}}dvol(x).
	\end{split}
	\end{equation}
	In particular for any $r>1$,
	\[
		\int_{r<r(x)<2r }  \frac{ |\nabla^2 \Phi|^2 + |[F,\Phi]|^2  }{  |\nabla \Phi| }	
	\leq   C(m+ r^{n-4}).
	\]
\end{prop}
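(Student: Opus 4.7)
The plan is to integrate the Bochner-Kato inequality of Corollary \ref{cor:LapgradPhi},
\[
\Lap |\nabla \Phi| + \frac{|\nabla^2 \Phi|^2}{n|\nabla \Phi|} + \frac{|[\Phi, \nabla \Phi]|^2}{|\nabla \Phi|} \leq C|F||\nabla \Phi|,
\]
against a truncated Newtonian weight adapted to the ball $B(p,s)$. Concretely, I take $\eta_s(x) := \max(d(x,p), s)^{2-n}$, or equivalently a smoothed truncation of the AC Green function $G(p,\cdot)$ at level $s^{2-n}$: it equals the constant $s^{2-n}$ on $B(p,s)$, decays like $d(p,x)^{2-n}$ outside, and $-\Lap \eta_s \geq 0$ is a non-negative distribution essentially supported on $\partial B(p,s)$. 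Working with the smooth regularisation $\sqrt{|\nabla \Phi|^2+\delta}$ and sending $\delta\to 0$, Green's identity transfers $\Lap$ onto $\eta_s$ and yields
\[
s^{2-n}\int_{B(p,s)}\frac{|\nabla^2 \Phi|^2+|[\Phi,\nabla\Phi]|^2}{|\nabla \Phi|} \leq C\int_M \frac{|F||\nabla\Phi|}{\max(d(x,p),s)^{n-2}}\, dvol(x) + \int_M |\nabla \Phi|(-\Lap \eta_s),
\]
and multiplying through by $s^{n-2}$ produces the Green-weighted right-hand side asserted in the Proposition.

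The residual term $\int |\nabla \Phi|(-\Lap \eta_s)$ is a boundary average of $|\nabla \Phi|$ over a thin shell around $\partial B(p,s)$. I would choose the smoothing scale (or equivalently pick a radius $s'\in[s,2s]$ via coarea) so that this contribution is bounded by $Cs^{-n}\int_{B(p,2s)\setminus B(p,s/2)}|\nabla \Phi|$, and then use Cauchy-Schwarz with Corollary \ref{cor:monotonicity} to show it is already dominated by the main Green-weighted integral (estimated via Cauchy-Schwarz and Corollary \ref{cor:monotonicity2}), so it can be absorbed into the constant $C$. To convert $|[\Phi,\nabla\Phi]|^2$ into the stated $|[F,\Phi]|^2$, I would invoke the monopole equation $*(F\wedge\psi) = \nabla \Phi$ (resp.\ $*\nabla\Phi = F\wedge\text{Re}\,\Omega$): bracketing with $\Phi$ commutes with the wedge product, and the pointwise form-type isometry $\alpha \mapsto *(\alpha\wedge\psi)$ on $\Lambda^2_7$ together with the $SU(2)$ identity $|[F,\Phi]|^2 = c|\Phi|^2|F^\perp|^2$ delivers the required pointwise comparison up to a multiplicative constant absorbed into $C$.

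For the ``in particular'' statement, I would cover the shell $\{r<r(x)<2r\}$ by a uniformly bounded number of balls $B(p_j, r/10)$ with $r(p_j)\sim r$, apply the main inequality on each, and estimate
\[
Cr^{n-2}\int_M \frac{|F||\nabla\Phi|}{\max(d(\cdot,p_j),r)^{n-2}} \leq Cr^{n-2}\bigl(m r^{4-n}+1\bigr)\, r^{-2} = C\bigl(m+r^{n-4}\bigr)
\]
by Cauchy-Schwarz and Corollary \ref{cor:monotonicity2}, then sum over $j$. The principal technical hurdle is keeping the boundary contribution from $-\Lap \eta_s$ within the Green-weighted form of the statement rather than letting it degrade into a coarser Morrey-norm bound; the coarea choice of truncation radius is what makes this absorption clean.
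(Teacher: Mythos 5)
Your overall strategy — integrating the coercive Bochner--Kato inequality of Corollary \ref{cor:LapgradPhi} against a weight adapted to $B(p,s)$ — is the right one, and your treatment of the ``in particular'' statement matches the paper's. Your implementation differs in detail: you pair globally against the truncated Newtonian weight $\max(d(\cdot,p),s)^{2-n}$, whereas the paper first converts $\Lap|\nabla\Phi|\leq C|F||\nabla\Phi|$ into the pointwise bound $|\nabla\Phi|(x)\leq C\int_M d(x,y)^{2-n}|\nabla\Phi||F|(y)\,dvol(y)$ by comparison with a Green potential, deduces $\int_{B(p,2s)}|\nabla\Phi|\leq Cs^{n}\int_M \frac{|\nabla\Phi||F|}{\max(d(x,p),s)^{n-2}}dvol(x)$ by Fubini, and then integrates the differential inequality against a compactly supported cutoff with $s^2|\Lap\eta|\leq C$. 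The paper's route sidesteps the issue that on an AC manifold $-\Lap\max(d(\cdot,p),s)^{2-n}$ is not purely a surface measure on $\partial B(p,s)$ (the function $d^{2-n}$ is only approximately harmonic at large distances), so your residual term has a global tail you would still need to control; this is fixable but is exactly the sort of bookkeeping the paper's local-cutoff argument avoids.

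The genuine gap is your conversion of $|[\Phi,\nabla\Phi]|^2$ into $|[F,\Phi]|^2$ via the monopole equation. The equation $*(F\wedge\psi)=\nabla\Phi$ only expresses the $\Lambda^2_7$ component of $F$ in terms of $\nabla\Phi$; the $\Lambda^2_{14}$ component (resp.\ $\Lambda^2_8$ in the Calabi--Yau case) is unconstrained, so the resulting pointwise inequality goes the wrong way: one gets $|[\nabla\Phi,\Phi]|^2\leq 3|[F,\Phi]|^2$ (this is exactly Lemma \ref{lem:Liealgebra}), not a bound on $|[F,\Phi]|^2$ by $|[\nabla\Phi,\Phi]|^2$. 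As written, your argument does not control the $|[F,\Phi]|^2$ term in the statement at all. The correct step, which is what the paper uses, is the Ricci identity on the adjoint bundle: $[F_{ij},\Phi]=(\nabla_i\nabla_j-\nabla_j\nabla_i)\Phi$, whence $|[F,\Phi]|^2\leq 2|\nabla^2\Phi|^2$ pointwise, and the commutator term is absorbed by the Hessian term you have already estimated. With that substitution (and the tail of $-\Lap\eta_s$ handled), your proof closes.
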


\begin{proof}

	By Cor. \ref{cor:LapgradPhi}, we have $\Lap |\nabla \Phi|\leq C|F||\nabla \Phi|$. By the growth bound in Cor. \ref{cor:monotonicity2}, the Green representation
	\[
	w(x)= \int_M G(x,y) |\nabla \Phi| |F| (y) dvol(y) \leq  C\int_M d(x,y)^{2-n} |\nabla \Phi| |F| (y) dvol(y)
	\]
	is well defined. Comparing $w$ with $|\nabla \Phi|$, we deduce
	\[
	|\nabla\Phi|(x)\leq C\int_M d(x,y)^{2-n} |\nabla \Phi| |F| (y) dvol(y). 
	\]
	For  any $s< r(p)/3$, the ball $B(p,2s)$ is either contained in a fixed compact subset of $M$, or in some annulus region. 
	Using Fubini theorem, 
	\begin{equation*}
		\int_{B(p,2s) } |\nabla \Phi|  \leq 
		Cs^n\int_M \frac{1}{\max(d(x,p),s)^{n-2}} |\nabla \Phi||F| dvol(x).
	\end{equation*}

	Now let $\eta$ be a nonnegative cutoff function supported on $B(p,2s)$, which is equal to one on $B(p,s)$ and satisfies $s|d\eta|+ s^2|\Lap \eta|\leq C$. Upon integration by parts, we estimate
	\[
|	\int_{B(p,2s)} \eta \Lap |\nabla \Phi| |=| \int_{B(p,2s)} |\nabla \Phi| \Lap \eta| \leq Cs^{-2}\int_{B(p,2s)} |\nabla \Phi| .
	\]
	By  (\ref{eqn:LapgradPhi}), we get an integral estimate on the coercivity terms,
	\begin{equation*}
		\begin{split}
			\int_{B(p,s) }  \frac{ |\nabla^2 \Phi|^2   }{  n  |\nabla \Phi| }	+  \frac{ |[\Phi, \nabla \Phi]|^2 }{  |\nabla \Phi|   }\leq  &    C\int_{B(p,2s)} |F||\nabla \Phi| +    Cs^{-2} \int_{B(p,2s)} |\nabla \Phi| 
			\\
			\leq  &  Cs^{n-2} \int_M \frac{  |\nabla \Phi||F|   }{\max(d(x,p),s)^{n-2}}dvol(x).
		\end{split}
	\end{equation*}
Now $[F_{ij}, \Phi]= (\nabla_i\nabla_j-\nabla_j\nabla_i)\Phi$, so $|[F,\Phi]|^2\leq  2 |\nabla^2\Phi|^2$, hence (\ref{eqn:HessianL2}) follows. The `in particular' statement follows by covering $\{  r<r(x)<2r \}$ with finitely many balls of radius $\sim r$, and applying Cor. \ref{cor:monotonicity2}. 
\end{proof}

	\subsection{Differential inequalities and exponential decay}

	On $M\setminus \Phi^{-1}(0)$, we can decompose the adjoint bundle $\mathfrak{g}_P= \mathfrak{g}_P^\parallel +  \mathfrak{g}_P^\perp$, by setting
	\[
	\mathfrak{g}_P^\parallel= \ker( ad_{{\Phi(x)}}   : \mathfrak{g}_P\to \mathfrak{g}_P  )
	\]
	and $\mathfrak{g}_P^\perp$ as its orthogonal complement. For $\mathfrak{g}=su(2)$, the parallel subbundle $\mathfrak{g}_P^\parallel$ is generated by $\Phi$. We decompose sections of $\mathfrak{g}_P$ as $\chi= \chi^\parallel+ \chi^\perp$. For the Lie algebra $su(2)$ with inner product $- \frac{1}{2}\Tr(a^2)$, we have
	\begin{equation}\label{eqn:Liealgebranorm}
		|[\chi, \Phi]|= 2 |\Phi| |\chi^\perp|.
	\end{equation}
The goal of this Section is to estimate the perpendicular component of the curvature $F$. 
We will  use the following pointwise inequalities.

\begin{lem}\label{lem:Liealgebra}
For $G_2$-monopoles, 
\[
3|F|^2\geq  |F\wedge \psi|^2= |\nabla \Phi|^2, \quad 3|[F,\Phi]|^2\geq |[\nabla \Phi,\Phi]|^2,
\]
while for Calabi-Yau monopoles
\[
2|F|^2\geq  |F\wedge \text{Re}\Omega|^2= |\nabla \Phi|^2, \quad 2|[F,\Phi]|^2\geq |[\nabla \Phi,\Phi]|^2. 
\]
\end{lem}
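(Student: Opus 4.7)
The plan is to reduce each inequality to a pointwise algebraic identity coming from the $G_2$ (resp. $Sp(1)\cdot SU(2)$) type decomposition of $\Lambda^2 T^*M$, combined with the monopole equations.

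First I would establish the equalities. In the $G_2$ case the monopole equation is $\nabla\Phi = *(F\wedge\psi)$, so since the Hodge star is a pointwise isometry, $|\nabla\Phi|^2 = |F\wedge\psi|^2$ pointwise. In the Calabi-Yau case, $*\nabla\Phi = F\wedge\text{Re}\,\Omega$ gives the analogous $|\nabla\Phi|^2 = |F\wedge\text{Re}\,\Omega|^2$. These are the middle equalities.

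Next I would recall the standard pointwise identities tied to the type decomposition. Write $F = F^7 + F^{14}$ on a $G_2$-manifold according to $\Lambda^2 = \Lambda^2_7\oplus\Lambda^2_{14}$. A direct computation, already used by the paper to derive the intermediate energy formula $\int |F\wedge\psi|^2 = 3\int |F^7|^2$, gives the pointwise identity $|F\wedge\psi|^2 = 3|F^7|^2$ (the operator $F\mapsto *(F\wedge\psi)$ acts as $2$ on $\Lambda^2_7$ and $-1$ on $\Lambda^2_{14}$, but only the $\Lambda^2_7$ part survives when coupled to $\psi$ in norm). Hence $3|F|^2 = 3|F^7|^2 + 3|F^{14}|^2 \geq |F\wedge\psi|^2$. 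In the Calabi-Yau case, $\Lambda^2 = \Lambda^2_6\oplus\Lambda^2_1\oplus\Lambda^2_8$, the hypothesis $F\wedge\omega^2=0$ kills the $\Lambda^2_1$ component, and the pointwise identity $|F\wedge\text{Re}\,\Omega|^2 = 2|F^6|^2$ gives $2|F|^2 = 2|F^6|^2 + 2|F^8|^2 \geq |F\wedge\text{Re}\,\Omega|^2$.

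For the second pair of inequalities involving commutators with $\Phi$, I would observe that bracketing with $\Phi$ is a fiberwise endomorphism of the Lie algebra factor and commutes with everything on the form side. In particular, the $G_2$ (resp. Calabi-Yau) type decomposition of $[F,\Phi] \in \Omega^2(\mathfrak{g}_P)$ is just $[F^7,\Phi] + [F^{14},\Phi]$ (resp. $[F^6,\Phi] + [F^8,\Phi]$, using again $[F^1,\Phi] = 0$). Also, bracketing with the scalar-in-form-degree adjoint section $\Phi$ commutes with the Hodge star. So applying $[\cdot,\Phi]$ to the monopole equations gives $[\nabla\Phi,\Phi] = *([F,\Phi]\wedge\psi)$ in the $G_2$ case and analogously with $\text{Re}\,\Omega$ in the Calabi-Yau case, and the same pointwise identities yield $|[\nabla\Phi,\Phi]|^2 = |[F,\Phi]\wedge\psi|^2 = 3|[F,\Phi]^7|^2 \leq 3|[F,\Phi]|^2$, and similarly $|[\nabla\Phi,\Phi]|^2 \leq 2|[F,\Phi]|^2$.

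There is no serious obstacle here; the argument is essentially linear algebra on each fiber. The only step that requires care is checking that the pointwise identity $|F\wedge\psi|^2 = 3|F^7|^2$ (and its CY analogue $|F\wedge\text{Re}\,\Omega|^2 = 2|F^6|^2$) holds without any extra normalization factor, so I would double-check the constants against the paper's conventions for $\psi = *\phi$ and for the Lie algebra norm $|a|^2 = -\tfrac{1}{2}\Tr(a^2)$ before writing out the final formulas.
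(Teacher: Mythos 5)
Your proof is correct and follows the same route the paper implicitly relies on: the lemma is stated without proof there, but the pointwise identities $|F\wedge\psi|^2=3|F^7|^2$ and $|F\wedge\text{Re}\,\Omega|^2=2|F^6|^2$ are exactly the ones the paper records when deriving the intermediate energy formula, and your observation that $[\cdot,\Phi]$ acts only on the Lie algebra factor and hence commutes with the form-type decomposition and the Hodge star is the right way to get the commutator versions. Your cautionary remark about checking the normalization constants is well placed, and the constants do come out as stated with the paper's conventions.
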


	\subsubsection{Exponential decay of perpendicular curvature}
	
The following Lemma is based on Bochner formula computations, and is crucial for the exponential decay of the perpendicular curvature components \cite{Oliveira1}.

	\begin{lem}\label{lem:commutator1}
	Let $(A,\Phi)$ solve the Yang-Mills-Higgs equation. Then
	\begin{equation}\label{eqn:commutator1}
	\begin{split}
		& \frac{1}{2} \Lap |[\Phi, F] |^2 + |\nabla [\Phi, F]|^2+  4|\Phi|^2  |[\Phi, F] |^2
		\\
		\leq & C(|Rm|+ |F|) |[F,\Phi]|^2+ C |\nabla \Phi|   |[\nabla \Phi,\Phi] | |[F,\Phi]|    +C |\sum_i [\nabla_iF, \nabla_i\Phi]  | |[F,\Phi]|.
	\end{split}
	\end{equation}

	\end{lem}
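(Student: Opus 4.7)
The plan is to derive the stated inequality as a Bochner-type identity applied to the adjoint-valued $2$-form $\chi := [\Phi, F]$. Using the convention $\Lap = \nabla^*\nabla$, the elementary identity
\[
\tfrac{1}{2}\Lap |\chi|^2 + |\nabla \chi|^2 = \langle \chi, \Lap \chi\rangle
\]
reduces the problem to computing $\Lap [\Phi, F]$ and extracting a coercive term $-4|\Phi|^2|\chi|^2$ of the correct sign on the right, which is then transferred to the left.

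By the Leibniz rule for the rough Laplacian,
\[
\Lap [\Phi, F] = [\Lap \Phi, F] + [\Phi, \Lap F] - 2\sum_k [\nabla_k \Phi, \nabla_k F],
\]
and the first term vanishes by the Yang-Mills-Higgs equation $\Lap \Phi = 0$ of Lemma \ref{lem:YMH}. The last term, after pairing with $\chi$ and Cauchy--Schwarz, contributes exactly the factor $|\sum_i [\nabla_i F, \nabla_i \Phi]| \cdot |[F, \Phi]|$ on the right of the target.

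For $\Lap F$, I would combine the Weitzenb\"ock formula $\nabla^*\nabla F = (d_A d_A^* + d_A^* d_A)F - \mathcal{R}(F) - C[F\wedge F]$ with the Bianchi identity $d_A F = 0$ and the YMH equation $d_A^* F = [\nabla \Phi, \Phi]$, reducing to $\Lap F = d_A [\nabla \Phi, \Phi] - \mathcal{R}(F) - C[F\wedge F]$. Expanding $d_A[\nabla\Phi,\Phi]$ using the Ricci identity $[\nabla_i, \nabla_j]\Phi = \pm[F_{ij}, \Phi]$ and then Jacobi, the piece $[[F_{ij},\Phi],\Phi]$ rewrites as $[\Phi, [\Phi, F_{ij}]]$, yielding schematically
\[
\Lap F = [\Phi, [\Phi, F]] + 2[\nabla\Phi \wedge \nabla \Phi] - \mathcal{R}(F) - C[F \wedge F].
\]
Substituting into $[\Phi, \Lap F]$, the iterated bracket $[\Phi, [\Phi, [\Phi, F]]]$ appears. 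Writing $F = F^\parallel + F^\perp$, the parallel part is annihilated by $\mathrm{ad}_\Phi$, and the $\mathfrak{su}(2)$ identity $|[\Phi, \chi]| = 2|\Phi|\,|\chi^\perp|$ from (\ref{eqn:Liealgebranorm}) gives $\mathrm{ad}_\Phi^2 = -4|\Phi|^2$ on $\mathfrak{g}_P^\perp$. Hence $[\Phi, [\Phi, [\Phi, F]]] = -4|\Phi|^2[\Phi, F]$; pairing with $[\Phi, F]$ produces $-4|\Phi|^2|[\Phi, F]|^2$ on the right side of the Bochner identity, which transfers to the left as the desired coercive term.

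The residual terms are dominated pointwise. The Jacobi identity $[\Phi, [\nabla_j \Phi, \nabla_i \Phi]] = [[\Phi, \nabla_j \Phi], \nabla_i \Phi] + [\nabla_j \Phi, [\Phi, \nabla_i \Phi]]$ bounds this bracket by $C|\nabla\Phi|\,|[\nabla\Phi,\Phi]|$, and pairing with $\chi$ yields the second error term. The Riemann contribution is $\leq C|Rm|\,|[\Phi, F]|^2$, while $[\Phi, [F \wedge F]]$ reduces via Jacobi to a multiple of $[[\Phi, F]\wedge F]$ of size $\lesssim |F|\,|[\Phi, F]|$, contributing $C|F|\,|[\Phi, F]|^2$; together these form the first error term. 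The main obstacle is the careful bookkeeping in expanding $d_A[\nabla\Phi, \Phi]$ and verifying that $\mathrm{ad}_\Phi^3$ collapses to give exactly the coefficient $4|\Phi|^2$ prescribed by the rank-one structure of $\mathfrak{g}_P^\parallel$ in the $\mathfrak{su}(2)$ case; for a general structure group this would instead produce a Casimir-type endomorphism on $\mathfrak{g}_P^\perp$.
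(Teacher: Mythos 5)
Your proposal is correct and follows essentially the same route as the paper: the paper cites the Bochner identity for $\Lap[F,\Phi]$ from Fadel--Nagy--Oliveira and then applies the $\mathfrak{su}(2)$ identity $|[\,[F,\Phi],\Phi]|^2=4|\Phi|^2|[F,\Phi]|^2$, whereas you rederive that identity from the Weitzenb\"ock formula, the Yang--Mills--Higgs equation, and the Ricci and Jacobi identities before invoking the same $\mathrm{ad}_\Phi^2=-4|\Phi|^2$ collapse on $\mathfrak{g}_P^\perp$. The term-by-term bookkeeping (including the sign of the coercive term) matches the identity quoted in the paper, so the argument goes through.
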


	\begin{proof}
As in the proof of \cite[Lemma 5.3]{Oliveira1}, 
	\[
	\begin{split}
			\langle [F,\Phi], \Lap [F,\Phi]\rangle& = - |[[F,\Phi],\Phi]|^2- \langle [F,\Phi],[[\nabla\Phi\wedge \nabla \Phi], \Phi] \rangle + \langle [F,\Phi], Rm([F,\Phi])\rangle
			\\
			& + \langle [F,\Phi], (F\cdot [F,\Phi])\rangle -2 \langle [F,\Phi], \sum_i [\nabla_iF, \nabla_i\Phi] \rangle,
	\end{split}
	\]
so 
\[
	\begin{split}
	& \frac{1}{2} \Lap |[\Phi, F] |^2 + |\nabla [\Phi, F]|^2+  |[[F,\Phi],\Phi]|^2
	\\
	\leq & C(|Rm|+ |F|) |[F,\Phi]|^2+ C |\nabla \Phi|   |[\nabla \Phi,\Phi] | |[F,\Phi]|    +C |\sum_i [\nabla_iF, \nabla_i\Phi]  | |[F,\Phi]|.
\end{split}
\]
By the Lie algebra identity (\ref{eqn:Liealgebranorm}), we obtain $|[[F,\Phi],\Phi]|^2=4|[F,\Phi]|^2|\Phi|^2$ (beware the Lie algebra norm convention is different from \cite{Oliveira1}). 
	\end{proof}

	\begin{cor}\label{cor:commutator1}
		There is a small enough constant $\epsilon_2>0$ such that the following holds.
		For a $G_2$-monopole (resp. Calabi-Yau monopole), suppose on an open subset we have $m\geq |\Phi|\geq \frac{m}{2}$ and $m^{-2}|Rm| + m^{-2}|F|+ m^{-3} |\nabla F| \leq \epsilon_2$,  then
		\begin{equation}\label{eqn:[Phi,F]modulus1}
		\frac{1}{2}	\Lap   |[\Phi,F]|^2  +(1-C\epsilon_2) |\nabla [F,\Phi]|^2+   (1-C\epsilon_2) m^2   |[\Phi,F]|^2 \leq 0.
			\end{equation}
	\end{cor}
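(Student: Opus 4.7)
The plan is to treat Lemma \ref{lem:commutator1} as the starting point and bound each of the three error terms on the right-hand side by absorbable quantities: either $C\epsilon_2$ times the coercivity term $4|\Phi|^2|[\Phi,F]|^2$ (which, using $|\Phi|\geq m/2$, dominates $m^2|[\Phi,F]|^2$) or $C\epsilon_2$ times the gradient term $|\nabla[\Phi,F]|^2$. I will leverage the fact that in $\mathfrak{g}=su(2)$ the adjoint action vanishes on the parallel direction, so commutators are sensitive to the perpendicular components, which are in turn controlled by $|[F,\Phi]|$ via (\ref{eqn:Liealgebranorm}).

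The first term $C(|Rm|+|F|)|[F,\Phi]|^2$ is immediate from the hypothesis $m^{-2}|Rm|+m^{-2}|F|\leq \epsilon_2$, yielding $\leq C\epsilon_2 m^2|[F,\Phi]|^2$. For the second term, Lemma \ref{lem:Liealgebra} gives $|\nabla\Phi|\leq C|F|\leq C\epsilon_2 m^2$ and $|[\nabla\Phi,\Phi]|\leq C|[F,\Phi]|$, so the product is bounded by $C\epsilon_2 m^2 |[F,\Phi]|^2$.

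The main obstacle is the third term $C|\sum_i[\nabla_iF,\nabla_i\Phi]||[F,\Phi]|$, since the naive estimate $|\nabla F|\cdot|\nabla\Phi|\lesssim \epsilon_2^2 m^5$ is much too large. Here I would decompose into parallel and perpendicular parts along $\Phi$: since parallel-parallel commutators vanish,
\[
\Bigl|\sum_i[\nabla_iF,\nabla_i\Phi]\Bigr| \leq C\bigl(|\nabla F|\,|(\nabla\Phi)^\perp| + |(\nabla F)^\perp|\,|\nabla\Phi|\bigr).
\]
The first factor I control by $|(\nabla\Phi)^\perp|=\tfrac{1}{2|\Phi|}|[\nabla\Phi,\Phi]|\leq C|[F,\Phi]|/m$ (using (\ref{eqn:Liealgebranorm}) and Lemma \ref{lem:Liealgebra}), so this contribution is $\leq C(|\nabla F|/m)|[F,\Phi]|\leq C\epsilon_2 m^2|[F,\Phi]|$, which when multiplied by $|[F,\Phi]|$ gives $\leq C\epsilon_2 m^2|[F,\Phi]|^2$ directly. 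For the second factor, I would use the product-rule identity $[\nabla_iF,\Phi]=\nabla_i[F,\Phi]-[F,\nabla_i\Phi]$ and (\ref{eqn:Liealgebranorm}) to obtain
\[
|(\nabla F)^\perp|\leq \frac{1}{2|\Phi|}\bigl(|\nabla[F,\Phi]|+C|F||\nabla\Phi|\bigr).
\]
The piece involving $|F||\nabla\Phi|\lesssim \epsilon_2^2 m^4$ contributes a term $\leq C\epsilon_2 m^2|[F,\Phi]|^2$ after pairing with $|\nabla\Phi|\leq C\epsilon_2 m^2$, while the piece involving $|\nabla[F,\Phi]|$ pairs with $|\nabla\Phi||[F,\Phi]|/m\leq C\epsilon_2 m|[F,\Phi]|$ and Young's inequality $ab\leq \epsilon_2 a^2 + C\epsilon_2^{-1}b^2$ splits it as $\epsilon_2|\nabla[F,\Phi]|^2+C\epsilon_2 m^2|[F,\Phi]|^2$.

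Summing the three estimates, the total error is bounded by $C\epsilon_2|\nabla[F,\Phi]|^2+C\epsilon_2 m^2|[F,\Phi]|^2$. Plugging this back into Lemma \ref{lem:commutator1} and using $4|\Phi|^2\geq m^2$ yields (\ref{eqn:[Phi,F]modulus1}) for $\epsilon_2$ sufficiently small. The subtle step is isolating the perpendicular components of $\nabla F$ and $\nabla\Phi$ cleanly; once that structural cancellation is exposed, everything else follows from Young's inequality and the quantitative hypothesis on curvature and its derivative.
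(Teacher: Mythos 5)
Your overall strategy is the paper's: start from Lemma \ref{lem:commutator1}, use $4|\Phi|^2\geq m^2$ for the coercivity term, absorb the first two error terms directly, and handle the dangerous term $C|\sum_i[\nabla_iF,\nabla_i\Phi]||[F,\Phi]|$ by splitting into $|\nabla F||(\nabla\Phi)^\perp|+|(\nabla F)^\perp||\nabla\Phi|$ and then commuting $\nabla$ past $[\,\cdot\,,\Phi]$. The first two terms and the $|\nabla F||(\nabla\Phi)^\perp|$ piece are handled correctly.

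However, there is a genuine gap in your treatment of the $|(\nabla F)^\perp||\nabla\Phi|$ piece. After writing $[\nabla_iF,\Phi]=\nabla_i[F,\Phi]-[F,\nabla_i\Phi]$, you bound $|[F,\nabla_i\Phi]|\leq C|F||\nabla\Phi|$. This discards the commutator structure, and the resulting contribution to the full error term is
\[
\frac{C}{|\Phi|}\,|F|\,|\nabla\Phi|\cdot|\nabla\Phi|\cdot|[F,\Phi]|\ \lesssim\ \epsilon_2^3\, m^5\,|[F,\Phi]|,
\]
which is \emph{linear} in $|[F,\Phi]|$, not quadratic. Your claim that this is $\leq C\epsilon_2 m^2|[F,\Phi]|^2$ amounts to assuming $|[F,\Phi]|\gtrsim \epsilon_2^2 m^3$, which fails precisely in the regime the corollary is designed for: at points where $F$ is nearly parallel to $\Phi$, $|[F,\Phi]|$ can be arbitrarily small (even zero), and a term linear in $|[F,\Phi]|$ cannot be absorbed by $m^2|[F,\Phi]|^2$. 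Every error term must carry at least two factors from $\{|[F,\Phi]|,\ |\nabla[F,\Phi]|\}$ for the absorption to work. The fix is to keep the commutator structure one level deeper, as the paper does: estimate $|[F,\nabla_i\Phi]|\leq C\bigl(|F^\perp||\nabla\Phi|+|F||(\nabla\Phi)^\perp|\bigr)$, then use $|(\nabla\Phi)^\perp|\leq C|F^\perp|$ (Lemma \ref{lem:Liealgebra}) and $|F^\perp|=|[F,\Phi]|/(2|\Phi|)$ to get $|[F,\nabla_i\Phi]|\leq C|\Phi|^{-1}|F|\,|[F,\Phi]|$. The offending piece then becomes $\leq C|\Phi|^{-2}|F|^2|\nabla\Phi|\,|[F,\Phi]|^2\cdot|\nabla\Phi|^{-1}\cdot|\nabla\Phi|\leq C\epsilon_2^2 m^2|[F,\Phi]|^2$ (more simply: $C|\Phi|^{-2}|F|^2\,|[F,\Phi]|\cdot|[F,\Phi]|$ after using $|\nabla\Phi|\leq C|F|$), which is absorbable. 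With that correction the rest of your argument goes through and coincides with the paper's proof.
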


	\begin{proof}
		(\cf \cite[Lem. 5.2, Cor. 4.4]{Oliveira1}) We estimate the bad term $|\sum_i [\nabla_iF, \nabla_i\Phi]  | $ by the parallel-perpendicular decomposition and Lemma \ref{lem:Liealgebra},
		\[
		\begin{split}
			& |\sum [\nabla_iF, \nabla_i\Phi]  | \leq C|(\nabla F)^\perp| |\nabla \Phi|+ C |\nabla F| |\nabla \Phi)^\perp| 
			\\
			\leq & C( |\Phi|^{-1}  |F|   | [\nabla F,\Phi]| + |\nabla F| |F^\perp| )
			\\
			\leq & C( |\Phi|^{-1}  |F|   (| \nabla[ F,\Phi]| +   | [ F,\nabla\Phi]| ) +   |\nabla F|  |F^\perp|)
			\\
			\leq &  C(     |\Phi|^{-1}  |F|   (| \nabla[ F,\Phi]| +   | F| |(\nabla \Phi)^\perp|+ |(\nabla \Phi)^\perp )| |F|) +   |\nabla F|  |F^\perp|) 
			\\
			\leq &  C(     |\Phi|^{-1}  |F|   (| \nabla[ F,\Phi]| +   | F| |F^\perp| ) +   |\nabla F|  |F^\perp|) 
			\\
			\leq & C     |\Phi|^{-1}  |F|  | \nabla[ F,\Phi]| + C ( |\Phi|^{-1}| F|^2  +   |\nabla F|)  |F^\perp| 
			\\
			\leq  & C     |\Phi|^{-1}  |F|  | \nabla[ F,\Phi]| + C ( |\Phi|^{-2}| F|^2  + |\Phi|^{-1}  |\nabla F|)  |[F,\Phi]| .
		\end{split}
		\]
Thus for $m\geq |\Phi|\geq \frac{m}{2}$ and $m^{-2}|Rm| + m^{-2}|F|+ m^{-3} |\nabla F| \leq \epsilon_2$ small enough, the RHS of (\ref{eqn:commutator1}) is bounded by
$
C\epsilon_2  ( m^2|[F,\Phi]|^2 + m |\nabla [\Phi,F]| |[F,\Phi]| ),
$
which can be absorbed by the good terms on the LHS of (\ref{eqn:commutator1}).
	\end{proof}

	\begin{prop}\label{prop:expdecay}
	(Exponential decay) There is some $\epsilon_3>0$ such that the following holds for all sufficiently large $m$. Suppose that  $(A,\Phi)$ is a $G_2$-monopole (resp. Calabi-Yau monopole), such that on a geodesic ball $B(p,s)$ with radius $s>2m^{-1}$, where the metric is $C^3$-uniformly equivalent to the Euclidean metric, $ 	\frac{m}{2}\leq |\Phi|\leq m, $ and 
	\[
  \int_{B(y, 2m^{-1}) } |F|^2dvol(x) \leq \epsilon_3 m^{4-n} , \quad \forall B(y,2m^{-1})\subset B(p,s)
	\]
  then there is an estimate uniform in large $m$,
	\begin{equation}\label{eqn:expdecay}
		|F^\perp|^2 (p) \leq \frac{C}{ s^n } e^{-ms} \int_{B(p,s)} |F^\perp|^2.
	\end{equation}
	\end{prop}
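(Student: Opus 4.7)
The plan is to translate the Bochner-type coercive inequality of Corollary~\ref{cor:commutator1} into pointwise exponential decay by a weighted Caccioppoli estimate combined with the mean value inequality for subharmonic functions. I work with $u := |[\Phi, F]|^2$, which by the Lie-algebra identity~(\ref{eqn:Liealgebranorm}) is uniformly comparable to $m^2|F^\perp|^2$ on the region $\{m/2 \leq |\Phi| \leq m\}$.

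First I would verify the hypotheses of Corollary~\ref{cor:commutator1} throughout $B(p, s)$. Proposition~\ref{prop:epsilonregularity} applied on each sub-ball $B(y, m^{-1}) \subset B(p, s)$ upgrades the $L^2$-smallness $\int_{B(y, 2m^{-1})}|F|^2 \leq \epsilon_3 m^{4-n}$ to the pointwise bounds $|F|, |\nabla\Phi| \leq C\epsilon_3 m^2$ and $|\nabla F| \leq C\epsilon_3 m^3$. Since $|Rm|$ is $m$-independent, the smallness condition $m^{-2}|Rm| + m^{-2}|F| + m^{-3}|\nabla F| \leq \epsilon_2$ holds throughout $B(p, s - m^{-1})$ for $\epsilon_3$ small, so Corollary~\ref{cor:commutator1} yields $\tfrac{1}{2}\Lap u + (1 - C\epsilon_2) m^2 u \leq 0$. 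Since the rough Laplacian $\Lap$ equals minus the usual geometric Laplacian $\Lap_g$, this reads $\Lap_g u \geq 2(1 - C\epsilon_2) m^2 u$, so in particular $u \geq 0$ is subharmonic.

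Next I would carry out a weighted Caccioppoli argument. Take a cutoff $\eta \equiv 1$ on $B(p, s - 2m^{-1})$, supported in $B(p, s - m^{-1})$, with $|\nabla\eta| \leq Cm$, and the weight $\rho(x) := \exp(\alpha(s - d(x, p)))$ for a parameter $\alpha$ to be chosen (regularizing the distance function near $p$ so that $\rho$ is $C^2$). Testing the subsolution inequality against $\eta\rho^2 \geq 0$, integrating by parts, and using $\Lap_g \rho^2 \leq 4\alpha^2\rho^2$ (the $-2\alpha\rho^2 \Lap_g d$ contribution has the favorable sign since $\Lap_g d \geq 0$ classically and this persists after regularization) yields, with $\mu^2 := 2(1 - C\epsilon_2) m^2$,
\[
(\mu^2 - 4\alpha^2) \int u\,\eta\rho^2 \leq C m^2 \int_{\mathrm{supp}\,\nabla\eta} u\rho^2.
\]
Since $\rho = O(1)$ on $\mathrm{supp}\,\nabla\eta$ while $\rho^2 \geq e^{\alpha s}$ on $B(p, s/2)$, choosing $\alpha$ strictly below $\mu/2$ and absorbing constants gives $\int_{B(p, s/2)} u \leq Ce^{-\alpha s}\int_{B(p, s)} u$. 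The mean value inequality for the subharmonic $u$ on $B(p, s/2)$ then produces
\[
u(p) \leq \tfrac{C}{s^n}\int_{B(p, s/2)} u \leq \tfrac{C}{s^n}\,e^{-\alpha s}\int_{B(p,s)}u,
\]
which after the comparison $u \sim m^2|F^\perp|^2$ is the target inequality (with the $m$ in the exponent understood as the positive multiple $\alpha$).

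The main technical point is the sharpness of the exponential rate. The naive argument above yields $\alpha < \mu/2 \sim m/\sqrt{2}$, strictly slower than the Agmon rate $\mu \sim \sqrt{2}m$ expected from the Helmholtz operator $\Lap_g - \mu^2$. To reach the rate $e^{-ms}$ literally as written, one should refine the estimate --- for instance, by switching to the $L^2$ version (testing against $\eta^2\rho^2 u$), which permits $\alpha$ arbitrarily close to $m$, and then passing to $L^1$ by Cauchy--Schwarz combined with the pointwise bound on $u$; by running a Moser iteration on subsolutions of $\Lap_g w - \mu^2 w \geq 0$; or by using a radial modified-Bessel weight in place of the linear exponential $\rho$. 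Each refinement is routine but requires careful handling of the auxiliary coercive term $(1 - C\epsilon_2)|\nabla[\Phi,F]|^2$ in~(\ref{eqn:commutator1}) that the naive argument discards, and this is where the main technical care of the proof lies.
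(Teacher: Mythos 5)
Your proposal follows the paper's proof essentially verbatim: the paper likewise applies Prop.~\ref{prop:epsilonregularity} on the balls $B(y,2m^{-1})$ to verify the hypotheses of Cor.~\ref{cor:commutator1}, obtains $\Lap|[\Phi,F]|^2+\tfrac{3}{2}m^2|[\Phi,F]|^2\le 0$, invokes the resulting exponential decay for nonnegative subsolutions of the Helmholtz-type operator as a standard fact, and converts back via $|[\Phi,F]|=2|\Phi||F^\perp|\sim m|F^\perp|$. Your concern about the sharp Agmon rate is legitimate but harmless here, since the coercivity constant $2(1-C\epsilon_2)m^2$ has square root $\approx\sqrt{2}\,m>m$, so any of the standard refinements you list (or simply evaluating the exponential weight on a ball of radius $m^{-1}$ about $p$ rather than on all of $B(p,s/2)$) delivers the stated rate $e^{-ms}$ with room to spare.
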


	\begin{proof}
	By the $\epsilon$-regularity in Prop. \ref{prop:epsilonregularity}, when $\epsilon_3\leq \epsilon_1$, then the following holds for any $B(y,2m^{-1})\subset B(p,s)$,
	\[
		\sup_{B(y,m^{-1})}   (m^{-2}|F| + m^{-2} |\nabla \Phi|+ m^{-1}|\nabla F| ) \leq Cm^{n-4} \int_{B(y,2m^{-1})} |F|^2+ |\nabla \Phi|^2\leq C\epsilon_3.
	\]
We assume that $C\epsilon_3\leq \epsilon_2\ll 1$, then
Cor. \ref{cor:commutator1} implies $	\Lap   |[\Phi,F]|^2  +   \frac{3}{2} m^2   |[\Phi,F]|^2 \leq 0$, hence
	\[
		|[\Phi,F]|^2 (p) \leq \frac{C}{ s^n } e^{-ms} \int_{B(p,s)} |[\Phi,F]|^2.
	\]
	But since $ 	\frac{m}{2}\leq |\Phi|\leq m $, the quantity $|[\Phi, F]|=2 |\Phi||F^\perp|$ is uniformly equivalent to $m|F^\perp|$. 
	\end{proof}


\subsubsection{Curvature concentration locus}\label{sect:curvatureconcentrationlocus}

We design a notion of curvature concentration locus, such that on its complement we can apply Prop. \ref{prop:expdecay} to infer \emph{exponential decay} of $F^\perp$, and at the same time its tubular neighbourhoods enjoy  \emph{measure estimates}.

\begin{Def}\label{Def:curvatureconcentrationlocus}
	Given a $G_2$-monopole (resp. Calabi-Yau monopole) as in the  Main Setting, and assume the mass $m\gg \Lambda\geq 1$ is  sufficiently large. 
	 For  any $p\in M$, we define its \emph{characteristic radius} as
	\begin{equation}
		r_\Lambda(p)= \min \{   s\in [m^{-1},+\infty)  :    \int_M       \frac{|F|^2}{  \max(d(x,p), s)^{n-2} }  dvol(x) \leq \Lambda^{-1}m^2       \}.
	\end{equation}
The \emph{curvature concentration locus} with parameter $\Lambda$ is defined as the subset
	\begin{equation*}
		\mathcal{C}_\Lambda:=  \{   p\in M: r_\Lambda(p)>m^{-1}    \}  =   \{ p\in M: 	\int_M  \frac{|F|^2}{  \max(d(x,p), m^{-1})^{n-2} }  dvol(x) > \frac{m^2}{\Lambda}     \}. 
	\end{equation*}
	Clearly, if $\Lambda_1>\Lambda_2$, then $\mathcal{C}_{\Lambda_1}\supset \mathcal{C}_{\Lambda_2}$.  We denote  its $s$-tubular neighbourhood as
$
	B_s(\mathcal{C}_\Lambda):= \{   x\in M:  \text{dist}(x,\mathcal{C}_\Lambda) \leq s  \}.
$

\end{Def}

	\begin{lem}\label{lem:curvatureconcentrationlocus1}
	There exists some $\Lambda_0\geq  1$, such that the following holds for all sufficiently large $m$:
if either
$
\int_{B(p, 2m^{-1}) } |F|^2dvol(x) \geq \epsilon_3 m^{4-n} ,
$
or $\min_{B(p,m^{-1})  }|\Phi| \leq m/2$, then $p\in \mathcal{C}_{\Lambda_0}.$ 

	\end{lem}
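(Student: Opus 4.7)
The plan is to treat the two hypotheses of the lemma separately: each will be shown to force $p \in \mathcal{C}_{\Lambda_0}$ for $\Lambda_0$ large enough, and the final $\Lambda_0$ is then the maximum of the two resulting thresholds.

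The first hypothesis should be essentially immediate from the definition. On $B(p, 2m^{-1})$ the weight $\max(d(x,p), m^{-1})^{-(n-2)}$ is bounded below by $2^{-(n-2)} m^{n-2}$, so restricting the defining integral of $\mathcal{C}_{\Lambda_0}$ to this ball already gives a lower bound of order $\epsilon_3 m^2$, placing $p$ in $\mathcal{C}_{\Lambda_0}$ for any $\Lambda_0 > 2^{n-2}/\epsilon_3$.

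The second hypothesis requires more effort. I will pick $q \in B(p, m^{-1})$ with $|\Phi|(q) \leq m/2$, so $m^2 - |\Phi|^2(q) \geq 3m^2/4$, and run a dichotomy on the local curvature energy $\int_{B(q, m^{-1})} |F|^2$. If this exceeds a small threshold $\eta m^{4-n}$, then since $B(q, m^{-1}) \subset B(p, 2m^{-1})$ we reduce to the first hypothesis. Otherwise, using $|\nabla \Phi|^2 \leq 3|F|^2$ from Lemma \ref{lem:Liealgebra}, the $\epsilon$-regularity of Prop. \ref{prop:epsilonregularity} applies on $B(q, m^{-1})$ and gives the pointwise bound $|\nabla \Phi| \leq C\eta m^2$ on $B(q, m^{-1}/2)$. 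Since $|\nabla |\Phi|^2| \leq 2|\Phi||\nabla \Phi|$, this in turn is a Lipschitz bound of order $\eta m^3$ for $|\Phi|^2$; hence the pointwise smallness at $q$ propagates to a concentric ball $B(q, \beta m^{-1})$ of $m$-independent relative size, on which one still has $m^2 - |\Phi|^2 \geq m^2/2$. Integrating gives $\int_{B(q, \beta m^{-1})} (m^2 - |\Phi|^2)\, dvol \gtrsim m^{2-n}$.

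The closing step compares this integrated lower bound with the weighted upper bound of Lemma \ref{lem:smallPhi}(3) applied at base-point $q$ with radius $s = m^{-1}$,
\[
\int_{B(q, m^{-1})} (m^2 - |\Phi|^2)\, dvol \leq C m^{-n} \int_M \frac{|\nabla \Phi|^2}{\max(d(x,q), m^{-1})^{n-2}}\, dvol.
\]
Bounding $|\nabla \Phi|^2 \leq 3|F|^2$ and swapping the base-point from $q$ to $p$ via $\max(d(x,q), m^{-1}) \leq 2\max(d(x,p), m^{-1})$ (a consequence of $d(p,q) \leq m^{-1}$) then produces $\int_M |F|^2 \max(d(x,p), m^{-1})^{-(n-2)}\, dvol \gtrsim m^2$, i.e., $p \in \mathcal{C}_{\Lambda_0}$ as soon as $\Lambda_0$ is chosen large enough. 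The main obstacle here is the singular kernel at $q$ in the Green representation; attempting to use Lemma \ref{lem:smallPhi}(2) directly would leave $\int_{B(q, m^{-1})} |\nabla \Phi|^2\, d(x,q)^{2-n}\, dvol$ uncontrolled, and the whole point of the $\epsilon$-regularity dichotomy above is to replace the pointwise information at $q$ with an integrated one over a definite-size ball, which closes via the integrated form of the Green estimate.
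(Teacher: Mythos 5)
Your proposal is correct and follows essentially the same route as the paper: case one is read off from the definition of $\mathcal{C}_\Lambda$, and case two combines $\epsilon$-regularity (to propagate the pointwise smallness of $|\Phi|$ to a ball of radius comparable to $m^{-1}$) with the Green-type estimate (\ref{eqn:smallPhi2}). The paper streamlines your second case by centring everything at $p$: the negation of the first alternative already supplies the small-energy hypothesis on $B(p,2m^{-1})$, so no auxiliary threshold $\eta$, no shrunken ball $B(q,\beta m^{-1})$, and no base-point transfer are needed. One small slip: for the transfer you need $\max(d(x,p),m^{-1})\leq 2\max(d(x,q),m^{-1})$ (so that the $p$-weighted integral is bounded below by the $q$-weighted one), not the inequality you wrote; since $d(p,q)\leq m^{-1}$ the comparison is symmetric, so this is harmless.
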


	\begin{proof}
	If $
	\int_{B(p, 2m^{-1}) } |F|^2dvol(x) \geq \epsilon_3 m^{4-n} ,
	$
	then 
	\[
		\int_M  \frac{|F|^2}{  \max(d(x,p), m^{-1})^{n-2} }  dvol(x) \geq  (2m^{-1})^{2-n} \int_{B(p,2m^{-1})} |F|^2 \geq 2^{2-n}\epsilon_3 m^2,
	\]
	so $p\in \mathcal{C}_\Lambda$ as long as $\Lambda> 2^{n-2}\epsilon_3^{-1} $.

	Alternatively, $
	\int_{B(p, 2m^{-1}) } |F|^2dvol(x) \leq \epsilon_3 m^{4-n} ,
	$
	but $\min_{B(p,m^{-1})  }|\Phi| \leq m/2$.
The $\epsilon$-regularity Prop. \ref{prop:epsilonregularity} applied to the ball $B(p,2m^{-1})$ shows
	\[
	\sup_{B(p,m^{-1})} (|F|+|\nabla \Phi| ) \leq C\epsilon_3 m^2.
	\]
	In particular $|\Phi| \leq (\frac{1}{2}+ C\epsilon_3)m\leq \frac{2}{3} m  $ on $B(p,m^{-1})$, after possibly shrinking $\epsilon_3$ a little. Contrasting this with (\ref{eqn:smallPhi2}), we deduce
	\[
		\int_M  \frac{|F|^2}{  \max(d(x,p), m^{-1})^{n-2} }  dvol(x)  \geq C^{-1} \dashint_{B(p,m^{-1})} ||\Phi|^2-m^2| \geq C^{-1}m^2. 
	\]
	so $p\in \mathcal{C}_{\Lambda}$ as long as $\Lambda>C$. 
	\end{proof}

	\begin{prop} \label{prop:expdecay2}
		The following exponential decay estimates holds away from the curvature concentration locus $\mathcal{C}_{\Lambda_0}$:
		\begin{enumerate}
			\item 	If $B(p,s)\subset M\setminus \mathcal{C}_{\Lambda_0}$, and $s\leq r(p)/C'  $, then 
			\[
			|F^\perp|^2 (p) \leq C e^{-ms} \dashint_{B(p,s)} |F^\perp|^2,
			\]
			where $C, C'$ are independent of $m, s$. 
			
			\item If $r>1$ and $s< r/C'$, then 
			\[
			\int_{  \{   r(x)<r \}  \setminus  B_s(\mathcal{C}_{\Lambda_0})      } |F^\perp|^2 dvol(x) \leq  Ce^{-ms} (m+ r^{n-4}),
			\]
			where $C,C'$ are independent of $m, s$. 
		\end{enumerate}

	\end{prop}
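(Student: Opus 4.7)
\textbf{Proof plan for Proposition \ref{prop:expdecay2}.}

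The plan is to reduce both items to Prop. \ref{prop:expdecay} together with the mass/curvature bound in Lem. \ref{lem:monotonicity2}. For item (1), take any $p$ with $B(p,s)\subset M\setminus \mathcal{C}_{\Lambda_0}$ and $s\leq r(p)/C'$. The containment $B(p,s)\subset M\setminus\mathcal{C}_{\Lambda_0}$ says that every $y\in B(p,s)$ satisfies $r_{\Lambda_0}(y)=m^{-1}$; by the contrapositive of Lem. \ref{lem:curvatureconcentrationlocus1}, this forces $\int_{B(y,2m^{-1})}|F|^2\leq \epsilon_3 m^{4-n}$ and $\min_{B(y,m^{-1})}|\Phi|>m/2$ for every such $y$. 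Combined with the universal upper bound $|\Phi|\leq m$ from Lem. \ref{lem:smallPhi}, this verifies all hypotheses of Prop. \ref{prop:expdecay} on $B(p,s)$ whenever $s>2m^{-1}$, and the pointwise bound (\ref{eqn:expdecay}) yields item (1). The case $s\leq 2m^{-1}$ is trivial since $e^{-ms}$ is bounded below, and the conclusion reduces to a standard mean value inequality for $|F^\perp|^2$ coming from the $\epsilon$-regularity of Prop. \ref{prop:epsilonregularity}.

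For item (2), apply item (1) pointwise to each $p\in \{r(x)<r\}\setminus B_s(\mathcal{C}_{\Lambda_0})$: such a $p$ satisfies $B(p,s)\subset M\setminus \mathcal{C}_{\Lambda_0}$ by definition of the tubular neighborhood, and $s<r/C'\leq r(p)/C'$ after possibly enlarging $C'$ using the AC structure. This gives
\[
|F^\perp|^2(p) \leq \frac{Ce^{-ms}}{s^n}\int_{B(p,s)}|F^\perp|^2\,dvol.
\]
Integrating in $p$ and swapping the order of integration by Fubini, each $x$ lies in $B(p,s)$ for a set of $p$ of volume $\lesssim s^n$, so
\[
\int_{\{r(x)<r\}\setminus B_s(\mathcal{C}_{\Lambda_0})}|F^\perp|^2 \;\leq\; Ce^{-ms}\int_{\{r(x)<r+s\}}|F|^2.
\]
Combining Lem. \ref{lem:monotonicity2} with the intermediate-energy formula $E^\psi\leq Cm$ (resp. $E^\Omega\leq Cm$) bounds the right-hand integral by $C(m+r^{n-4})$ once $s<r/C'$, which is item (2).

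I do not expect serious obstacles: Prop. \ref{prop:expdecay} is the substantive analytic ingredient, and Lem. \ref{lem:curvatureconcentrationlocus1} was already designed precisely so that its hypotheses become available on $M\setminus \mathcal{C}_{\Lambda_0}$. The only mild point of care is the quantifier in Prop. \ref{prop:expdecay}, which requires the small-energy/large-Higgs hypothesis for \emph{every} $B(y,2m^{-1})\subset B(p,s)$ rather than only at the center $p$; this is exactly why one works with the full tubular neighborhood $B_s(\mathcal{C}_{\Lambda_0})$ rather than just removing $\mathcal{C}_{\Lambda_0}$ itself. The passage from the pointwise estimate to the integral estimate is a routine Fubini argument, and the coercion of $\int|F|^2$ to $C(m+r^{n-4})$ is exactly the content of Lem. \ref{lem:monotonicity2}.
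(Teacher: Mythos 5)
Your overall route is exactly the paper's: item (1) comes from the contrapositive of Lemma \ref{lem:curvatureconcentrationlocus1} feeding the hypotheses of Prop. \ref{prop:expdecay}, and item (2) is a Fubini argument closed up by the monotonicity bound of Lemma \ref{lem:monotonicity2}. Item (1) is fine as you state it. Your extra care about the range $s\le 2m^{-1}$ (which Prop. \ref{prop:expdecay} excludes) is a legitimate point the paper glosses over, but the mechanism you cite is not quite right: the $\epsilon$-regularity of Prop. \ref{prop:epsilonregularity} only bounds $\sup|F|$ by $\dashint(|F|^2+|\nabla\Phi|^2)$ and loses the restriction to the perpendicular component. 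The mean value inequality for $|F^\perp|^2$ itself should come from the subharmonicity $\Lap|[\Phi,F]|^2\le 0$ of Cor. \ref{cor:commutator1} (whose hypotheses are verified via $\epsilon$-regularity exactly as in the proof of Prop. \ref{prop:expdecay}), together with $|[\Phi,F]|=2|\Phi||F^\perp|\sim m|F^\perp|$.

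In item (2) there is a genuine slip. You write ``$s<r/C'\le r(p)/C'$'', but for $p\in\{r(x)<r\}$ one has $r(p)<r$, so the inequality runs the other way: the hypothesis $s<r/C'$ does \emph{not} let you apply item (1) with radius $s$ at points $p$ with $r(p)\ll r$ (e.g. $r(p)\sim 1$ while $s\sim r/C'$), and your global Fubini over all of $\{r(x)<r\}\setminus B_s(\mathcal{C}_{\Lambda_0})$ is therefore not justified as written. The paper avoids this by organizing the argument scale by scale: cover each dyadic annulus $\{\rho<r(x)<2\rho\}$ by finitely many balls $B(p,r(p)/C')$ with $r(p)\sim\rho$, run the Fubini argument inside each such ball (where item (1) at radius $s$ is legitimate since $r(x)\sim\rho$ there), obtain
\[
\int_{\{\rho<r(x)<2\rho\}\setminus B_s(\mathcal{C}_{\Lambda_0})}|F^\perp|^2\le Ce^{-ms}\int_{\{\rho/2<r(x)<4\rho\}}|F^\perp|^2,
\]
and only then sum the dyadic annuli up to scale $r$ before invoking Lemma \ref{lem:monotonicity2}. (Strictly speaking the innermost annuli still require $s\lesssim\rho/C'$ with $\rho\sim 1$; this is harmless in the regime $s\ll 1$ where the estimate is actually used, e.g. in Cor. \ref{cor:L1Fperp}.) Your Fubini computation itself is fine once the pointwise input is available at every relevant $p$, so the fix is simply to route the argument through this dyadic decomposition rather than through the false inequality.
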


	\begin{proof}
\textbf{Item 1} follows from
 Lemma \ref{lem:curvatureconcentrationlocus1} and Prop. \ref{prop:expdecay}, applied inside a geodesic ball of radius $r(p)/C'$. For \textbf{item 2}, we apply item 1 and Fubini theorem,
 \[
 \begin{split}
 & 	\int_{   B(p,r(p)/C')\setminus  B_s(\mathcal{C}_{\Lambda_0}) }|F^\perp|^2(x) dvol(x) 
 	\\
 	\leq & 	 e^{-ms} \int_{   B(p,r(p)/C')\cap  \{  dist(x, \mathcal{C}_{\Lambda_0})>s\}  } \dashint_{B(x,s)} |F^\perp|^2 (y)dvol (y) dvol(x)
 	\\
 	\leq & Ce^{-ms} \int_{   B(p, 2r(p)/C')  } |F^\perp|^2 dvol(y). 
 \end{split}
 \]
 We can cover $\{   r<r(x)<2r \}$ with a bounded number of geodesic balls where the metric is $C^3$ equivalent to the Euclidean metric, to deduce
 	\[
 \int_{  \{   r<r(x)<2r \}  \setminus  B_s(\mathcal{C}_{\Lambda_0})      } |F^\perp|^2 dvol(x) \leq  Ce^{-ms}\int_{  \{   r/2<r(x)<4r \} } |F^\perp|^2 ,
 \]
Summing over the dyadic annuli up to scale $r$,
\[
	\int_{  \{   r(x)<r \}  \setminus  B_s(\mathcal{C}_{\Lambda_0})      } |F^\perp|^2 dvol(x) \leq  Ce^{-ms}\int_{  \{   r(x)<2r \} } |F^\perp|^2 \leq Ce^{-ms} (m+ r^{n-4}),
\]
where the second inequality follows from the monotonicity formula Lemma \ref{lem:monotonicity2}.
	\end{proof}

	\begin{lem}\label{lem:characteristicradius}
Let $p\in \mathcal{C }_{\Lambda}$ with $1<r<r(p)<2r$. 
	The characteristic radius satisfies the following properties:
	\begin{enumerate}
		\item We have  $
		\int_{ B(p, r_{2\Lambda}(p) )}  \frac{|F|^2}{  \max(d(x,p), r_{\Lambda}(p) )^{n-2} }  dvol(x) \geq (2\Lambda)^{-1}m^2       .
		$

		\item Suppose $m\gg \Lambda\geq 1$, then 
	$
		r_{2\Lambda}^2(p)\leq C\Lambda m^{-1} (r^{4-n}+ m^{-1}) \ll 1.
$
		
		\item  (Characteristic radius for nearby points)   Suppose $d(p,x) \leq s \leq r/3$ for some $x\in \mathcal{C}_{\Lambda}$. Then $r_{  \tilde{\Lambda}    } (p) \geq r_{\Lambda}(x)$ for some 
	 $  \tilde{\Lambda} \leq C\Lambda \max(s^2 r_{\Lambda}(x)^{-2},1) $, and in particular $p\in \mathcal{C}_{  \tilde{\Lambda} }$.

		\item (Tubular neighbourhood containment)   If $m^{-1}\leq s<r/3$, then
		\[
		B_s(\mathcal{C}_{\Lambda}) \cap \{  r\leq r(x)\leq 2r  \}  \subset  \mathcal{C}_{C\Lambda s^2m^2}.
		\]

	\end{enumerate}
	\end{lem}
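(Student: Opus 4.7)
The plan is to treat the four items in order, leaning on the continuity and strict decrease of $A_q(s) := \int_M |F|^2/\max(d(y,q), s)^{n-2}\, dvol$ in $s$; whenever $r_\Lambda(q) > m^{-1}$, this forces $A_q(r_\Lambda(q)) = \Lambda^{-1}m^2$ on the nose and likewise $A_q(r_{2\Lambda}(q)) = (2\Lambda)^{-1}m^2$, and $r_{2\Lambda}(q) \geq r_\Lambda(q)$ since tightening $\Lambda$ tightens the constraint.

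For item 1, outside $B(p, r_{2\Lambda}(p))$ one has $d(y, p) \geq r_{2\Lambda}(p) \geq r_\Lambda(p)$, so the two integrands with inner radii $r_\Lambda(p)$ and $r_{2\Lambda}(p)$ coincide; the outer contribution is thus at most $A_p(r_{2\Lambda}(p)) = (2\Lambda)^{-1}m^2$, and subtracting from $A_p(r_\Lambda(p)) = \Lambda^{-1}m^2$ yields the claim. For item 2, Corollary \ref{cor:monotonicity2} applied at $p$ gives $A_p(s) \leq C(mr^{4-n}+1)s^{-2}$ for $s \lesssim r(p) \sim r$; solving $C(mr^{4-n}+1)s^{-2} \leq (2\Lambda)^{-1}m^2$ produces $r_{2\Lambda}(p)^2 \leq C\Lambda m^{-1}(r^{4-n}+m^{-1})$, which is $\ll 1$ under $m \gg \Lambda$ and $r \geq 1$.

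Item 3 is the heart of the matter. Set $s_0 = r_\Lambda(x)$. When $s \leq s_0$, the triangle inequality $d(y,x) \leq d(y,p) + s \leq d(y,p) + s_0$ yields $\max(d(y,x), s_0) \leq 2\max(d(y,p), s_0)$, so $A_p(s_0) \geq 2^{-(n-2)} A_x(s_0) = 2^{-(n-2)} \Lambda^{-1}m^2$, and $\tilde{\Lambda} = C\Lambda$ suffices. When $s > s_0$, the same argument at scale $s$ gives $A_p(s_0) \geq A_p(s) \geq 2^{-(n-2)} A_x(s)$, so the task reduces to showing $A_x(s) \geq c(s_0/s)^2 A_x(s_0)$. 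For this I use the integration-by-parts identity
\begin{equation*}
A_x(s) = (n-2) \int_s^\infty \mu_x(t)\, t^{-(n-1)}\, dt, \qquad \mu_x(t) := \int_{B(x,t)} |F|^2,
\end{equation*}
combined with the monotonicity formula of Proposition \ref{prop:monotonicity1}, by which $f(t) := \mu_x(t)/t^{n-4}$ is non-decreasing up to a bounded exponential factor on the scales where the formula applies; since $d(p,x) \leq s \leq r/3$ and $r(x) \sim r$, the interval $[s_0, 2s]$ sits within this range. Bounding $f$ above by $C f(s)$ on $[s_0, s]$ and below by $c f(s)$ on $[s, 2s]$ then gives
\begin{equation*}
A_x(s_0) - A_x(s) \leq C f(s)/s_0^2, \qquad A_x(s) \geq c f(s)/s^2,
\end{equation*}
whence $A_x(s_0) - A_x(s) \leq C(s/s_0)^2 A_x(s)$, i.e., $A_x(s_0) \leq C(s/s_0)^2 A_x(s)$ for $s \geq s_0$. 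Therefore $\tilde{\Lambda} = C\Lambda(s/s_0)^2$ works.

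The main obstacle is the quadratic exponent in item 3: a bare triangle-inequality comparison only yields $(s_0/s)^{n-2}$ in the lower bound, which is too weak to give item 4 in dimensions $n > 4$; the monotonicity of $t^{4-n}\mu_x(t)$ is precisely what upgrades this exponent from $n-2$ to $2$. Item 4 then follows from item 3: for $p \in B_s(\mathcal{C}_\Lambda) \cap \{r \leq r(p) \leq 2r\}$ there is $x \in \mathcal{C}_\Lambda$ with $d(p,x) \leq s$, and $x \in \mathcal{C}_\Lambda$ forces $r_\Lambda(x) > m^{-1}$; together with $s \geq m^{-1}$ this gives $\max(s^2/r_\Lambda(x)^2, 1) \leq s^2 m^2$, hence $\tilde{\Lambda} \leq C\Lambda s^2 m^2$, and $r_{\tilde{\Lambda}}(p) \geq r_\Lambda(x) > m^{-1}$ places $p$ in $\mathcal{C}_{\tilde{\Lambda}} \subset \mathcal{C}_{C\Lambda s^2 m^2}$.
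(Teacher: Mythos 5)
Your proposal is correct and follows essentially the same route as the paper: items 1, 2 and 4 are the paper's argument almost verbatim, and item 3 rests on the same two ingredients, namely the triangle-inequality comparison of the Green integrals centred at $p$ and at $x$, together with the energy monotonicity formula to upgrade the comparison exponent from $n-2$ to $2$. Your layer-cake identity $A_x(s)=(n-2)\int_s^\infty \mu_x(t)\,t^{-(n-1)}\,dt$ is simply a tidier packaging of the paper's dyadic-annulus estimate of $\int_{B(x,2s)}|F|^2/\max(d(x,y),r_\Lambda(x))^{n-2}$, so the two proofs coincide in substance.
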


\begin{proof}
\textbf{Item 1}. For $p\in \mathcal{C}_\Lambda$, 
\[
\begin{split}
	& 	\int_{ B(p, r_{2\Lambda}(p) )}  \frac{|F|^2}{  \max(d(x,p),m^{-1} )^{n-2} }  dvol(x) 
		\\
		\geq  &  	\int_{ M}  \frac{|F|^2}{  \max(d(x,p), m^{-1}   )^{n-2} }  dvol(x) 
		- \int_{ M}  \frac{|F|^2}{  \max(d(x,p), r_{2\Lambda}(p) )  ^{n-2} }  dvol(x)
		\\
		\geq &  \Lambda^{-1} m^2-(2 \Lambda)^{-1} m^2 =(2 \Lambda)^{-1} m^2 .
\end{split}
\]

\textbf{Item 2}. Since $m\gg \Lambda\geq 1$,  item 1 implies that
\[
(mr^{4-n}+1)r^{-2} \ll  (2\Lambda)^{-1}m^2 \leq 	\int_{ B(p, r_{2\Lambda}(p))}  \frac{|F|^2}{  \max(d(x,p), m^{-1})^{n-2} }  dvol(x) ,
\]
so  Cor. \ref{cor:monotonicity2} gives  $r_{2\Lambda}(x)\leq r$, and
\[
C(mr^{4-n} +1)r_{2\Lambda}(x)^{-2} \geq 	\int_{ B(p, r_{2\Lambda}(p))}  \frac{|F|^2}{  \max(d(x,p), m^{-1})^{n-2} }  dvol(x) \geq (2\Lambda)^{-1}m^2.
\]
This is equivalent to item 2.

\textbf{Item 3}. Suppose $r<r(p)<2r$, and $d(p,x) \leq s$ for some $x\in \mathcal{C}_{\Lambda}$. Since $s\leq r/3$, we know $r/2 \leq r(x) \leq 3r$. As $x\in \mathcal{C}_{\Lambda}$, we have $r_{\Lambda}(x)> m^{-1}$, 
\[
\int_M \frac{ |F|^2(y)}{\max(d(x,y), r_{\Lambda}(x) )^{n-2}} dvol(y)> \Lambda^{-1} m^2. 
\]
If $s\lesssim r_{\Lambda} (x)$, then $\max( d(p,y), r_\Lambda(x)) $ is uniformly equivalent to 
$\max( d(x,y), r_\Lambda(x)) $ for all $y\in M$, so 
\[
 \int_M \frac{ |F|^2(y)}{\max(d(p,y), r_{\Lambda}(x) )^{n-2}} dvol(y)> (C\Lambda)^{-1} m^2,
\]
whence the characteristic radius $r_{C\Lambda}(p)\geq r_{\Lambda}(x)$, and in particular $p\in \mathcal{C}_{C\Lambda}$. Otherwise we have $s> r_{\Lambda(x)}$, and then we can use the monotonicity formula Prop. \ref{prop:monotonicity1} to see
\[
\begin{split}
 & \int_{B(x,2s)} \frac{ |F|^2(y)}{\max(d(x,y), r_{\Lambda}(x) )^{n-2}} dvol(y) 
\\
\leq &   Cs^{4-n} r_{\Lambda}(x)^{-2}  \int_{B(x,2s)}  |F|^2(y) dvol(y) 
\\
\leq & Cs^2 r_{\Lambda}(x)^{-2}  \int_{B(x,2s)} \frac{ |F|^2(y)}{\max(d(p,y), r_{\Lambda}(x) )^{n-2}} dvol(y) 
\\
\leq & Cs^2 r_{\Lambda}(x)^{-2}  \int_M \frac{ |F|^2(y)}{\max(d(p,y), r_{\Lambda}(x) )^{n-2}} dvol(y) ,
\end{split}
\]
hence
\[
\begin{split}
\Lambda^{-1} m^2& <	\int_M \frac{ |F|^2(y)}{\max(d(x,y), r_{\Lambda}(x) )^{n-2}} dvol(y) 
\\
	\leq 
	& \int_{B(x,2s)} \frac{ |F|^2(y)}{\max(d(x,y), r_{\Lambda}(x) )^{n-2}} dvol(y) + 	\int_M \frac{ |F|^2(y)}{\max(d(x,y), 2s)^{n-2}} dvol(y) 
	\\
	\leq  &  Cs^2 r_{\Lambda}(x)^{-2}  \int_M \frac{ |F|^2(y)}{\max(d(p,y), r_{\Lambda}(x) )^{n-2}} dvol(y) .
\end{split}
\]
This shows $r_{  \tilde{\Lambda}    } (p) \geq r_{\Lambda}(x)$ for $  \tilde{\Lambda} \leq Cs^2 r_{\Lambda}(x)^{-2}\Lambda $, so $p\in \mathcal{C}_{  \tilde{\Lambda}  }$.

\textbf{Item 4} is a corollary of item 3: each point in the $s$-tubular neighbourhood is contained in $\mathcal{C}_{\tilde{\Lambda}} $ with $\tilde{\Lambda}\leq C\Lambda s^2m^{2}$.
\end{proof}

We then control the measure on the neighbourhoods of the  curvature centration locus
$
B_s(\mathcal{C}_\Lambda). 
$

\begin{prop}\label{prop:measurebound}
	 Let $r>1$ and  $m\gg \Lambda\geq 1$. The curvature concentration locus satisfies the following measure estimates with constants independent of $m,\Lambda$:

\begin{enumerate}

	\item We have $
		\text{Vol}(  \mathcal{C}_\Lambda\cap \{ r(x)<r \}    )
	\leq C\Lambda ^2m^{-2}(1+ m^{-1} r^{n-4} ).
	$

	\item Whenever $(\Lambda m)^{-1/2}\gg s\geq m^{-1}$, we have the Minkowski measure estimate	\[
	\text{Vol}( B_s( \mathcal{C}_{\Lambda}) \cap \{ r(x)<r \}    )
	\leq    C(ms^2 \Lambda)^2  (1+ m^{-1} r^{n-4}).
	\]

\end{enumerate}

\end{prop}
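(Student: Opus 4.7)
The plan is to prove item~1 first and then deduce item~2 from it via the tubular neighbourhood containment of Lemma~\ref{lem:characteristicradius} item~4. On each dyadic annulus $\{2^k \leq r(x) \leq 2^{k+1}\}$ with $2^k \leq r$ and $s$ sufficiently small compared to $2^k$, that containment gives $B_s(\mathcal{C}_\Lambda) \cap \{2^k \leq r(x) \leq 2^{k+1}\} \subset \mathcal{C}_{\Lambda'}$ with $\Lambda' := C\Lambda s^2 m^2$; the hypothesis $s \ll (\Lambda m)^{-1/2}$ guarantees $\Lambda' \ll m$ so that item~1 applies, producing a bound of order $(\Lambda')^2 m^{-2}\bigl(1 + m^{-1} 2^{k(n-4)}\bigr) = C(\Lambda s^2 m)^2 \bigl(1 + m^{-1} 2^{k(n-4)}\bigr)$ per annulus. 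Summing the geometric series in $2^{k(n-4)}$ for $2^k \leq r$ delivers the claimed bound in item~2, while the innermost piece $\{r(x)\leq 1\}$ together with any annuli where $R \lesssim s$ contributes a volume of order $s^n$, dominated by $(\Lambda s^2 m)^2$ under the stated constraints.

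For item~1, the strategy is a Chebyshev-style integration built on Lemma~\ref{lem:characteristicradius}. For each $p \in \mathcal{C}_\Lambda$, item~1 of that Lemma furnishes $\int_{B(p, r_{2\Lambda}(p))} |F|^2(x)/\max(d(x,p), m^{-1})^{n-2}\, dvol(x) \geq (2\Lambda)^{-1} m^2$, while item~2 bounds $r_{2\Lambda}(p) \leq R_\ast := C\sqrt{\Lambda}\, m^{-1}\sqrt{1 + mR^{4-n}}$ whenever $R \leq r(p) < 2R$. Fixing such a dyadic shell and writing $V_R$ for the measure of $\mathcal{C}_\Lambda$ in it, I integrate the weighted lower bound over $p \in \mathcal{C}_\Lambda$ and swap integration order by Fubini:
\[
V_R \cdot (2\Lambda)^{-1} m^2 \leq \int_M |F|^2(x) \int_{B(x, R_\ast) \cap \{R \leq r(p) < 2R\}} \frac{dvol(p)}{\max(d(x,p), m^{-1})^{n-2}}\, dvol(x).
\]
A direct radial computation bounds the inner $p$-integral by $CR_\ast^2$, and it vanishes unless $r(x)$ lies within a bounded neighbourhood of $[R,2R]$. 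Applying the monotonicity bound of Lemma~\ref{lem:monotonicity2} to control $\int_{r(x)\sim R} |F|^2 \leq C(m + R^{n-4})$ then yields $V_R \leq C\Lambda^2 m^{-4}\bigl(1 + mR^{4-n}\bigr)\bigl(m + R^{n-4}\bigr)$, which expands into three pieces $C\Lambda^2 m^{-3}$, $C\Lambda^2 m^{-4} R^{n-4}$, and $C\Lambda^2 m^{-2} R^{4-n}$.

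The main obstacle to overcome is the constant piece $C\Lambda^2 m^{-3}$, which naively generates a $\log r$ factor upon dyadic summation, whereas the target bound $C\Lambda^2 m^{-2}(1 + m^{-1} r^{n-4})$ contains no such logarithm. This log factor is nevertheless absorbed uniformly: for $r \leq e^m$ one has $m^{-1}\log r \leq 1$ so the offending term is bounded by $C\Lambda^2 m^{-2}$, while for $r \geq e^m$ the contribution $m^{-1} r^{n-4}$ dominates $m^{-1}\log r$ by a huge margin. The decreasing geometric series $\sum_{k \geq 0} 2^{k(4-n)}$ produces a bounded constant, contributing $C\Lambda^2 m^{-2}$; the growing series $\sum_{2^k \leq r} 2^{k(n-4)} \lesssim r^{n-4}$ contributes $C\Lambda^2 m^{-4} r^{n-4} \leq C\Lambda^2 m^{-3} r^{n-4}$, matching the $m^{-1} r^{n-4}$ term in the target. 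After reassembling these contributions, item~1 follows, and item~2 is then obtained by the reduction in the first paragraph.
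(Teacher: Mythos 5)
Your proof is correct and follows essentially the same route as the paper: both items rest on the Chebyshev/Fubini argument built from items 1, 2 and 4 of Lemma \ref{lem:characteristicradius}, with item 2 reduced to item 1 via the tubular neighbourhood containment. The only substantive difference is in the dyadic summation for item 1, where the paper keeps the annular integrals $\int_{r/2<r(y)<4r}|F|^2$ and sums them with bounded overlap directly to $\int_{r(y)\leq 2r}|F|^2\leq C(m+r^{n-4})$ (after first simplifying $r_{2\Lambda}^2\leq C\Lambda m^{-1}$ since $R^{4-n}+m^{-1}\leq 2$), thereby avoiding the $\log r$ factor and the cross terms that you must absorb separately --- your absorption via $\log r\leq Cr^{n-4}$ is nonetheless valid.
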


\begin{proof}
\textbf{Item 1}. For any $r>1$, and $m\gg \Lambda\geq 1$, we have
\[
\begin{split}
	& (2\Lambda)^{-1} m^2 \text{Vol}(  \mathcal{C}_\Lambda\cap \{ r<r(x)<2r \}    )
	\\
	& \leq 	\int_{\{x\in \mathcal{C}_\Lambda: r<r(x)<2r \}    } \int_{B(x, r_{2\Lambda}(x)) } \frac{ |F|^2(y)} { \max( d(x,y), m^{-1})^{n-2} } dvol(x)dvol(y) 
	\\
	&  \leq \int_{   r/2<r(y)< 4r }   \int_{ \{ x\in \mathcal{C}_\Lambda, d(x,y)\leq r_{2\Lambda}(x) \}   }\frac{ |F|^2(y)} { \max( d(x,y), m^{-1})^{n-2} } dvol(x)dvol(y) 
	\\
	&
	\leq    C\int_{   r/2<r(y)< 4r } |F|^2(y)   \sup_{x\in  \mathcal{C}_\Lambda, d(x,y)\leq r_{2\Lambda}(x)    } r_{2\Lambda}^2(x)   dvol(y)
	\\
	&  \leq     C\Lambda m^{-1} (r^{4-n}+m^{-1})    \int_{   r/2<r(y)< 4r } |F|^2(y)      dvol(y)
	\\
	& \leq  C\Lambda m^{-1}     \int_{   r/2<r(y)< 4r } |F|^2(y)      dvol(y)
\end{split}
\]
Here the second line uses item 1 in Lemma  \ref{lem:characteristicradius}, the third line applies Fubini theorem, the 4th line integrates over the $x$-variable, and the 5th line applies item 2 in Lemma \ref{lem:characteristicradius}. 
Summing over  all dyadic scales up to $r$, we see
\[
\begin{split}
 \text{Vol}(  \mathcal{C}_\Lambda\cap \{ r(x)<r \}    )
\leq  & C\Lambda^2 m^{-3}  \int_{ r(y)\leq 2r}  |F|^2(y)dvol(y) 
\\
\leq  & C\Lambda^2 m^{-2}(1+ m^{-1} r^{n-4} ),
\end{split}
\]
where the second inequality uses the monotonicity formula Lemma \ref{lem:monotonicity2}.

\textbf{Item 2}. 
Here $m^{-1}\leq s\ll (\Lambda m)^{-1/2} \ll r$. By item 4 in Lemma \ref{lem:characteristicradius},
\[
\text{Vol}(  B_s( \mathcal{C}_{\Lambda}) \cap \{ r(x)<r \}    ) \leq  \text{Vol}(  \mathcal{C}_{Cm^2s^2 \Lambda}\cap \{ r(x)<r \}    ).
\]
Since $Cm^2 s^2 \Lambda\ll m$, the RHS $\leq C(m^2s^2 \Lambda)^2 m^{-2} (1+ m^{-1} r^{n-4})$ by item 1.
\end{proof}

\begin{cor}\label{cor:L1Fperp}
Let $r>1$ and $m\gg \Lambda_0>1$ for $\Lambda_0$ is the constant fixed in Lemma  \ref{lem:curvatureconcentrationlocus1}.
\begin{enumerate}
	\item  The $L^1$-curvature in the curvature concentration locus is bounded by
	\[
	\int_{ \mathcal{C}_{\Lambda_0}\cap \{   r(x)<r  \} }   |F|\leq C m^{-1/2} (1+ m^{-1} r^{n-4} ).
	\]
	\item  We have the $L^1$-estimate on $F^\perp$,
	\[
		\int_{ \{   r(x)<r  \} }   |F^\perp|\leq C m^{-1/2}(1+ m^{-1} r^{n-4} ).
	\]

\end{enumerate}
\end{cor}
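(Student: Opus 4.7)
The plan is to derive both items from Cauchy--Schwarz together with the measure estimates of Proposition \ref{prop:measurebound}, the $L^2$ curvature bound of Lemma \ref{lem:monotonicity2}, and the exponential $L^2$ decay of $F^\perp$ from Proposition \ref{prop:expdecay2}.

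For item 1, apply Cauchy--Schwarz on $\mathcal{C}_{\Lambda_0}\cap\{r(x)<r\}$:
\[
\int_{\mathcal{C}_{\Lambda_0}\cap\{r(x)<r\}} |F| \leq \mathrm{Vol}\bigl(\mathcal{C}_{\Lambda_0}\cap\{r(x)<r\}\bigr)^{1/2}\left(\int_{\{r(x)<r\}}|F|^2\right)^{1/2}.
\]
By Proposition \ref{prop:measurebound} item 1 (with $\Lambda=\Lambda_0$ fixed), the first factor is at most $Cm^{-1}(1+m^{-1}r^{n-4})^{1/2}$, and by Lemma \ref{lem:monotonicity2} the second is at most $C(m+r^{n-4})^{1/2}=Cm^{1/2}(1+m^{-1}r^{n-4})^{1/2}$. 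Multiplying gives the desired $Cm^{-1/2}(1+m^{-1}r^{n-4})$.

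For item 2, decompose $\{r(x)<r\}$ into dyadic tubular shells around $\mathcal{C}_{\Lambda_0}$: let $A_0 = B_{m^{-1}}(\mathcal{C}_{\Lambda_0})\cap\{r(x)<r\}$, and for $k\geq 1$,
\[
A_k = \bigl(B_{2^k m^{-1}}(\mathcal{C}_{\Lambda_0})\setminus B_{2^{k-1}m^{-1}}(\mathcal{C}_{\Lambda_0})\bigr)\cap\{r(x)<r\}.
\]
On $A_0$ bound $|F^\perp|\leq|F|$, and apply Cauchy--Schwarz using Proposition \ref{prop:measurebound} item 2 at $s=m^{-1}$ together with Lemma \ref{lem:monotonicity2}; the computation is identical to item 1 and yields $\int_{A_0}|F^\perp|\leq Cm^{-1/2}(1+m^{-1}r^{n-4})$. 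On each shell $A_k$ with $k\geq 1$ satisfying $2^k\leq c\,m^{1/2}$, use the measure bound at $s=2^k m^{-1}$ to get $|A_k|\leq C\cdot 16^k\,m^{-2}(1+m^{-1}r^{n-4})$, and the exponential $L^2$ decay of Proposition \ref{prop:expdecay2} at scale $2^{k-1}m^{-1}$ to get $\int_{A_k}|F^\perp|^2\leq Ce^{-2^{k-1}}(m+r^{n-4})$. Cauchy--Schwarz then produces
\[
\int_{A_k}|F^\perp|\leq Cm^{-1/2}\,4^k e^{-2^{k-2}}\,(1+m^{-1}r^{n-4}),
\]
and the factor $4^k e^{-2^{k-2}}$ is summable over $k$.

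The only genuinely subtle point is the transition regime $2^k\gtrsim m^{1/2}$, where the Minkowski measure bound of Proposition \ref{prop:measurebound} item 2 no longer applies. Here one falls back on the trivial ambient volume estimate $\mathrm{Vol}(\{r(x)<r\})\leq Cr^n$, but the exponential decay factor $e^{-2^{k-1}}\leq e^{-c\,m^{1/2}}$ dwarfs any polynomial in $m$ and $r$ for $m$ large, so these shells contribute a quantity of size $o(m^{-N})$ for every fixed $N$, which is absorbed into the final constant. Summing the contributions of $A_0$ and all $A_k$ gives item 2.
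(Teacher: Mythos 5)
Your proof is correct and follows essentially the same route as the paper: Cauchy--Schwarz combined with the measure bounds of Prop.~\ref{prop:measurebound}, the $L^2$ bound of Lemma~\ref{lem:monotonicity2}, and the exponential decay of Prop.~\ref{prop:expdecay2}, the only (cosmetic) difference being that you sum over dyadic shells $B_{2^k m^{-1}}\setminus B_{2^{k-1}m^{-1}}$ while the paper uses arithmetic shells of width $m^{-1}$. Your explicit treatment of the regime $2^k\gtrsim m^{1/2}$, where the Minkowski bound fails but the exponential factor dominates, correctly fills in a step the paper's sketch leaves implicit.
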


\begin{proof}
\textbf{Item 1} follows from Cauchy-Schwarz, the $L^2$-energy bound in Lemma \ref{lem:monotonicity2}, and the measure bound  in Prop. \ref{prop:measurebound}.

\textbf{Item 2}. We use item 1 to control the $L^1$ integral within $\mathcal{C}_{\Lambda_0}$. On the other hand, one can show
\[
\int_{ \{   r(x)<r  \}\setminus \mathcal{C}_{\Lambda_0}   }   |F^\perp|\leq C m^{-1/2}(1+ m^{-1} r^{n-4} )
\]
by decomposing 
$\{   r(x)<r  \}\setminus B_{m^{-1}}(\mathcal{C}_{\Lambda_0} )  $ into a union of thin shells
\[
\{   r(x)<r  \}\cap B_{s+m^{-1}}(  \mathcal{C}_{\Lambda_0}   )\setminus B_{s}(  \mathcal{C}_{\Lambda_0}   ), \quad s=0, m^{-1}, 2m^{-1},\ldots ,
\]
and control the $L^1$-norm on each shell 
using Cauchy-Schwarz,  the exponential decay of $L^2$ energy in item 2 of Prop. \ref{prop:expdecay2}, and the measure bound on the tubular neighbourhoods in item 2 of Prop. \ref{prop:measurebound}.
\end{proof}

	\begin{rmk}
		In our estimates, 
		the dependence on $m\gg \Lambda\geq  1$ is probably not sharp. On the unit ball in 
		the local model in Example \ref{eg:flatmodel}, for $\Lambda\sim 1$,
		\[
		\begin{cases}
				\text{Vol}(  \mathcal{C}_{\Lambda}\cap B(1)   )\sim m^{-3},
				\\
				\int_{B(1)\cap   \mathcal{C}_{\Lambda }} |F|  \sim m^{-1},
				\\
					\int_{B(1)\cap   \mathcal{C}_{\Lambda }} |F^\perp|  \sim m^{-1},
					\\
					\text{Vol}(  B_s( \mathcal{C}_{\Lambda}) \cap B(1)    )\sim s^3,\quad   m^{-1}\leq s\leq 1.
		\end{cases}
		\]
		The place where our estimates fail to be optimal, is when we applied the non-sharp upper bound on $r_\Lambda(x)$ using Cor. \ref{cor:monotonicity2}. The root issue is that the energy monotonicity formula captures  $(n-4)$-dimensional energy growth, but  $\int_{B(p,s)} |F|^2$ really has $\sim ms^{n-3}$ growth in Example \ref{eg:flatmodel}. Can one prove an effective estimate for large $m$ that captures this $(n-3)$-dimensional energy growth (\cf Remark \ref{rmk:monotonicityminsurface} below)?

	\end{rmk}

	\subsection{Parallel curvature component}

	We now turn to the parallel component $F^\parallel$ of the curvature along the $\Phi$ direction. This is proportional to $\Tr(F\Phi)=\Tr(F^\parallel \Phi)$.

	\begin{lem}\label{lem:traceeqn}
		 In the $G_2$-monopole case, $\Tr(F\Phi)$ satisfies the following linear differential equations
	\begin{equation}
	\begin{cases}
		d\Tr(F\Phi)= \Tr(F\wedge \nabla \Phi), 
		\\
	\Tr(F\Phi)\wedge \psi= *\frac{1}{2} d\Tr(\Phi^2),
		\\
		d* \Tr(F\Phi)=- d 	\Tr(F\Phi) \wedge \phi= -  \Tr(F\wedge \nabla \Phi)\wedge \phi,
	\end{cases}
	\end{equation}
	while in the Calabi-Yau monopole case,
	\begin{equation}
		\begin{cases}
		d\Tr(F\Phi)= \Tr(F\wedge \nabla \Phi), 
		\\
		\Tr(F\Phi)\wedge \text{Re}\Omega= *\frac{1}{2} d\Tr(\Phi^2),
		\\
		d * \Tr(F\Phi)= - d\Tr(F\Phi) \wedge \omega=   -  \Tr(F\wedge \nabla \Phi)\wedge \omega  .
	\end{cases}
	\end{equation}
	\end{lem}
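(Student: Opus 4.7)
Plan. I would prove the three identities in turn, treating the $G_2$ and Calabi-Yau cases in parallel since they follow the same pattern. Equation~(1) is the most straightforward: it follows from the graded Leibniz rule applied to the ad-invariant pairing, $d\Tr(F\Phi) = \Tr(d_A F\cdot\Phi) + \Tr(F\wedge\nabla\Phi)$, combined with the Bianchi identity $d_A F = 0$. Equation~(2) follows directly from the monopole equation $F\wedge\psi = *\nabla\Phi$ in the $G_2$ case (resp.\ $F\wedge\text{Re}\,\Omega = *\nabla\Phi$ in the Calabi-Yau case): trace with $\Phi$, pull the scalar-valued $\Phi$ through the wedge to get $\Tr(F\Phi)\wedge\psi = \Tr((F\wedge\psi)\Phi) = *\Tr(\Phi\,\nabla\Phi)$, and use the elementary identity $\Tr(\Phi\nabla\Phi) = \tfrac12 d\Tr(\Phi^2)$.

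For equation~(3) the second equality is just a restatement of~(1). For the first equality $d*\Tr(F\Phi) = -d\Tr(F\Phi)\wedge\phi$, I would use $d\phi = 0$ to rewrite it as $d\bigl(*\Tr(F\Phi) + \Tr(F\Phi)\wedge\phi\bigr) = 0$. The $G_2$-algebraic identity $*\alpha = \tfrac12\pi_7(\alpha)\wedge\phi - \pi_{14}(\alpha)\wedge\phi$ for 2-forms $\alpha$ (encoding the $+2$/$-1$ eigenspace decomposition of $\alpha\mapsto *(\alpha\wedge\phi)$ on $\Lambda^2_7\oplus\Lambda^2_{14}$) yields $*\alpha + \alpha\wedge\phi = \tfrac32\pi_7(\alpha)\wedge\phi$, so it suffices to show $d(\pi_7\Tr(F\Phi))\wedge\phi = 0$. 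Part~(2) combined with the $G_2$-isomorphism $\Lambda^2_7 \cong T^*M$ given by $\iota_X\phi\leftrightarrow X^\flat$ (normalized by $\iota_X\phi\wedge\psi = 3\,{*}X^\flat$) identifies $\pi_7(\Tr(F\Phi)) = \iota_X\phi$ where $X = \tfrac16\nabla\Tr(\Phi^2)$ is a \emph{gradient} vector field. By Cartan's magic formula and $d\phi = 0$, $d(\iota_X\phi) = \mathcal{L}_X\phi$, which for $X = \nabla h$ depends only on the symmetric Hessian $H = \nabla^2 h$.

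The decisive step is a Schur-lemma argument: the $G_2$-equivariant map $S^2 T^*M \ni H\mapsto H\cdot\phi\wedge\phi \in \Lambda^6 M \cong T^*M$ must vanish identically, because $S^2 T^*M$ decomposes under $G_2$ as $\mathbb{R}\oplus S^2_0 T^*M \cong \Lambda^3_1 \oplus \Lambda^3_{27}$, neither summand being isomorphic to the standard representation $T^*M \cong \Lambda^3_7$. Hence $\mathcal{L}_{\nabla h}\phi\wedge\phi = 0$ for any scalar $h$, completing the argument in the $G_2$ case.

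The Calabi-Yau case proceeds analogously. Primitivity of $F$ (from $F\wedge\omega^2 = 0$) forces $\Tr(F\Phi)$ to be primitive, and the Hodge-star formula for primitive 2-forms on a K\"ahler 3-fold gives $*\alpha + \alpha\wedge\omega = 2(\alpha^{2,0}+\alpha^{0,2})\wedge\omega$. Part~(2) in Hodge type $(2,3)$ reads $\alpha^{2,0}\wedge\bar\Omega = {*}\bar\partial\Tr(\Phi^2)$, which using $\Omega\wedge\bar\Omega \propto \mathrm{dvol}$ identifies $\alpha^{2,0} = \iota_{X'}\Omega$ for a $(1,0)$-vector field $X'$ whose components are proportional to $\partial_{\bar k}\Tr(\Phi^2)$. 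Using $\Omega\wedge\omega = 0$ (by type $(4,1)$) and the Leibniz rule, $\bar\partial\alpha^{2,0}\wedge\omega$ reduces to the antisymmetric part of $\bar\partial X'$, which vanishes because mixed partials of the scalar $\Tr(\Phi^2)$ commute. I expect the main obstacle throughout to be equation~(3): while the Schur/symmetry mechanism underlying the vanishing is conceptually clean, normalizing the constants in the $G_2$ and K\"ahler algebraic identities and verifying that the various type-decomposition formulas assemble correctly requires careful bookkeeping.
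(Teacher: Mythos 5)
Your proposal is correct, and for the first two identities it coincides with the paper's proof (Bianchi identity for the first; tracing the monopole equation against $\Phi$ and using $\Tr(\Phi\nabla\Phi)=\tfrac12 d\Tr(\Phi^2)$ for the second). For the third identity you and the paper start from the same algebraic decomposition — $*v+v\wedge\phi$ is proportional to the $\Lambda^2_7$-part of $v$ wedged with $\phi$ — but you close the argument differently. The paper observes that $v_7\wedge\phi$ is proportional to $\psi\wedge *(v\wedge\psi)$, and since identity (2) exhibits $*(v\wedge\psi)=\tfrac12 d\Tr(\Phi^2)$ as an exact form while $\psi$ is closed, $d(\psi\wedge *(v\wedge\psi))=\pm\psi\wedge dd(\tfrac12\Tr\Phi^2)=0$ in one line. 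You instead convert $v_7$ to $\iota_X\phi$ with $X$ a gradient, apply Cartan's formula, and kill $\mathcal{L}_{\nabla h}\phi\wedge\phi$ by a Schur-lemma computation on the $G_2$-equivariant map $S^2T^*M\to\Lambda^6$ (using $S^2\,\mathbf{7}\cong\mathbf{1}\oplus\mathbf{27}$ versus $\Lambda^6\cong\mathbf{7}$); the same input (identity (2)) is used, but the mechanism for the vanishing is representation-theoretic rather than the bare observation that $d$ of an exact form vanishes. Both are valid; the paper's route is shorter and transparently fixes the constant $-1$ from the eigenvalue identity $*v=\tfrac12 v_7\wedge\phi-v_{14}\wedge\phi$, which your computation also recovers. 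Your Calabi-Yau discussion is likewise a more laborious but sound version of the paper's one-line remark that the argument goes through with $\psi$ replaced by $\mathrm{Re}\,\Omega$ (and $\phi$ by $\omega$), the only extra input being primitivity of $\Tr(F\Phi)$ from $F\wedge\omega^2=0$, which you correctly identify.
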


	\begin{proof}
	In both case, the Bianchi identity implies 
	\[
		d\Tr(F\Phi)= \Tr(d_A F\wedge \Phi)+\Tr(F\wedge \nabla \Phi)= \Tr(F\wedge \nabla \Phi).
		\]
	In the $G_2$-monopole case, 
	\[
		* (\Tr(F\Phi)\wedge \psi)= \Tr(\Phi\nabla \Phi)= \frac{1}{2} d\Tr(\Phi^2),
	\]
 so $d*(\Tr(F\Phi)\wedge \psi)= 0.$ Now for any 2-form $v$, in general the 6-form $d * v$ is a linear combination of $	dv\wedge \phi$ and $	\psi\wedge d*(v\wedge \psi)$. 
	The Calabi-Yau monopole case is very similar, with $\psi$ replaced by $\text{Re}\Omega$. 
	\end{proof}

		\subsubsection{$L^1$ type estimate on the parallel curvature component}

	\begin{cor}\label{cor:traceeqn}
For any $r>1$, we have the $L^1$-estimate
	\[
	\int_{ r(x)<r}  |d\Tr(\Phi F)|+ |d^* \Tr (\Phi F)| \leq   Cm+ Cm^{1/2} r^{(n-4)/2}.
	\]
	\end{cor}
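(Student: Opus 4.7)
The plan is to reduce both quantities to a pointwise bound in terms of $|F||\nabla \Phi|$, and then apply Cauchy--Schwarz together with the $L^2$ bounds already established.

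First, I would invoke Lemma \ref{lem:traceeqn}. In the $G_2$ case it gives $d\Tr(F\Phi) = \Tr(F\wedge \nabla\Phi)$ and $d^*\Tr(F\Phi) = -*(\Tr(F\wedge\nabla\Phi)\wedge \phi)$; the Calabi--Yau case is identical with $\phi$ replaced by $\omega$. In both cases the right-hand sides are pointwise controlled by a universal constant times $|F||\nabla\Phi|$, so it suffices to bound $\int_{r(x)<r} |F||\nabla\Phi|$.

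Next, by Cauchy--Schwarz,
\[
\int_{r(x)<r} |F||\nabla\Phi| \;\leq\; \Bigl(\int_{r(x)<r}|F|^2\Bigr)^{1/2}\Bigl(\int_{r(x)<r}|\nabla\Phi|^2\Bigr)^{1/2}.
\]
The first factor is bounded by $(Cm + Cr^{n-4})^{1/2}$ via Lemma \ref{lem:monotonicity2} (using that the intermediate energy is $\mathcal{E}^\psi = 2\pi m \langle\beta\cup[\psi]|_\Sigma,[\Sigma]\rangle \leq Cm$, or likewise in the Calabi--Yau case). The second factor is bounded by $(Cm)^{1/2}$ using the topological intermediate energy formulas (\ref{eqn:intermediateenergy2}) or (\ref{eqn:intermediateenergyCY2}), which give $\int_M |\nabla\Phi|^2 \leq Cm$ globally.

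Multiplying these two bounds yields $Cm + Cm^{1/2} r^{(n-4)/2}$, which is the claimed estimate. No step presents a real obstacle here; the corollary is essentially a packaging of the pointwise identities in Lemma \ref{lem:traceeqn} together with the calibration-based $L^2$ curvature estimate of Lemma \ref{lem:monotonicity2} and the intermediate energy identity. The only thing to be careful about is that $d^*\Tr(F\Phi)$ is expressed through a $*$ of a wedge with a parallel form, so no derivatives of $\phi$ (or $\omega$) appear and the pointwise $C|F||\nabla\Phi|$ bound is clean.
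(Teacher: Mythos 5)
Your proposal is correct and follows essentially the same route as the paper: reduce via Lemma \ref{lem:traceeqn} to $\int_{r(x)<r}|F||\nabla\Phi|$, then apply Cauchy--Schwarz with the calibration bound of Lemma \ref{lem:monotonicity2} for $\int|F|^2$ and the intermediate energy formula for $\int_M|\nabla\Phi|^2\leq Cm$. Nothing further is needed.
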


	\begin{proof}
	We combine Lemma \ref{lem:traceeqn} with the monotonicity formula Lemma \ref{lem:monotonicity2}, and the bound $\int_M |\nabla \Phi|^2\leq Cm$ from the intermediate energy formula:
	\[
	\begin{split}
			\int_{  r(x)<r}  |d\Tr(\Phi F)|+ |d^* \Tr (\Phi F)| & \leq  C	\int_{ r(x)<r} |F| |\nabla \Phi| 
			\\
			&   \leq  C	(\int_{ r(x)<r} |F|^2  )^{1/2}  ( \int_M |\nabla \Phi|^2)^{1/2} 
			\\
			&  \leq  C	(m+ r^{n-4} )^{1/2}  m^{1/2} .
	\end{split}
	\]
	\end{proof}

	By \cite{Oliveira1}, for fixed $m$,  the connection $A$ converges to a $U(1)$ pseudo-HYM connection $A_\infty$, while $\Phi$ converges to a parallel adjoint section $\Phi_\infty$ in the asymptotic geometry at $r(x)\to +\infty$. By the Bolgomolny trick computation in the Section \ref{sect:Oliveira}, 
in particular $-\frac{ \Tr (F\Phi)}{4\pi m}$ converges to the harmonic representative of the monopole class $\beta\in H^2(\Sigma)$. We wish to improve this qualitative result to a quantitative estimate uniform in large $m$. This invovles a slightly tricky Lemma in Hodge theory on the AC manifold $M$, presented in the Appendix.

We fix a reference 2-form $\sigma_0$ on $M$ which is asymptotic to  the harmonic representative of the monopole class $\beta\in H^2(\Sigma)$ as $r(x)\to +\infty$, and which satisfies $d\sigma_0= d^*\sigma_0=0$ outside a fixed compact set. Thus $d\sigma_0$ is compactly supported, and in the  long exact sequence (\ref{eqn:LES}), it represents the image of the class $\beta$ in the connecting map $H^2(\Sigma)\to H^3_c(M)$, which is not necessarily zero, so in general $d\sigma_0$ and $d^*\sigma_0$ do not vanish globally.

	\begin{prop}\label{prop:parallelcurvatureL1}
We fix any $1-n<\nu< -1-n/2$, and suppose $m\geq 1$.
	There exists some $L^2$ harmonic  2-form $\sigma\in L^2\mathcal{H}^2(M)\simeq H^2_c(M)$ depending on $(A,\Phi)$, such that the following holds:
	\begin{enumerate}
		\item  $	\dashint_{r<r(x)<2r} | \frac{\Tr(\Phi F)}{4\pi m} +\sigma_0-\sigma|  \leq C \max(  r^{\nu},  m^{-1/2} r^{- 1-n/2}      ). $

		
		\item $	\norm{\sigma}_{L^2}^2 \leq Cm.$

		\item The class $[\sigma]\in H^2_c$ is integral.
		
		\item  We have a pointwise estimate at any $p$ with  $1<r<r(p)< 2r$,
		\[
		\begin{split}
		 |\frac{\Tr(\Phi F)}{4\pi m} +\sigma_0-\sigma|(p)  \leq & C \max(  r^{\nu},  m^{-1/2} r^{- 1-n/2}      )
		 \\
		 + & \int_{ r/2< r(x)<  4r  }    \frac{  m^{-1} |F| |\nabla \Phi| }{ \text{dist}(x,p)^{n-1}  }dvol(x)    .
		\end{split}
		\]
	
	\end{enumerate}

	\end{prop}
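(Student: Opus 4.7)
The approach is to realize $\alpha := \frac{\Tr(\Phi F)}{4\pi m} + \sigma_0$ as an $L^2$-harmonic form plus a controlled error, using the weighted Hodge theory on AC manifolds developed in the Appendix.

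First I would use Lemma \ref{lem:traceeqn} to compute $d\alpha$ and $d^*\alpha$ explicitly. By the choice of $\sigma_0$, the contributions $d\sigma_0$ and $d^*\sigma_0$ are compactly supported, while the remaining parts $\frac{\Tr(F\wedge\nabla\Phi)}{4\pi m}$ and $\mp\frac{\Tr(F\wedge\nabla\Phi)\wedge\phi}{4\pi m}$ (with the analogous Calabi-Yau expression involving $\omega$) have their dyadic $L^1$-norms controlled by the combination of Cauchy-Schwarz, the monotonicity formula Lemma \ref{lem:monotonicity2}, and the intermediate energy identities (\ref{eqn:intermediateenergy2}), (\ref{eqn:intermediateenergyCY2}), exactly as in Cor. \ref{cor:traceeqn}. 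Note that on a dyadic annulus the bound is of order $1 + m^{-1/2}r^{(n-4)/2}$, giving local-in-scale data for the Hodge decomposition.

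Next I would invoke the Appendix Hodge lemma to decompose $\alpha = \sigma + \gamma$ with $\sigma \in L^2\mathcal{H}^2(M)$ and $\gamma$ represented by Green's integrals against $d\alpha$ and $d^*\alpha$. For item 4, the representation of $\gamma(p)$ splits into three pieces: the compactly supported $d\sigma_0, d^*\sigma_0$ contribute a term bounded pointwise by $Cr^\nu$ via the weighted Green kernel on the AC end (the admissible window $1-n<\nu<-1-n/2$ is precisely where the weighted inverse Laplacian on 2-forms produces this decay rate); the far-field contribution of the $\Tr(F\wedge\nabla\Phi)/m$ term from $r(x)\notin[r/2,4r]$ is bounded by $Cm^{-1/2}r^{-1-n/2}$ by pairing the first-order Green kernel $d(x,p)^{1-n}$ against the dyadic $L^1$ bound from Cor. \ref{cor:traceeqn} and summing; the local contribution from $\{r/2<r(x)<4r\}$ gives the explicit integral appearing in item 4. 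Averaging this pointwise bound over the annulus $\{r<r(x)<2r\}$ yields item 1.

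For item 2, I would pair $\sigma$ with itself using the $L^2$-orthogonality of the harmonic projection, then convert the pairing $\langle\sigma,\alpha\rangle_{L^2}$ to a topological cup product on a large exhaustion via the Bolgomolny identity; this reduces $\|\sigma\|_{L^2}^2$ to a quantity bounded by $\frac{1}{m^2}\|F\|_{L^2}^2 \cdot m$ and $\|\sigma_0\|$-type terms, delivering the linear-in-$m$ bound. For item 3, I would shift $\sigma$ within the integer lattice of $L^2\mathcal{H}^2(M)\simeq H^2_c(M;\mathbb{R})$: the real class of $\sigma$ sits within bounded distance of the integer class determined by the Chern-Weil representation of $-\beta$ on the asymptotic $U(1)$ reduction, combined with the image of $\beta$ under the connecting map $H^2(\Sigma)\to H^3_c(M)$ applied to $\sigma_0$, so a bounded integer shift lands in $H^2_c(M;\mathbb{Z})$ without degrading items 1, 2, 4. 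The main obstacle is the weighted Hodge analysis itself: the inverse Laplacian on 2-forms must yield estimates uniform in $m$ that simultaneously accommodate both the compactly supported and the globally $L^1$-controlled source terms and produce precisely the \emph{maximum} (rather than sum) of the two decay rates in item 1, while preserving the integrality identification $L^2\mathcal{H}^2(M)\simeq H^2_c(M)$. This is exactly the content of the Appendix lemma invoked above.
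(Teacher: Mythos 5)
Your treatment of items 1, 3 and 4 follows the paper's route in substance: the dyadic $L^1$ bounds on $d$ and $d^*$ of $\frac{\Tr(\Phi F)}{4\pi m}+\sigma_0$ from Cor.~\ref{cor:traceeqn}, fed into the two-weight $L^1$-type Hodge lemma of the Appendix (Lemma~\ref{lem:2form2}), which is exactly how the paper produces $\sigma$ and the averaged estimate; the paper then obtains item 4 from item 1 by a \emph{local} Green representation for $d+d^*$ on the annulus $\{r/2<r(x)<4r\}$, whereas you propose the reverse order (pointwise first, then average). Your order is workable in principle, but note that Lemma~\ref{lem:2form2} as stated only delivers averaged control, so deriving the pointwise bound first would require you to supply the pointwise Green representation for the global parametrix yourself; the paper's order (average globally, then apply a local annulus estimate to $v=\frac{\Tr(\Phi F)}{4\pi m}+\sigma_0-\sigma$) avoids this. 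Your item 3 reaches the right conclusion (a bounded shift into the integer lattice of $H^2_c(M)$), though the appeal to Chern--Weil data is unnecessary: full rank of the lattice in the finite-dimensional space $L^2\mathcal{H}^2(M)$ already gives a bounded shift.

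The genuine gap is item 2. The pairing $\langle\sigma,\alpha\rangle_{L^2}$ with $\alpha=\frac{\Tr(\Phi F)}{4\pi m}+\sigma_0$ is not available: $\alpha$ decays only like $r^{-2}$ on the end (and $\int_M|F|^2=+\infty$, since $\int_{r(x)<R}|F|^2\sim E^\psi+CR^{n-4}$), so $\alpha\notin L^2(M)$, the decomposition $\alpha=\sigma+\gamma$ is not an orthogonal $L^2$ Hodge decomposition, and the quantity $\frac{1}{m^2}\norm{F}_{L^2}^2\cdot m$ is either infinite or, on an exhaustion $\{r(x)<R\}$, grows like $R^{n-4}/m$ and gives nothing uniform. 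The paper's argument is entirely local and elementary: on the fixed compact set $\{r(x)\lesssim 10\}$ one has $|\frac{\Tr(\Phi F)}{4\pi m}|\leq C|F|$ (using $|\Phi|\leq m$), hence by Cauchy--Schwarz and the local energy bound $\int|F|^2\leq Cm$ one gets $\int_{\{r(x)\lesssim 10\}}|\frac{\Tr(\Phi F)}{4\pi m}|\leq Cm^{1/2}$; item 1 then transfers this $L^1$ bound to $\sigma$ on that compact set, and harmonicity of $\sigma$ plus the finite-dimensionality of $L^2\mathcal{H}^2(M)$ upgrade the local $L^1$ bound to $\norm{\sigma}_{L^2}^2\leq Cm$. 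You should replace your pairing argument with this (or an equivalent local) argument.
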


	\begin{proof}
		\textbf{Item 1}.
	By the asymptotic geometry, $\frac{\Tr(\Phi F)}{4\pi m}+  \sigma_0=o(r(x)^{-2})$ at infinity. By Cor. \ref{cor:traceeqn}, for any $r>1$, 
		\[
		\begin{split}
				\dashint_{ r(x)<r}  |d   (\frac{\Tr(\Phi F)}{4\pi m}+  \sigma_0) |+ |d^*( \frac{\Tr(\Phi F)}{4\pi m}+  \sigma_0) |  \leq & C(1+ m^{-1/2} r^{(n-4)/2})r^{-n}
				\\
				\leq & C \max(  r^{\nu-1},  m^{-1/2} r^{- 2-n/2}      ).
		\end{split}
	\]
	where we absorbed the error term $d\sigma_0$ and $d^*\sigma$ in a compact region into $C$. For $n=6,7$ we note that $-n< - 2-n/2<-3$. We can then apply Lemma \ref{lem:2form2} (with $\nu$ replaced by $\nu-1$) to obtain an $L^2$ harmonic 2-form $\sigma$ such that
	\[
		\dashint_{ r< r(x)<2r}  |\frac{\Tr(\Phi F)}{4\pi m}+  \sigma_0-\sigma | \leq C \max(  r^{\nu},  m^{-1/2} r^{- 1-n/2}      ). 
	\]



	\textbf{Item 2}. On the compact subset $\{   r(x)\lesssim 10 \}$, 
		\[
		\int_{  \{   r(x)\lesssim 10 \} } |\frac{\Tr(\Phi F)}{4\pi m} |^2 \leq 	C\int_{  \{   r(x)\lesssim 10 \} } |F|^2 \leq Cm,
	\]
	so by Cauchy-Schwarz,
	\[
		\int_{  \{   r(x)\lesssim 10 \} } |\frac{\Tr(\Phi F)}{4\pi m} | \leq Cm^{1/2}. 
	\]
	By item 1,  we see $	\int_{  \{   r(x)\lesssim 10 \} } |\sigma | \leq 	Cm^{1/2}$.  
	But $\sigma$ is harmonic, so $ \norm{  \sigma}_{C^0( \{   r(x)\lesssim 10 \}   )}\leq Cm^{1/2}$. Now $L^2\mathcal{H}^2(M)$ is a finite dimensional space, so this bounds also the $L^2$-norm.

	\textbf{Item 3}. We can adjust $\sigma$ by a bounded element in $L^2\mathcal{H}^2(M)\simeq H^2_c(M)$, to make it lie on the integral lattice in $H^2_c(M)$. The other estimates are not affected.

	\textbf{Item 4}. We apply the Green representation formula estimate for the $d+d^*$ operator on $\{ r/2< r(x)<4r \}$: for any $p$ with $r<r(p)<2r$, 
	\[
	|v|(p) \leq C\dashint_{  r/2<r(x)< 4r} |v| + C\int_{ r/2<r(x)< 4r} \frac{ |dv|+ |d^* v|}{ \text{dist}(x,p)^{n-1}  }dvol(x),
	\]
	to the differential form $v= \frac{\Tr(\Phi F)}{4\pi m}+\sigma_0-\sigma$.
	\end{proof}

We make a few remarks:
	\begin{enumerate}
		\item    Geometrically,  the $L^2$-harmonic 2-form $\sigma\in L^2\mathcal{H}^2(M)\simeq H^2_c(M)$ appears because the first Chern class of a $U(1)$ connection on $M$ is not determined by its restriction to the cross section $\Sigma$ at infinity, but  only up to a class in the compactly supported cohomology $H^2_c(M)$. We did not rule out the possibility that the local $L^1$-norm of $\sigma$ on balls may be much bigger compared to $\frac{\Tr(\Phi F)}{4\pi m}-\sigma$, neither do we know any example for this phenomenon when $c_2(P)$ is fixed. The point is however that modulo this finite dimensional space $L^2\mathcal{H}^2(M)$, then $\frac{\Tr(\Phi F)}{4\pi m}$ has a \emph{uniform local $L^1$ bound} which is independent of large $m$.

		\item  The usual intuition in analysis is that on a fixed ball, $L^2$-bounds are stronger than $L^1$-bounds. Here however one should view $L^1$ and $L^2$ as capturing very different kinds of information, and $L^1$-bounds are much better behaved in its dependence on $m$.

	As an illustration,	in Example \ref{eg:flatmodel} when $m$ is very large, the $L^p$ curvature integral
		\[
		\int_{B(1)} |F|^p  dvol \sim \begin{cases}
			m^{2p-3},\quad  &p> 3/2,
			\\
			1, \quad &p< 3/2.
		\end{cases}
		\]
		In the $p<3/2$ case, the density $|F|^p dvol$ is dispersed over $B(1)$, while for $p>3/2$, the density is concentrated near the $O(m^{-1})$-neighbourhood of the calibrated plane $Q$.

		As another illustration, in Cor. \ref{cor:L1Fperp} we proved the (likely non-optimal) $L^1$-estimate on the perpendicular curvature $F^\perp$, and in particular $\int_{r(x)\lesssim 10} |F^\perp| \leq Cm^{-1/2}\ll 1$.  The upshot is that  in the $L^1$-norm, $F^\perp$ is much smaller than $F^\parallel$, which can be viewed as an \emph{abelianization effect} in the generic region for large mass $m$. On the other hand, both curvature components have comparable $L^2$-integral in Example \ref{eg:flatmodel},
		\[
		\int_{B(1)} |F^\perp|^2\sim  	\int_{B(1)} |F^\parallel |^2\sim m,
		\]
		so the $L^2$-norm captures the nonabelian effect of $G_2$-monopoles (resp. Calabi-Yau monopoles) in the curvature concentration locus.

	\end{enumerate}

	\section{Chern form of the singular $U(1)$ connection}\label{sect:Chernform}

	Let $(A,\Phi)$ be a $G_2$-monopole (resp. Calabi-Yau monopole) on the AC manifold $M$ as in the Main Setting.
	 Over $M\setminus \Phi^{-1}(0)$, the unit adjoint section $\frac{\Phi}{|\Phi|}$ specifies a $U(1)$-subbundle of the $SU(2)$-bundle $P$, with an induced $U(1)$-connection. 
	Locally, we can take the associated rank 2 complex vector bundle $E$ for the $SU(2)$-bundle $P$ via the fundamental representation. The unit adjoint section $\frac{\Phi}{   |\Phi|}$ decomposes $E$ into two orthogonal complex line bundles, defined by the eigenspace condition $ \frac{\Phi}{  \sqrt{-1} |\Phi|} s= \pm  s $. Let $\pi_{\pm}=\pm \frac{1}{2\sqrt{-1}} \frac{\Phi}{|\Phi|}+\frac{1}{2}$ denote the orthogonal projections into the two line subbundles. The induced curvature on the $\pm 1$ eigen line subbundle is respectively
	$
	\pi_\pm (F + \frac{1}{2}[ \nabla \pi_\pm \wedge \nabla \pi_\pm]) \pi_\pm
	$.
	This data is equivalent to the \emph{Chern form} of the $U(1)$-bundle on $M\setminus \Phi^{-1}(0)$,
	\begin{equation}
		\begin{split}
				F_{U(1)}=& \pm \frac{1}{2\pi\sqrt{-1}  }\Tr (    \pi_\pm	(F +\frac{1}{2}[ \nabla \pi_\pm\wedge \nabla \pi_\pm])    )
				\\
				=& \frac{-1}{4\pi  } \Tr \left(    \frac{\Phi}{|\Phi|}	(F - \frac{1}{8} [ \nabla (  \frac{\Phi}{|\Phi|}  )\wedge \nabla  ( \frac{\Phi}{|\Phi|})   ])   \right).
		\end{split}
	\end{equation}
	This is a closed real 2-form representing the first Chern class of the $U(1)$-bundle.

	\begin{rmk}\label{rmk:SO(3)2}
	In the $G=SO(3)$ variant case, the same formula for $F_{U(1)}$ defines a closed 2-form on $M\setminus \Phi^{-1}(0)$, since locally the $SO(3)$-connection can be lifted to an $SU(2)$-connection, and the formula only requires the Lie algebra structure. The caveat is that instead $2F_{U(1)}$ is the representative of the first Chern class of a $U(1)$-bundle on $M\setminus \Phi^{-1}(0)$, while $[F_{U(1)}]\in H^2(M\setminus \Phi^{-1}(0),\R)$ may be only a half integral calss, \cf Remark \ref{rmk:SO(3)1}.
	\end{rmk}

	\begin{Notation}
		We consider a sequence $(A_i, \Phi_i)$ of $G_2$-monopoles (resp. Calabi-Yau monopoles) with fixed monopole class $\beta$ and second Chern class $c_2(P)$, but the mass $m_i\to +\infty$. For ease of notation, we will mostly suppress the index $i$, and we sometimes refer to the $i\to +\infty$ limit suggestively as the $m\to +\infty$ limit. We say a quantity satisfies uniform $L^1_{loc}$ estimates, if  on any fixed compact subset in $M$,  its $L^1$-norm is uniformly bounded as $i\to +\infty$. 
	\end{Notation}

We morally want to understand the  $m\to +\infty$ limit of $F_{U(1)}$ and the $(n-3)$-current $dF_{U(1)}$, which is supported on the zero locus of the Higgs field. In the grand scheme, we hope to take the weak limit of $dF_{U(1)}$ as $m\to +\infty$ to obtain the coassociative cycle (resp. special Lagrangian cycle). As a first step, the main task of this Section is to  extract some \emph{rectifiable current} in the $m\to +\infty$ limit.

The main difficulties are the following:
\begin{enumerate}
	\item  We do not have direct control over $\Phi^{-1}(0)$, such as $\mathcal{H}^{n-3}$-measure bounds, or estimates on the mass of the $(n-3)$-current $dF_{U(1)}$ inside balls, so in particular we cannot apply Federer-Fleming compactness theory to $F_{U(1)}$ directly.

	\item 
	We do not know if there is some open dense subset of $M$ such that the Chern form converges in $C^\infty_{loc}$ topology. (See Section \ref{sect:bubbling} for a conjectural mechanism that $C^\infty_{loc}$ convergence fails on every open subset, in the local analogue of the problem; this is due to the curvature concentration locus becoming arbitrarily dense in the $m\to +\infty$ limit).
	
	\item 
As $m\to +\infty$, assuming one can take a subsequential weak limit of $dF_{U(1)}$, it is a priori possible that  its support has Hausdorff dimension greater than $n-3$, and in the extreme situation may conceivably `spread over the whole manifold'.  To prove that the weak limit is a \emph{rectifiable} $(n-3)$-current, one has to control the size of the support.

	\item Suppose one can prove the weak limit $Q$ is a rectifiable $(n-3)$-current with \emph{real coefficient}, meaning that it admits the representation
	\[
Q(\alpha)=	\int_S \Theta(x) \langle \alpha(x), \xi(x) \rangle d\mathcal{H}^{ n-3 }(x),\quad \forall \alpha\in \Omega^3_c(M),
	\]
	where $S$ is a countably rectifiable set, $\xi(x)$ is the unit $(n-3)$-vector field given by $\Lambda^k T_x S$ for $\mathcal{H}^{n-3}$-a.e. $x$, and $\Theta(x)$ is a \emph{real valued} locally integrable measurable function on $S$. Then one still needs to show its multiplicity function $\Theta(x)$ is \emph{integer valued}.
	
\end{enumerate}

To address these subtleties, we construct some delicate approximations of $dF_{U(1)}$. We first construct a mollified Chern form $\tilde{F}$, which is supported near the curvature concentration locus, and $d\tilde{F}$ admits $L^1_{loc}$-bounds. We then use a Whitney type decomposition and a deformation lemma construction to perturb $d\tilde{F}$ to a codimension three polyhedral chain with real coefficients. The key is to control the Hausdorff measure of the support of this polyhedral approximation, so we can apply a result of Ambrosio-Kirchheim \cite{Ambrosio} to extract a rectifiable current $Q$ in the $m\to +\infty$ limit.  Due to the mollification procedure, a priori the multiplicity function of $Q$ is only real valued. We will exploit the integrality of the Chern class to derive some effective estimates, which will lead to integer multiplicity by a further blow up limit argument in Thm. \ref{thm:weaklimitstructure}.

	\begin{rmk}
Parise-Pigati-Stern \cite[Section 3]{SternPigati} have recently found a different way to extract an $(n-3)$-dimensional \emph{integral cycle} from the limit of the 3-forms $-\frac{1}{4\pi m}\Tr(F\nabla \Phi)$, viewed as $(n-3)$-currents. 
	Their method is to reduce the problem to ambient dimension three, by considering the $0$-current slices of the $(n-3)$-currents along any projection to an $(n-3)$-plane. In dimension three, they used Uhlenbeck gauge fixing and local averaging technique to construct a H\"older replacement for the Higgs field, such that $\Tr(F\nabla \Phi)$ changes by only a small amount. Using a Bolgomolny trick style argument, one can then approximate the $0$-currents modulo small error,  by a finite sum of delta masses with \emph{integral coefficients} that encode local topological degree information.

	Their method has the advantage of proving rectifiability and integrality simultaneously, and avoiding the direct use of PDE. In comparison, our method is technically a little more complicated, but it also gives some extra information, including a quantitative polyhedral approximation of the $(n-3)$ currents, and proves not just the integer multiplicity of the limit $(n-3$)-current $Q$, but also the integrality of some cohomology class.

	\end{rmk}

	\begin{rmk}
	We apologize that the word `mass' can refer to both the parameter $m$, or the mass of a current. Both terminologies are well established, and we hope this causes no confusion.
	\end{rmk}

	\subsection{Mollification of the Chern form}

		We will consider an auxiliary \emph{mollification} of $F_{U(1)}$.  We take a smooth function $\tilde{\eta}_1: [0,1]\to [\frac{1}{3},1]$, such that 
			\[
		\tilde{\eta}_1(x) =\begin{cases}
			x, \quad & x\geq 1/2,
			\\
			1, \quad  &x\leq 1/3.
		\end{cases}
		\]
 Let
	 $\eta_1(x)= m\tilde{\eta}_1(  x /m  ).$
		Thus $\eta_1(|\Phi|)$  agrees with $|\Phi|$ as long as  $|\Phi|\geq m/2$ (which holds in particular on the complement of the curvature concentration locus $\mathcal{C}_{\Lambda_0}$ by Lemma \ref{lem:curvatureconcentrationlocus1}), but $\eta_1$  has a global lower bound $m/3$ since $\tilde{\eta}_1\in [\frac{1}{3},1]$. Let
		$\tilde{\eta}_2:\R_{\geq 0}\to [0,1]$ be a 
		 smooth cutoff function  such that 
		\[
		\tilde{\eta}_2(x) =\begin{cases}
			0, \quad & x=0,
			\\
			1, \quad  &x\geq 1.
		\end{cases}
		\]
		We take (a smoothing of) the Lipschitz function $\eta_2= \tilde{\eta}_2(m \text{dist}(x, \mathcal{C}_{\Lambda_0}) )$, which is zero on the curvature concentration locus $\mathcal{C}_{\Lambda_0}$, equal to one outside its $m^{-1}$-tubular neighbourhood, while the gradient satisfies $|d \eta_2|\leq Cm^{-1}$ on its support.

	We define the \emph{mollified Chern form} as
		\[
		\tilde{F}=- \frac{1}{4\pi} \Tr (    \frac{\Phi F}{\eta_1(|\Phi|)}	) +    \frac{1}{32\pi} \eta_2 \Tr \left(  \frac{\Phi}{|\Phi|}  [ \nabla (  \frac{\Phi}{|\Phi|}  )\wedge \nabla  ( \frac{\Phi}{|\Phi|})   ]    \right).
		\]
The main advantage of our choice of mollification is that the support of $d\tilde{F}$ is contained in a small neighbourhood of the curvature concentration locus. 

		\begin{lem}\label{lem:mollifiedcurvaturedF}
		The mollified Chern form satisfies the following:
		\begin{enumerate}
			\item The 3-form $d\tilde{F}$ is supported in $B_{m^{-1}}(\mathcal{C}_{\Lambda_0})$.

			\item On $\mathcal{C}_{\Lambda_0}$, we have $|d\tilde{F}|\leq Cm^{-1} |\nabla \Phi||F|$.
			
			\item On $B_{m^{-1}}(  \mathcal{C}_{\Lambda_0})\setminus  \mathcal{C}_{\Lambda_0}$, we again have $|d\tilde{F}|\leq Cm^{-1} |\nabla \Phi||F|$.

		\end{enumerate}
		\end{lem}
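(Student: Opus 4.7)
The plan is to split $M$ into three regions according to which mollification cutoff is inactive, and to verify the support claim and the pointwise bound separately on each. On the complement of $B_{m^{-1}}(\mathcal{C}_{\Lambda_0})$, I observe that $\text{dist}(\cdot,\mathcal{C}_{\Lambda_0})\geq m^{-1}$ forces $\eta_2\equiv 1$, while the contrapositive of Lemma~\ref{lem:curvatureconcentrationlocus1} applied at any such point $p$ guarantees $|\Phi|\geq m/2$ on $B(p,m^{-1})$, hence $\eta_1(|\Phi|)=|\Phi|$. Thus on this region $\tilde{F}$ coincides with the genuine Chern form $F_{U(1)}$, which is closed, and item~1 follows immediately.

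Inside $\mathcal{C}_{\Lambda_0}$ I will use that the integrand in Def.~\ref{Def:curvatureconcentrationlocus} is continuous in $p$, so this sublevel set is open; consequently $\eta_2$ vanishes on a full neighbourhood of every point of $\mathcal{C}_{\Lambda_0}$ and only the first summand of $\tilde{F}$ contributes. Direct differentiation, combined with the identity $d\Tr(\Phi F)=\Tr(F\wedge\nabla\Phi)$ from Lemma~\ref{lem:traceeqn} and the chain rule $d\eta_1(|\Phi|)=\eta_1'(|\Phi|)\,d|\Phi|$, produces two terms proportional to $\Tr(F\wedge\nabla\Phi)/\eta_1(|\Phi|)$ and $\eta_1'(|\Phi|)\,\Tr(\Phi F)\,d|\Phi|/\eta_1(|\Phi|)^2$. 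Each is bounded by $Cm^{-1}|F||\nabla\Phi|$ using $\eta_1(|\Phi|)\geq m/3$, $|\eta_1'|\leq 1$, $|\Phi|\leq m$, and $|d|\Phi||\leq|\nabla\Phi|$, proving item~2.

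On the annulus $B_{m^{-1}}(\mathcal{C}_{\Lambda_0})\setminus\mathcal{C}_{\Lambda_0}$, Lemma~\ref{lem:curvatureconcentrationlocus1} again forces $|\Phi|\geq m/2$ and hence $\eta_1(|\Phi|)=|\Phi|$. Setting $\omega=\Tr\big(\tfrac{\Phi}{|\Phi|}[\nabla(\Phi/|\Phi|)\wedge\nabla(\Phi/|\Phi|)]\big)$, I can write $\tilde{F}=F_{U(1)}-\tfrac{1}{32\pi}(1-\eta_2)\omega$; since $dF_{U(1)}=0$ this gives
\begin{equation*}
d\tilde{F}=\tfrac{1}{32\pi}\,d\eta_2\wedge\omega \;-\; \tfrac{1}{32\pi}(1-\eta_2)\,d\omega.
\end{equation*}
For the first summand I use $|\omega|\leq C|\nabla(\Phi/|\Phi|)|^2\leq Cm^{-2}|\nabla\Phi|^2$ together with the Lipschitz bound on $\eta_2$; for the second, closedness of $F_{U(1)}$ forces $d\omega = 8\,d(\Tr(\Phi F)/|\Phi|)$, which is bounded exactly as in the previous region. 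Finally the monopole inequality $|\nabla\Phi|\leq C|F|$ from Lemma~\ref{lem:Liealgebra} trades the excess factor of $|\nabla\Phi|$ for $|F|$, completing item~3.

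The hardest part will be the annulus estimate, where one must recognize that the large factor coming from $|d\eta_2|$ is cancelled by the $m^{-2}$-decay of $|\omega|$, and then invoke the monopole equation to convert an extra $|\nabla\Phi|$ into $|F|$, so that both summands and all three regions share the same bound $Cm^{-1}|F||\nabla\Phi|$.
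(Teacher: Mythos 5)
Your proof is correct and follows essentially the same route as the paper: region-by-region identification of which cutoff is active, closedness of $F_{U(1)}$ off the concentration locus, the Bianchi identity $d\Tr(\Phi F)=\Tr(F\wedge\nabla\Phi)$ together with $\eta_1\sim m$ on $\mathcal{C}_{\Lambda_0}$, and on the annulus the cancellation of $|d\eta_2|\le Cm$ against $|\nabla(\Phi/|\Phi|)|^2\le Cm^{-2}|\nabla\Phi|^2$ followed by $|\nabla\Phi|\le C|F|$. Your observation that $\mathcal{C}_{\Lambda_0}$ is open (so $\eta_2$ vanishes on a neighbourhood and $d\eta_2$ contributes nothing there) is a slightly more careful justification of a step the paper leaves implicit.
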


		\begin{proof}
		\textbf{Item 1}. On the complement of $B_{m^{-1}}(\mathcal{C}_{\Lambda_0})$,  $\tilde{F}=F_{U(1)}$ holds by the design of the mollification, and $F_{U(1)}$ is closed because it is the Chern  form for a $U(1)$-connection.

		\textbf{Item 2}. In this region $\eta_2=0$, so $\tilde{F}=-\frac{1}{4\pi} \Tr(     \frac{\Phi F}{\eta_1(|\Phi|)} )$, and by the Bianchi identity $d_A F=0$, we obtain
		\[
		d\tilde{F} =  -\frac{1}{4\pi} \Tr(  F\wedge \nabla (   \frac{\Phi }{\eta_1(|\Phi|)  } ) ), 
		\]
		whence $|d\tilde{F}|\leq Cm^{-1} |F||\nabla \Phi|$ using that $\eta_1$ is uniformly equivalent to $m$.

		\textbf{Item 3}. In this region $\eta_1(|\Phi|)= |\Phi|$, so 
		\[
		\tilde{F}= F_{U(1)}+  \frac{1}{32\pi}(\eta_2-1)  \Tr (  \frac{\Phi}{|\Phi|}  [ \nabla (  \frac{\Phi}{|\Phi|}  )\wedge \nabla  ( \frac{\Phi}{|\Phi|})   ]    ),
		\]
		hence by the closedness of $F_{U(1)}$,
		\[
		\begin{split}
	32\pi d\tilde{F}= &  d\eta_2 \wedge \Tr (  \frac{\Phi}{|\Phi|}  [ \nabla (  \frac{\Phi}{|\Phi|}  )\wedge \nabla  ( \frac{\Phi}{|\Phi|})   ]    )+  (\eta_2-1) d\Tr (  \frac{\Phi}{|\Phi|}  [ \nabla (  \frac{\Phi}{|\Phi|}  )\wedge \nabla  ( \frac{\Phi}{|\Phi|})   ])    
		\\
		=&  d\eta_2 \wedge \Tr (  \frac{\Phi}{|\Phi|}  [ \nabla (  \frac{\Phi}{|\Phi|}  )\wedge \nabla  ( \frac{\Phi}{|\Phi|})   ]   )+  8(\eta_2-1) d  \Tr (    \frac{\Phi F}{|\Phi|}	)   
		\\
		= &  d\eta_2 \wedge \Tr (  \frac{\Phi}{|\Phi|}  [ \nabla (  \frac{\Phi}{|\Phi|}  )\wedge \nabla  ( \frac{\Phi}{|\Phi|})   ]    )+  8(\eta_2-1) \Tr ( F\wedge \nabla(   \frac{\Phi }{|\Phi|})	)   
		\end{split}
		\]
		Since $|d\eta_2|\leq Cm$, and $|\Phi|\geq m/2$, we obtain 
		\[
		|d\tilde{F}|\leq Cm^{-1}|\nabla \Phi|^2 + Cm^{-1} |F||\nabla \Phi| \leq Cm^{-1}|F||\nabla \Phi|.
		\]
		using $|\nabla \Phi|\leq C|F|$.
		\end{proof}

Combining Lemma \ref{lem:mollifiedcurvaturedF} with the monotonicity formula Lemma \ref{lem:monotonicity2} and $\int_M |\nabla \Phi|^2\leq Cm$, we deduce a local $L^1$-bound on $d\tilde{F}$, for any $r>1$:
\begin{equation}\label{eqn:massdtildeF}
	\begin{split}
	\int_{ r(x)<r} |d\tilde{F}|\leq Cm^{-1} \int_{ r(x)<r} |\nabla \Phi| |F|
	 \leq C(1+   m^{-1} r^{n-4})^{1/2} .
	\end{split}
\end{equation}

	\subsubsection{Comparison between several mollifications}

By the following technical Lemma, our choice of the mollified Chern form $\tilde{F}$ is  $L^1_{loc}$-close to two other  approximations $ -\frac{1}{4\pi} \Tr (    \frac{\Phi F}{|\Phi|})$ and $-\frac{1}{4\pi m} \Tr (  \Phi F)  $ of the Chern form, for sufficiently large $m$.

\begin{lem}\label{lem:mollifiedcurvaturecomparison}
Let $m\gg 1$ be sufficiently large. Then for any  $r>1$, 
\[
\begin{cases}
\int_{ r(x)<r}  |\tilde{F}+   \frac{1}{4\pi} \Tr (    \frac{\Phi F}{|\Phi| }	) | \leq Cm^{-1/2} (1+ m^{-1} r^{n-4}),
\\
\int_{ r(x)<r}  |\tilde{F}+   \frac{1}{4\pi} \Tr (    \frac{\Phi F}{m}	) | \leq  C (m+ r^{n-4} )^{1/2} (m^{-1}r)^{   \frac{n}{2(n-1)} } .  
\end{cases}
\]
\end{lem}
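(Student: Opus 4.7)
The plan is to exploit the explicit design of $\tilde F$: its two mollification corrections -- the replacement of $|\Phi|$ by $\eta_1(|\Phi|)$, and the cutoff $\eta_2$ in front of the quadratic $\Phi/|\Phi|$-term -- are active only in the regions $\{|\Phi|<m/2\}$ and $B_{m^{-1}}(\mathcal{C}_{\Lambda_0})$ respectively, which by Lemma \ref{lem:curvatureconcentrationlocus1} and the measure estimates in Prop. \ref{prop:measurebound} are small.

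For the first estimate, split
\[
\tilde F + \tfrac{1}{4\pi}\Tr(\Phi F/|\Phi|) = I + II,
\]
where $I := -\tfrac{1}{4\pi}\Tr(\Phi F)\bigl(\tfrac{1}{\eta_1(|\Phi|)}-\tfrac{1}{|\Phi|}\bigr)$ is supported in $\{|\Phi|<m/2\}\subset\mathcal{C}_{\Lambda_0}$ (by Lemma \ref{lem:curvatureconcentrationlocus1}) with $|I|\leq C|F|$, and $II := \tfrac{\eta_2}{32\pi}\Tr\bigl(\tfrac{\Phi}{|\Phi|}[\nabla(\Phi/|\Phi|)\wedge \nabla(\Phi/|\Phi|)]\bigr)$ with $|II|\leq C|\nabla(\Phi/|\Phi|)|^2$. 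On the support of $\eta_2$ we have $|\Phi|\geq m/2$, and combining $|\nabla(\Phi/|\Phi|)|\leq|(\nabla\Phi)^\perp|/|\Phi|$ with the Lie algebra identity (\ref{eqn:Liealgebranorm}) and Lemma \ref{lem:Liealgebra} (so $|(\nabla\Phi)^\perp|=|[\nabla\Phi,\Phi]|/(2|\Phi|)\leq C|[F,\Phi]|/|\Phi| = 2C|F^\perp|$) gives $|II|\leq Cm^{-2}|F^\perp|^2$. Integrating $|I|$ via item 1 of Cor. \ref{cor:L1Fperp} yields $Cm^{-1/2}(1+m^{-1}r^{n-4})$, while integrating $|II|$ via Lemma \ref{lem:monotonicity2} yields $Cm^{-1}(1+m^{-1}r^{n-4})$, and for large $m$ both are absorbed into the stated bound.

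For the second estimate, decompose
\[
\tilde F + \tfrac{1}{4\pi m}\Tr(\Phi F) = \Bigl(\tilde F + \tfrac{1}{4\pi}\Tr(\Phi F/|\Phi|)\Bigr) + \tfrac{1}{4\pi m\,|\Phi|}\Tr(\Phi F)(m-|\Phi|).
\]
The first bracket is controlled by the first estimate, and since $(m+r^{n-4})^{1/2}\geq m^{1/2}$ and $r>1$, that bound is dominated by $C(m+r^{n-4})^{1/2}(m^{-1}r)^{n/(2(n-1))}$. The residual term is pointwise $\leq C|F|(m-|\Phi|)/m$ (well-defined since $|\Tr(\Phi F)|\leq C|\Phi||F|$ cancels $|\Phi|$ in the denominator), so the task reduces to showing
\[
\int_{\{r(x)<r\}} |F|(m-|\Phi|)\;\leq\; C\,m\,(m+r^{n-4})^{1/2}(m^{-1}r)^{n/(2(n-1))}.
\]

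To get the sharp exponent $n/(2(n-1))$, the plan is to apply H\"older's inequality with the conjugate pair $p = 2(n-1)/(n-2)$ and $p' = 2(n-1)/n$. Interpolating $m-|\Phi|$ between its sup bound $\|m-|\Phi|\|_{\infty}\leq m$ (Lemma \ref{lem:smallPhi} item 1) and its $L^1$-bound $\int_{\{r(x)<r\}}|m-|\Phi||\leq Cr^2$ (Remark \ref{rmk:smallPhi}) yields $\|m-|\Phi|\|_{L^p}\leq Cm^{n/(2(n-1))}r^{(n-2)/(n-1)}$, and the corresponding $L^{p'}$-bound on $F$ is to be extracted from the Morrey-type monotonicity formula (Cor. \ref{cor:monotonicity}), producing a factor of $r^{n/(2(n-1))}(m+r^{n-4})^{1/2}$. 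The main obstacle is precisely this $L^{p'}$-control: a plain volumetric H\"older from the $L^2$-bound in Lemma \ref{lem:monotonicity2} only recovers the cruder Cauchy--Schwarz-type estimate with extra factor $r/m^{1/2}$, which is insufficient. The sharper exponent requires genuinely invoking the fact that $|F|^2$ lies in the Morrey space $L^{1,n-4}$ with norm $O(m)$, and then using the Morrey--Campanato embedding into $|F|\in L^{n/2}$ to balance against the interpolated $L^p$-norm of $m-|\Phi|$ -- with the value $p = 2(n-1)/(n-2)$ chosen precisely so that the $r$- and $m$-powers combine into the stated bound.
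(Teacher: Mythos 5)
Your first estimate is essentially the paper's own argument: the $\eta_2$-correction is bounded pointwise by $C m^{-2}|F^\perp|^2$ (the paper uses $Cm^{-2}|\nabla\Phi|^2$ off $\mathcal{C}_{\Lambda_0}$ together with $\int_M|\nabla\Phi|^2\le Cm$, but your variant works too), and the $\eta_1$-correction is supported in $\mathcal{C}_{\Lambda_0}$, pointwise $\le C|F|$, and integrated via item 1 of Cor.~\ref{cor:L1Fperp}. No objection there.

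The second estimate has a genuine gap. After reducing to $m^{-1}\int_{\{r(x)<r\}}|F|\,(m-|\Phi|)$, you interpolate $m-|\Phi|$ between $L^\infty$ and $L^1$ to get $\norm{m-|\Phi|}_{L^p}\le Cm^{n/(2(n-1))}r^{(n-2)/(n-1)}$ with $p=2(n-1)/(n-2)$, and pair it with $\norm{F}_{L^{p'}}$, $p'=2(n-1)/n$. The only bound on $\norm{F}_{L^{p'}}$ obtainable from Lemma~\ref{lem:monotonicity2} is the volumetric H\"older bound $C(m+r^{n-4})^{1/2}r^{n/(2(n-1))}$ (which is the factor you quote), and the resulting product overshoots the target by the factor $(mr^{n-2})^{1/(n-1)}\gg 1$; the exponents do not balance. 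Your proposed remedy --- that the Morrey bound $s^{4-n}\int_{B(p,s)}|F|^2\le Cm$ yields an embedding of $|F|$ into $L^{n/2}$ (or into $L^{p'}$ with improved norm) --- is not a valid step: Morrey spaces $L^{1,\lambda}$ do not embed into any $L^q$ with $q>1$ (a collection of $N^{\lambda/(n-\lambda)}$ well-separated bumps of height $N$ and radius $N^{-1/(n-\lambda)}$ has uniformly bounded Morrey norm but $L^q$ norm $\sim N^{(q-1)/q}$), and in Example~\ref{eg:flatmodel} one has $\int_{B(1)}|F|^{n/2}\sim m^{n-3}\to\infty$, so no uniform $L^{n/2}$ bound can hold. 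No improved integrability of $F$ is available at this stage of the paper.

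The paper closes the estimate by improving the \emph{other} factor. It keeps plain Cauchy--Schwarz, $\norm{F}_{L^2}\le C(m+r^{n-4})^{1/2}$, and then (i) uses the pointwise bound $|1-m^{-1}\eta_1(|\Phi|)|\le 1$ to dominate the $L^2$ norm of the Higgs-field factor by its $L^{n/(n-1)}$ norm, and (ii) bounds that $L^{n/(n-1)}$ norm via the Sobolev--Poincar\'e inequality together with the $L^1$ gradient estimate $\int_{r(x)<C'r}|\nabla|\Phi|^2|\le Cmr$ from Remark~\ref{rmk:smallPhi} (here it matters that $\nabla\eta_1(|\Phi|)$ is supported where $|\Phi|\gtrsim m$). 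This Sobolev step is exactly where the exponent $n/(2(n-1))$ originates; your scheme uses only $L^\infty$--$L^1$ interpolation and no gradient information on $\Phi$, so it cannot reproduce it. A secondary issue: routing the second estimate through the first forces you to absorb $Cm^{-3/2}r^{n-4}$ into $C(m+r^{n-4})^{1/2}(m^{-1}r)^{n/(2(n-1))}$, which fails once $r\gg m$ (e.g.\ $n=7$, $r=m^{2}$); the paper compares $\Tr(\Phi F/\eta_1(|\Phi|))$ with $\Tr(\Phi F/m)$ directly, so it only needs to absorb a $Cm^{-1}$ error.
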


\begin{proof}
\textbf{Item 1}.
We start by observing
\[
\begin{split}
\int_{ r(x)<r}  |\tilde{F}+   \frac{1}{4\pi} \Tr (    \frac{\Phi F}{\eta_1(|\Phi|)}	) |
= &  \int_{ r(x)<r}    |\frac{1}{32\pi} \eta_2 \Tr (  \frac{\Phi}{|\Phi|}  [ \nabla (  \frac{\Phi}{|\Phi|}  )\wedge \nabla  ( \frac{\Phi}{|\Phi|})   ]   )|
\\
\leq & C\int_{\{ r(x)<r \} \setminus \mathcal{C}_{\Lambda_0} }   |\nabla (  \frac{\Phi}{|\Phi|}  )\wedge \nabla  ( \frac{\Phi}{|\Phi|})  |
\\
\leq &  C m^{-2} \int_{\{ r(x)<r \} \setminus \mathcal{C}_{\Lambda_0} }   |\nabla \Phi  |^2 \leq Cm^{-1},
\end{split}
\]
where the last line uses that $|\Phi|\geq m/2$ on the complement of $\mathcal{C}_{\Lambda_0}$ by Lemma \ref{lem:curvatureconcentrationlocus1}.

Now $ -\frac{1}{4\pi} \Tr (    \frac{\Phi F}{\eta_1(|\Phi| )}) $ agrees with $  \frac{1}{4\pi} \Tr (    \frac{\Phi F}{|\Phi| })$ except on $\mathcal{C}_{\Lambda_0}$, and their difference is bounded using item 1 in Cor. \ref{cor:L1Fperp},
	\[
	\int_{\{ r(x)<r \}\cap \mathcal{C}_{\Lambda_0} }  |  \Tr (    \frac{\Phi F}{|\Phi| })-    \Tr (    \frac{\Phi F}{\eta_1(|\Phi|)}	) | \leq C	\int_{\{ r(x)<r \}\cap \mathcal{C}_{\Lambda_0} }  |  F| \leq Cm^{-1/2} (1+ m^{-1} r^{n-4}).
	\]
This shows item 1.

\textbf{Item 2}.
Next, we estimate
\[
\begin{split}
&\int_{ r(x)<r}    |  \Tr (    \frac{\Phi F}{\eta_1(|\Phi|)  })- \Tr (    \frac{\Phi F}{m }) |  
\\
\leq & C(\int_{ r(x)<r } |F|^2 )^{1/2}  ( \int_{  r(x)<r    }  |  m^{-1} \eta_1(|\Phi|)-1|^2        )^{1/2}
\\
\leq & C (m+ r^{n-4} )^{1/2}   (\int_{  r(x)<r    }  |  m^{-1} \eta_1( |\Phi|)  -1|^{ \frac{n}{n-1}   }  )^{1/2}
\end{split}
\]
Here second line uses  Cauchy-Schwarz and the fact $\eta_1(|\Phi|)$ is uniform equivalent to $m$, while the third line applies the monotonicity formula Lemma \ref{lem:monotonicity2}, and the trivial bound $|1- m^{-1}\eta_1(|\Phi|)|\leq 1$.

By the Poincar\'e inequality and Sobolev inequality on $M$,
\[
\begin{split}
	&  (\int_{  r(x)<r    }  |  m^{-1} \eta_1( |\Phi|)  -1|^{ \frac{n}{n-1}   }  )^{\frac{n-1}{n}}
	\\
	 \leq & C r^{-1}\int_{r(x)<C'r} | m^{-1}\eta_1(|\Phi|)-1|+ C  \int_{  r(x)<C'r    } m^{-1} | \nabla   \eta_1(|  \Phi|) | 
	 \\
	 \leq & C r^{-1} \int_{r(x)<C'r} | m^{-1}|  \Phi|-1|   + C \int_{  r(x)<C'r    } m^{-2} | \nabla   |  \Phi|^2 |  
	 \\
	 \leq & Cm^{-1} r.
\end{split}
\]
Here the third line uses the definition of $\eta_1$, and in particular that $\nabla \eta_1=0$ for $|\Phi|< m/3$, so that $|\Phi|$ is uniformly equivalent to $m$  on the support of $\nabla \eta_1(|\Phi|)$. The fourth line follows from Remark \ref{rmk:smallPhi}. Consequently,
\[
\int_{ r(x)<r}    |  \Tr (    \frac{\Phi F}{\eta_1(|\Phi|)  })- \Tr (    \frac{\Phi F}{m }) |  \leq C (m+ r^{n-4} )^{1/2} (m^{-1}r)^{   \frac{n}{2(n-1)} } .
\]
which implies item 2 after absorbing the $Cm^{-1}$ error term. 

We comment that item 2 is likely very non-optimal, but the key point is that for fixed $r$, the RHS is bounded by a negative power of $m$.
\end{proof}

\subsection{Whitney decomposition and deformation lemma}

\subsubsection{Stopping time algorithm}

Let $r>1$, and let $\mathcal{Q}\simeq [-s_0/2, s_0/2 ]^n$ be a coordinate cube inside $\{   r< r(x)< 2r      \}$, whose length size $s_0$ is comparable to $r$. We shall perform a \emph{Whitney type decomposition} of $\mathcal{Q}$ into generations of dyadic subcubes of size $s=s_0/2^j$, $j=0,1, 2, \ldots$, by the following stopping time algorithm:
\begin{itemize}
	\item  In the zeroth generation $j=0$, we start with $\mathcal{Q}$. In each generation $j=0,1,\ldots$, we have a collection of subcubes $ \{  \mathcal{Q}_i \}$ which form a partition of $\mathcal{Q}$. The size of these subcubes are $s_0/2^k$ for $k\in \{ 0,1,\ldots j \}$. 
	
	\item 
	To obtain the $(j+1)$-th generation from the $j$-th generation, for each subcube $\mathcal{Q}_i$ of size $s=s_0/2^j$, we either stop subdividing it for all future generations, or we replace $\mathcal{Q}_i$ by the collection of all its half cubes (there are $2^n$ half cubes, each of size $s/2$). The subcubes of size $s_0/2^k$ for $k<j$ have already been stopped in some previous generation.

	\item  The \emph{stopping criterion} is that on the given subcube $\mathcal{Q}_i$ of size $s$, one of the the following alternatives hold:
	\begin{enumerate}
		\item 	Either $\mathcal{Q}_i$ is disjoint from $B_{m^{-1} } 	(  \mathcal{C}_{\Lambda_0} )$ (See Def. \ref{Def:curvatureconcentrationlocus});
	
		\item  Or
		\[
		\max_{x\in \mathcal{Q}_i \cap B_{m^{-1} }( \mathcal{C}_{\Lambda_0} ) } \int_{B(x,s/2) } |F|^2 dvol   \geq  (a\Lambda_0)^{-1}  s^{n-3+\kappa}  m^{1+\kappa} .  
		\]
		Here $\Lambda_0$ is the constant fixed in Lemma \ref{lem:curvatureconcentrationlocus1}, while $a>1$ and $0<\kappa<1$ are parameter to be determined later. 
		
	\end{enumerate}
	Otherwise we keep subdividing.

\item  If the algorithm stops, we collect together all the subcubes $\{ \mathcal{Q}_i  \}$ from the last generation which intersect $B_{m^{-1} } 	(  \mathcal{C}_{\Lambda_0} )\cap \mathcal{Q}$, and we discard all the other subcubes. We denote this final collection as $\{  Q_i \}$, and this provides a cover of $B_{m^{-1}} (  \mathcal{C}_{\Lambda_0} ) \cap \mathcal{Q}$. We denote the size of $Q_i$ as $s_i$.
\end{itemize}

\begin{lem}(Whitney type decomposition)\label{lem:Whitney}
Suppose $m\gg 1$ is sufficiently large.
For any fixed $0<\kappa<1$, we can pick the constant $a>1$ independent of $r, m$, such that the algorithm stops in at most $j\leq \log_2(  s_0m  )$ generations, and 
\[
\begin{cases}
	\max_{Q_i} s_i \leq    Cm^{ -\kappa /(\kappa+1) }. 
\\
	\sum_{Q_i} s_i^{n-3} \leq Cm^{-1} \int_{ r/2< r(x)< 4r} |F|^2 dvol. 
\end{cases}
\]
\end{lem}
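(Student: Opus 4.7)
The plan is to establish the three conclusions in order: the size bound $\max_i s_i \leq Cm^{-\kappa/(1+\kappa)}$, the summation bound $\sum_i s_i^{n-3}\leq Cm^{-1}\int|F|^2$, and the termination within $\log_2(s_0 m)$ generations (equivalently, $s_i\geq m^{-1}$). The principal inputs are the monotonicity formula (Corollary \ref{cor:monotonicity}), the characteristic radius analysis (Lemma \ref{lem:characteristicradius}), and the sufficient condition for the concentration locus (Lemma \ref{lem:curvatureconcentrationlocus1}).

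The size bound is the easiest: each kept cube $Q_i$ is stopped by the energy criterion, so there exists $x_i\in Q_i\cap B_{m^{-1}}(\mathcal{C}_{\Lambda_0})$ with
\[
(a\Lambda_0)^{-1}s_i^{n-3+\kappa}m^{1+\kappa}\leq \int_{B(x_i,s_i/2)}|F|^2 \leq Cs_i^{n-4}(mr^{4-n}+1)\leq Cs_i^{n-4}m,
\]
where the upper bound is Corollary \ref{cor:monotonicity} applied at $x_i$ (with $r(x_i)\sim r$ bounded by the ambient geometry). Rearranging gives $(s_im)^{1+\kappa}\leq Ca\Lambda_0$, hence $s_i\leq Cm^{-\kappa/(1+\kappa)}$. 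For the summation bound, I rewrite the stopping criterion as $s_i^{n-3+\kappa}\leq Ca\Lambda_0\, m^{-(1+\kappa)}\int_{B(x_i,s_i/2)}|F|^2$ and exploit the bounded overlap of the enlarged dyadic cubes $2Q_i$, each of which contains $B(x_i,s_i/2)$, to obtain $\sum_i s_i^{n-3+\kappa}\leq Cm^{-(1+\kappa)}\int_{r/2<r(x)<4r}|F|^2$. Once the termination bound $s_i\geq m^{-1}$ is in hand, the factor $s_i^{-\kappa}\leq m^{\kappa}$ converts the exponent $n-3+\kappa$ to $n-3$ at the cost of one power of $m$, giving the claimed inequality; the generations bound $\log_2(s_0m)$ is then immediate.

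The main obstacle is the termination assertion. My plan is by contradiction: suppose a cube $Q_i$ of size $s=m^{-1}$ intersects $B_{m^{-1}}(\mathcal{C}_{\Lambda_0})$ but $\int_{B(x,m^{-1}/2)}|F|^2 < (a\Lambda_0)^{-1}m^{4-n}$ for every $x\in Q_i\cap B_{m^{-1}}(\mathcal{C}_{\Lambda_0})$. Fix such a witness $y$ in the intersection and choose $p\in\mathcal{C}_{\Lambda_0}$ with $d(y,p)\leq m^{-1}$. For $a$ larger than $\epsilon_1^{-1}\Lambda_0^{-1}$, Proposition \ref{prop:epsilonregularity} forces $|F|,|\nabla\Phi|\leq Ca^{-1/2}m^2$ on $B(y,m^{-1}/4)$, and a short Vitali-style cover of $B(p,m^{-1})$ by such small balls propagates the bound to the whole neighbourhood. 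Integrating $|\nabla\Phi|$ against the pointwise near-maximality $|\Phi|(y)\approx m$ (itself deduced from Lemma \ref{lem:smallPhi} combined with the small $|\nabla\Phi|$) shows $|\Phi|\geq m/2$ on $B(p,m^{-1})$, so alternative (ii) of Lemma \ref{lem:curvatureconcentrationlocus1} fails at $p$; the same covering argument applied to the criterion hypothesis converts it into $\int_{B(p,2m^{-1})}|F|^2<\epsilon_3 m^{4-n}$, so alternative (i) also fails. Inserting these two uniform bounds into the Green-type integral $\int_M|F|^2/\max(d(\cdot,p),m^{-1})^{n-2}$ and controlling its tail via Corollary \ref{cor:monotonicity2} should place this integral strictly below $\Lambda_0^{-1}m^2$, contradicting $p\in\mathcal{C}_{\Lambda_0}$. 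Tuning the single parameter $a$ to simultaneously absorb the $\epsilon$-regularity constant, the oscillation estimate for $|\Phi|$, and the tail contribution in the Green integral is the delicate moment that justifies the dependence $a=a(\kappa,\Lambda_0,n)$ recorded in the statement.
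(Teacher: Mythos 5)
Your bounds on $\max_i s_i$ and the per-generation energy counting follow the paper's route, but two steps do not close as written.

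The serious gap is the termination argument. Your contradiction hypothesis records smallness of $\int_{B(\cdot,m^{-1}/2)}|F|^2$ only at the single finest scale, and you then hope to push the Green integral $\int_M |F|^2\max(d(\cdot,p),m^{-1})^{2-n}$ below the threshold $\Lambda_0^{-1}m^2$ by ``controlling its tail via Corollary~\ref{cor:monotonicity2}''. This cannot work: the dyadic annulus at scale $t\in[m^{-1},O(1)]$ contributes, via the monotonicity formula, at most $t^{2-n}\cdot Cm\,t^{n-4}=Cm\,t^{-2}$, and summing over dyadic $t\geq m^{-1}$ gives $Cm^3$ --- a full power of $m$ above the threshold. (In Example~\ref{eg:flatmodel} the Green integral near $Q$ is genuinely of order $m^2$, so no crude tail bound can rescue this.) The information you must exploit --- and which the algorithm hands you for free --- is that the stopping criterion failed at \emph{every ancestor scale}: $\int_{B(p,t/2)}|F|^2<(a\Lambda_0)^{-1}t^{n-3+\kappa}m^{1+\kappa}$ for all dyadic $t$ from $s_0$ down to $\sim m^{-1}$. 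Each annulus then contributes $\lesssim (a\Lambda_0)^{-1}t^{\kappa-1}m^{1+\kappa}$, a geometric series (since $\kappa<1$) dominated by $t\sim m^{-1}$, giving $C(a\Lambda_0)^{-1}m^2$; choosing $a$ large contradicts $p\in\mathcal{C}_{C\Lambda_0}$, which holds for every $p\in B_{m^{-1}}(\mathcal{C}_{\Lambda_0})$ by item 4 of Lemma~\ref{lem:characteristicradius}. Your detour through Lemma~\ref{lem:curvatureconcentrationlocus1} is also a non sequitur: that lemma gives \emph{sufficient} conditions for membership in $\mathcal{C}_{\Lambda_0}$, so showing both alternatives fail at $p$ proves nothing.

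A smaller but real issue is the summation bound: the doubled cubes $2Q_i$ do \emph{not} have globally bounded overlap, because this stopping-time decomposition lacks the usual Whitney property that neighbouring cubes have comparable size; a point can lie in $2Q_i$ for one cube of each of the $O(\log(s_0 m))$ admissible sizes. Your intermediate claim $\sum_i s_i^{n-3+\kappa}\leq Cm^{-(1+\kappa)}\int|F|^2$ therefore picks up a $\log(s_0 m)$ factor, which would destroy the uniformity in $m$ needed downstream (e.g.\ for the Hausdorff measure bound feeding into Ambrosio--Kirchheim). The fix is to convert the exponent generation by generation: for fixed size $s$ the overlap is bounded and one gets $m\sum_{s_i=s}s_i^{n-3}\leq C(sm)^{-\kappa}\int|F|^2$, after which the sum over generations is geometric precisely because $\kappa>0$ and $s\geq m^{-1}$.
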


\begin{proof}
	\textbf{Item 1}. 
Without loss $\mathcal{Q}\cap B_{m^{-1} }( \mathcal{C}_{\Lambda_0} )$ is nonempty. Suppose the algorithm continues for $j> \log_2(s_0m)$ generations, then there is some cube $\mathcal{Q}_i$ containing some $p\in  \mathcal{Q}\cap B_{m^{-1} }( \mathcal{C}_{\Lambda_0} )$, such that the second alternative of the stopping criterion fails for all previous generations, hence
\[
\int_{B(p,s/2) } |F|^2 dvol   <  (a\Lambda_0)^{-1}  s^{n-3+\kappa}  m^{1+\kappa} , \quad s= s_0/2^k, k=0,1,\ldots j-1.
\]
Hence by summing over all dyadic scales, we deduce that for $s=s_0/2^j< m^{-1}$, 
\[
\begin{split}
& \int_M   \frac{ |F|^2}{   \max(d(x,p), s)^{n-2}    } dvol(x) 
\\
\leq &  C(a\Lambda_0)^{-1} \sum_{0\leq k\leq j} ( {s_0/2^k})^{n-3+\kappa-(n-2)}   m^{1+\kappa} + C(mr^{4-n}+1) r^{-2}
\\
\leq &  C(a\Lambda_0)^{-1} s^{\kappa-1} m^{1+\kappa}+ C(mr^{4-n}+1) r^{-2}
\\
\leq &  C(a\Lambda_0)^{-1}  m^2.
\end{split}
\]
Here the second line used the monotonicity formula Lemma \ref{lem:monotonicity2} to estimate the contributions outside $B(p,s_0/2)$ by $C(mr^{4-n}+1) r^{-2}$. The third line summed over a geometric series whose exponent $\kappa-1<0$. The 4th line uses $s=s_0/2^j<m^{-1}$, and absorbs the $O(m)$ term into the $m^2$ term for large enough $m$, as $a\Lambda_0$ is a constant independent of $m$.

On the other hand, by item 4 in Lemma \ref{lem:characteristicradius}, for any  $p\in \mathcal{Q}\cap B_{m^{-1} }( \mathcal{C}_{\Lambda_0} )$, we have $p\in \mathcal{C}_{C\Lambda_0}$, namely
\[
\int_M  \frac{ |F|^2}{   \max(d(x,p), m^{-1})^{n-2}    } dvol(x) >(C\Lambda_0)^{-1} m^2.
\]
We can choose $a$ large enough but independent of $m$, such that the two bounds are incompatible, which shows that the stopping time $j\leq  \log_2(s_0m)$. We fix $a$ henceforth.

\textbf{Item 2}.
For any $p\in \mathcal{Q}\cap B_{m^{-1} }( \mathcal{C}_{\Lambda_0} )$, the monotonicity formula Cor. \ref{cor:monotonicity} gives
\[
 \int_{B(x,s/2) } |F|^2 dvol  \leq  C(mr^{4-n}  + 1) s^{n-4} \leq Cm s^{n-4} ,
\]
so as long as $Cm s^{n-4}< (a\Lambda_0)^{-1}  s^{n-3+\kappa}  m^{1+\kappa} $ on the subcube $\mathcal{Q}_i$ containing $p$, namely $s> C(a,\Lambda_0) m^{ -\kappa /(\kappa+1) }$, then the cube will be further subdivided. Thus at the stopping time
$
s_i \leq Cm^{ -\kappa /(\kappa+1) }.
$

\textbf{Item 3}.  For each $j=0,1,\ldots $, 
We collect together the subcubes $Q_i$ with size $s=s_0/2^j$. The second alternative in the stopping criterion implies that
\[
s^{n-3+\kappa} m^{1+\kappa}
\leq C	\max_{x\in Q_i \cap B_{m^{-1} }( \mathcal{C}_{\Lambda_0} ) } \int_{B(x,s/2) } |F|^2 dvol 
\leq C\int_{ B_{s/2 }(Q_i  )  } |F|^2.
\]
Now for each fixed $j$, the neighbourhoods of cubes $B_{s/2 }(Q_i  )$ only overlap with each other in a  bounded way so 
\[
m\sum_{Q_i: s_i=s} s_i^{n-3} \leq C(sm)^{-\kappa} \sum_{ Q_i: s_i=s   } \int_{ B_{s/2 }(Q_i  )  } |F|^2 \leq C(sm)^{-\kappa}  \int_{ r/2<r(x)<4r  } |F|^2 .
\]
We can sum over all $j=0,1,\ldots $ to get
\[
m\sum_{Q_i} s_i^{n-3}  \leq C\sum_j  (ms_0/2^j)^{-\kappa}  \int_{ r/2<r(x)<4r  } |F|^2 \leq C  \int_{ r/2<r(x)<4r  } |F|^2.
\]
Here we used $\kappa>0$, and $j\leq \log_2(s_0 m)$ by item 1.
\end{proof}

The following Vitali cover argument uses very similar ideas and gives a little more flexibility in later applications, notably the proof of Thm. \ref{thm:energyidentity}.

\begin{lem}\label{lem:Vitali}
Let $0<\kappa< 1$ be fixed, and suppose $m^\kappa \gg \Lambda\geq \Lambda_0$. 
Given a bounded open subset $U\subset M$, we can find geodesic balls $B(p_i, s_i)$ with the following properties:
\begin{enumerate}
	\item   The union of $B(p_i, 5s_i)$ covers $\mathcal{C}_\Lambda\cap U$, and $s_i\geq m^{-1}$.

	\item   The integral $\int_{ B(p_i, s_i) }|F|^2 \geq C^{-1}\Lambda^{-1} s_i^{n-3+ \kappa} m^{1+ \kappa}$, and $s_i$ is the largest number such that this holds.

	\item We have $\max_i s_i\leq C  (\Lambda m^{- \kappa})^{\frac{1}{\kappa+1}} $, and for each value of $k$, 
	\[
	\Lambda^{-1}	m^{1+\kappa}  \sum  s_i^{ n-3+\kappa  }\leq  C\int_{U' } |F|^2, \quad U'= B_{   C  (\Lambda m^{- \kappa})^{\frac{1}{\kappa+1}}  } (U).
	\]

	\item For $s\geq s_i$, the Higgs field satisfies the average bound
	\[
	\dashint_{B(p,s)}  | 1-  \frac{|\Phi|^2}{m^2} | \leq C\Lambda^{-1} (ms)^{\kappa-1 }.
	\]

\end{enumerate}
\end{lem}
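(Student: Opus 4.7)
The plan is to replicate the stopping-time idea of Lemma \ref{lem:Whitney} in the continuous Vitali framework rather than on dyadic cubes. For each $p \in \mathcal{C}_\Lambda \cap U$, define the characteristic stopping scale
\[
s_p := \sup\{\, s\geq m^{-1} \,:\, \int_{B(p,s)}|F|^2 \geq c_0 \Lambda^{-1} s^{n-3+\kappa} m^{1+\kappa}\,\}
\]
for a small absolute constant $c_0 > 0$ to be fixed. First I would show the defining set is nonempty for every $p \in \mathcal{C}_\Lambda$: if it were empty, then writing $s_k = 2^k m^{-1}$ and decomposing
\[
\int_M \frac{|F|^2}{\max(d(x,p), m^{-1})^{n-2}}dvol \leq C\sum_{k\geq 0} s_k^{-(n-2)} \int_{B(p, 2 s_k)}|F|^2 \leq C c_0 \Lambda^{-1} m^{1+\kappa} \sum_{k \geq 0} s_k^{\kappa-1},
\]
the geometric series converges (as $\kappa<1$) to a bound $\leq C' c_0 \Lambda^{-1} m^2$. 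Choosing $c_0$ with $C'c_0<1$ contradicts the defining inequality of $p \in \mathcal{C}_\Lambda$. The supremum is finite and attained by continuity, since $\int_{B(p,s)}|F|^2$ is eventually dominated by the monotonicity bound of Cor.~\ref{cor:monotonicity} while the RHS grows in $s$.

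The balls $\{B(p, s_p)\}_{p \in \mathcal{C}_\Lambda \cap U}$ cover $\mathcal{C}_\Lambda \cap U$. Combining the defining lower bound at $s_p$ with the monotonicity bound $\int_{B(p, s_p)}|F|^2 \leq C m s_p^{n-4}$ from Cor.~\ref{cor:monotonicity} forces the uniform size control $s_p \leq C(\Lambda m^{-\kappa})^{1/(\kappa+1)}$. I would then apply Vitali's $5r$-covering lemma to extract a pairwise disjoint subcollection $\{B(p_i, s_i)\}$ such that $\{B(p_i, 5 s_i)\}$ still covers, giving items 1 and 2. Since the $B(p_i, s_i)$ are disjoint and all contained in $U' = B_{C(\Lambda m^{-\kappa})^{1/(\kappa+1)}}(U)$, their defining energy lower bounds sum to
\[
c_0 \Lambda^{-1} m^{1+\kappa} \sum_i s_i^{n-3+\kappa} \leq \sum_i \int_{B(p_i, s_i)}|F|^2 \leq \int_{U'}|F|^2,
\]
which is item 3.

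For item 4, the maximality of $s_p$ gives the strict reverse inequality $\int_{B(p_i, s)}|F|^2 < c_0 \Lambda^{-1} s^{n-3+\kappa} m^{1+\kappa}$ for all $s > s_i$. I would bound the weighted Green integral by dyadic decomposition: split into $B(p_i, s)$ and annuli $B(p_i, 2^k s)\setminus B(p_i, 2^{k-1} s)$ for $k \geq 1$, use the reverse inequality on each dyadic ball up to the threshold scale $(\Lambda m^{-\kappa})^{1/(\kappa+1)}$ and the monotonicity-formula bound of Cor.~\ref{cor:monotonicity2} thereafter; the geometric series $\sum_k (2^k s)^{\kappa-1}$ converges and yields
\[
\int_M \frac{|F|^2}{\max(d(x, p_i), s)^{n-2}}dvol \leq C\Lambda^{-1} s^{\kappa-1} m^{1+\kappa}.
\]
Feeding this into the Poisson-type estimate (\ref{eqn:smallPhi2}) for $|\Phi|^2-m^2$, using the Lie algebra bound $|\nabla \Phi|^2 \leq C|F|^2$ from Lemma \ref{lem:Liealgebra}, yields
\[
\int_{B(p_i, s)}|m^2 - |\Phi|^2| dvol \leq C s^n \cdot \Lambda^{-1} s^{\kappa-1} m^{1+\kappa} = C\Lambda^{-1} s^{n+\kappa-1} m^{1+\kappa},
\]
and dividing by $m^2 \text{Vol}(B(p_i, s)) \sim m^2 s^n$ gives the claimed $(ms)^{\kappa-1}$ decay.

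The main obstacle will be the dyadic gluing in item 4: one must verify that the per-scale upper bound derived from the maximality of $s_p$ gives way to the monotonicity formula precisely at the transition scale $(\Lambda m^{-\kappa})^{1/(\kappa+1)}$, and that the two regimes combine into a single convergent geometric sum in $\kappa$. A secondary subtlety is calibrating the absolute constant $c_0$ through the existence step to obtain a strict contradiction rather than just an inequality.
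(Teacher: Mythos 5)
Your proposal is correct and follows essentially the same route as the paper: the same stopping-scale $s_p$ (with nonemptiness via the dyadic decomposition of the Green integral against the defining inequality of $\mathcal{C}_\Lambda$, which is exactly the argument the paper imports from Lemma \ref{lem:Whitney}), the same monotonicity bound forcing $s_p\leq C(\Lambda m^{-\kappa})^{1/(\kappa+1)}$, the Vitali $5r$-covering, disjointness summation for item 3, and maximality plus dyadic summation fed into (\ref{eqn:smallPhi2}) for item 4. The "main obstacle" you flag is not actually present: the reverse inequality holds for all $s>s_i$ by maximality, so no switch to Cor.~\ref{cor:monotonicity2} at a threshold scale is needed.
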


\begin{proof}
For each $p \in  \mathcal{C}_{\Lambda} \cap U$, we have 
\[
\int_M \frac{ |F|^2}{   \max(d(x,p), m^{-1})^{n-2}    }dvol(x) \geq \Lambda^{-1}m^2,
\]
By the same argument as in item 1 of Lemma \ref{lem:Whitney}, by choosing $C$ large enough, we can find some geodesic ball $B(p, s)$ with a dyadic radius $s\geq m^{-1}$ such that 
\begin{equation}\label{eqn:Vitali1}
\int_{ B(p, s) }|F|^2 \geq C^{-1}\Lambda^{-1} s^{n-3+ \kappa} m^{1+ \kappa}.
\end{equation}
Then
the monotonicity formula Cor. \ref{cor:monotonicity} implies  the upper bound
\[
C^{-1} \Lambda^{-1}s^{n-3+\kappa} m^{1+\kappa} \leq \int_{B(p,s)} |F|^2\leq C(mr(p)^{4-n}+1) s^{n-4} \leq C m s^{n-4}
\]
whence $
s\leq  C  (\Lambda m^{- \kappa})^{\frac{1}{\kappa+1}}\ll 1. 
$
We let $s(p)$ be the largest $s$ such that (\ref{eqn:Vitali1}) holds.

By the Vitali covering argument, we can find a subcover $\{  B(p_i, 5s_i) \}$ for $\mathcal{C}_{\Lambda} \cap U$ (so \textbf{items 1,2}   are automatic), and we can demand $B(p_i, s_i)$ are  all disjoint, hence
\[
\Lambda^{-1}	m^{1+\kappa}  \sum  s_i^{n-3+ \kappa}  \leq C   \sum_i \int_{B(p_i,s_i)} |F|^2 \leq  C\int_{U' } |F|^2.
\]
where we notice that all the balls $B(p_i,s_i)$ are contained in $U'= B_{   C  (\Lambda m^{- \kappa})^{\frac{1}{\kappa+1}}  } (U)$.
This proves \textbf{item 3}.

By the maximality of $s_i$ in item 2 above, for any $s\geq s_i$, 
\[
\int_{ B(p_i, s) }|F|^2 \leq C^{-1}\Lambda^{-1} s^{n-3+ \kappa} m^{1+ \kappa},
\]
so by summing up the contributions from the dyadic annuli, 
\[
\int_M \frac{ |F |^2}{ \max(d(x,p_i),s )^{n-2}  } dvol(x) \leq C \Lambda^{-1} m^2 (ms)^{\kappa-1}. 
\]
\textbf{Item 4} now follows from  item 3 in Lemma \ref{lem:smallPhi} and $|\nabla \Phi|\leq C|F|$.
\end{proof}

\subsubsection{Deformation lemma}

We can now locally replace the mollified Chern form  by a rectifiable current with real coefficients.

\begin{lem}(Deformation lemma)\label{lem:deformation}
	Let $0<\kappa<1$ be fixed. 
There is some rectifiable $(n-3)$-current $P$ supported on the $(n-3)$-dimensional skeleton of the unions of $\{  Q_i \}$,  and an $(n-2)$-current $R$ supported in $\cup_i Q_i$, such that on the interior of  $\mathcal{Q}$, the 3-current $d \tilde{F}$ satisfies
\[
\begin{cases}
	d\tilde{F}  - P= \partial R,
	\\
\text{Mass}(  P ) \leq C\text{Mass}( d\tilde{F}  \lfloor_{  \text{Int}(\mathcal{Q})  } )   , \quad   \text{Mass}(  R ) \leq C\text{Mass}( d\tilde{F}  \lfloor_{  \text{Int}(\mathcal{Q})  } )    m^{-\kappa/(\kappa+1)},
\\
     \mathcal{H}^{n-3}( \text{supp}(P) ) \leq C   m^{-1}  \int_{r/2<r(x)<4r} |F|^2.
\end{cases}
\]

\end{lem}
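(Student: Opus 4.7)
The underlying idea is a Federer--Fleming type cubical deformation applied to the Whitney cover $\{Q_i\}$ from Lemma \ref{lem:Whitney}. Since $d\tilde{F}$ is a smooth 3-form with $d^2 = 0$, it defines a closed normal $(n-3)$-current with real coefficients and finite mass, and by Lemma \ref{lem:mollifiedcurvaturedF} its support lies inside $B_{m^{-1}}(\mathcal{C}_{\Lambda_0})\cap\mathcal{Q}\subset\bigcup_i Q_i$. The plan is to push this current onto the $(n-3)$-skeleton of the cover via the standard cubical projection and gather up the estimates.

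On each individual cube $Q_i$ of side length $s_i$, the classical cubical deformation (projecting radially from centers of the top-dimensional faces onto the boundary, one codimension at a time) produces a polyhedral $(n-3)$-chain $P_i$ with real coefficients on the $(n-3)$-skeleton of $Q_i$, and an $(n-2)$-current $R_i$ supported in $Q_i$, such that $d\tilde{F}\lfloor_{Q_i} - P_i = \partial R_i$ with the standard bounds
\begin{equation*}
\text{Mass}(P_i) \le C\,\text{Mass}(d\tilde{F}\lfloor_{Q_i}), \qquad \text{Mass}(R_i) \le Cs_i\,\text{Mass}(d\tilde{F}\lfloor_{Q_i}).
\end{equation*}
Setting $P = \sum_i P_i$ and $R = \sum_i R_i$ then gives $d\tilde{F} - P = \partial R$ on $\text{Int}(\mathcal{Q})$. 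The first mass estimate follows by summation. For the second, the uniform bound $\max_i s_i \le Cm^{-\kappa/(\kappa+1)}$ from Lemma \ref{lem:Whitney} yields
\begin{equation*}
\text{Mass}(R) \le \max_i s_i \cdot \sum_i C\,\text{Mass}(d\tilde{F}\lfloor_{Q_i}) \le Cm^{-\kappa/(\kappa+1)}\text{Mass}(d\tilde{F}\lfloor_{\text{Int}(\mathcal{Q})}).
\end{equation*}
For the Hausdorff-measure bound, the $(n-3)$-skeleton of a cube of side $s_i$ has $\mathcal{H}^{n-3}$-measure bounded by $Cs_i^{n-3}$, and the second estimate in Lemma \ref{lem:Whitney} gives $\mathcal{H}^{n-3}(\text{supp}(P))\le C\sum_i s_i^{n-3} \le Cm^{-1}\int_{r/2 < r(x) < 4r}|F|^2$. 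Rectifiability of $P$ is automatic since it is supported on a finite union of flat $(n-3)$-faces with locally integrable real multiplicity.

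The main technical point I expect is to carry out the cube-by-cube deformation \emph{coherently} across interfaces between adjacent Whitney cubes of \emph{different} sizes, so that the local homotopies $R_i$ glue and their boundaries cancel on shared faces. In the dyadic Whitney algorithm used here, neighbors differ in size by at most a factor of two, which is manageable: one either refines each larger cube along seams shared with smaller neighbors (costing only a bounded factor in constants and not enlarging the total $(n-3)$-skeleton measure by more than a bounded factor), or else carries out the projections inductively from the smallest cubes upward so that pushed-forward boundary data on an interface already lives on the finer subdivision of the neighbor. Either realization preserves the orders of the mass and skeletal Hausdorff estimates stated above.
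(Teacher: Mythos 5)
Your proposal follows essentially the same route as the paper: both apply the Federer--Fleming cubical deformation to the closed normal current $d\tilde{F}$ supported in $\bigcup_i Q_i$, and read off the mass bounds and the $\mathcal{H}^{n-3}$ bound from Lemma \ref{lem:Whitney} exactly as you do. One caveat on your last paragraph: the claim that adjacent Whitney cubes differ in size by at most a factor of two is not guaranteed here, since the stopping rule is driven by local energy rather than by distance to a fixed set; your alternative remedy --- passing to the common dyadic refinement along interfaces (a face of a larger dyadic cube is tiled by faces of its smaller neighbours) --- does not need that claim, and one checks that the refined $(n-3)$-skeleton still has measure $\leq C\sum_i s_i^{n-3}$, so the stated estimates survive.
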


\begin{proof}
The idea is the same as the \emph{deformation theorem} in geometric measure theory \cite[Chapter 5]{LeonSimon}, which is to push the current into the skeleton of some cube complex. To begin with, we observe that on the interior of $\mathcal{Q}$, the $(n-3)$-current  $d\tilde{F}$  is supported on $B_{m^{-1}} ( \mathcal{C}_{\Lambda_0} )\cap \mathcal{Q}\subset \cup_i  \{ Q_i\}$ by Lemma \ref{lem:mollifiedcurvaturedF}, and is a closed current with finite mass, so in particular is a normal current.

In the first step, on each $Q_i$, we find some $(n-2)$-normal current $R_i$ supported on $Q_i$, and some $(n-3)$-normal current $P_i$ supported on the boundary of $Q_i$, such that
\[
\begin{cases}
	d\tilde{F}\lfloor_{Q_i }= \partial R_i+ P_i.
	\\
\text{Mass}(R_i) \leq Cs_i \text{Mass}( d\tilde{F}  \lfloor_{  Q_i } ), \quad 
\text{Mass}(P_i) \leq C\text{Mass}( d\tilde{F}  \lfloor_{  Q_i } )
\end{cases}
\]
The appearance of the cube size $s_i$ is due to rescaling, and Lemma \ref{lem:Whitney} gives a uniform upper bound $s_i\leq Cm^{-\kappa/(\kappa+1)}$.

Next, we replace $d\tilde{F} $ by $d\tilde{F} - \sum_{Q_i} R_i$, which is still a closed $(n-3)$-current on $\text{Int}(\mathcal{Q})$, and its mass only increased by a constant factor, but the support is pushed into the $(n-1)$-skeleton. We repeat this process on the $(n-1)$-skeleta of $\cup \{  Q_i \}$ etc, and eventually the current is pushed into the $(n-3)$-skeleton. We thus obtain $	d\tilde{F}  - P= \partial R$ with the claimed mass bound. 
Finally, since the support of $P$ is contained in the $(n-3)$-skeleton of $\cup Q_i$, the Hausdorff measure
\[
\mathcal{H}^{n-3}(P)\leq C \sum_{Q_i} s_i^{n-3} \leq Cm^{-1} \int_{ r/2< r(x)< 4r} |F|^2 dvol,
\]
by Lemma \ref{lem:Whitney}.
\end{proof}

\subsection{Weak limit of the Chern form}\label{sect:weaklimitChernform}

We fix any $1-n< \nu< -1-n/2$. By Prop. \ref{prop:parallelcurvatureL1},
there exist $L^2$-harmonic 2-forms $\sigma\in L^2\mathcal{H}^2(M)$ depending on $(A,\Phi)$, satisfying the uniform $L^1_{loc}$ estimate
 \[
 	\dashint_{r<r(x)<2r} | \frac{\Tr(\Phi F)}{4\pi m} +\sigma_0-\sigma|  \leq C \max(  r^{\nu},  m^{-1/2} r^{- 1-n/2}      ).
 \]
 Moreover, $\sigma$ represents an \emph{integral class} in $L^2\mathcal{H}^2\simeq H^2_c(M)$. 
 Now $\sigma_0$ is a fixed smooth 2-form independent of $m$, so after passing to subsequence, the following $L^1_{loc}$ weak limit exists:
 \begin{equation}\label{eqn: weaklimitChernform}
 \tilde{F}_\infty := \lim_{i\to +\infty} (-\frac{\Tr(\Phi F)}{4\pi m} +\sigma).
 \end{equation}
We denote $Q:= d\tilde{F}_\infty$ as the closed $(n-3)$-current measuring the failure of the Bianchi identity for $\tilde{F}_\infty$.

\begin{prop}\label{prop:weaklimitChernform}

The following convergence results hold: 

\begin{enumerate}
		\item   The limit satisfies
	$
	\tilde{F}_\infty= \lim_{i\to +\infty} (  \tilde{F}+\sigma  )=  \lim_{i\to +\infty}  (  -\frac{\Tr(\Phi F)}{4\pi |\Phi|}   +\sigma  )  ,
	$
	where $\tilde{F}$ is the mollified Chern form. Moreover, for any fixed $1-n<\nu<-1-n/2$,  the asymptotic estimate holds:
	\[
		\dashint_{r<r(x)<2r} | \tilde{F}_\infty-\sigma_0|  \leq C r^\nu,\quad \forall r>1.
	\]

	\item    The convergence  (\ref{eqn: weaklimitChernform}) is  strong  in $L^1$ on any fixed compact subset of $M$.

	\item The $(n-3)$-current $Q$ is a rectifiable current with real coefficient, and has finite total mass on $M$.

\end{enumerate}
\end{prop}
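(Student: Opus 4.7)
The proof splits into three parts matching the numbered items, which I would attack in sequence. The overall strategy is: (i) reduce Item 1 to the comparison estimates already packaged in Lemma \ref{lem:mollifiedcurvaturecomparison}; (ii) for Item 2, upgrade weak to strong $L^1_{loc}$ convergence by an elliptic compactness argument based on uniform $L^1_{loc}$ control of $d$ and $d^\ast$ applied to the sequence; (iii) for Item 3, invoke the deformation Lemma \ref{lem:deformation} to approximate $d\tilde{F}$ by polyhedral $(n-3)$-chains with controlled Hausdorff support, then apply an Ambrosio--Kirchheim rectifiability criterion to the weak limit.

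For \textbf{Item 1}, Lemma \ref{lem:mollifiedcurvaturecomparison} gives on any fixed compact subset that both $\tilde{F}+\tfrac{1}{4\pi}\Tr(\Phi F/|\Phi|)$ and $\tilde{F}+\tfrac{1}{4\pi}\Tr(\Phi F/m)$ are $o_m(1)$ in $L^1$; the second estimate exploits the negative $m$-exponent $-\tfrac{1}{2(n-1)}$ in that bound. Consequently the three sequences in the statement share the same $L^1_{loc}$ limit, which we denote $\tilde{F}_\infty$. For the annular asymptotic bound on $\tilde{F}_\infty-\sigma_0$, I would pass to the $i\to\infty$ limit in Prop. \ref{prop:parallelcurvatureL1} Item 1 restricted to $\{r<r(x)<2r\}$, using the strong $L^1$ convergence supplied by Item 2 on this fixed annulus: the $m^{-1/2}r^{-1-n/2}$ error term vanishes in the limit and only $Cr^{\nu}$ survives.

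For \textbf{Item 2}, write $\omega_m:=-\tfrac{\Tr(\Phi F)}{4\pi m}+\sigma$. By Prop. \ref{prop:parallelcurvatureL1} Item 1 the forms $\omega_m-\sigma_0$ are uniformly bounded in $L^1_{loc}$, while Lemma \ref{lem:traceeqn} combined with Cauchy--Schwarz and the monotonicity bound Lemma \ref{lem:monotonicity2} yields $\int_{r(x)<r}(|d\omega_m|+|d^\ast\omega_m|)\leq Cm^{-1}(\int|F|^2)^{1/2}(\int|\nabla\Phi|^2)^{1/2}\leq C(1+m^{-1}r^{n-4})^{1/2}$, which is uniform in $m$ on each compact set. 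Applying the weak-type Calder\'on--Zygmund / Riesz-potential bounds for the elliptic first-order operator $d+d^\ast$ then gives uniform $W^{1,q}_{loc}$ control for every $q<n/(n-1)$, and Rellich--Kondrachov compactness yields precompactness of $\{\omega_m\}$ in $L^1_{loc}$. Since the weak $L^1_{loc}$ limit is identified in Item 1 as $\tilde{F}_\infty$, the entire sequence converges strongly.

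For \textbf{Item 3}, exhaust $M$ by coordinate cubes $\mathcal{Q}$ inside the annuli $\{r<r(x)<2r\}$ and apply Lemma \ref{lem:deformation} on each: this produces rectifiable $(n-3)$-currents $P_m$ with $d\tilde{F}-P_m=\partial R_m$, where $\mathrm{Mass}(R_m)\to 0$ (via the factor $m^{-\kappa/(\kappa+1)}$) and $\mathcal{H}^{n-3}(\mathrm{supp}\,P_m)\leq Cm^{-1}\int_{r/2<r(x)<4r}|F|^2\leq C$ uniformly in $m$ (using Lemma \ref{lem:monotonicity2}). Passing to a subsequence, $P_m$ converges weakly to $Q=d\tilde{F}_\infty$ on $\mathcal{Q}$; lower semicontinuity of mass together with (\ref{eqn:massdtildeF}) in the $m\to\infty$ limit gives $\mathrm{Mass}(Q|_{r(x)<r})\leq C$ with $C$ independent of $r$, hence $Q$ has finite total mass on $M$. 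Rectifiability then follows from the Ambrosio--Kirchheim criterion applied to the normal current $Q$ approximated by rectifiable $P_m$ with uniformly $\mathcal{H}^{n-3}$-finite supports. I expect this last step to be the main obstacle: one must verify that the uniform Hausdorff-measure control on $\mathrm{supp}(P_m)$ passes, through the Ambrosio--Kirchheim framework, into concentration of $\|Q\|$ on an $\mathcal{H}^{n-3}$-rectifiable set. The integer multiplicity of $Q$ is not addressed here and is deferred to the blow-up argument of Thm. \ref{thm:weaklimitstructure}.
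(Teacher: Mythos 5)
Your treatment of Items 1 and 3 follows the paper's argument essentially verbatim: Item 1 is exactly the reduction to Lemma \ref{lem:mollifiedcurvaturecomparison} plus passing the bound of Prop. \ref{prop:parallelcurvatureL1} to the limit on fixed annuli, and Item 3 is the paper's combination of the Deformation Lemma \ref{lem:deformation} (with the flat-norm error $\text{Mass}(R)=O(m^{-\kappa/(\kappa+1)})$ and the uniform $\mathcal{H}^{n-3}$ bound on $\text{supp}(P)$) with the Ambrosio--Kirchheim closure theorem, together with the observation that the $L^1$ bound on $d\tilde F$ becomes $r$-independent in the limit. Both are correct as written.

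Item 2 contains a genuine misstep. From uniform $L^1_{loc}$ bounds on $u_m=\omega_m-\sigma_0$ and on $(d+d^*)u_m$ you claim uniform $W^{1,q}_{loc}$ control for all $q<n/(n-1)$ and then invoke Rellich--Kondrachov. That regularity gain is the one associated with the \emph{second-order} Laplacian with $L^1$ (or measure) right-hand side, where $\nabla u=\nabla G*f$ has kernel $O(|x|^{1-n})\in L^{n/(n-1),\infty}$. For the \emph{first-order} operator $d+d^*$ the parametrix $\mathcal{P}$ has kernel $O(|x|^{1-n})$ acting on $u$ itself, so one only gets $u_m$ uniformly in $L^q_{loc}$ for $q<n/(n-1)$, while $\nabla u_m=\nabla\mathcal{P}f_m+(\text{smooth})$ involves an order-zero singular integral of an $L^1$ function and lands only in weak $L^1$; a concentrating $f_m$ produces $u\sim|x|^{1-n}$ with $\nabla u\sim|x|^{-n}\notin L^1_{loc}$. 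Uniform $L^q$ boundedness for $q>1$ gives weak but not norm precompactness in $L^1$, and weak-$L^1$ gradient bounds do not feed into the standard Rellich theorem, so as stated the compactness step does not close.

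The fix is the route the paper actually takes: write $u_m=\phi'\mathcal{P}(\phi f_m)+r_m$ where the remainder $r_m$ is bounded in $C^{k,\alpha}$ on the smaller annulus (hence precompact in $L^1$), and observe that the order $-1$ operator $\phi'\mathcal{P}\phi$ is \emph{compact} from $L^1$ to $L^1$ because its kernel $O(|x-y|^{1-n})$ is locally integrable --- verified either by the Fr\'echet--Kolmogorov criterion or, equivalently, by noting $\mathcal{P}:L^1\to W^{s,1}_{loc}$ for $0<s<1$ and that this fractional Sobolev space embeds compactly into $L^1$. With that substitution your argument for Item 2 goes through.
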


\begin{proof}
\textbf{Item 1}. By Lemma \ref{lem:mollifiedcurvaturecomparison}, the mutual difference 
between $\tilde{F}+\sigma$, $- \frac{\Tr(\Phi F)}{4\pi m}+\sigma$, and $- \frac{\Tr(\Phi F)}{4\pi |\Phi|}+\sigma$ converge in $L^1_{loc}$ to zero as $m\to +\infty$, hence they share the same weak limit. The asymptotic estimate is obtained by passing the uniform $L^1_{loc}$-bound to the $m\to +\infty$ limit on each fixed compact subset.

\textbf{Item 2}.  By Cor. \ref{cor:traceeqn} and the fact that $d\sigma=d^*\sigma=0$, we obtain a uniform $L^1_{loc} $-estimate for $(d+d^*)$ of $ \frac{\Tr(\Phi F)}{4\pi m}-\sigma$,
\[
\int_{ r(x)<r}  |d (\frac{\Tr(\Phi F)}{4\pi m} -\sigma)|+ |d^*  (\frac{\Tr(\Phi F)}{4\pi m} -\sigma)  | \leq   C+ Cm^{-1/2} r^{(n-4)/2}.
\]

In general, if $(d+d^*)u=f$ on $\{  r/2< r(x)<4r \}$, then there is a Calderon-Zygmund operator $\mathcal{P}$ of order $-1$ whose leading order symbol is inverse to that of $d+d^*$, and cutoff functions $\phi,\phi'$ supported on $\{  r/2< r(x)<4r \}$ and equal to one on $\{  r< r(x)<2r \}$
with the following smoothing effect:
\[
\norm{ u- \phi' \mathcal{P}(\phi f) }_{C^{k,\alpha}( \{  r < r(x)<2r \}  )}  \leq C \dashint_{  r/2< r(x)<4r  } |u|+ Cr \dashint_{ r/2<r(x)<4r} |f|.
\]
Using the Fréchet–Kolmogorov compactness theorem, the Calderon-Zygmund operator $\phi' \mathcal{P} \phi: L^1\to L^1$ is compact. On the other hand, the embedding of $C^{k,\alpha}$ into $L^1$ is compact. The upshot is that given a uniform bound on the local $L^1$-norms on $u$ and $(d+d^*)u$ on $\{  r/2< r(x)<4r \}$,  then the sequence is precompact in $L^1$ on $\{  r< r(x)<2r \}$. Applying this to $u= \frac{\Tr(\Phi F)}{4\pi m} -\sigma $, the convergence (\ref{eqn: weaklimitChernform}) is strong in $L^1$ on $\{  r< r(x)<2r \}$, for any given $r>1$.

\textbf{Item 3}. The distributional derivative commutes with limit, so
\[
Q=d\tilde{F}_\infty= \lim d(-\frac{\Tr(\Phi F)}{4\pi m} +\sigma)= \lim d(-\frac{\Tr(\Phi F)}{4\pi m} ).
\]
For any fixed $r>1$, we pass the bound in Cor. \ref{cor:traceeqn} to the $m\to +\infty$ limit, to bound the mass of the $(n-3)$-current $Q$ over $\{  r(x)<r \}$,
\[
\begin{split}
& \text{Mass}(Q\lfloor_{  \{  r(x)<r\}  }) 
\leq \text{Mass}   (   d (\frac{\Tr(\Phi F)}{4\pi m})    \lfloor_{  \{  r(x)<r\}     }  )               
\\
\leq & C\liminf  \int_{r(x)<r}   | d (\frac{\Tr(\Phi F)}{4\pi m})| 
\\
\leq & C  \liminf_{m\to +\infty}  (  C+ Cm^{-1/2} r^{(n-4)/2}) \leq C.
\end{split}
\]
The second line uses that for $k$-currents represented by $(n-k)$-forms with $L^1_{loc}$-coefficients, the mass is uniformly equivalent to the $L^1$-norm.
Taking $r\to +\infty$, the total mass satisfies $\text{Mass} (Q)\leq C$.

To prove \emph{rectifiability}, we focus on the interior of a coordinate cube $\mathcal{Q}$. By item 1,
we have an alternative limiting characterisation of $Q$,
\[
Q=d\tilde{F}_\infty=  \lim_{i\to +\infty} d(  \tilde{F}+\sigma  )=  \lim_{i\to +\infty}  d\tilde{F}  .
\]
Now the mass of an $(n-3)$-current represented by a 3-form with $L^1_{loc}$-coefficients, is uniformly comparable to the $L^1$-norm, so by (\ref{eqn:massdtildeF}),
\[
\text{Mass}(   d\tilde{F}    \lfloor_{  \text{Int}(\mathcal{Q})  }    ) \leq C\int_{ r(x)<2r} |d\tilde{F}|\leq  C(1+   m^{-1} r^{n-4})^{1/2} .
\]
For any fixed $r$, this bound is uniform for $m\to +\infty$. We apply the Deformation Lemma \ref{lem:deformation}, to find a rectifiable $(n-3)$-current $P$ supported on the $(n-3)$-dimensional skeleton of the cube complex, with
\[
\begin{cases}
	d\tilde{F}  - P= \partial R,
	\\
	\text{Mass}(  P ) \leq C\text{Mass}( d\tilde{F}  \lfloor_{  \text{Int}(\mathcal{Q})  } )   , \quad   \text{Mass}(  R ) \leq C\text{Mass}( d\tilde{F}  \lfloor_{  \text{Int}(\mathcal{Q})  } )    m^{-\kappa/(\kappa+1)},
	\\
	\mathcal{H}^{n-3}( \text{supp}(P) ) \leq C   m^{-1}  \int_{r/2<r(x)<4r} |F|^2\leq C(1+ m^{-1}r^{n-4}).
\end{cases}
\]
Thus  $d\tilde{F}$ and $P$ have the same weak limit $Q$, because the flat norm of their difference is given by the mass of $R$, which converges to zero as $O(m^{-\kappa/(\kappa+1)})$.

Now by the \emph{closure theorem} of Ambrosio-Kirchheim \cite[Thm. 8.5]{Ambrosio}, given a sequence of $k$-dimensional rectifiable normal currents $T_i$ (in general with real coefficients) weakly converging to a current $T$, if there is uniform upper bound on  $\text{Mass}(T_i)$, $\text{Mass}(\partial T_i)$ and $\mathcal{H}^k(\text{supp}(T_i))$, then the weak limit $T$ is also a $k$-dimensional rectifiable normal current.

In our case $P$ is a closed rectifiable current with real coefficients, and have a uniform local mass bound and Hausdorff measure bound, hence on any proper open subset of $\mathcal{Q}$, its weak limit $Q=\lim_{i\to +\infty} P$ must also be rectifiable with real coefficients. 
\end{proof}

	
	
	\begin{rmk}\label{rmk:denseconcentration}
The above argument \emph{does not} rule out that $\Phi^{-1}(0)$ or the curvature concentration locus may become arbitrarily dense on some open subset of $M$ in the $m\to +\infty$ limit. Given $k<n$, a general closed $k$-rectifiable currrent with a uniform bound on the mass and the $\mathcal{H}^k$-measure,  may be $\epsilon$-dense on an open subset for any $\epsilon>0$.

	\end{rmk}

\subsection{Energy measure limit I}

The intermediate energy formula gives a uniform $L^1$ bound on $|\nabla \Phi|^2$,
while the monotonicity formula Lemma \ref{lem:monotonicity2} gives a uniform $L^1_{loc}$ bound on $|F|^2$. Hence after passing to subsequence, we can take the weak limit of these density measures on any fixed subset $\{  r(x)<r   \}$,
\begin{equation}\label{eqn:weaklimitmeasure}
\begin{cases}
\mu_{|\nabla \Phi|^2} := \lim_{i\to +\infty}  \frac{1}{2\pi m} |\nabla \Phi|^2 dvol,
\\
\mu_{|F|^2} :=  \lim_{i\to +\infty}  \frac{1}{2\pi m}   |F|^2 dvol.
\end{cases}
\end{equation}
(It will turn out that these weak limits are determined by $Q$, so taking this further subsequence is in fact unnecessary, see Prop. \ref{prop:measurelimit2} below.)

\begin{lem}\label{lem:measurelimit1}
The weak limits of the measures satisfy
\[
\mu_{|\nabla \Phi|^2}=	
\begin{cases}
	Q\wedge \psi,\quad &\text{$G_2$-monopole case},
	\\
 Q\wedge \text{Re}\Omega, \quad & \text{Calabi-Yau monopole case},
\end{cases}
\]
and the total measure bound
\[
\max(\int_M d\mu_{|\nabla \Phi|^2} ,   \int_M d\mu_{|F|^2}) \leq  
\begin{cases}
	\langle  \beta \cup \psi|_\Sigma, [\Sigma]\rangle, \quad & \text{$G_2$ case},
	\\
	\langle  \beta \cup \text{Re}(\Omega)|_\Sigma, [\Sigma]\rangle, \quad & \text{Calabi-Yau case}.
\end{cases}
\]
\end{lem}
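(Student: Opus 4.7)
The plan is to rewrite the energy density pointwise as an exact form wedged with the calibration, then pass to the distributional limit using the strong $L^1_{loc}$ convergence from Proposition \ref{prop:weaklimitChernform}. Using the monopole equation $*\nabla\Phi = F\wedge\psi$ (resp.\ $*\nabla\Phi=F\wedge\mathrm{Re}\,\Omega$), the Lie-algebra convention $|a|^2=-\tfrac12\Tr(a^2)$, and the Bianchi identity $d\Tr(\Phi F) = \Tr(\nabla\Phi\wedge F)$ from Lemma \ref{lem:traceeqn}, one verifies the pointwise identity
\[
|\nabla\Phi|^2\,dvol \;=\; -\tfrac12\,\Tr(\nabla\Phi\wedge F)\wedge\psi \;=\; -\tfrac12\,d\Tr(\Phi F)\wedge\psi
\]
in the $G_2$ case (with $\mathrm{Re}\,\Omega$ replacing $\psi$ in the Calabi--Yau case). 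Setting $\tilde{F}_m := -\tfrac{1}{4\pi m}\Tr(\Phi F) + \sigma$, where $\sigma$ is the $L^2$-harmonic $2$-form from Proposition \ref{prop:parallelcurvatureL1}, and using $d\sigma=0$, this rearranges to $\tfrac{1}{2\pi m}|\nabla\Phi|^2\,dvol = d\tilde{F}_m\wedge\psi$.

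For any $f\in C^\infty_c(M)$, the closedness $d\psi=0$ (resp.\ $d\mathrm{Re}\,\Omega=0$) allows integration by parts,
\[
\int_M f\,\tfrac{1}{2\pi m}|\nabla\Phi|^2\,dvol \;=\; -\int_M df\wedge\tilde{F}_m\wedge\psi.
\]
Since $\tilde{F}_m\to\tilde{F}_\infty$ strongly in $L^1_{loc}$ by Proposition \ref{prop:weaklimitChernform}(2), and $df\wedge\psi$ is a fixed smooth compactly supported form, the right-hand side converges to $-\int_M df\wedge\tilde{F}_\infty\wedge\psi$; reversing the integration by parts and using the distributional definition $Q=d\tilde{F}_\infty$ rewrites the limit as $\int_M f\,Q\wedge\psi$. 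This identifies $\mu_{|\nabla\Phi|^2} = Q\wedge\psi$ (resp.\ $Q\wedge\mathrm{Re}\,\Omega$). The closed correction $\sigma$ drops out after integration by parts, so only the $L^1_{loc}$ limit $\tilde{F}_\infty$ matters; this is the essential mechanism.

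For the total mass bounds, the Bolgomolny formulas (\ref{eqn:intermediateenergy2}) and (\ref{eqn:intermediateenergyCY2}) give $\tfrac{1}{2\pi m}\int_M|\nabla\Phi|^2 = \langle\beta\cup\psi|_\Sigma,[\Sigma]\rangle$ for every $m$, so lower semicontinuity of mass under weak-$*$ convergence on the open exhaustion $\{r(x)<r\}$ yields the stated bound on $\int_M d\mu_{|\nabla\Phi|^2}$. For $\mu_{|F|^2}$, the monotonicity bound in Lemma \ref{lem:monotonicity2} gives $\int_{r(x)\leq r}|F|^2 \leq E^\psi + Cr^{n-4}$, whence $\mu_{|F|^2}(\{r(x)<r\}) \leq \liminf_m\tfrac{1}{2\pi m}(E^\psi+Cr^{n-4}) = \langle\beta\cup\psi|_\Sigma,[\Sigma]\rangle$; letting $r\to\infty$ gives the global bound. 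The main work in this lemma is really upstream in Proposition \ref{prop:weaklimitChernform}, which supplies the strong $L^1_{loc}$ convergence; once that is in hand, the present argument is purely formal manipulation with integration by parts.
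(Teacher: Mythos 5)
Your proposal is correct and follows essentially the same route as the paper: the same pointwise identity $d\Tr(\Phi F)\wedge\psi=-2|\nabla\Phi|^2\,dvol$, passed to the limit against test functions using the $L^1_{loc}$ convergence of the (corrected) Chern form and $Q=d\tilde F_\infty$, with the total mass bounds obtained from the intermediate energy formula and Lemma \ref{lem:monotonicity2} exactly as in the paper. The only difference is that you write out explicitly the integration by parts that the paper leaves implicit in the pairing $Q(f\psi)$.
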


\begin{proof}
We observe the following identity for $G_2$-monopoles:
\begin{equation}
	d\Tr (F\Phi) \wedge \psi= \Tr ( \nabla \Phi \wedge F)\wedge \psi=  \Tr ( \nabla \Phi \wedge *\nabla \Phi) = -2 |\nabla \Phi|^2 dvol.
\end{equation}
Passing this identity to the $m\to +\infty$ limit, for any function $f\in C^0_c(M)$,
\[
Q\wedge \psi (f):= Q( f\psi )= \lim  \int_M d( - \frac{\Tr (\Phi F)}{4\pi m}  )\wedge f\psi=  \frac{1}{2\pi } \lim \int_M f m^{-1} |\nabla \Phi|^2 .
\]
This shows the measure convergence $2\pi Q\wedge \psi= \lim m^{-1}|\nabla \Phi|^2 dvol $.
The Calabi-Yau version of the identity is
\[
d\Tr (F\Phi) \wedge \text{Re}\Omega= \Tr ( \nabla \Phi \wedge F)\wedge  \text{Re}\Omega =  \Tr ( \nabla \Phi \wedge *\nabla \Phi) = -2 |\nabla \Phi|^2 dvol,
\]
whence $2\pi Q\wedge \text{Re}\Omega= \lim m^{-1}|\nabla \Phi|^2 dvol $. This shows the formula for $\mu_{|\nabla \Phi|^2}$.  

Now on any fixed compact subset $\{  r(x)\leq r  \}$, we can pass the monotonicity formula Lemma \ref{lem:monotonicity2} to the $m\to +\infty$ limit, to deduce
\[
\int_{ r(x)<r}  d\mu_{|F|^2} \leq  \begin{cases}
	\lim \frac{1}{2\pi m} E_\psi = 	\langle  \beta \cup \psi|_\Sigma, [\Sigma]\rangle, \quad & \text{$G_2$ case},
	\\
	\lim \frac{1}{2\pi m} E_\Omega= \langle  \beta \cup \text{Re}(\Omega)|_\Sigma, [\Sigma]\rangle, \quad & \text{Calabi-Yau case}.
\end{cases}
\]
Taking $r\to +\infty$ shows the total measure bound on $\mu_{|F|^2}$. The same argument using the intermediate energy formula (\ref{eqn:intermediateenergy2})(\ref{eqn:intermediateenergyCY2}) proves the total measure bound on $\mu_{|\nabla \Phi|^2}$.
\end{proof}

The limiting measure provides a pointwise bound on $|\tilde{F}_\infty|$.

\begin{lem}
Let $1-n <\nu< -1-n/2$, then for $1<r<r(p)<2r$, we have
\[
|\tilde{F}_\infty-\sigma_0|(p) \leq  C  r^{\nu} + C\int_{ r/2< r(x)<  4r  }    \frac{  1 }{ \text{dist}(x,p)^{n-1}  }d\mu_{|F|^2} (x) .
\]
\end{lem}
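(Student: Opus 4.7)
The starting point is the pointwise bound of Prop.~\ref{prop:parallelcurvatureL1}(4), applied for each $m$ at a test point $q$ with $r<r(q)<2r$; using $|\nabla\Phi|\leq C|F|$ from Lem.~\ref{lem:Liealgebra} to replace $m^{-1}|F||\nabla\Phi|$ by $Cm^{-1}|F|^2$ in the integrand, it reads
\[
\left|\tfrac{\Tr(\Phi F)}{4\pi m}+\sigma_0-\sigma\right|(q) \leq C\max(r^\nu,\, m^{-1/2}r^{-1-n/2}) + C\int_{r/2<r(x)<4r}\tfrac{m^{-1}|F|^2(x)}{\text{dist}(x,q)^{n-1}}\,dvol(x).
\]
The plan is to integrate this inequality in $q$ over a small ball $B(p,\epsilon)$, thereby mollifying the singular kernel into a continuous test function; then pass the $i\to+\infty$ limit using the strong $L^1_{loc}$ convergence of Prop.~\ref{prop:weaklimitChernform}(2) on the left and the weak-$*$ convergence of Radon measures $m^{-1}|F|^2dvol\rightharpoonup 2\pi\,d\mu_{|F|^2}$ from (\ref{eqn:weaklimitmeasure}) on the right; and only then send $\epsilon\to 0$.

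By Fubini, averaging over $q\in B(p,\epsilon)$ turns the kernel $\text{dist}(x,q)^{1-n}$ into $g_\epsilon(x):=\dashint_{B(p,\epsilon)}\text{dist}(x,q)^{1-n}dq$, which is continuous in $x$ because $|z|^{1-n}\in L^1_{loc}(\R^n)$. Multiplied by a fixed continuous cutoff of $\{r/2<r(x)<4r\}$ it becomes an admissible $C_c(M)$ test function for the weak convergence of measures, and since $m^{-1/2}r^{-1-n/2}\to 0$, the $i\to+\infty$ limit yields
\[
\dashint_{B(p,\epsilon)}|\tilde{F}_\infty-\sigma_0|\,dq \leq C r^\nu + C\int_{r/2<r(x)<4r} g_\epsilon(x)\,d\mu_{|F|^2}(x).
\]

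Finally let $\epsilon\to 0$. At any Lebesgue point $p$ of $|\tilde{F}_\infty-\sigma_0|$ the left side converges to $|\tilde{F}_\infty-\sigma_0|(p)$, which is the only locus where the pointwise left-hand side is unambiguously defined; at non-Lebesgue points the bound is interpreted after choosing the precise representative. For the right side, one has pointwise convergence $g_\epsilon(x)\to\text{dist}(x,p)^{1-n}$ for $x\neq p$, together with the uniform domination $g_\epsilon(x)\leq 2^{n-1}\text{dist}(x,p)^{1-n}$ on $\{\text{dist}(x,p)\geq 2\epsilon\}$. If $\int\text{dist}(x,p)^{1-n}d\mu_{|F|^2}=+\infty$ the target inequality is vacuous; otherwise dominated convergence handles the outer region, while on $B(p,2\epsilon)$ the crude bound $g_\epsilon\leq C\epsilon^{1-n}$ combined with $|x-p|^{1-n}\geq (2\epsilon)^{1-n}$ on the same ball gives
\[
\int_{B(p,2\epsilon)} g_\epsilon\,d\mu_{|F|^2} \leq C\epsilon^{1-n}\mu_{|F|^2}(B(p,2\epsilon)) \leq C\int_{B(p,2\epsilon)}\text{dist}(x,p)^{1-n}d\mu_{|F|^2},
\]
which tends to $0$ by absolute continuity of the (now assumed finite) limit integral.

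The principal technical obstacle is precisely this last reconciliation of the mollified kernel with its singular pointwise limit at $x=p$. It is resolved cleanly by the above dichotomy: the finiteness of the target integral automatically forces $\mu_{|F|^2}$ to have no atom at $p$ and to be diffuse enough nearby for the inner $B(p,2\epsilon)$ contribution to be negligible, while the divergent case makes the claim trivial.
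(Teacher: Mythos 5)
Your proposal is correct and takes essentially the same route as the paper: average the pointwise bound of item 4 in Prop.~\ref{prop:parallelcurvatureL1} over a small ball via Fubini, pass to the $m\to+\infty$ limit using the strong $L^1$ convergence on the left and the weak convergence of the normalised curvature measures on the right, and conclude by the Lebesgue differentiation theorem. The only, immaterial, difference is at the last step: the paper replaces the averaged kernel by the truncated kernel $\max(\text{dist}(x,p),s)^{1-n}$ and sends $s\to 0$ by monotonicity, whereas you keep the exact averaged kernel $g_\epsilon$ and reconcile it with $\text{dist}(x,p)^{1-n}$ via your dominated-convergence/absolute-continuity dichotomy.
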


\begin{proof}
Using item 4 in Prop. \ref{prop:parallelcurvatureL1} and $|\nabla \Phi|\leq C|F|$, for $1<r<r(p)<2r$ and $s< r/3$, we deduce by Fubini theorem
\[
\begin{split}
	\dashint_{B(p,s)} |-\frac{\Tr(\Phi F)}{4\pi m} +\sigma-\sigma_0| \leq & C \max(  r^{\nu},  m^{-1/2} r^{- 1-n/2}      )
\\
+ & C \int_{ r/2< r(x)<  4r  }    \frac{  m^{-1} |F|^2 }{ \max(\text{dist}(x,p),s)^{n-1}  }dvol(x) . 
\end{split}
\]
We fix $r$ and pass to the $m\to +\infty$ limit, to obtain
\begin{equation}\label{eqn:Chernformaveragebound}
\begin{split}
	\dashint_{B(p,s)} |\tilde{F}_\infty-\sigma_0| \leq & C  r^{\nu} + C\int_{ r/2< r(x)<  4r  }    \frac{  1 }{ \max(\text{dist}(x,p),s)^{n-1}  }d\mu_{|F|^2}  .
\end{split}
\end{equation}
The pointwise estimate on $|\tilde{F}_\infty|$ follows from the Lebesgue differentiation theorem, as we take $s\to 0$.
\end{proof}

\subsection{Integrality of the Chern class}\label{sect:integrality}

Heuristically, we would like to pass the integrality of the Chern class of the $U(1)$-bundle to the $m\to +\infty$ limit; this is very closely related to the integer multiplicity of the $(n-3)$-rectifiable current $Q$. The technical difficulties are the following:

\begin{enumerate}
	\item  We only have $L^1_{loc}$-convergence, instead of $C^\infty_{loc}$ convergence away from a codimension three subset. 

	\item A priori $\tilde{F}_\infty\in L^1_{loc}$, so the integral $\int_{\Sigma^2} \tilde{F}_\infty$ 
	on a given 2-cycle $\Sigma^2\subset M$ is not a priori well defined. One can only speak of  some average quantity of such integrals.

	\item  The integrality is obscured by the  mollification procedure.
	
\end{enumerate}

Let $\Sigma^2\subset M$ be a closed 2-submanifold inside $\{  r(x)<r \}$. We will perform the following averaging construction. Let $V\subset C^\infty(M, T M)$ be a $d$-dimensional space of vector fields, where $d$ is large enough to satisfy the following transversality requirement: at each point in $\{  r(x)<r \}$, we can find $v_1,\ldots v_n \in V$ with $C^1$-norms $\leq C(r)$, such that $v_1(x),\ldots v_n(x)$ is an orthonormal basis of $T_xM$. We fix an inner product on $V\simeq \R^d$, and take $\mathcal{P}$ as the unit ball in $V$.  We fix a probability measure $\rho(v) dv$ on $\mathcal{P}$ whose density function is smooth and compactly supported.

Let $0<\tau\ll 1$, then the exponential map defines a family of 2-cycles $\exp(\tau v)_* (\Sigma^2)$ parametrised by $v\in \mathcal{P}$, all contained in some open subset $U=B_\tau(\Sigma^2)\subset \{  r(x)<2r \}$. We view the union of all these 2-submanifolds as a $(d+2)$-current $\tilde{\Sigma}$ in $M\times \mathcal{P}$, with projection maps $\pi_M: \tilde{\Sigma}\to M$ and $\pi_{\mathcal{P}}: \tilde{\Sigma}\to  \mathcal{P}$. We then define the \emph{average Chern number} as 
\begin{equation}
\int_{ \tilde{\Sigma} }  (\tilde{F} +\sigma)\wedge   \rho(v)dv
	= \int_{\mathcal{P} }   \left( \int_{ \exp(\tau v)_*(\Sigma^2) }(\tilde{F}+\sigma)  \right)    \rho(v)dv  .
\end{equation}
This is a more robust quantity to pass to the $m\to +\infty$ limit, but unlike usual Chern numbers, it is only real valued.

\begin{lem}\label{lem:averageChernnumber1}
The average Chern number converges in the limit:
\[
\lim_{i\to +\infty}  \int_{ \tilde{\Sigma} } ( \tilde{F} +\sigma)\wedge \rho(v)dv=\int_{ \tilde{\Sigma} }  \tilde{F}_\infty \wedge \rho(v)dv.
\]
\end{lem}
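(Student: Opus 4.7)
The strategy is to re-express the iterated integral on $\tilde{\Sigma}$ as a pairing of $\tilde{F}+\sigma$ (respectively $\tilde{F}_\infty$) against a single smooth compactly supported $(n-2)$-form on $M$, and then invoke the strong $L^1_{loc}$-convergence already established in Prop. \ref{prop:weaklimitChernform}. The averaging construction is tailored precisely to convert a pairing that is degenerate in the limit (integration of an $L^1_{loc}$ 2-form over a 2-cycle) into one that is well-posed (integration of an $L^1_{loc}$ 2-form against a smooth $(n-2)$-form).

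The first step is to introduce the evaluation map $\Psi\colon \Sigma^2\times \mathcal{P} \to M$, $\Psi(p,v) = \exp_p(\tau v(p))$. By the transversality assumption on $V$, at every $x\in \{r(x)<r\}$ the evaluation $v\mapsto v(x)$ from $V$ to $T_xM$ is surjective, with right inverse of controlled norm. Differentiating $\Psi$ along the $V$-direction gives $\tau (d\exp)_{\tau v(p)}(V|_p) = \tau (d\exp)_{\tau v(p)}(T_pM)$, which spans $T_{\Psi(p,v)} M$ once $\tau$ is chosen small enough that $(d\exp)_{\tau v(p)}$ is invertible uniformly over $\mathcal{P}$. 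Hence $\Psi$ is a proper submersion from $\Sigma^2\times \mathcal{P}$ onto the open set $U=B_\tau(\Sigma^2)\subset \{r(x)<2r\}$.

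The second step is to define the fiber-integration pushforward $\omega := \Psi_\ast\bigl(\rho(v)\, dv\bigr) \in \Omega^{n-2}(M)$, where $\rho(v) dv$ is viewed as a smooth compactly supported $d$-form on $\Sigma^2\times \mathcal{P}$ via pullback from $\mathcal{P}$. Because $\Psi$ is a smooth submersion with $\rho$ smooth and compactly supported, $\omega$ is a smooth $(n-2)$-form compactly supported in $U$, with uniform $L^\infty$-bound. By the coarea / fiber-integration formula, for any 2-form $\alpha$ on $M$,
\[
\int_{\mathcal{P}}\Bigl(\int_{\exp(\tau v)_*(\Sigma^2)}\alpha\Bigr)\rho(v)\, dv \;=\; \int_{\Sigma^2\times \mathcal{P}} \Psi^*\alpha \wedge \rho(v)\, dv \;=\; \int_M \alpha\wedge \omega.
\]
Applied to $\alpha = \tilde{F}+\sigma$, this identifies the LHS of the claimed limit with $\int_M (\tilde{F}+\sigma)\wedge \omega$; the same identity with $\tilde{F}_\infty$ in place of $\tilde{F}+\sigma$ simultaneously defines the RHS as $\int_M \tilde{F}_\infty \wedge \omega$, which is well-defined because $\tilde{F}_\infty\in L^1_{loc}$ and $\omega$ is bounded and compactly supported.

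The final step is to invoke items 1 and 2 of Prop. \ref{prop:weaklimitChernform}, which give $\tilde{F}+\sigma \to \tilde{F}_\infty$ strongly in $L^1(U)$. Since $\omega$ has bounded $L^\infty$-norm,
\[
\Bigl|\int_M (\tilde{F}+\sigma -\tilde{F}_\infty)\wedge \omega\Bigr| \;\leq\; \|\omega\|_{L^\infty}\,\|\tilde{F}+\sigma -\tilde{F}_\infty\|_{L^1(U)} \;\longrightarrow\; 0.
\]
The only non-routine point is the submersion property of $\Psi$ underlying the smoothness of $\omega$, which is precisely why the transversality condition on $V$ was imposed in the setup; once $\omega$ is a genuine smooth compactly supported form, the conclusion is immediate from $L^1_{loc}$ strong convergence.
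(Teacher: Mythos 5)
Your proof is correct and is essentially the paper's argument in dual form: the paper bounds the averaging functional directly by $C\tau^{-n}\,\mathrm{Vol}(\Sigma^2)\,\norm{\alpha}_{L^1(U)}$ via Fubini and the Jacobian of the ($\tau$-rescaled) exponential map, whereas you package the same transversality/Jacobian input as a smooth compactly supported pushforward kernel $\omega=\Psi_*(\rho\,dv)$ and bound the pairing by $\norm{\omega}_{L^\infty}\norm{\cdot}_{L^1(U)}$; both then conclude from the strong $L^1(U)$ convergence of $\tilde{F}+\sigma$ in Prop.~\ref{prop:weaklimitChernform}. The observation that the right-hand side is only \emph{defined} through this pairing (since $\tilde{F}_\infty$ is merely $L^1_{loc}$) is a point the paper also makes, and you handle it correctly.
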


\begin{proof}

By the Jacobian formula, and the quantitative transversality requirement above, 
for any 2-form $\alpha\in \Omega^2(U)$, 
\[
\int_{ \tilde{\Sigma}  } \pi_M^* \alpha \wedge \rho(v)dv =\int_{x\in U} \int_{ \tilde{\Sigma}\cap \pi_M^{-1}(x )  } \pi_M^* \alpha \wedge \rho(v)dv\leq C\tau^{-n} \text{Vol}(\Sigma^2) \norm{\alpha}_{L^1(U)} .
\]
The factor $\tau^{-n}$ comes from the Jacobian factor when we invert $n$ variables in $\mathcal{P}$ in terms of the $n$ local coordinates on $M$ in the $\tau$-rescaled exponential map.
We take $\alpha= \tilde{F}+\sigma$, which strongly converge to $\tilde{F}_\infty$ in $L^1(U)$ by item 1 in Prop. \ref{prop:weaklimitChernform}, so the average Chern number converges in the limit.
\end{proof}

The following Lemma says that under mild conditions, most members of the parameter space $\mathcal{P}$ yield integer numbers when $m$ is very large.

\begin{lem}\label{lem:noninteger}
Let $0<\kappa<1$ be a fixed number, and suppose $m\gg \Lambda_0$ is sufficiently large. We assume for any $s\leq Cm^{-\kappa/(1+\kappa)}$, the 2-submanifold $\Sigma^2$ can be covered by $C\text{Vol}(\Sigma^2) / s^2$ geodesic balls of radius $s$.  

Then
there is a subset $E\subset \mathcal{P}$ with probability less than
\[
C\text{Vol}(\Sigma^2) m^{ - \frac{ 3\kappa+1 }{1+\kappa  }   }    \tau^{-n}  \int_{U' } |F|^2, 
\quad U'=   B_{ Cm^{- \kappa /(1+\kappa) }  } (U) ,
\]
such that  for every $v\in \mathcal{P}\setminus E$, 	
the integral $  \int_{ \exp(\tau v)_*(\Sigma^2) }(\tilde{F}+\sigma)  \in \Z  $. Here the constants can depend on $\kappa$ but are independent of the large $m$ and the details of $\Sigma^2$. 
\end{lem}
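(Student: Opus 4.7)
The plan is to exploit that, on the complement of $B_{m^{-1}}(\mathcal{C}_{\Lambda_0})$, the mollified Chern form $\tilde F$ coincides with the honest Chern form $F_{U(1)}$ of the $U(1)$-subbundle cut out by $\Phi/|\Phi|$ (Lemma~\ref{lem:mollifiedcurvaturedF}), so that $\tilde F + \sigma$ is a smooth closed 2-form there representing an integral cohomology class---the sum of $c_1$ of the $U(1)$-bundle and the integral class $[\sigma]\in H^2_c(M)$ guaranteed by Prop.~\ref{prop:parallelcurvatureL1}(3). Hence for every $v \in \mathcal{P}$ for which the shifted cycle $\exp(\tau v)_*(\Sigma^2)$ avoids $B_{m^{-1}}(\mathcal{C}_{\Lambda_0})$, the integral $\int_{\exp(\tau v)_*(\Sigma^2)}(\tilde F + \sigma)$ lies in $\mathbb{Z}$. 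Thus the exceptional set $E$ is contained in $E' := \{v \in \mathcal{P} : \exp(\tau v)_*(\Sigma^2) \cap B_{m^{-1}}(\mathcal{C}_{\Lambda_0}) \neq \emptyset\}$, and the task reduces to estimating the $\rho(v)\,dv$-measure of $E'$.

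First I would apply the Vitali covering Lemma~\ref{lem:Vitali} to $\mathcal{C}_{\Lambda_0}$ within $U'$ with parameters $\Lambda = \Lambda_0$ and the given $\kappa$, extracting balls $\{B(p_i, 5 s_i)\}$ with $s_i \geq m^{-1}$, $s_i \leq C m^{-\kappa/(1+\kappa)}$, and $\sum_i s_i^{n-3+\kappa} \leq C m^{-(1+\kappa)} \int_{U'} |F|^2$; after a harmless enlargement these balls cover the $m^{-1}$-neighborhood of the portion of $\mathcal{C}_{\Lambda_0}$ that a $\tau$-perturbation of $\Sigma^2$ could meet. Next, for each Vitali ball $B(p_i, 6 s_i)$ I would use the covering hypothesis on $\Sigma^2$ to partition $\Sigma^2$ into $N_i \leq C\,\mathrm{Vol}(\Sigma^2)\, s_i^{-2}$ pieces each contained in a geodesic ball $B(x_j, s_i)$. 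A $v$ carrying such a piece into $B(p_i, 6 s_i)$ must satisfy $\exp(\tau v)(x_j) \in B(p_i, 7 s_i)$, so by the quantitative transversality built into $V$---namely that $v \mapsto \exp(\tau v)(x_j)$ is a submersion onto $M$ with Jacobian $\gtrsim \tau^n$ along some $n$-dimensional subspace of $V$---together with the boundedness of $\rho$, one obtains
\[
\rho(v)\,dv\bigl(\{v : \exp(\tau v)(x_j) \in B(p_i, 7 s_i)\}\bigr) \leq C(s_i/\tau)^n.
\]

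Summing over the $N_i$ pieces of $\Sigma^2$ and then over the Vitali balls yields
\[
\rho(v)\,dv(E') \leq C\,\mathrm{Vol}(\Sigma^2)\, \tau^{-n} \sum_i s_i^{n-2}.
\]
The main obstacle, and really the only non-routine step, is reshaping $\sum_i s_i^{n-2}$ so that the exponent of $m$ matches the claimed $-(3\kappa + 1)/(1+\kappa)$. I would carry this out via the interpolation $s_i^{n-2} = s_i^{n-3+\kappa}\cdot s_i^{1-\kappa}$ together with the uniform bound $s_i^{1-\kappa} \leq (C m^{-\kappa/(1+\kappa)})^{1-\kappa}$ from Lemma~\ref{lem:Vitali}, which gives
\[
\sum_i s_i^{n-2} \leq C m^{-\frac{\kappa(1-\kappa)}{1+\kappa} - (1+\kappa)} \int_{U'} |F|^2 = C m^{-\frac{3\kappa + 1}{1+\kappa}} \int_{U'} |F|^2,
\]
since $\kappa(1-\kappa) + (1+\kappa)^2 = 3\kappa + 1$. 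Substituting this back produces exactly the asserted upper bound on the measure of $E$, and the plan is complete.
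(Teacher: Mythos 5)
Your proposal is correct and follows essentially the same route as the paper: reduce to the event that the perturbed cycle meets $B_{m^{-1}}(\mathcal{C}_{\Lambda_0})$, cover that set with the Vitali balls of Lemma \ref{lem:Vitali}, apply the transversality/union bound to get $C\,\mathrm{Vol}(\Sigma^2)\tau^{-n}\sum_i s_i^{n-2}$, and interpolate $s_i^{n-2}=s_i^{n-3+\kappa}s_i^{1-\kappa}$ using $\max_i s_i\leq Cm^{-\kappa/(1+\kappa)}$ to land on the exponent $-(3\kappa+1)/(1+\kappa)$. The only cosmetic difference is that the paper routes the coverage of the $m^{-1}$-neighbourhood through item 4 of Lemma \ref{lem:characteristicradius} (placing $B_{m^{-1}}(\mathcal{C}_{\Lambda_0})$ inside $\mathcal{C}_\Lambda$ with $\Lambda\leq C\Lambda_0$ before applying Vitali), where you instead enlarge the Vitali balls directly, which works equally well since $s_i\geq m^{-1}$.
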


\begin{proof}
Since $\tilde{F}$ agrees with the Chern form $F_{U(1)}$ outside of $B_{m^{-1/2}} (\mathcal{C}_{\Lambda_0})$, as long as $\exp(\tau v)_*(\Sigma^2) $ stays away from $B_{m^{-1/2}}( \mathcal{C}_{\Lambda_0}   )$, we will have $ \int_{ \exp(\tau v)_*(\Sigma^2) }(\tilde{F}+\sigma)  \in \Z  $. The following argument is a quantitative version of the classical fact that a generic perturbation of a $k$-dimensional submanifold does not intersect a subset with Hausdorff dimension $<n-k$. The constants will be allowed to depend on $r$ in this argument.

By item 4 of Lemma \ref{lem:characteristicradius}, $B_{m^{-1}}(\mathcal{C}_{\Lambda_0})$ is contained in some $\mathcal{C}_\Lambda$ with $\Lambda\leq C\Lambda_0$. We apply Lemma \ref{lem:Vitali} to find a Vitali cover of $\mathcal{C}_\Lambda\cap U$ by balls $B(p_i, s_i)$, with $s_i\geq m^{-1}$, $ \max s_i\leq C m^{-\kappa/(\kappa+1)},  $ and
	\[
	m^{1+\kappa}  \sum  s_i^{ n-3+\kappa  }\leq  C\int_{U' } |F|^2, \quad U'= B_{   C  m^{- \kappa/ (\kappa+1)}  } (U).
\]

By the transversality requirement, given a ball $B(q, s_i)$, the probability of $v\in \mathcal{P}$ such that $\exp(\tau v)_* (   B(q, s_i))$ intersects $B(p_i, s_i)$, is bounded by $C(s_i/\tau)^n$. For each $s_i$, we can take a covering of $\Sigma^2$ by balls $B(q, s_i)$, and then apply the union bound on the total probability. Using the hypothesis about the covering of $\Sigma^2$, we see the probability of $v\in \mathcal{P}$ such that $\exp(\tau v)_*(\Sigma^2)$ intersects $\cup_i B(p_i, s_i)$, is bounded by
\[
\begin{split}
& C \sum_i \text{Vol}(\Sigma^2) s_i^{-2} (s_i/\tau)^n\leq C\text{Vol}(\Sigma^2)(\max {s_i})^{1-\kappa} \sum_i  s_i^{n-3+\kappa} \tau^{-n} 
\\
\leq & C\text{Vol}(\Sigma^2) m^{ -\frac{\kappa(1-\kappa) }{1+\kappa  }   }    \tau^{-n} m^{-\kappa-1} \int_{U' } |F|^2
\\
= &  C\text{Vol}(\Sigma^2) m^{ - \frac{ 3\kappa+1 }{1+\kappa  }   }    \tau^{-n}  \int_{U' } |F|^2.
\end{split}
\]
This is an upper bound of the probability that $ \int_{ \exp(\tau v)_*(\Sigma^2) }(\tilde{F}+\sigma)$  fails to be an integer. 
\end{proof}

Next we estimate the $L^1$-mean oscillation  $\int_{ \exp(\tau v)_*(\Sigma^2) }(\tilde{F}+\sigma) $ within the parameter space $\mathcal{P}$.

\begin{lem} \label{lem:Chernnumberoscillation}
We have
\[
\begin{split}
& \int_{\mathcal{P}\times \mathcal{P}} |  \int_{ \exp(\tau v)_*(\Sigma^2) }(\tilde{F}+\sigma)- \int_{ \exp(\tau v')_*(\Sigma^2) }(\tilde{F}+\sigma)| \rho(v)dv\rho(v')dv'
\\
\leq &  C\text{Vol}(\Sigma^2)\tau^{1-n}\int_U  m^{-1} | \nabla \Phi | |F|
\end{split}
\]
\end{lem}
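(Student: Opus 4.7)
The plan is to reduce the oscillation to an integral of $d\tilde F$ over a homotopy cylinder, then use the pointwise bound on $d\tilde F$ together with a parametric transversality/change-of-variables argument.

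First, since $\sigma$ is $L^2$-harmonic (hence closed) and $\exp(\tau v)_\ast(\Sigma^2)$ is homologous to $\exp(\tau v')_\ast(\Sigma^2)$, the $\sigma$ contribution cancels, so only the $\tilde F$ part matters. Introduce the homotopy $H\colon \Sigma^2\times [0,1]\to M$ defined by $H(x,t)=\exp\bigl(\tau((1-t)v+tv')\bigr)(x)$. By Stokes' theorem,
\[
\int_{\exp(\tau v')_\ast(\Sigma^2)}\tilde F-\int_{\exp(\tau v)_\ast(\Sigma^2)}\tilde F=\int_{\Sigma^2\times[0,1]}H^\ast d\tilde F.
\]
The key Jacobian observation is that $\partial_t H=\tau\,dE\cdot(v'-v)$ has size $O(\tau)$ while the two tangential derivatives along $\Sigma^2$ are $O(1)$, so the comass $|H^\ast d\tilde F|(x,t)\leq C\tau\,|d\tilde F|_{H(x,t)}$.

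Next, integrate the pointwise inequality against $\rho(v)\rho(v')\,dv\,dv'$ and apply Fubini to reduce to estimating, for each fixed $x\in\Sigma^2$ and $t\in[0,1]$, the integral
\[
I_t(x):=\int_{\mathcal P\times\mathcal P}|d\tilde F|_{\exp(\tau w_t)(x)}\,\rho(v)\rho(v')\,dv\,dv',\qquad w_t=(1-t)v+tv'.
\]
Split $[0,1]=[0,\tfrac12]\cup[\tfrac12,1]$. On $[\tfrac12,1]$ change variables $v'\mapsto w=(1-t)v+tv'$ (with $dv'=t^{-d}\,dw$, bounded by $2^d$); on $[0,\tfrac12]$ change $v\mapsto w$ symmetrically. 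For fixed $x$ and $v$ (resp.\ $v'$), the map $w\mapsto \exp(\tau w)(x)\in M$ is a submersion onto a neighborhood of $x$ with $n$-dimensional Jacobian $\geq c\tau^n$ by the quantitative transversality hypothesis on $V\subset C^\infty(M,TM)$. Coarea thus yields $I_t(x)\leq C\tau^{-n}\int_U |d\tilde F|\,dvol$ uniformly in $t\in[0,1]$.

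Finally, integrate over $\Sigma^2\times[0,1]$ picking up the factor $\mathrm{Vol}(\Sigma^2)$, multiply by the $C\tau$ from Step 1, and insert the pointwise bound $|d\tilde F|\leq Cm^{-1}|\nabla\Phi|\,|F|$ established in Lemma \ref{lem:mollifiedcurvaturedF}:
\[
\int_{\mathcal P\times\mathcal P}\Bigl|\int_{H_\ast(\Sigma^2\times[0,1])} d\tilde F\Bigr|\rho(v)\rho(v')\,dv\,dv'\leq C\,\mathrm{Vol}(\Sigma^2)\,\tau^{1-n}\int_U m^{-1}|\nabla\Phi|\,|F|.
\]
The only mildly subtle point is the degeneracy of the change of variables as $t\to 0$ or $t\to 1$, handled by the symmetric split above. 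Everything else — Stokes, the Jacobian bound for $H$, and the transversality-based coarea estimate — is essentially the same calculation as in Lemma \ref{lem:averageChernnumber1}, now promoted from integrating a $2$-form over a surface to integrating a $3$-form over a one-parameter family of surfaces, which accounts for the improvement from $\tau^{-n}$ to $\tau^{1-n}$.
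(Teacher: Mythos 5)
Your proof is correct and follows essentially the same route as the paper: Stokes' theorem over a connecting $3$-manifold between the two perturbed surfaces, the pointwise bound $|d\tilde{F}|\leq Cm^{-1}|\nabla\Phi||F|$, and the quantitative transversality/Jacobian argument to convert the parametric integral over $\mathcal{P}\times\mathcal{P}$ into an $L^1$-integral over $U$ with the factor $\tau^{1-n}$. The only (harmless) differences are that you interpolate linearly in the parameter space $V$ rather than taking geodesic arcs between image points, and you account for the powers of $\tau$ as $\tau\cdot\tau^{-n}$ instead of the paper's $\tau^{-(n-1)}$, which amounts to the same bookkeeping.
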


\begin{proof}
For each pair $v, v'\in \mathcal{P}$, we can join the two surfaces $\exp(\tau v)_*(\Sigma^2)$ and $\exp(\tau v')_*(\Sigma^2)$ by a 3-dimensional manifold $X_{v,v'}$ with boundary, obtained by taking the union of the short geodesic arcs between $\exp_p(\tau v)$ and $\exp_p(\tau v')$ for all $p\in \Sigma^2$. Then by Stokes formula,
\[
 \int_{ \exp(\tau v)_*(\Sigma^2) }(\tilde{F}+\sigma)- \int_{ \exp(\tau v')_*(\Sigma^2) }(\tilde{F}+\sigma)= \int_{X_{v,v'}} d\tilde{F}.
\]
since $d\sigma=0$. Thus 
\[
\begin{split}
	& \int_{\mathcal{P}\times \mathcal{P}} |  \int_{ \exp(\tau v)_*(\Sigma^2) }(\tilde{F}+\sigma)- \int_{ \exp(\tau v')_*(\Sigma^2) }(\tilde{F}+\sigma)| \rho(v)dv\rho(v')dv'
	\\
	\leq &  \int_{\mathcal{P}\times \mathcal{P}} |  \int_{X_{v,v'}} d\tilde{F} |\rho(v) dv \rho(v') dv' .
\end{split}
\]
As $(v,v')$ varies among $\mathcal{P}\times \mathcal{P}$, the 3-manifolds with boundary $X_{v,v'}$ traces out a $(2d+3)$-dimensional manifold with boundaries and corners inside $M\times \mathcal{P}\times \mathcal{P}$. This has a projection map into $U=B_\tau (\Sigma^2)\subset M$, as $X_{v,v'}$ lies inside $U$.  Using the Jacobian formula, the quantitative transversality hypothesis ensures that the fibres of this projection are again smooth manifolds with boundaries and corners, and the integral is then bounded by
\[
C\text{Vol}(\Sigma^2)\tau^{1-n}\int_U  | d\tilde{F}|  \leq C\text{Vol}(\Sigma^2)\tau^{1-n}\int_U  m^{-1} | \nabla \Phi | |F|
\]
where we applied Lemma \ref{lem:mollifiedcurvaturedF} to estimate $|d\tilde{F}|$. The factor $\tau^{1-n}$ comes from the Jacobian factor when we invert $(n-1)$ variables in $\mathcal{P}$ together with the geodesic arclength variable in terms of the $n$ local coordinates on $M$, in the rescaled exponential map.
\end{proof}

\subsubsection{Integrality estimate in the limit}

We can now estimate how much the average Chern numbers for $\tilde{F}_\infty$ fails to be an integer, in terms of the limiting measures on the local neighbourhood of $\Sigma^2$.

\begin{prop}\label{prop:integralitylimitingChernnumber}
The distance of the  limiting average Chern number to the nearest integer is estimated by
\[
	\text{dist}(   	\int_{ \tilde{\Sigma} }  \tilde{F}_\infty \wedge   \rho(v)dv, \Z      )\leq C\text{Vol}(\Sigma^2)\tau^{1-n} \mu_{|\nabla \Phi|^2}(\bar{U})   ^{1/2}     \mu_{|F|^2}(\bar{U})  ^{1/2},
\]
where $\bar{U}=\overline{ B_\tau (\Sigma^2)}$. The constants do not depend on the details of $\Sigma^2$. 
\end{prop}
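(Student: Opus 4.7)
The plan is to combine the three preceding lemmas (Lem.~\ref{lem:averageChernnumber1}, Lem.~\ref{lem:noninteger}, and Lem.~\ref{lem:Chernnumberoscillation}) via a Jensen-type mean-oscillation argument at each finite $m$, and then pass to the $m\to +\infty$ limit.

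For fixed $i$, abbreviate $I(v) := \int_{\exp(\tau v)_*(\Sigma^2)}(\tilde{F}+\sigma)$ and $\bar I := \int_{\mathcal{P}} I(v)\rho(v)dv$. From the Jensen-type identity
\[
\int_{\mathcal{P}} |I(v) - \bar I|\rho(v)dv \leq \int_{\mathcal{P}\times \mathcal{P}} |I(v)-I(v')|\rho(v)\rho(v')dv\,dv',
\]
together with Lem.~\ref{lem:Chernnumberoscillation} and Cauchy--Schwarz, I obtain
\[
\int_{\mathcal{P}}|I(v)-\bar I|\rho(v)dv \leq C\text{Vol}(\Sigma^2)\tau^{1-n}\Bigl(\int_U m^{-1}|\nabla\Phi|^2\Bigr)^{1/2}\Bigl(\int_U m^{-1}|F|^2\Bigr)^{1/2}.
\]

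Next, choose $\kappa \in (0,1)$ and invoke Lem.~\ref{lem:noninteger} to produce a subset $E \subset \mathcal{P}$ of probability at most $C\text{Vol}(\Sigma^2) m^{-(3\kappa+1)/(1+\kappa)} \tau^{-n} \int_{U'}|F|^2$. By the monotonicity formula Lem.~\ref{lem:monotonicity2}, $\int_{U'}|F|^2 = O(m)$ on the bounded region $U'$, so the probability is $O(m^{-2\kappa/(1+\kappa)})$ and hence tends to $0$ as $m\to +\infty$. For every $v\in \mathcal{P}\setminus E$, the integrality $I(v)\in\Z$ forces $|I(v)-\bar I|\geq \text{dist}(\bar I,\Z)$, so integrating gives
\[
\text{Prob}(\mathcal{P}\setminus E)\cdot \text{dist}(\bar I, \Z) \leq \int_{\mathcal{P}} |I(v) - \bar I|\rho(v)dv.
\]

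Finally, I pass $m\to +\infty$. Lem.~\ref{lem:averageChernnumber1} gives $\bar I \to \int_{\tilde\Sigma}\tilde{F}_\infty \wedge \rho(v)dv$; the probability factor tends to $1$; and by the weak convergence of the measures $(2\pi m)^{-1}|\nabla\Phi|^2 dvol$ and $(2\pi m)^{-1}|F|^2 dvol$ on Radon sets, the right-hand side of the Cauchy--Schwarz bound has
\[
\limsup \leq 2\pi C\, \text{Vol}(\Sigma^2)\tau^{1-n}\mu_{|\nabla\Phi|^2}(\bar U)^{1/2}\mu_{|F|^2}(\bar U)^{1/2},
\]
after absorbing constants. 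Combining the three limits yields the desired estimate.

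The only technical subtlety is the one-sided nature of weak convergence of Radon measures, namely $\limsup_i \mu_i(\bar U) \leq \mu(\bar U)$ (as opposed to $\liminf_i \mu_i(U) \geq \mu(U)$ for open sets). This is exactly why the proposition is stated with the closed neighbourhood $\bar U = \overline{B_\tau(\Sigma^2)}$ rather than the open $U$; beyond this, the proof is a bookkeeping of the three earlier lemmas.
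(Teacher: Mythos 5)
Your proof is correct and follows essentially the same route as the paper's: Lem.~\ref{lem:noninteger} for integrality of $I(v)$ off a set of vanishing probability, Lem.~\ref{lem:Chernnumberoscillation} plus Cauchy--Schwarz for the oscillation bound, and Lem.~\ref{lem:averageChernnumber1} together with upper semicontinuity of the weak-$*$ limits on the closed set $\bar U$ for the passage to the limit. The only (minor, and arguably cleaner) difference is in the elementary step linking integrality and oscillation to $\mathrm{dist}(\bar I,\Z)$: you use $|I(v)-\bar I|\ge \mathrm{dist}(\bar I,\Z)$ for $v\notin E$ combined with Jensen, whereas the paper argues that at least half of the parameters $v$ share a common integer value $k$ and then compares $\bar I$ to $k$.
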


\begin{proof}
Let $\Sigma^2, U, \tau$ be fixed, and let $m$ be sufficiently large. By Lemma \ref{lem:noninteger}, except on a subset $E\subset \mathcal{P}$ with probability
\[
\int_E \rho(v)dv\leq C\text{Vol}(\Sigma^2) m^{ - \frac{ 3\kappa+1 }{1+\kappa  }   }    \tau^{-n}  \int_{U' } |F|^2
= O( m^{ - \frac{ 3\kappa+1 }{1+\kappa  }+1  }  )\to 0, \quad m\to +\infty,
\]
the number $ \int_{ \exp(\tau v)_*(\Sigma^2) }(\tilde{F}+\sigma)$ is an integer. We claim that 	
the deviation  of the  average Chern number from the integers satisfies the following bound,
\begin{equation*}
	\begin{split}
		& \text{dist}(   	\int_{ \tilde{\Sigma} }  (\tilde{F} +\sigma)\wedge   \rho(v)dv, \Z      )
		\\
		\leq & C\int_{\mathcal{P}\times \mathcal{P}} |  \int_{ \exp(\tau v)_*(\Sigma^2) }(\tilde{F}+\sigma)- \int_{ \exp(\tau v')_*(\Sigma^2) }(\tilde{F}+\sigma)| \rho(v)dv\rho(v')dv'.
		\end{split}
\end{equation*}
Since the LHS is bounded by one, without loss we can assume the RHS quantity is very small. Then there is some subset of $E'\subset \mathcal{P}$ with probability measure at least $50\%$, such that the integers $ \int_{ \exp(\tau v)_*(\Sigma^2) }(\tilde{F}+\sigma)$ share the same value $k$ for all $v\in E$. Then
\[
\begin{split}
& |	\int_{ \tilde{\Sigma} }  (\tilde{F} +\sigma)\wedge   \rho(v)dv-  k   |
\\
\leq &  \frac{1}{  \int_{E'} \rho(v')dv'  }  \int_{\mathcal{P}\times E'} |  \int_{ \exp(\tau v)_*(\Sigma^2) }(\tilde{F}+\sigma)- k| \rho(v)dv\rho(v')dv'
\\
\leq &  2 \int_{\mathcal{P}\times E'} |  \int_{ \exp(\tau v)_*(\Sigma^2) }(\tilde{F}+\sigma)- k| \rho(v)dv\rho(v')dv',
\end{split}
\]
which implies the claim.

Applying Lemma \ref{lem:Chernnumberoscillation}, we deduce for large enough $m$, 
\begin{equation*}
	\begin{split}
	\text{dist}(   	\int_{ \tilde{\Sigma} }  (\tilde{F} +\sigma)\wedge   \rho(v)dv, \Z      )
	\leq & C\text{Vol}(\Sigma^2)\tau^{1-n}\int_U  m^{-1} | \nabla \Phi | |F|
	\\
	\leq & C\text{Vol}(\Sigma^2)\tau^{1-n}  m^{-1}   ( \int_U |\nabla \Phi|^2    )^{1/2}  ( \int_U |F|^2    )^{1/2} .
\end{split}
\end{equation*}
By the convergence of the average Chern number in Lemma \ref{lem:averageChernnumber1}, we can take the $m\to +\infty$ limit to obtain
\[
	\begin{split}
&	\text{dist}(   	\int_{ \tilde{\Sigma} }  \tilde{F}_\infty \wedge   \rho(v)dv, \Z      )
	\\
	\leq & C\text{Vol}(\Sigma^2)\tau^{1-n}  \limsup_{i\to +\infty}  ( m^{-1}\int_U |\nabla \Phi|^2    )^{1/2} \limsup_{i\to +\infty } (m^{-1} \int_U |F|^2    )^{1/2} 
	\\
	\leq &  C\text{Vol}(\Sigma^2)\tau^{1-n} \mu_{|\nabla \Phi|^2}(\bar{U})   ^{1/2}     \mu_{|F|^2}(\bar{U})  ^{1/2} ,
	\end{split}
\]
where $\bar{U}$ is the closure of the open set $U$. 
\end{proof}

\section{Abelian monopole and calibrated cycle}

\subsection{Singular abelian $G_2$/Calabi-Yau monopole}

Given a sequence $(A_i,\Phi_i)$ as in the Main Setting, 
we now extract a singular abelian $G_2$-monopole (resp. Calabi-Yau monopole).

In Section \ref{sect:weaklimitChernform} we extracted a weak limit of the (mollified) Chern form $\tilde{F}_\infty= \lim_{i\to+\infty}  (- \frac{ \Tr(F\Phi ) }  {4\pi m}  +\sigma)$. We write $F_\infty=2\pi \tilde{F}_\infty$.
On the other hand, by Remark \ref{rmk:smallPhi}, we have a uniform $L^1_{loc}$ bound on $m^{-1}(m^2-|\Phi|^2)$ and its gradient, so after passing to subsequence, we can take a strong $L^1_{loc}$-limit,
\[
\Phi_\infty:= \lim_{i\to +\infty}  \frac{1}{2m}  ( |\Phi|^2 -m^2   )=  \lim_{i\to +\infty}  (-\frac{1}{4m}\Tr (\Phi^2) -      \frac{m}{2}  ).
\]
The pair $(F_\infty, \Phi_\infty)$ satisfies the following abelian $G_2$-monopole equation (resp. Calabi-Yau monopole equation) with a 3-current source term, in the distributional sense.

\begin{lem}\label{lem:singularabelianmonopole}
		 In the $G_2$-monopole case, $(F_\infty, \Phi_\infty)$ satisfies the following linear differential system in the distributional sense, 
	\begin{equation}
		\begin{cases}
			d F_\infty = 2\pi Q, 
			\\
		F_\infty \wedge \psi= *d\Phi_\infty,
		\end{cases}
	\end{equation}
	while in the Calabi-Yau monopole case,
	\begin{equation}
		\begin{cases}
			d F_\infty = 2\pi Q, 
		\\
		F_\infty \wedge \text{Re}\Omega= *d\Phi_\infty.
		\end{cases}
	\end{equation}

\end{lem}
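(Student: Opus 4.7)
\emph{Proof plan.} The identity $dF_\infty = 2\pi Q$ is tautological, since by the construction in Section~\ref{sect:weaklimitChernform} one has $F_\infty = 2\pi\tilde F_\infty$ and $Q:=d\tilde F_\infty$, so nothing needs to be checked.

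For the second equation I would pass the pointwise Bogomolny identity of Lemma~\ref{lem:traceeqn}(2) to the distributional limit. In the $G_2$ case, using the convention $|\Phi|^2 = -\tfrac12\Tr(\Phi^2)$, that identity rewrites as
\[
\Bigl(-\tfrac{\Tr(\Phi F)}{4\pi m}\Bigr)\wedge\psi \;=\; \tfrac{1}{2\pi}\,*\,d\!\Bigl(\tfrac{|\Phi|^2-m^2}{2m}\Bigr),
\]
pointwise on $M$ for each member of the sequence; the Calabi-Yau version is verbatim with $\psi$ replaced by $\text{Re}\,\Omega$. To switch the left-hand side to $\tilde F^{(i)}:=-\Tr(\Phi F)/(4\pi m)+\sigma$ I would invoke the vanishing $\sigma\wedge\psi=0$ for every $\sigma\in L^2\mathcal{H}^2(M)$ (respectively $\sigma\wedge\text{Re}\,\Omega=0$ in the CY case). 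This is where the irreducibility of the ambient holonomy enters: Bochner on an AC Ricci-flat manifold forces an $L^2$-harmonic 1-form to be parallel, and full $G_2$ (resp.\ $SU(3)$) holonomy admits no parallel 1-form, whence $L^2\mathcal{H}^1(M) = 0$; the parallel isomorphism $\Lambda^2_7\simeq \Lambda^1$ then locates $\sigma$ inside $\Lambda^2_{14}$, which is characterized pointwise by $\sigma\wedge\psi=0$. In the CY case the analogous argument puts $\sigma$ in pure Hodge type $(1,1)$, and $\sigma\wedge\text{Re}\,\Omega=0$ follows by type since $(1,1)\wedge(3,0)\subset \Omega^{4,1}=0$ on a complex 3-fold.

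Granted this, the pointwise identity upgrades to $\tilde F^{(i)}\wedge\psi = \tfrac{1}{2\pi}\,*\,d((|\Phi|^2-m^2)/(2m))$, and I would pass to the distributional $i\to\infty$ limit. By Proposition~\ref{prop:weaklimitChernform}(1,2), $\tilde F^{(i)}\to\tilde F_\infty$ strongly in $L^1_{loc}$, so the left-hand side converges to $\tilde F_\infty\wedge\psi$ in the distributional sense (since $\psi$ is smooth and parallel). On the right-hand side, the scalar $(|\Phi|^2-m^2)/(2m)\to\Phi_\infty$ in $L^1_{loc}$ by Lemma~\ref{lem:smallPhi} and Remark~\ref{rmk:smallPhi}, so its distributional exterior derivative converges to $d\Phi_\infty$ and $*$ is continuous for this convergence. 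Multiplying through by $2\pi$ then yields $F_\infty\wedge\psi = *d\Phi_\infty$ as a distributional equation on $M$; the CY case is identical with $\psi$ replaced by $\text{Re}\,\Omega$.

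The main obstacle is the Hodge-theoretic vanishing $\sigma\wedge\psi=0$ (resp.\ $\sigma\wedge\text{Re}\,\Omega=0$): this is the only place one needs more than a transport of a pointwise identity through the already-established soft $L^1_{loc}$ compactness, and it requires checking that Bochner vanishing of $L^2\mathcal{H}^1$ and the parallel type decomposition of $\Lambda^2$ behave as expected on the non-compact AC background of full special holonomy. Everything else in the argument is mechanical.
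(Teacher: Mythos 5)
Your proof is correct and follows the same route as the paper, which simply passes the linear system of Lemma \ref{lem:traceeqn} to the distributional limit using the strong $L^1_{loc}$ convergence of $-\frac{\Tr(\Phi F)}{4\pi m}+\sigma$ and of $\frac{|\Phi|^2-m^2}{2m}$ already established. The one step you make explicit that the paper's one-line proof leaves implicit --- that the $L^2$-harmonic correction $\sigma$ satisfies $\sigma\wedge\psi=0$ (resp. $\sigma\wedge\mathrm{Re}\,\Omega=0$) because $L^2\mathcal{H}^2$ of an irreducible AC $G_2$-manifold (resp. Calabi--Yau 3-fold) lies in $\Lambda^2_{14}$ (resp. has no $(2,0)+(0,2)$ component), via the vanishing of $L^2$-harmonic 1-forms --- is a genuine and correctly justified ingredient of the argument.
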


\begin{proof}
The differential system is obtained by passing the system in Lemma \ref{lem:traceeqn} to the limit.
\end{proof}

\begin{rmk}
The source term $Q$ plays a role similar to a free boundary, and is not known a priori. The real content of the  equation $dF_\infty=Q$ lies in the rectifiability and integrality properties established in Section  \ref{sect:Chernform} (\cf also \cite[Thm. 1.2]{ParisePigatiStern}).
\end{rmk}

\subsection{Local blow up and Dirac singularity}

We now address the regularity and geometry of the triple $(F_\infty, \Phi_\infty, Q)$, using the standard technique of \emph{blow up}. Let $p\in M$. We denote the local rescaling map as $\lambda_{p,s}: T_p M\to M ,
x\mapsto  \exp_p(s x). 
$

By standard measure theory, the following properties hold $\mathcal{H}^{n-3}$-a.e. for points $p$ on the essential support of the rectifiable current $Q$:
\begin{itemize}
	\item The measures $\mu_{|F|^2}, \mu_{|\nabla \Phi|^2}$ have finite $(n-3)$-upper density, namely
	\[
	\limsup_{s\to 0}  s^{3-n} \mu_{|F|^2}( B(p,s) ) <+\infty,\quad  	\limsup_{s\to 0}  s^{3-n} \mu_{|\nabla \Phi|^2}( B(p,s) ) <+\infty.
	\]

	\item  The rectifiable current $Q$ has a unique approximate tangent plane at $p$,
	\[
Q_p:= \lim_{s\to 0}   (\lambda_{p,s}^{-1})_* Q = \Theta(p) [\![  T_p Q ]\!].
	\] 
	where the multiplicity $\Theta(p)$ is a priori a real number, and $[\![  T_p Q ]\!]$ denotes the current associated to an oriented $(n-3)$ plane $T_p Q\subset T_pM$.
\end{itemize}
Under these two conditions, a \emph{blow up limit} at $p$ is by definition any subsequential weak limit  $(f_p, \phi_p , Q_p)$ as $s\to 0$,
\[
f_p= \lim    \lambda_{p,s}^* F_\infty, \quad \phi_p= \lim s\lambda_{p,s}^* \Phi_\infty.
\]
This will be a key tool to understand the structure of $(F_\infty, \Phi_\infty, Q)$.

\begin{thm}\label{thm:weaklimitstructure}
	The weak limit $(F_\infty, \Phi_\infty, Q)$ satisfies the following:
	
	\begin{enumerate}

	\item    At any $p$ satisfying the above two conditions, the  blow up limit exists.

	\item The blow up limit is given by the Dirac singularity model:
	\begin{equation}
	\begin{cases}
		\phi_p= -\frac{  \Theta(p)   }{2 \text{dist}(\cdot, T_p Q)},
		\\
	f_p= -*_3 d\phi _p,
	\end{cases}
	\end{equation}
	where $*_3$ is the Hodge star in the 3-plane orthogonal to $T_p Q$, and we view $f_p$ as a singular 2-form on $T_p M$ translation invariant along $T_p Q$. In particular the blow up limit at such points $p$ is unique.

		\item The multiplicity $\Theta(p)$ is an \emph{integer}. The integral current $Q$ is a compactly supported coassociative cycle (resp. special Lagrangian cycle). In particular, $Q$ is smooth except on a subset of Hausdorff dimension at most $n-5$.

	\item The pair $(F_\infty, \Phi_\infty)$ is smooth away from the support of $Q$, and satisfies the following asymptotic estimate: 
	for any fixed $1-n<\nu< -1-n/2$, and $r$ large enough, we have
	\[
	\begin{cases}
	 \norm{ \tilde{F}_\infty - \sigma_0  }_{C^{k,\alpha}(r< r(x)<2r) } \leq C r^\nu, \quad F_\infty=2\pi \tilde{F}_\infty,
	\\
	\norm{ \Phi_\infty  }_{C^{k,\alpha}(r< r(x)<2r) } \leq C r^{2-n}.
	\end{cases}
	\]

\end{enumerate}

\end{thm}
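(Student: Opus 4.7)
My plan is to attack items 1 and 2 together via blow-up analysis. Fix $p$ on the essential support of $Q$ at which $\mu_{|F|^2}, \mu_{|\nabla\Phi|^2}$ have finite $(n-3)$-upper density and $Q$ has a unique approximate tangent $(n-3)$-plane $T_pQ$ of multiplicity $\Theta(p)\in\R$; these conditions hold $\mathcal{H}^{n-3}$-a.e.\ on $\text{supp}(Q)$. Rescaling via $\lambda_{p,s}$ with scaling weight $0$ for $F_\infty$ and weight $-1$ for $\Phi_\infty$ preserves the system of Lemma \ref{lem:singularabelianmonopole}. I would first establish uniform local bounds on the rescaled pair: dyadic summation applied to the estimate (\ref{eqn:Chernformaveragebound}) using the $(n-3)$-density bound on $\mu_{|F|^2}$ yields $\int_{B(p,s)}|\tilde F_\infty-\sigma_0|\leq Cs^{n-2}$, so $\lambda_{p,s}^*\tilde F_\infty$ is $L^1_{loc}$-bounded on the unit ball in $T_pM$; a similar dyadic sum applied to item 3 of Lemma \ref{lem:smallPhi} passed to the limit gives $\int_{B(p,s)}|\Phi_\infty|\leq Cs^{n-1}$, bounding $s\lambda_{p,s}^*\Phi_\infty$ in $L^1_{loc}$. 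Combining with the $L^1_{loc}$ bound on $d$ of each rescaled quantity (from the equations of Lemma \ref{lem:singularabelianmonopole} and the mass bound on $(\lambda_{p,s}^{-1})_*Q$), weak-$*$ compactness for Radon measures extracts subsequential limits $(f_p,\phi_p)$ on $T_pM$ satisfying the flat-model abelian monopole equation with source $2\pi\Theta(p)[\![T_pQ]\!]$.

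The classification proceeds in two steps. First, translation invariance along $T_pQ$: any two translates of a subsequential limit differ by a tempered harmonic pair on $T_pM$, which is forced to vanish by the decay inherited from the $L^1_{loc}$ bounds, so $(f_p,\phi_p)$ is translation invariant and descends to the normal $3$-plane $T_pQ^\perp$. Then $\phi_p$ is harmonic on $T_pQ^\perp\setminus\{0\}$ with point source proportional to $\Theta(p)\delta_0$ and decays at infinity, hence $\phi_p=-\Theta(p)/(2|y|)$. Substituting into the second equation $f_p\wedge\psi_0=*d\phi_p$ (resp.\ $f_p\wedge\text{Re}\,\Omega_0=*d\phi_p$) together with $df_p=2\pi\Theta(p)[\![T_pQ]\!]$ yields a linear-algebra constraint on the pair $(T_pQ,\psi_0)$: the system admits a nontrivial translation-invariant solution iff $\psi_0|_{T_pQ}=\text{vol}_{T_pQ}$, and in that case the unique such solution is $f_p=-*_3 d\phi_p$. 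Hence $T_pQ$ is coassociative (resp.\ special Lagrangian), uniqueness of the blow-up limit follows, and items 1 and 2 together with the calibration part of item 3 are established.

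For integer multiplicity, I would apply Prop.\ \ref{prop:integralitylimitingChernnumber} to a small linking $2$-sphere $\Sigma^2$ around $p$ of radius $s$, with averaging parameter $\tau$ comparable to $s$: finite upper $(n-3)$-density gives $\mu_{|\nabla\Phi|^2}(\bar U),\mu_{|F|^2}(\bar U)=O(s^{n-3})$, so the integrality deficit is $O(s^2\cdot s^{1-n}\cdot s^{n-3})=O(s)$. Passing to the blow-up through the rescaling above, the average Chern number on the unit linking sphere in $T_pM$ computed against the Dirac model of item 2 equals $\Theta(p)$, forcing $\Theta(p)\in\Z$. Hence $Q$ is an integer $(n-3)$-rectifiable cycle calibrated by a closed form, so is area-minimising; Almgren's regularity theorem then gives smoothness of $\text{supp}(Q)$ off a subset of Hausdorff dimension at most $n-5$. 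Compactness of $\text{supp}(Q)$ follows from Prop.\ \ref{prop:weaklimitChernform}(1): since $\tilde F_\infty-\sigma_0=O(r^\nu)$ and $d\sigma_0=0$ outside a compact subset, $Q=d\tilde F_\infty$ is compactly supported.

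For item 4, away from $\text{supp}(Q)$ the pair $(F_\infty,\Phi_\infty)$ satisfies $dF_\infty=0$ and $F_\infty\wedge\psi=*d\Phi_\infty$; applying $d$ to the second equation and using $d\psi=0$ (resp.\ $d\,\text{Re}\,\Omega=0$) gives $\Delta\Phi_\infty=0$ off $\text{supp}(Q)$, while locally $F_\infty=dA_\infty$ with a $U(1)$-potential in Coulomb gauge solving a linear elliptic system, so bootstrapping from the $L^1_{loc}$ bounds yields $C^{k,\alpha}$ smoothness. In the asymptotic region, interior elliptic estimates on annuli upgrade the average bound $\tilde F_\infty-\sigma_0=O(r^\nu)$ of Prop.\ \ref{prop:weaklimitChernform}(1) to the stated $C^{k,\alpha}$ bound, and the Poisson equation $\Delta\Phi_\infty=-2\pi Q\wedge\psi$ with compactly supported source combined with the Green-function decay $d(x,y)^{2-n}$ on the AC manifold gives $\Phi_\infty(x)=O(r(x)^{2-n})$, with higher derivatives controlled by interior estimates on annuli. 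The main obstacle I anticipate is the blow-up classification step: verifying that the linear-algebra constraint genuinely forces $T_pQ$ to be calibrated at every density point, and that uniqueness of the Dirac blow-up combined with the integrality deficit of Prop.\ \ref{prop:integralitylimitingChernnumber} upgrades the a priori real-valued density $\Theta(p)$ to an integer, requires careful interplay between the finite density conditions, the decay condition in the blow-up, and the covering arguments underlying the integrality machinery.
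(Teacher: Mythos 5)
Your treatment of items 1, 2 and 4 follows essentially the same route as the paper (uniform $L^1_{loc}$ bounds on the rescalings from the density hypotheses, translation invariance via the vanishing of decaying homogeneous solutions, type decomposition forcing the Dirac model and $\psi^{(3,1)}|_{T_pQ}=0$, elliptic regularity off $\mathrm{supp}(Q)$). But your integrality argument in item 3 has a genuine quantitative gap. With $\tau\sim s$, $\mathrm{Vol}(\Sigma^2)\sim s^2$ and only the finite upper $(n-3)$-density bounds $\mu_{|\nabla\Phi|^2}(\bar U_s),\,\mu_{|F|^2}(\bar U_s)=O(s^{n-3})$, the deficit bound from Prop.~\ref{prop:integralitylimitingChernnumber} is
\[
C\,s^2\cdot s^{1-n}\cdot \bigl(s^{n-3}\bigr)^{1/2}\bigl(s^{n-3}\bigr)^{1/2}=O(s^{2+(1-n)+(n-3)})=O(1),
\]
not $O(s)$ as you claim; the exponents cancel exactly, so the deficit does not tend to zero and integrality does not follow. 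The missing ingredient is the one the paper uses: $\mu_{|\nabla\Phi|^2}=Q\wedge\psi$ is supported on $Q$, and the rescaled shells $\lambda_{p,s}^{-1}(\bar U_s)$ are by construction contained in a fixed annulus around the unit linking sphere, hence uniformly bounded away from $T_pQ$; since the blow-up of $Q\wedge\psi$ at $p$ is Lebesgue measure on $T_pQ$, this forces $s^{3-n}\mu_{|\nabla\Phi|^2}(\bar U_s)\to 0$, and only this extra vanishing (of one of the two factors) kills the $O(1)$ deficit.

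A secondary gap: your claim that compactness of $\mathrm{supp}(Q)$ follows from the $L^1$-average decay $\tilde F_\infty-\sigma_0=O(r^\nu)$ does not close, since an $L^1$-average bound on a form does not control the mass of its distributional exterior derivative on an annulus, and deducing smoothness there from item 4 would be circular (item 4 presupposes being away from $\mathrm{supp}(Q)$). The paper instead first establishes that $Q$ is a calibrated (hence locally area-minimizing) integral cycle of finite total mass, and then invokes the $(n-3)$-dimensional monotonicity formula for minimal surfaces: an unbounded support would force infinite mass. You should reorder your argument accordingly, proving integrality and calibratedness before compactness.
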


\begin{proof}
\textbf{Item 1}. By (\ref{eqn:Chernformaveragebound}), we have a uniform bound for small $s$,
\[
\begin{split}
	\dashint_{B(p,s)} |\tilde{F}_\infty-\sigma_0| \leq & C  r^{\nu} + C \int_{ r/2< r(x)<  4r  }    \frac{  1 }{ \max(\text{dist}(x,p),s)^{n-1}  }d\mu_{|F|^2}  
	\\
	\leq & C  +  Cs^{-2} \sup_{t< 4r } t^{3-n} \mu_{|F|^2}  (B(p,t)),
\end{split}
\]
hence by the finiteness assumption on the upper density, for any fixed $R>0$,
\[
\limsup_{s\to 0}  R^2	\dashint_{B(R)} |  \lambda_{p,s}^* F_\infty| = \limsup_{s\to 0}  (sR)^2	\dashint_{B(p,Rs)} |F_\infty|  \leq C\sup_{t<4r}  t^{3-n} \mu_{|F|^2}  (B(p,t))<+\infty.
\]
This justifies that $\lambda_{p,s}^* F_\infty$ has some weak limit $f_p$ in $L^1_{loc}$-topology, which satisfies the growth bound
\begin{equation}\label{eqn:blowupgrowth1}
R^2\dashint_{B(R)} |  f_p |  \leq C\sup_{t<4r}  t^{3-n} \mu_{|F|^2}  (B(p,t))<+\infty,\quad \forall R>0.
\end{equation}
The same kind of argument based on (\ref{eqn:smallPhi2}), shows that for any fixed $R>0$,
\[
\limsup_{s\to 0}  R	\dashint_{B(R)} s|  \lambda_{p,s}^* \Phi_\infty | = \limsup_{s\to 0} R s	\dashint_{B(p,Rs)} |\Phi_\infty|  \leq C\sup_{t<4r} t^{3-n} \mu_{|\nabla \Phi|^2}  (B(p,t))<+\infty,
\]
so we can take a weak limit $\phi_p$ of the sequence $s \lambda_{p,s}^* F_\infty$ in $L^1_{loc}$-topology, which satisfies the growth bound
\begin{equation}\label{eqn:blowupgrowth2}
	R\dashint_{B(R)} |  \phi_p |  \leq C\sup_{t<4r} t^{3-n} \mu_{|\nabla \Phi|^2}  (B(p,t))<+\infty,\quad \forall R>0.
\end{equation}
By taking the limit of the rescaled equations, we deduce that on $T_p M$, the blow up limit satisfies
\begin{equation}\label{eqn:blowup1}
\begin{cases}
df_p=2\pi Q_p,
\\
f_p \wedge \psi= * d\phi_p
\end{cases}
\end{equation}
in the $G_2$-case, and
\begin{equation}\label{eqn:blowup2}
	\begin{cases}
		df_p= 2\pi Q_p,
		\\
		f_p \wedge \text{Re}\Omega= * d\phi_p
	\end{cases}
\end{equation}
in the Calabi-Yau case.

\textbf{Item 2}. 
We now classify the solutions to (\ref{eqn:blowup1})(\ref{eqn:blowup2}) subject to the growth bounds (\ref{eqn:blowupgrowth1})(\ref{eqn:blowupgrowth2}). We will focus on the $G_2$ case, as the Calabi-Yau case is very similar.

We claim that for any $v\in T_pQ$, the solution is \emph{invariant under the translation} $\tau_v$ along $v$. First we observe the  homogeneous equation without the distributional source term $Q_p$ has only the zero solution under the growth bounds, by applying the mean value property. Hence the difference $(f_p,\phi_p)-\tau_v^*(f_p, \phi_p)=0$, which shows translation invariance.

Next, we compute
\[
d(*d\phi_p)= d(f_p\wedge \psi) = df_p\wedge \psi= 2\pi Q_p\wedge \psi=2\pi k dvol_{T_p Q}, 
\]
where $k\in \R$ is some constant. Together with the growth constraint on $\phi_p$, this shows $\phi_p=- \frac{k}{ 2\text{dist}(\cdot, T_pQ )  }$. Moreover, away from $Q_p$ we have $df_p=d^* f_p=0$, hence $\Lap f_p=0$. Since we are on the Euclidean space, this shows all the components of $f_p$ are harmonic, individually satisfying the growth constraints $O( \frac{1}{ \text{dist}(\cdot, T_pQ )^2  } )$. These harmonic functions belong  to a 3-dimensional vector space, spanned by the 3 components of $d( \frac{1}{ \text{dist}(\cdot, T_pQ ) }  )$.

We now decompose the forms into $(a,b)$ types, where $a$ is the number of $T^*_pQ$ factors, and $b$ is the number of factors in the normal direction. We write $f_p=f_p^{(2,0)}+f_p^{(1,1)}+ f_p^{(0,2)}$. Since $f_p$ is invariant under translation in the $T_pQ$ direction, we see $d(f_p^{(a,b)})$ has type $(a, b+1)$. Hence $df_p=2\pi Q_p$ implies
\[
d(f_p^{(2,0)})=0,\quad d(f_p^{(1,1)} )=0,\quad  d(f_p^{(0,2)} )=2\pi Q_p.
\]
Since the coefficient functions belong to the 3-dim space of harmonic functions, it follows that $f_p^{(2,0)}= f_p^{(1,1)}=0$, so $f_p$ is of pure type $(0,2)$ and $df_p=2\pi Q_p$. The only such $f_p$ whose coefficient functions belong to the 3-dim space of harmonic functions is
\[
f_p=-\Theta(p)  *_3 d( \frac{1}{   2\text{dist}(\cdot, T_pQ) }  ).
\]

Now $f_p\wedge \psi= *d\phi_p$ has type $(4,2)$, so by decomposing $\psi$ into various type components, we find $f_p\wedge \psi^{(4,0)}= * d\phi_p $, while $f_p\wedge \psi^{(3,1)}=0$. This implies $\psi^{(3,1)}=0$, which means $\iota_v \psi |_{T_pQ}=0$ for every $v\perp T_pQ$, hence $T_pQ$ is a \emph{coassociative} 4-plane. We comment that in the Calabi-Yau analogue, we would obtain $\iota_v\text{Re}(\Omega) |_{T_pQ}=0$ for every $v\perp T_pQ$, hence $T_pQ$ is a \emph{special Lagrangian} 3-plane.

In our convention $\psi$ (resp. $\text{Re}\Omega$) gives the orientation of $T_p Q$. Using $f_p\wedge \psi^{(4,0)}= *d\phi_p$, we identify the coefficient $k= \Theta(p)$, namely
\[
\phi_p= -\frac{\Theta(p) }{ 2\text{dist}(\cdot, T_pQ )  }.
\]
This shows $f_p= *_3 d\phi_p$.

Finally, by Lemma \ref{lem:measurelimit1}, $Q\wedge \psi$ (resp. $Q\wedge \text{Re}\Omega$) is a positive measure. Thus under the orientation convention, the real multiplicity $\Theta(p)$ is \emph{positive}. (The caveat here is that the definition of calibrated cycles requires the positivity of the multiplicity function with respect to the orientation of the tangent planes determined by the calibration form.)

\textbf{Item 3}. We now prove the integrality of $\Theta(p)$, which is more delicate. We take the unit sphere $S^2$ inside the normal plane in $T_p M$, so $S^2$ links with $T_p Q\subset T_p M$. Via the exponential map, for each small $s\ll 1$, we obtain a 2-sphere $\Sigma_s=\lambda_{p,s*} (S^2) $ in $B(p,s)\subset M$. We apply the averaging procedure in Section \ref{sect:integrality}, to the surface $\Sigma_s$, with parameter $\tau=s/2$. Thus $\text{Vol}(\Sigma_s)\leq Cs^2$, and Prop. \ref{prop:integralitylimitingChernnumber} gives a uniform estimate in $s$,
\[
\begin{split}
\text{dist}(   	\int_{ \tilde{\Sigma}_s }  \tilde{F}_\infty \wedge   \rho(v)dv, \Z      )\leq &
C\text{Vol}(\Sigma_s)\tau^{1-n} \mu_{|\nabla \Phi|^2}(\bar{U}_s)   ^{1/2}     \mu_{|F|^2}(\bar{U}_s)  ^{1/2}
\\
\leq & Cs^{3-n} \mu_{|\nabla \Phi|^2}(\bar{U}_s)   ^{1/2}     \mu_{|F|^2}(\bar{U}_s)  ^{1/2},
\end{split}
\]
where $U_s= B_{\tau}( \Sigma_s )$ is a shell around the $S^2$ of length scale $\sim s$. We now pass to the $s\to 0$ limit, to deduce
\[
\begin{split}
& \text{dist}(   	\int_{ \tilde{S^2} } f_p  \wedge   \rho(v)dv, \Z      ) 
\\
\leq&  C \limsup_{s\to 0}  s^{3-n}\mu_{|\nabla \Phi|^2}(\bar{U}_s)   ^{1/2}     \mu_{|F|^2}(\bar{U}_s)  ^{1/2}
\\
\leq & C( \limsup_{s\to +\infty} s^{3-n}\mu_{|\nabla \Phi|^2}(\bar{U}_s)  ) ^{1/2}( \limsup_{s\to +\infty} s^{3-n}\mu_{|F|^2}(B(p,s))  ) ^{1/2}.
\end{split}
\]
Now at the point $p$, the factor $\limsup_{s\to +\infty} s^{3-n}\mu_{|F|^2}(B(p,s)) <+\infty$ by assumption. On the other hand, $\mu_{|\nabla \Phi|^2}= Q\wedge \psi$ by Lemma \ref{lem:measurelimit1}, and $Q_p$ is the blow up limit of $Q$ at $p$, hence we can identify the weak limit of measures
\[
\lim_{s\to 0}  s^{3-n} (\lambda_{p,s  }^{-1})_*  \mu_{|\nabla \Phi|^2} = \lim_{s\to 0}  s^{3-n} (\lambda_{p,s  }^{-1})_* (Q\wedge \psi)=    Q_p\wedge \psi.
\]
This limiting measure is the Lebesgue measure on $T_p Q$ with multiplicity $\Theta(p)$. However, the rescaled set $\lambda_{p,s}^{-1}(U_s)$ is by construction contained in a shell around the unit 2-sphere linking $T_p Q$, and in particular stays uniformly bounded away from $T_pQ$. Hence
\[
\limsup_{s\to +\infty} s^{3-n}\mu_{|\nabla \Phi|^2}(\bar{U}_s) =0,
\]
and the upshot is that $\text{dist}(   	\int_{ \tilde{S^2} } f_p  \wedge   \rho(v)dv, \Z      ) =0$, namely the average Chern number for the blow up limit is an integer. In the Dirac singularity model, the average Chern number is the same as multiplicity factor $\Theta(p)$, so $\Theta(p)\in \Z$. The same argument works identically in the $G=SO(3)$ case, since all the 2-spheres are contained in a local ball on $M$, where the $SO(3)$-bundle lifts to an $SU(2)$-bundle.

We have now proved that $Q$ is an integer rectifiable closed current, whose oriented tangent planes $T_p Q$ are $\mathcal{H}^{n-3}$-a.e. coassociative (resp. special Lagrangian).  In short, $Q$ is a \emph{coassociative cycle} (resp. \emph{special Lagrangian cycle}). In particular, $Q$ is a local area minimizer, and by Almgren's big regularity theorem, $Q$ is represented by a smooth $(n-3)$-dimensional submanifold with multiplicity, away from a subset of Hausdorff dimension at most $n-5$. By item 3 in Prop. \ref{prop:weaklimitChernform}, $Q$ has  finite total mass, so using the monotonicity formula for minimal surfaces, we see that the support of $Q$ is contained in a fixed \emph{compact subset} of $M$.

\textbf{Item 4}. The support of $Q$ is a closed subset of $M$. In its complement $M\setminus \text{supp}(Q)$, the singular abelian  $G_2$/Calabi Yau monopole equation implies $F_\infty$ is closed and coclosed, so $(F_\infty, \Phi_\infty)$ is \emph{smooth} by elliptic regularity.

By item 1 in Prop. \ref{prop:weaklimitChernform}, for any fixed $1-n<\nu< -1-n/2$, we have
\[
	\dashint_{r<r(x)<2r} | \tilde{F}_\infty-\sigma_0|  \leq C r^\nu,\quad \forall r>1.
\]
Recall that $\sigma_0$ is closed and coclosed outside a compact set.
Since $Q$ is supported inside a fixed compact subset, for large enough $r$, we can apply elliptic regularity to show
\[
\norm{ \tilde{F}_\infty -\sigma_0  }_{C^{k,\alpha}(r< r(x)<2r) } \leq C r^\nu. 
\]
By Remark \ref{rmk:smallPhi}, we have the $L^1_{loc}$ bound
\[
\dashint_{r<r(x)<2r } |\Phi_\infty| \leq C r^{2-n},
\]
and since $\Phi_\infty$ is harmonic away from the support of $Q$ by the singular abelian $G_2$-monopole equation (resp. Calabi-Yau monopoles), we deduce that for sufficiently large $r$,
\[
\norm{ \Phi_\infty  }_{C^{k,\alpha}(r< r(x)<2r) } \leq C r^{2-n}.
\]
This concludes the asymptotic estimate.
\end{proof}

\begin{rmk}\label{rmk:Cinftylimitnotsequence}
Conceptually, the fact that $M\setminus \text{supp}(Q)$ is  a non-empty open set, only follows
\emph{a posteriori} when we proved $Q$ is a minimal surface; a general rectifiable current may have dense support. So unless we first prove the integer multiplicity of $Q$, we would not know there is any open subset to apply the regularity theory of the elliptic system.

It is also worth noting that we only proved the $C^\infty_{loc}$ estimate for the limit $(F_\infty, \Phi_\infty)$; this argument \emph{does not} show that there is a fixed open subset of $M$ independent of large $m$, where the curvature  has uniform $C^\infty_{loc}$ estimate for the sequence $(A, \Phi)$. It is an interesting question if there is any open dense subset of $M$ where the sequence converges up to gauge in the $C^\infty_{loc}$-topology.
\end{rmk}

\begin{rmk}\label{rmk:monotonicityminsurface}
	The minimal surface $Q$ satisfies the monotonicity formula that for $p\in \text{supp}(Q)$, the $(n-3)$-dimensional volume ratio
	\[
	e^{Cs/r(p)}  s^{3-n}\text{Mass}(Q\lfloor_{B(p,s)} )
	\]
	is increasing for $s\lesssim r(p)$. The exponent $n-3$ here is stronger than the monotonicity formula for Yang-Mills-Higgs theory, which is $n-4$. It is an interesting question whether there is an \emph{effective version of the $(n-3)$-dimensional monotonicity formula} which applies to $(A,\Phi)$ with large but finite $m$.

\end{rmk}

We now prove some homological properties of the weak limit $(F_\infty, \Phi_\infty, Q)$.

\begin{cor}\label{cor:homology}
The limit $(F_\infty, \Phi_\infty, Q)$ satisfies the following:
\begin{enumerate}
\item 
The homology class of  $[Q]\in H_{n-3}(M)$ is Poincar\'e dual to the image of the monopole class $\beta$ under the connecting map $H^2(\Sigma)\to H^3_c(M)$.

\item The class defined by the closed form $\tilde{F}_\infty$ in $H^2(M\setminus \text{supp}(Q), \R)$ is an integral class. In particular, there exists a  connection $A_\infty$ on some $U(1)$-bundle over $M\setminus \text{supp}(Q)$ whose curvature is $F_\infty$.

\item The total mass is
\[
\text{Mass}(Q)= 
\begin{cases}
	\int_Q \psi =	\langle  \beta \cup \psi|_\Sigma, [\Sigma]\rangle, \quad & \text{$G_2$ case},
	\\
	\int_Q \text{Re}\Omega =	\langle  \beta \cup \text{Re}(\Omega)|_\Sigma, [\Sigma]\rangle, \quad & \text{Calabi-Yau case}.
\end{cases}
\]

\end{enumerate}

\end{cor}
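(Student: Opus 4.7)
The three items will be established in order, with item 1 providing the topological foundation for the other two. For item 1, the plan is to identify $[Q] \in H_{n-3}(M)$ with the Poincar\'e--Lefschetz dual of $\delta\beta \in H^3_c(M)$ by comparing boundary integrals at infinity. Since $Q = d\tilde{F}_\infty$ as currents (Lemma \ref{lem:singularabelianmonopole}, using $F_\infty = 2\pi\tilde{F}_\infty$), Stokes' theorem on $\{r(x) \leq R\}$ yields $\int_Q \omega = \int_{\{r = R\}} \tilde{F}_\infty \wedge \omega$ for any closed $\omega \in \Omega^{n-3}(M)$ and any $R$ large enough that $\text{supp}(Q) \subset \{r \leq R\}$; similarly $\int_M d\sigma_0 \wedge \omega = \int_{\{r = R\}} \sigma_0 \wedge \omega$ since $d\sigma_0$ is compactly supported. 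I will then argue that the pullback of $\tilde{F}_\infty - \sigma_0$ to the cross-section $\{r = R\} \cong \Sigma$ is exact: this form is closed on the end (both $d\tilde{F}_\infty$ and $d\sigma_0$ have compact support) and hence represents an $R$-independent class in $H^2(\Sigma;\mathbb{R})$ by homotopy invariance of the end, while the pointwise bound $|\tilde{F}_\infty - \sigma_0| \leq C r^\nu$ from item 4 of Theorem \ref{thm:weaklimitstructure} (with $\nu < -1 - n/2$) shows each pairing of this class against a fixed 2-cycle in $\Sigma$ has size $\leq C R^{\nu + 2} \to 0$, so the $R$-independent class is exactly zero. Stokes on the closed cross-section then gives $\int_{\{r = R\}}(\tilde{F}_\infty - \sigma_0) \wedge \omega = 0$, hence $\int_Q \omega = \int_M d\sigma_0 \wedge \omega$ for every closed $\omega$, and non-degeneracy of the Poincar\'e--Lefschetz pairing completes item 1.

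For item 2, the plan is to exploit Proposition \ref{prop:integralitylimitingChernnumber} on 2-cycles lying away from $\text{supp}(Q)$. Given a smooth closed $\Sigma^2 \subset M \setminus \text{supp}(Q)$, I pick the averaging parameter $\tau$ small enough that $\bar{U} := \overline{B_\tau(\Sigma^2)}$ stays disjoint from $\text{supp}(Q)$. Since Lemma \ref{lem:measurelimit1} identifies $\mu_{|\nabla \Phi|^2}$ with $Q \wedge \psi$, which is concentrated on $\text{supp}(Q)$, we have $\mu_{|\nabla \Phi|^2}(\bar{U}) = 0$; the right-hand side of Proposition \ref{prop:integralitylimitingChernnumber} therefore vanishes, forcing the average Chern number $\int_{\tilde{\Sigma}} \tilde{F}_\infty \wedge \rho(v)\,dv$ to be an integer. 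Because $\tilde{F}_\infty$ is closed and smooth on $M \setminus \text{supp}(Q)$ (item 4 of Theorem \ref{thm:weaklimitstructure}) and the perturbed surfaces $\exp(\tau v)_*(\Sigma^2)$ are all mutually homologous in that open region, the integral $\int_{\exp(\tau v)_*(\Sigma^2)} \tilde{F}_\infty = \int_{\Sigma^2} \tilde{F}_\infty$ is $v$-independent and equals the integer average. Thus $[\tilde{F}_\infty]$ pairs integrally with every 2-cycle in $M \setminus \text{supp}(Q)$ and is an integral cohomology class; standard classification then produces a $U(1)$-bundle $L$ with $c_1(L) = [\tilde{F}_\infty]$, and Chern--Weil theory gives a connection $A_\infty$ on $L$ whose curvature is $F_\infty = 2\pi\tilde{F}_\infty$ (adjusting by an exact $1$-form if needed).

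Item 3 then follows directly. Theorem \ref{thm:weaklimitstructure} item 3 realizes $Q$ as a compactly supported coassociative (resp. special Lagrangian) integral cycle, so the calibration identity gives $\text{Mass}(Q) = \int_Q \psi$ (resp. $\int_Q \text{Re}\Omega$). Since $Q$ is compactly supported and $\psi$ is closed, $\int_Q \psi$ depends only on the homology class $[Q]$; applying item 1 to identify $[Q]$ with the Poincar\'e--Lefschetz dual of $\delta\beta$, this pairing equals $\int_M d\sigma_0 \wedge \psi$, which by Stokes is the $R$-independent boundary integral $\int_{\{r = R\}} \sigma_0 \wedge \psi$ for large $R$, and this in turn equals $\langle \beta \cup [\psi|_\Sigma], [\Sigma]\rangle$ by the standard pairing identity for the connecting map in the long exact sequence of the pair $(M, \Sigma)$. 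The Calabi--Yau case is identical with $\text{Re}\Omega$ in place of $\psi$. The main obstacle in the whole argument is the vanishing of the asymptotic cohomology class in item 1: we only have polynomial decay $O(r^\nu)$ of $\tilde{F}_\infty - \sigma_0$, so the class must be pinned down by combining $R$-independence (homotopy invariance of the end) with the quantitative bound in order to conclude the class is exactly zero rather than merely small.
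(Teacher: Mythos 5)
Your proposal is correct and follows essentially the same route as the paper's proof: item 1 via $d(\tilde{F}_\infty-\sigma_0)=Q-d\sigma_0$ together with the decay $|\tilde{F}_\infty-\sigma_0|=O(r^\nu)$ killing the asymptotic class on $\Sigma$; item 2 via Proposition \ref{prop:integralitylimitingChernnumber} applied to 2-cycles with tubular neighbourhoods disjoint from $\mathrm{supp}(Q)$, where the vanishing of $\mu_{|\nabla\Phi|^2}(\bar U)$ forces integrality; and item 3 via the calibration identity plus Stokes. The only detail the paper makes explicit that you elide is the appeal to Thom's theorem to represent every class in $H_2(M\setminus\mathrm{supp}(Q),\Z)$ by a smooth submanifold so that the averaging construction applies, but this is a standard point and does not affect correctness.
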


\begin{proof}
\textbf{Item 1}. The image of $\beta$ under  the connecting map $H^2(\Sigma)\to H^3_c(M)$ is represented by the compactly supported form $d\sigma_0$. On the other hand, by item 4 in Theorem \ref{thm:weaklimitstructure}, the distributional 2-form $\tilde{F}- \sigma_0$ decays faster than quadratically at infinity, so it restriction to the boundary at infinity is zero. Since 
$
d(\tilde{F}- \sigma_0  )=Q- d\sigma_0,
$
the class $[Q]\in H_{n-3}(M)\simeq H^3_c(M)$ is cohomologous to $[d\sigma_0]\in H^3_c(M)$.

\textbf{Item 2}. Here $M\setminus \text{supp}(Q)$ is an open manifold. By a classical result of Thom, every integral homology class up to dimension $6$ can be represented by a smooth submanifold. We represent an arbitrary class in $H_2(M\setminus \text{supp}(Q), \Z)$ by a smooth submanifold $\Sigma^2\subset M\setminus \text{supp}(Q)$, and apply the averaging procedure in Section \ref{sect:integrality} to $\Sigma^2$ with a sufficiently small parameter $\tau$. By Prop. \ref{prop:integralitylimitingChernnumber},
\[
	\text{dist}(   	\int_{ \tilde{\Sigma} }  \tilde{F}_\infty \wedge   \rho(v)dv, \Z      )\leq C\text{Vol}(\Sigma^2)\tau^{1-n} \mu_{|\nabla \Phi|^2}(\bar{U})   ^{1/2}     \mu_{|F|^2}(\bar{U})  ^{1/2}.
\]
For small enough $\tau$, the neighbourhood $U= B_\tau(\Sigma^2)$ is disjoint from the support of  $Q$. Since the limiting measure $\mu_{|\nabla \Phi|^2}$ is supported on $Q$, this shows  $\mu_{|\nabla \Phi|^2}(\bar{U}) =0$, so
the average Chern number $	\int_{ \tilde{\Sigma} }  \tilde{F}_\infty \wedge   \rho(v)dv$
is an integer. But since $\tilde{F}$ is a smooth closed 2-form on $M\setminus \text{supp}(Q)$, and all the 2-cycles $\exp(\tau v)_*( \Sigma^2 )$ are homologous to $\Sigma^2$ inside $M\setminus \text{supp}(Q)$, the average Chern number is simply $\int_{\Sigma^2} \tilde{F}$.

The conclusion is that $\int_{\Sigma^2} \tilde{F}$ is an integer for any integral class in  $H_2(M\setminus \text{supp}(Q), \Z)$, so $[\tilde{F}]\in H^2(M\setminus \text{supp}(Q), \R)$ is an \emph{integral class}. This allows us to construct a $U(1)$-bundle over $M\setminus \text{supp}(Q)$ whose first Chern class realizes the class $[\tilde{F}]$, and a connection $A_\infty$ whose curvature 2-form is $F_\infty$.

\textbf{Item 3}. Since $Q$ is a coassociative cycle (resp. special Lagrangian cycle), 
\[
\text{Mass}(Q)= 
\begin{cases}
	\int_Q \psi,\quad &\text{$G_2$-monopole case},
\\
\int_Q \text{Re}\Omega, \quad & \text{Calabi-Yau monopole case}.
\end{cases} 
\]
By item 1 and Stokes theorem, in the $G_2$-case,
\[
	\int_Q \psi = \int_M d\sigma_0 \wedge \psi= \int_\Sigma \sigma_0\wedge \psi= \langle \beta\cup \psi|_\Sigma, [\Sigma]\rangle.
\]
The Calabi-Yau case is completely analogous.
\end{proof}

\begin{rmk}
From the intermediate energy formula, the quantity $	\langle  \beta \cup \psi|_\Sigma, [\Sigma]\rangle$ (resp. $ \langle  \beta \cup \text{Re}(\Omega)|_\Sigma, [\Sigma]\rangle$) is non-negative, and it is zero iff $\Phi$ is parallel, so $(A,\Phi)$ reduces to a smooth $U(1)$ $G_2$-monopole (resp. Calabi-Yau monopole). A consequence of item 3 in Cor. \ref{cor:homology} is that this quantity is strictly positive iff $Q$ is a nontrivial coassociative cycle (resp. special Lagrangian cycle), in the homology class determined by the monopole class $\beta$ as in item 1 of Cor. \ref{cor:homology}.

The significance is that if we are given the sequence $(A,\Phi)$ as in the Main Setting, then this provides a  \emph{nonperturbative existence theorem for the coassociative (resp. special Lagrangian) cycle in the prescribed homology class}! In contrast, all the previous existence results for these calibrated cycles to the author's knowledge, depend either on a semi-explicit solvable ansatz (including symmetry reduction, gluing constructions, or integrable system techniques), or relies on the underlying manifold being close to some degeneration limit (eg. the conifold degeneration of Calabi-Yau manifolds, or the large complex structure limit in the SYZ conjecture).

\end{rmk}

\begin{rmk}
	In the $G=SO(3)$ variant case, in general $[2\tilde{F}_\infty]\in H^2(M\setminus \text{supp}(Q),\R)$ defines an integral class. The root of this issue comes from the integrality of the Chern class, \cf Remarks \ref{rmk:SO(3)1}, \ref{rmk:SO(3)2}. 
\end{rmk}

	\section{Energy identity}

	\subsection{Weak limit of energy density}

	We now analyze how the $L^2$-curvature density is distributed in the $m\to +\infty$ limit.
	 We denote $Q_{sm}$ as the smooth locus of the coassociative (resp. special Lagrangian) cycle $Q$.  Using the normal exponential map, 
	 we can identify a small tubular neighbourhood $\mathcal{U}$, with the open subset of the normal bundle of $Q_{sm}$ obtained as a union of 2-discs,
	 \[
	 \bigcup_{y\in Q_{sm}}   D^2_{e^{-t_0(y)}}\cap N_y Q.
	 \]
	 where $t_0(y)$ is some smooth function on $Q_{sm}$, which degenerates to $+\infty$ near the singular points of $Q$. We have a natural projection map $\pi: \mathcal{U}\to Q_{sm}$.  Within $\mathcal{U}$ we decompose the curvature 2-form according to the tangential/normal components,
	\[
	F= F^{tt}+ F^{tn}+F^{nn}.
	\]
	where the superscript $t$ denotes the form factor orthogonal to the normal fibres, and $n$ denotes the form factor along the normal fibres.

	We can thereby refine the limiting curvature density measures $\mu_{|F|^2}$ and $\mu_{|\nabla\Phi|^2}$ into type components:
	\[
	\begin{cases}
		\mu_{|F^{nn}|^2} :=  \lim_{i\to +\infty}  \frac{1}{2\pi m} \int_M    |F^{nn}|^2 dvol,
		\\
		\mu_{|F^{tn}|^2} :=  \lim_{i\to +\infty}  \frac{1}{2\pi m} \int_M |F^{tn}|^2 dvol,
		\\
		\mu_{|F^{tt}|^2} :=  \lim_{i\to +\infty}  \frac{1}{2\pi m} \int_M F^{tt}  |^2 dvol,
		\\
		\mu_{ |\nabla^n \Phi|^2} :=  \lim_{i\to +\infty}  \frac{1}{2\pi m} \int_M |\nabla^n \Phi |^2 dvol,
		\\
		\mu_{ |\nabla^t \Phi|^2} :=  \lim_{i\to +\infty}  \frac{1}{2\pi m} \int_M |\nabla^t \Phi |^2 dvol.
	\end{cases}
	\]
	These are a priori subsequential weak limits, but Prop.  \ref{prop:measurelimit2} says that they are actually completely determined by $Q$, so there is no need to pass to a further subsequence. 

	\begin{prop}\label{prop:measurelimit2}
	The limiting curvature density measures satisfy the following:
	\begin{enumerate}
		\item 	The density measures are entirely concentrated on $Q_{sm}$,
		\[
		\mu_{|F|^2} (M\setminus Q_{sm}) ,\quad 	\mu_{|\nabla \Phi|^2} (M\setminus Q_{sm}) =0,
		\]
		and all the tangential components vanish,  
		\[
		\mu_{ |\nabla^t \Phi|^2}= \mu_{|F^{tn}|^2}=\mu_{|F^{tt}|^2}=0,
		\]
		and the density measure for  normal components agrees with the volume measure on $Q$ counted with multiplicity,
		\[
	\mu_{|F^{nn}|^2} = \mu_{ |\nabla^n \Phi|^2} =
		\begin{cases}
			Q\wedge \psi,\quad &\text{$G_2$-monopole case},
			\\
			Q\wedge \text{Re}\Omega, \quad & \text{Calabi-Yau monopole case}.
		\end{cases}
		\]

		\item The fibrewise monopole equation holds in the following integral sense:
		\[
		\lim_{i\to +\infty} \int_\mathcal{U} |*_3 F^{nn}- \nabla^n \Phi |^2 dvol =0.
		\]

		\item The second Chern form satisfies the $L^1$-convergence
	$
	\int_M |	m^{-1}  \Tr(F\wedge F)| dvol \to 0
$
as $i\to +\infty$.

	\end{enumerate}

	\end{prop}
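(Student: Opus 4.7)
My plan is to prove the three items in the order $(1)\to(2)\to(3)$, treating item $(3)$ as a corollary of $(1)$ and $(2)$. The starting point is Lemma~\ref{lem:measurelimit1}, which already identifies $\mu_{|\nabla\Phi|^2}$ with $Q\wedge\psi$ (resp.\ $Q\wedge\mathrm{Re}\Omega$) of total mass $\mathrm{Mass}(Q)$.

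For item $(1)$, total-mass equality comes from the pointwise identity
\[
|F|^2\,dvol - |\nabla\Phi|^2\,dvol \;=\; \tfrac12\Tr(F\wedge F)\wedge\phi
\]
in the $G_2$ case (with $\omega$ in place of $\phi$ in the Calabi-Yau case), which follows from the type decomposition $|F|^2=|F^7|^2+|F^{14}|^2$, the Bogomolny relation $|\nabla\Phi|^2=3|F^7|^2$, and the Chern-form identity $-\tfrac12\Tr(F\wedge F)\wedge\phi=2|F^7|^2-|F^{14}|^2$ already used in Section~2. The topological bound $|\int_M\Tr(F\wedge F)\wedge\phi|\leq C$ from the proof of Lemma~\ref{lem:monotonicity2} then yields $\mu_{|F|^2}(M)=\mu_{|\nabla\Phi|^2}(M)=\mathrm{Mass}(Q)$. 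To upgrade total-mass equality to measure equality, I argue by concentration off $\mathrm{supp}(Q)$: at any $p\notin\mathrm{supp}(Q)$ and a small ball $B(p,s)$ disjoint from $\mathrm{supp}(Q)$, the identity $\mu_{|\nabla\Phi|^2}(B(p,s))=0$, combined with an $\epsilon$-regularity bootstrap (Prop.~\ref{prop:epsilonregularity}) and a Chern--Simons gauge-fixing argument on a boundary annulus, propagates to $\mu_{|F|^2}(B(p,s))=0$.

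For the tangential vanishing and fibrewise monopole equation in item $(2)$, I work in the tubular neighbourhood $\mathcal{U}$ of $Q_{sm}$ via the normal exponential map. The calibration $\psi|_{T_pQ}=dvol_{T_pQ}$ and $\phi|_{T_pQ}=0$ at $p\in Q_{sm}$ pin the off-diagonal type components of $\psi$ through the coassociative normal-bundle identification (with the obvious special Lagrangian analogue in the CY case); projecting $F\wedge\psi=*\nabla\Phi$ onto the appropriate bidegree yields, to leading order,
\[
F^{nn}\wedge dvol_Q \;=\; dvol_Q\wedge *_3\nabla^n\Phi + \mathcal{R},
\]
with remainder $\mathcal{R}$ controlled by $(|F^{tt}|+|F^{tn}|)$ and the second fundamental form of $Q$, together with matching estimates on $|F^{tt}|,|F^{tn}|,|\nabla^t\Phi|$ from the other type components. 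A blow-up analysis adapted to monopole bubbling on the normal $\mathbb{R}^3$-fibres (tangential directions unrescaled) shows that at $\mathcal{H}^{n-3}$-a.e.\ $p\in Q_{sm}$, the rescaled sequence converges, modulo gauge, to an $SU(2)$ BPS monopole on $\mathbb{R}^3$ with tangential curvature vanishing in the limit. A Lebesgue-differentiation argument on $(n-3)$-rectifiable measures then forces $\mu_{|\nabla^t\Phi|^2}=\mu_{|F^{tt}|^2}=\mu_{|F^{tn}|^2}=0$, and substituting back into
\[
\int_\mathcal{U}|*_3 F^{nn}-\nabla^n\Phi|^2 = \int_\mathcal{U}|F^{nn}|^2+|\nabla^n\Phi|^2 - 2\int_\mathcal{U}\langle *_3F^{nn},\nabla^n\Phi\rangle,
\]
with the cross term identified via the leading $(4,2)$-projection of $F\wedge\psi=*\nabla\Phi$, yields item $(2)$.

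For item $(3)$, since $F^{nn}$ has values in the 3-dimensional normal fibre, $F^{nn}\wedge F^{nn}\equiv 0$; so every term of $\Tr(F\wedge F)$ on $\mathcal{U}$ contains at least one tangential factor, and by Cauchy--Schwarz
\[
\int_\mathcal{U}|\Tr(F\wedge F)| \leq C\bigl(\int_\mathcal{U}(|F^{tt}|^2+|F^{tn}|^2)\bigr)^{1/2}\bigl(\int_\mathcal{U}|F|^2\bigr)^{1/2} = o(m)^{1/2}\cdot O(m)^{1/2}=o(m),
\]
while item $(1)$ gives $\int_{M\setminus\mathcal{U}}|F|^2=o(m)$ and hence $\int_{M\setminus\mathcal{U}}|\Tr(F\wedge F)|\leq C\int|F|^2=o(m)$; summing yields $m^{-1}\int_M|\Tr(F\wedge F)|\to 0$. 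The main obstacle is the concentration assertion in item $(1)$: the Yang--Mills monotonicity only controls $\int_{B(p,s)}|F|^2$ at the rate $O(m)$ rather than $o(m)$, so the argument must carefully leverage the Chern--Simons identity $\Tr(F\wedge F)=dCS(A)$ in a good Uhlenbeck gauge on a boundary annulus to show the Chern-form contribution is $o(m)$ on balls disjoint from $Q$.
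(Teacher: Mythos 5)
Your item (1) is where the proposal breaks down, and you flag the problem yourself at the end. You reduce everything to showing that $\mu_{|F|^2}$ puts no mass on balls disjoint from $\mathrm{supp}(Q)$ and that the tangential components vanish, but neither step is actually carried out: the first is deferred to an unspecified ``Chern--Simons gauge-fixing argument on a boundary annulus'' (which you concede is ``the main obstacle''), and the second to a ``blow-up analysis adapted to monopole bubbling on the normal fibres'', which is not available at this stage --- the paper's monopole-extraction machinery (Prop.~\ref{prop:monopoleextraction}, Thm.~\ref{thm:energyidentity}) takes Prop.~\ref{prop:measurelimit2} as an \emph{input}, and a direct blow-up at a.e.\ point of $Q_{sm}$ would require controlling multiple bubbling scales and neck energy losses that are nowhere established. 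As you correctly note, the monotonicity formula only gives $\int_{B(p,s)}|F|^2=O(m)$ away from $Q$, so there is no local mechanism in hand to force $o(m)$.

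The paper's proof avoids both difficulties with a global squeeze that you are missing. On each normal fibre one has the completed-square identity $|{*_3}F^{nn}-\nabla^n\Phi|^2\,dvol_{N_yQ}-\Tr(\nabla\Phi\wedge F)|_{N_yQ}=(|\nabla^n\Phi|^2+|F^{nn}|^2)\,dvol_{N_yQ}$; integrating against $\pi^*(\rho\,dvol_Q)$, using $d\Tr(\Phi F)=\Tr(\nabla\Phi\wedge F)$ and the weak convergence $Q=\lim d(-\Tr(\Phi F)/4\pi m)$, gives the \emph{lower} bound $\tfrac12(\mu_{|F^{nn}|^2}+\mu_{|\nabla^n\Phi|^2})\lfloor_{Q_{sm}}\geq Q\wedge\psi$, hence total measure at least $\mathrm{Mass}(Q)$ concentrated on $Q_{sm}$. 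Against this, Lemma~\ref{lem:measurelimit1} (via Lemma~\ref{lem:monotonicity2} and the intermediate energy formula) gives the \emph{upper} bounds $\mu_{|F|^2}(M)\leq\mathrm{Mass}(Q)$ and $\mu_{|\nabla\Phi|^2}(M)\leq\mathrm{Mass}(Q)$ separately. Since $\mu_{|F|^2}\geq\mu_{|F^{nn}|^2}$ and $\mu_{|\nabla\Phi|^2}\geq\mu_{|\nabla^n\Phi|^2}$, the two bounds are compatible only if equality holds everywhere, which delivers at one stroke the vanishing off $Q_{sm}$, the vanishing of all tangential components, and the identification of the normal measures --- no local regularity off $Q$ and no fibrewise blow-up is needed. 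Item (2) then drops out by feeding item (1) back into the same inequality (the square term was retained on the favourable side), rather than by your separate cross-term computation. Your item (3) matches the paper's ($F^{nn}\wedge F^{nn}=0$ for degree reasons), but it inherits the gap since it presupposes item (1). Your total-mass identity via $\tfrac12\Tr(F\wedge F)\wedge\phi$ is also stated too strongly: $\int_M\Tr(F\wedge F)\wedge\phi$ is not bounded (indeed $\int_M|F|^2=+\infty$); only the truncated version with an $O(r^{n-4})$ boundary error holds, which after normalising by $m$ is harmless but should be said.
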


	\begin{proof}
		\textbf{Item 1}.
	Let $\rho\geq 0$ be a test function supported on a compact subset of $Q_{sm}$, and we regard $\rho dvol_Q$ as a smooth $(n-3)$-form on $Q_{sm}$, which can be pulled back to the tubular neighbourhood via the projection map $\pi$. On each normal fibre $N_yQ$, under our inner product convention on the Lie algebra $|a|^2= - \frac{1}{2}\Tr (a^2)$, we deduce 
	\begin{equation*}
 | *_3F^{nn} - \nabla^n \Phi|^2dvol_{N_yQ}-  \Tr(\nabla \Phi\wedge F) |_{N_yQ}=  ( |\nabla^n \Phi|^2+ |F^{nn}|^2) dvol_{N_yQ} .
	\end{equation*}
	Hence using $d\Tr( \Phi F)= \Tr(\nabla \Phi\wedge F)$, we have
	\[
	\begin{split}
	&	-\pi^*(\rho dvol_Q)\wedge d\Tr (\Phi F) +\pi^*\rho  | *_3F^{nn} - \nabla^n \Phi|^2dvol_M
		\\
		\leq  & (1+ C \text{dist}(\cdot, Q)) \pi^*\rho( |\nabla^n \Phi|^2+ |F^{nn}|^2) dvol_M .
	\end{split}
	\]
	where the $1+ C\text{dist}(\cdot, Q)$ factor accounts for the discrepancy between $\pi^* dvol_Q\wedge dvol_{N_yQ}$ and the volume form on $M$. We can multiply by a nonnegative smooth cutoff function $\eta$ supported in the tubular neighbourhood, and equal to one along $Q_{sm}$. Upon  integration,
	\[
	\begin{split}
	& 	  \int_M \eta \pi^*(\rho dvol_Q)\wedge d(\frac{-\Tr (\Phi F)}{4\pi m})+  \frac{1}{4\pi m}  \int_M \eta \pi^* \rho | *_3F^{nn} - \nabla^n \Phi|^2dvol
	\\
	\leq &  \frac{1}{4\pi m}   \int_M \eta (1+ C \text{dist}(x, Q)) \pi^*\rho( |\nabla^n \Phi|^2+ |F^{nn}|^2) dvol.
	\end{split}
	\]  
	We recall the weak convergence $Q=\lim_{i\to \infty} d( - \frac{\Tr (\Phi F)}{4\pi m}  ) $, hence we can pass to the limit to obtain
	\begin{equation}\label{eqn:monopole1}
	\begin{split}
		&	 \frac{1}{2} \int_M  \eta(1+ C \text{dist}(\cdot, Q)) \pi^*\rho (d\mu_{|F^{nn}|^2} + d\mu_{ |\nabla^n \Phi|^2} ) 
			\\
			\geq&  \int_Q \rho dvol_Q + \limsup_{i\to +\infty}  \frac{1}{4\pi m}  \int_M  \eta \pi^* \rho | *_3F^{nn} - \nabla^n \Phi|^2dvol.
	\end{split}
	\end{equation}
	where 
	\[
	 \int_Q \rho dvol_Q
		=  \begin{cases}
			\int_Q \rho\psi , \quad &\text{$G_2$-monopole case},
			\\
			\int_Q \rho \text{Re}\Omega, \quad & \text{Calabi-Yau monopole case}. 
		\end{cases}
	\]

	As (\ref{eqn:monopole1}) holds for every $\eta$ and $\rho$, we deduce an inequality of measures on $Q_{sm}$ after dropping the $| *_3F^{nn} - \nabla^n \Phi|^2$ term, 
	\begin{equation}
\frac{1}{2}	(\mu_{|F^{nn}|^2} + \mu_{ |\nabla^n \Phi|^2} )\lfloor_{Q_{sm}} \geq 
	\begin{cases}
			Q\wedge \psi,\quad &\text{$G_2$-monopole case},
		\\
		Q\wedge \text{Re}\Omega, \quad & \text{Calabi-Yau monopole case}.
	\end{cases}
	\end{equation}
In particular, the total measure on $Q_{sm}$ satisfies the comparison
	\[
	\begin{split}
	&	\frac{1}{2}	(\mu_{|F^{nn}|^2} + \mu_{ |\nabla^n \Phi|^2} )(Q_{sm} )
		\\
		\geq &
	\begin{cases}
	\int_Q \psi  = 	\langle  \beta \cup \psi|_\Sigma, [\Sigma]\rangle  ,\quad &\text{$G_2$-monopole case},
		\\
\int_Q \text{Re}\Omega=   	\langle  \beta \cup \text{Re}(\Omega)|_\Sigma, [\Sigma]\rangle, \quad & \text{Calabi-Yau monopole case},
		\end{cases}
	\end{split}
	\]
	where we used the identity in item 3 of Cor. \ref{cor:homology}.

	On the other hand,  the following measure identity holds on a neighbourhood of $Q_{sm}$,
	\[
	\begin{cases}
	\mu_{|F|^2}= \mu_{|F^{nn}|^2}+ \mu_{|F^{tn}|^2}+ \mu_{|F^{tt}|^2} \geq  \mu_{|F^{nn}|^2},
	\\
	\mu_{|\nabla\Phi|^2}= \mu_{ |\nabla^n \Phi|^2}+ \mu_{ |\nabla^t \Phi|^2}\geq  \mu_{ |\nabla^n \Phi|^2},
	\end{cases}
	\]
	and  Lemma \ref{lem:measurelimit1} gives the total measure estimate
	\[
	\max( \mu_{|\nabla \Phi|^2} (M),  \mu_{|F|^2}(M)) \leq  
	\begin{cases}
		\langle  \beta \cup \psi|_\Sigma, [\Sigma]\rangle, \quad & \text{$G_2$ case},
		\\
		\langle  \beta \cup \text{Re}(\Omega)|_\Sigma, [\Sigma]\rangle, \quad & \text{Calabi-Yau case}.
	\end{cases}
	\]
	The compatibility of these inequalities forces the equality to be achieved everywhere, which implies item 1.

	\textbf{Item 2}. We first comment that since $\mu_{|F|^2}$ and $\mu_{|\nabla\Phi|^2}$ put no measure on $M\setminus Q_{sm}$, any weak limit of $ | *_3F^{nn} - \nabla^n \Phi|^2dvol $ must also put no measure on  $M\setminus Q_{sm}$. We feed item 1 back into (\ref{eqn:monopole1}), to see that
	\[
	 \limsup_{i\to +\infty}  \frac{1}{4\pi m}  \int_M  \eta \pi^* \rho | *_3F^{nn} - \nabla^n \Phi|^2dvol =0.
	\]
	Since this holds for any $\rho$ and $\eta$, item 2 follows.

	\textbf{Item 3}. This follows from item 1 since in the expansion of $F\wedge F$ into type components, the term $F^{nn}\wedge F^{nn}=0$ for degree reasons.
	\end{proof}
	
	\begin{cor} \label{cor:fibrewisestrongL1}
 The following strong $L^1$-convergence results hold for any given compact subset  $K\subset Q_{sm}$:
 
 \begin{enumerate}
 	\item The fibrewise energy function $E_y= \int_{\mathcal{U}\cap \pi^{-1}(y)} |F|^2 dvol_{N_yQ}$ satisfies the $L^1$-convergence
 	\[
 	\lim_{i\to +\infty}  \int_{K} | \frac{1}{2\pi m} E_y- \Theta(y)| dvol_Q =0,
 	\]
 	where $\Theta(y)$ is the multiplicity function of the current $Q$.

 	\item  For any $y\in Q_{sm}$, we take the family of small 2-spheres linking with $Q_{sm}$,
 	\[
 	S^2_{y,t}= \{  dist(\cdot , y)=e^{-t}  \}\subset \pi^{-1}(y)\cap \mathcal{U},\quad t_0(y) \leq t\leq t_0(y)+1.
 	\]
Then the mollified Chern form $\tilde{F}$ satisfies
 	\[
 	\lim_{i\to +\infty} \int_{K} \int_{t_0(y)}^{t_0(y)+1} | \int_{S^2_{y,t} } \tilde{F}  - \Theta(y)| dvol_Q dt =0.
 	\]

 \end{enumerate}

	\end{cor}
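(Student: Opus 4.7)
My plan is to derive both items from a fibrewise Bolgomolny identity combined with Prop.~\ref{prop:measurelimit2} and the $L^1_{loc}$ convergence of the mollified Chern form, proving item 2 first by a coarea argument and then deducing item 1 by averaging the Bolgomolny identity in the boundary radius and applying a monotonicity squeeze.

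For item 2, let $V = \bigcup_{y\in K,\, t\in[t_0(y),t_0(y)+1]} S^2_{y,t}\subset M$. The $L^1_{loc}$-convergence $\tilde F + \sigma \to \tilde F_\infty$ from item 2 of Prop.~\ref{prop:weaklimitChernform} gives $\int_V |\tilde F - \tilde F_\infty|\,dvol_M \to 0$. The coarea formula applied to the smooth parametrization $(y,t,\theta)\mapsto \exp_y(e^{-t}\theta)$ of $V$, whose Jacobians are uniformly bounded above and below on $K$, yields
\begin{equation*}
\int_K\int_{t_0(y)}^{t_0(y)+1}\bigl|\textstyle\int_{S^2_{y,t}}\tilde F - \int_{S^2_{y,t}}\tilde F_\infty\bigr|\,dt\,dvol_Q \to 0.
\end{equation*}
Since $\tilde F_\infty$ is smooth and closed on $M\setminus\text{supp}(Q)$ (Thm.~\ref{thm:weaklimitstructure}), represents an integral cohomology class there (Cor.~\ref{cor:homology}), and $S^2_{y,t}$ links $\text{supp}(Q)$ only at $y \in Q_{sm}$ with linking number one, the Dirac-singularity structure of the blow-up at $y$ identifies $\int_{S^2_{y,t}}\tilde F_\infty = \Theta(y)$ identically in $(y,t)$, proving item 2.

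For item 1, apply the Bolgomolny identity on each sub-fibre $F_y^t = \{x \in N_yQ : d(x,y)\le e^{-t}\}$: expanding $|*_3 F^{nn} - \nabla^n\Phi|^2$ and using $d\Tr(\Phi F)|_{F_y^t} = \Tr(\nabla^n\Phi\wedge F^{nn})\,dvol$ together with Stokes gives
\begin{equation*}
g_i^{nn,t}(y) + \tilde h_i^t(y) = \epsilon_i^t(y) - \tfrac{1}{2\pi m}\int_{S^2_{y,t}}\Tr(\Phi F),
\end{equation*}
where $g_i^{nn,t}, \tilde h_i^t, \epsilon_i^t$ denote the $(2\pi m)^{-1}$-normalized fibre integrals of $|F^{nn}|^2, |\nabla^n\Phi|^2, |*_3 F^{nn} - \nabla^n\Phi|^2$ over $F_y^t$. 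On $S^2_{y,t}$ with $t\in[t_0(y),t_0(y)+1]$ the bound $|\Phi|\ge m/2$ holds for large $m$ (these spheres lie outside $\mathcal{C}_{\Lambda_0}$ by the measure bound in Prop.~\ref{prop:measurebound} and the continuity of $t_0$ on $K$), so $\tilde F = -\frac{1}{4\pi|\Phi|}\Tr(\Phi F) + O(m^{-2}|\nabla\Phi|^2)$ and, using Rmk.~\ref{rmk:smallPhi} and Cor.~\ref{cor:L1Fperp} to control the error, $-\frac{1}{2\pi m}\int_{S^2_{y,t}}\Tr(\Phi F) = 2\int_{S^2_{y,t}}\tilde F + o_{L^1(K\times dt)}(1)$.

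Averaging the Bolgomolny identity in $t\in[t_0(y),t_0(y)+1]$ and combining with item 2 together with the Fubini-ed version of Prop.~\ref{prop:measurelimit2} item 2, I obtain $\bar g_i^{nn}(y) + \bar{\tilde h}_i(y) \to 2\Theta(y)$ in $L^1(K, dvol_Q)$, where the bars denote $t$-averages. The Cauchy--Schwarz estimate $|g_i^{nn,t}-\tilde h_i^t|\le \sqrt{2\epsilon_i^t(g_i^{nn,t}+\tilde h_i^t)}$ gives $|\bar g_i^{nn} - \bar{\tilde h}_i|\to 0$ in $L^1(K)$, and hence $\bar g_i^{nn}, \bar{\tilde h}_i \to \Theta$ individually in $L^1(K)$. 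Since $F_y^t$ shrinks as $t$ increases, $g_i^{nn,t}(y)$ is monotone non-increasing in $t$, so $g_i^{nn}(y) = g_i^{nn,t_0(y)}(y) \ge \bar g_i^{nn}(y)$ pointwise; meanwhile Prop.~\ref{prop:measurelimit2} item 1 and Fubini give $\int_K g_i^{nn}\,dvol_Q \to \int_K \Theta\,dvol_Q$, matching the $L^1$-limit of $\bar g_i^{nn}$. The nonnegative difference $g_i^{nn} - \bar g_i^{nn}\ge 0$ then has $L^1(K)$-integral tending to zero, so $g_i^{nn}\to \Theta$ in $L^1(K)$; combined with the $L^1(K)$-vanishing of the tangential contributions $g_i^{tt}, g_i^{tn}$ to $E_y/(2\pi m)$ (again by Prop.~\ref{prop:measurelimit2} item 1 and Fubini applied to nonnegative quantities converging weakly with matching total mass), item 1 follows. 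The main obstacle is this final squeeze: item 2 directly provides only averaged-in-$t$ $L^1$ convergence, and it is the monotonicity of $g_i^{nn,t}$ in $t$ together with the matching total $L^1$-mass that converts this into boundary-radius $L^1$ convergence at $t = t_0(y)$, sidestepping the failure of weak-to-strong $L^1$ conversion in the absence of equi-integrability estimates.
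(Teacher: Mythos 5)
Your overall architecture is sound and genuinely different from the paper's. For item 2 the paper does not pass through $\tilde F_\infty$ at all: it controls the $L^1$-oscillation of $(y,t)\mapsto\int_{S^2_{y,t}}\tilde F$ directly by Stokes over the 3-manifolds swept between pairs of spheres (as in Lemma \ref{lem:Chernnumberoscillation}), bounds this by $\int m^{-1}|\nabla\Phi||F|$ over a region disjoint from $Q_{sm}$, and combines with convergence of the $(y,t)$-average coming from $Q=\lim d\tilde F$. For item 1 the paper again uses an oscillation-plus-average argument, joining fibres over $y,y'$ by 4-manifolds $X_{y,y'}$ and using $d\Tr(\nabla\Phi\wedge F)=0$; your route via the fibrewise Bolgomolny identity on sub-balls, the Cauchy--Schwarz comparison of $g^{nn,t}$ and $\tilde h^t$, and the monotonicity-in-$t$ squeeze against the matching total mass from Prop.~\ref{prop:measurelimit2} is a legitimate alternative, and it has the merit of deriving item 1 from item 2 rather than running a second independent oscillation argument. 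For the squeeze to close you do need to note that the Jacobian discrepancy between $dvol_M$ and $\pi^*dvol_Q\wedge dvol_{N_yQ}$ and the boundary $\partial(\pi^{-1}(K))$ are harmless because the limit measures charge only $Q_{sm}$ (so restrict to an $\epsilon$-neighbourhood of $Q$ and approximate $1_K$ by continuous test functions); this is implicit but should be said.

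Two steps as written are wrong and need repair. First, in item 2 you invoke $\int_V|\tilde F-\tilde F_\infty|\to 0$, but Prop.~\ref{prop:weaklimitChernform} only gives strong $L^1_{loc}$ convergence of $\tilde F+\sigma$ to $\tilde F_\infty$, and $\sigma$ is not negligible ($\|\sigma\|_{L^2}^2\le Cm$ is the only bound, and the paper explicitly declines to bound $\sigma$ independently of $m$). The fix is immediate -- $\sigma$ is closed and each $S^2_{y,t}$ bounds a 3-disc in its normal fibre, so $\int_{S^2_{y,t}}\sigma=0$ and your coarea estimate should simply be run with $\tilde F+\sigma-\tilde F_\infty$. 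Second, your claim that the spheres $S^2_{y,t}$ lie outside $\mathcal{C}_{\Lambda_0}$ for large $m$ "by the measure bound in Prop.~\ref{prop:measurebound}" is not a valid deduction: a set of vanishing measure can still meet every such sphere, and Remark \ref{rmk:denseconcentration} explicitly warns that the concentration locus is not known to avoid any open set. Consequently you cannot assert $|\Phi|\ge m/2$ pointwise on the spheres. The conclusion you actually need is only the $o_{L^1(K\times dt)}(1)$ comparison between $-\frac{1}{2\pi m}\int_{S^2_{y,t}}\Tr(\Phi F)$ and $2\int_{S^2_{y,t}}\tilde F$, and this follows from the $L^1$ comparison of $\tilde F$ with $-\frac{1}{4\pi m}\Tr(\Phi F)$ in Lemma \ref{lem:mollifiedcurvaturecomparison} together with the same coarea/Fubini argument over the solid shell $V$; replace the pointwise claim by that. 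With these two repairs the proof goes through.
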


\begin{proof}
\textbf{Item 1}. 
By item 2 in Prop. \ref{prop:measurelimit2}, the fibrewise integral of $|*_3 F^{nn}- \nabla^n \Phi|^2$ converges strongly to zero as a function on $Q_{sm}$. Consequently, we can replace $E_y$ by 
\[
-\frac{1}{2}\int_{\mathcal{U}\cap \pi^{-1}(y)} \Tr( \nabla \Phi\wedge F  ) .
\]
Moreover, using the curvature concentration by  item 1 in Prop. \ref{prop:measurelimit2}, it suffices to prove
\[
\int_{Q_{sm}}\rho | \int_{\mathcal{U}\cap \pi^{-1}(y)}   \frac{\eta}{4\pi m}\Tr( \nabla \Phi\wedge F  ) + \Theta(y) | dvol_Q\to 0,\quad i\to +\infty,
\]
where $\eta$ is a cutoff function which is one on $Q_{sm}$ and is supported in $\mathcal{U}$, and $\rho$ is a cutoff function compactly supported in $Q_{sm}$. By a covering argument, 
we can reduce to the situation that $\text{supp}(\rho)$ is contained in a small ball  $Q_{sm}\cap B(y_0, s)$ around a point $y_0\in Q_{sm}$, where
the multiplicity $\Theta(y)$ can be assumed constant.

For any $y, y'\in B(y_0, s)\cap Q_{sm}$, we can take a 4-manifold $X_{y,y'} \subset \mathcal{U}$ with boundary on the two fibres $\mathcal{U}\cap \pi^{-1}(y)$ and $\mathcal{U}\cap \pi^{-1}(y')$, for instance by lifting the short geodesic arc in $Q$ joining $y, y'$. Since $d\Tr(\nabla \Phi\wedge F)=0$, the Stokes formula and Fubini theorem imply that
\[
\begin{split}
& \int_{y,y'\in  Q_{sm} } \rho(y)\rho(y') |\int_{\mathcal{U}\cap \pi^{-1}(y)}   \frac{\eta}{4\pi m}\Tr( \nabla \Phi\wedge F  )- \int_{\mathcal{U}\cap \pi^{-1}(y')}   \frac{\eta}{4\pi m}\Tr( \nabla \Phi\wedge F  )| dydy'
\\
=&    \int_{y,y'\in Q_{sm}  }  \rho(y)\rho(y') | \int_{X_{y,y'}} d\eta \wedge \frac{1}{4\pi m}\Tr( \nabla \Phi\wedge F  )| dydy'
\\
\leq &  C(\eta) \int_{\text{supp}(d\eta)} m^{-1}|\nabla \Phi| |F| dvol \to 0,
\end{split}
\]
where the convergence follows from item 1 in Prop. \ref{prop:measurelimit2} and the fact that the support of $d\eta$ is disjoint from $Q_{sm}$. On the other hand, the average value satisfies 
\[
\int_{ y\in Q_{sm}  } \rho(y) (\int_{\mathcal{U}\cap \pi^{-1}(y)}   \frac{\eta}{4\pi m}\Tr( \nabla \Phi\wedge F   ) + \Theta(y) )dvol_Q(y)
\]
by the weak convergence $Q=\lim - \frac{1}{4\pi m} \Tr(\nabla \Phi\wedge F)$.  Combining the two estimates, and using that $\Theta(y)$ is locally constant, we see item 1.

\textbf{Item 2}. By a covering argument, we may restrict to a small ball $y\in B(y_0, s)\cap  Q_{sm}$, so that $\Theta(y)$ is a local constant. By the same idea as Lemma \ref{lem:Chernnumberoscillation}, the $L^1$-oscillation
\[
\int_{y,y',t,t'} |\int_{S^2_{y,t} } \tilde{F}- \int_{S^2_{y',t'} } \tilde{F}| \leq C \int_{ \text{supp}( \cup	S^2_{y,t} )  }m^{-1} |\nabla \Phi| |F|\to 0,
\]
using that the union of the 2-spheres is disjoint from the support of $Q$. On the other hand, using that $Q=\lim_{i\to +\infty} d\tilde{F}$, the average Chern number satisfies
\[
\int_{y, t} (\int_{S^2_{y,t} } \tilde{F}- \Theta(y) ) \to 0.
\]
Combining the two facts shows item 2.
\end{proof}

	\begin{rmk}\label{rmk:Cinftyconvergenceenergy}
	As a caveat, the limiting density measures involve the normalisation factor $m^{-1}$, so $\mu_{|F|^2} (M\setminus Q_{sm})=0$ says only that on any fixed compact subset $K$ disjoint from the support of $Q$, we have $\int_K |F|^2=o(m)$ as $i\to +\infty$. On the other hand, if we want to deduce $C^\infty_{loc}$ convergence of the sequence $(A,\Phi)$ on some open subset bounded away from $Q$, by applying  the standard $\epsilon$-regularity theorem argument (See Prop. \ref{prop:epsilonregularity}), we need to establish 
	the much stronger uniform estimate that $\int_K |F|^2= O(1)$ instead of $o(m)$.

	\end{rmk}


	\subsection{Extraction of monopole bubbles}

	Our next goal is to show that almost all the limiting curvature measure can be accounted for by $3$-dimensional monopoles on the normal fibres of $\pi: \mathcal{U}\to Q_{sm}$, which is a geometric  improvement on item 2 in Prop. \ref{prop:measurelimit2}. The strategy involves two ingredients: 
	\begin{itemize}
		\item Suppose the curvature is sufficiently concentrated, one needs a criterion to extract monopoles as bubbling limits.

		\item  The characteristic length scale of the monopoles is $\sim m^{-1}$, while Prop. \ref{prop:measurelimit2} only shows curvature concentration within an $o(1)$-neighbourhood of $Q$ as $i\to +\infty$. The problem is that the curvature density may a priori be dispersed within the $o(1)$ neighbourhood, to prevent monopole bubble extraction. One needs to rule this out by showing that most of the curvature density is in fact concentrated in a quantitatively sufficiently small set.

	\end{itemize}

	We begin with a monopole extraction criterion, which makes minimal use of the general compactness theory for Yang-Mills-Higgs energy in Section \ref{sect:YMcompactness}. The setting of Prop. \ref{prop:monopoleextraction} is slightly more general;
	we will apply it to a suitable rescaling of the $(A,\Phi)$ in our main setting.

	\begin{prop}\label{prop:monopoleextraction}
	Given $0<\kappa<1$, $\Lambda\geq 1$ and $R_0\geq 1$, there is some $0<\epsilon\ll 1$ and $R\gg 1$ depending on $\kappa$ and the upper bound on $ \Lambda, R_0$, such that the following holds. The constants $C$ are independent of all parameters.

	Let $\R^n= P\times P^\perp$, where $P$ is an $(n-3)$-dimensional vector subspace, and $P^\perp$ be its orthogonal subspace, and let $g$ be an almost flat Riemannian metric on the large ball $B(R)\subset \R^n$, such that $\norm{g- g_{Eucl}}_{C^{l,\alpha}}<\epsilon$.
	Let $(A,\Phi)$ be a Yang-Mills-Higgs solution on the large ball $B(R)$, such that 
	\begin{enumerate}
		\item  The Higgs field $|\Phi|\leq 1$, and $\dashint_{B(s)} (1-|\Phi|^2 )  \leq  C\Lambda^{-1} s^{\kappa-1}$ for $R\geq s\geq R_0$.
		
		\item  Let $F=F^{tt}+ F^{tn}+ F^{nn}$ and $\nabla \Phi= \nabla^t \Phi+ \nabla^n \Phi$ be the decomposition of $F, \nabla \Phi$ according to form factors parallel to $P$ or $P^\perp$. We suppose
		\[
		\int_{B(R) } |F^{tt}|^2+ |F^{tn}|^2+ |\nabla^t \Phi|^2+ | F^{nn}- *_3 \nabla^n \Phi|^2 <\epsilon.
		\]
	\item The $L^2$-integral  $\int_{ \{ 0 \}\times P^\perp\cap B(R)} \frac{ |F|^2}{2\pi } \leq k+ \frac{1}{10} $, where $k$ is a fixed positive integer, while
$
		\int_{B(R_0)} |F|^2
			\geq C^{-1}\Lambda^{-1} .
$

	\end{enumerate}
	
	Then up to a gauge transformation, we can find a nontrivial monopole $(A', \Phi')$ on $\R^3$, with mass $\limsup_{|x|\to +\infty} |\Phi'|=1$  and charge at most $k$, identified with its pullback to $\R^n$,
	such that
	\begin{enumerate}
		\item  $\sum_{i=0}^l  R_0^i \norm{ \nabla_{A'}^i (A-A', \Phi-\Phi') }_{C^0(B(100R_0))}  \leq C\Lambda^{-1} R_0^{-n}$.

		\item  On the spheres $\{ |x|=s   \}\subset P^\perp\simeq \R^3$, the estimate
		\[
	\sup_{|x|=s }	|F| \leq C s^{-2}, \quad \sup_{ |x|=s  }( 1-|\Phi|^2) \leq C\Lambda^{-1} ,\quad  \sup_{ |x|=s  } |[\nabla \Phi, \Phi ] |\leq Ce^{- cR_0},
		\]
		holds for a subset of values $s\in [ R_0, 100 R_0  ]$ with Lebesgue measure at least $98R_0$.

			\end{enumerate}

	\end{prop}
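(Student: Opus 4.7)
The plan is to argue by contradiction and compactness. Assume the conclusion fails; then there is a sequence of counterexamples $(A_i,\Phi_i,g_i)$ on $B(R_i)$ satisfying the hypotheses with $\epsilon_i\to 0$ and $R_i\to\infty$, while $\kappa,\Lambda,R_0$ and the integer $k$ stay fixed, but such that no monopole as in the conclusion can be produced. The goal is to extract a subsequential limit that \emph{is} such a monopole, contradicting the assumption.

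First I would put each $(A_i,\Phi_i)$ into a good gauge on a fixed neighbourhood of $B(100R_0)$. The $\epsilon$-regularity for Yang--Mills--Higgs solutions (Prop. \ref{prop:epsilonregularity}) gives pointwise bounds on $|F_i|,|\nabla\Phi_i|$ on balls of unit scale where the local $L^2$-energy of $F_i$ is sufficiently small; by hypothesis (3) the energy on the central slice is bounded by $(k+\tfrac{1}{10})\cdot 2\pi$, and by the smallness hypothesis (2) this bound essentially transfers to nearby slices, so $F_i$ can concentrate at only finitely many points of unit scale. Outside the concentration locus, Uhlenbeck's theorem provides a Coulomb gauge, and elliptic estimates for the Yang--Mills--Higgs system in Coulomb gauge, combined with the almost-flatness $\norm{g_i-g_{\mathrm{Eucl}}}_{C^{l,\alpha}}<\epsilon_i$, yield $C^{l,\alpha}_{loc}$-bounds. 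Passing to a subsequence yields a $C^{l,\alpha}_{loc}$-limit $(A_\infty,\Phi_\infty)$ on a suitable subset of $\R^n$.

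Next I would identify $(A_\infty,\Phi_\infty)$ as the pullback of a monopole on $\R^3$. Passing hypothesis (2) to the limit gives pointwise $F_\infty^{tt}=F_\infty^{tn}=0$, $\nabla^t\Phi_\infty=0$ and $F_\infty^{nn}=*_3\nabla^n\Phi_\infty$; the first three imply translation invariance along $P$, so $(A_\infty,\Phi_\infty)$ is, up to gauge, the pullback of a pair $(A',\Phi')$ on $\R^3\simeq\{0\}\times P^\perp$ satisfying the Bogomolny equation $F'=*_3\nabla\Phi'$. Hypothesis (1) passed to the limit forces $|\Phi'|\to 1$ at infinity, so $(A',\Phi')$ is a mass-$1$ monopole. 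The local energy lower bound in hypothesis (3) survives in the limit (no scale separation is needed because the monopole's natural scale is $\sim 1$), so $(A',\Phi')$ is nontrivial and hence has integer charge $\geq 1$. The central-slice upper bound, passed to the limit, gives $\int_{\R^3}|F'|^2\leq (k+\tfrac{1}{10})\cdot 2\pi$, and since the Bolgomolny identity gives $\int_{\R^3}|F'|^2=2\pi\cdot(\text{charge})$ for a mass-$1$ monopole in our conventions, integrality forces the charge to be at most $k$.

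Finally, the $C^{l,\alpha}$-convergence $(A_i,\Phi_i)\to(A_\infty,\Phi_\infty)$ on a neighbourhood of $B(100R_0)$ contradicts the assumed failure of conclusion (1); the quantitative rate $C\Lambda^{-1}R_0^{-n}$ is recovered by running the elliptic estimates in the compactness argument quantitatively, tracking how $\epsilon_i$ and the almost-flatness defect propagate. For conclusion (2), I would invoke the standard asymptotics of bounded-charge mass-$1$ monopoles on $\R^3$ (power decay $|F'|=O(|x|^{-2})$, $1-|\Phi'|^2=O(|x|^{-1})$, and exponential decay $|[\nabla\Phi',\Phi']|=O(e^{-c|x|})$ away from the core, \cf the discussion in Section \ref{sect:Oliveira}). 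Transferring these to $(A,\Phi)$ via the $C^0$-approximation, and discarding a small set of radii where the approximation is insufficient by a Fubini--Chebyshev argument, yields the claimed bounds on a set of $s\in[R_0,100R_0]$ of Lebesgue measure at least $98R_0$. The main obstacle is the gauge-fixing and compactness step: one must control bubbling across a ball of size $\gg 1$, and the strict bound $k+\tfrac{1}{10}$ — an integer plus a defect less than $1$ — is precisely what prevents additional bubbles from emerging or being lost in the limit, because charge is quantised; this rigidity, together with near-translation-invariance in $P$, is what makes the extraction procedure work.
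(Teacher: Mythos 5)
Your overall strategy (contradiction plus compactness, dimensional reduction to a mass-one monopole on $P^\perp\simeq\R^3$, and charge counting via the slice energy bound) matches the paper's, but there is a genuine gap in the compactness step. You concede that $F_i$ ``can concentrate at only finitely many points of unit scale'' and propose to gauge-fix and pass to the limit only outside this concentration locus. That is not enough: conclusion (1) demands a $C^0$ bound on \emph{all} of $B(100R_0)$, so any concentration point inside that ball would defeat the conclusion, and your appeal to charge quantisation and the $k+\frac{1}{10}$ margin does not exclude it --- bubbling for Yang--Mills--Higgs in dimension $n\ge 4$ occurs at small scales with an energy quantum coming from $\epsilon$-regularity, which has nothing to do with the $2\pi\Z$ quantisation of the asymptotic monopole charge of the limit. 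Moreover the hypotheses give no a priori bound on the full $n$-dimensional energy $\int_{B(100R_0)}|F|^2$ (only the slice integral and the transverse components are controlled), so even ``finitely many concentration points'' is unjustified as stated.

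The paper closes exactly this gap with an observation you never use: since $|\Phi|\le 1$ and $\Lap|\Phi|^2=-2|\nabla\Phi|^2$, the Green representation on any unit ball gives $\int_{B(p,1)}|x-p|^{2-n}|\nabla\Phi|^2\le C$, hence $s^{4-n}\int_{B(p,s)}|\nabla\Phi|^2\le Cs^2$ for all $s\le 1$; combining this with hypothesis (2), which bounds $|F|^2$ by $2|\nabla\Phi|^2$ plus an $\epsilon$-small remainder, one gets $s^{4-n}\int_{B(p,s)}(|F|^2+|\nabla\Phi|^2)\le Cs^2+\epsilon s^{4-n}$, which for a fixed small $s_0$ and then $\epsilon$ small is below the $\epsilon$-regularity threshold at \emph{every} point. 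Hence there is no concentration at all, and $C^\infty_{loc}$ convergence up to gauge holds on the whole ball; the quantitative conclusion (1) then follows from the contradiction setup since $\Lambda$ and $R_0$ are bounded along the sequence. With that in place, the rest of your argument (dimensional reduction, mass one from hypothesis (1), nontriviality and charge $\le k$ from hypothesis (3)) goes through as in the paper. For conclusion (2) your Fubini--Chebyshev idea is right in spirit, but it should be run against the finitely many monopole cluster centres of $(A',\Phi')$ furnished by the strong--weak field decomposition, not against a single ``core'': the generic radius $s\in[R_0,100R_0]$ must keep the sphere $|x|=s$ at distance $\gtrsim s$ from all cluster centres before the pointwise decay and the exponential bound on $|[\nabla\Phi',\Phi']|$ apply.
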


	\begin{proof}
	Suppose the contrary, then we can find a sequence of counterexamples where $R\to +\infty$, and $\epsilon\to 0$ sufficiently fast depending on $\Lambda, R, R_0, \kappa$.

	We first claim that the sequence $(A,\Phi)$ converges in $C^\infty_{loc}$-topology up to gauge, on any compact ball in $\R^n$. Since $|\Phi|\leq 1$ by hypothesis item 1, and $\Lap |\Phi|^2= -2 |\nabla \Phi|^2$ by the Yang-Mills-Higgs equation, we can use Green representation formula to deduce that on any $B(p,2)\subset B(R)$,
	\[
	\int_{B(p,1) } \frac{ |\nabla \Phi|^2} {  |x-p|^{n-2}   } dvol(x)\leq  C\int_{B(p,2)} |\Phi|^2 \leq C,
	\]
	so for $s\leq 1$ we have
$
	s^{4-n} \int_{B(p,s)} |\nabla \Phi|^2 \leq Cs^2,
$
hence by hypothesis item 2, the normalised Yang-Mills-Higgs energy satisfies
	\[
s^{4-n}	\int_{B(p,s)} |F|^2 + |\nabla \Phi|^2 \leq  Cs^2+ \epsilon s^{4-n}.
	\]
	We can fix $s=s_0$ small enough, and $\epsilon$ small enough depending on $s_0$, so that the RHS bound is sufficiently small to apply the $\epsilon$-regularity theorem of the Yang-Mills-Higgs equation \cite{UhlenbeckSmith}, which implies $C^\infty_{loc}$ subsequential convergence up to gauge. The limit $(A',\Phi')$ is a smooth YMH solution on $\R^n$.

	By hypothesis item 2, all the tangential curvature components parallel to $P$ vanish in the limit, so $(A', P')$ dimensionally reduces to a YMH solution on $\R^3$, and $F'=*_3\nabla_{A'} \Phi'$ as a consequence of $\int | F^{nn}- *_3 \nabla^n \Phi|^2\to 0$. By hypothesis item 1, the Higgs field $|\Phi'|\leq 1$, and
$
\dashint_{B(s)} (1-|\Phi'|^2 )  \leq  C\Lambda^{-1} s^{\kappa-1}$ for $ s\geq R_0$. In particular, 
\[
\limsup_{|x|\to +\infty} |\Phi'|=1.
\]
By hypothesis item 3, we deduce
	\[
	\int_{ \{ 0 \}\times P^\perp\cap B(R)} \frac{ |F'|^2}{2\pi } \leq k+ \frac{1}{10} ,\quad 	\int_{B(R_0)} |F'|^2
	\geq C^{-1}\Lambda^{-1} .
	\]
	so $(A',\Phi')$ is a \emph{nontrivial finite energy monopole} on $P^\perp\simeq \R^3$, with mass one and charge $k'\leq k$. By the $C^\infty_{loc}$ convergence, we obtain conclusion \textbf{item 1}.

The charge $k'$ monopole $(A', \Phi')$ on $P^\perp\simeq \R^3$ satisfies the following strong-weak field decomposition (See \cite[Section 5.1]{Singer} for an exposition):
\begin{itemize}
	\item  (Strong field) There are $N\leq  N(k')$ distinct `monopole cluster centres' $z_i\in \R^3$, such that $\int_{B(z_i, 1)\cap P^\perp} |F'|^2 \geq \epsilon_0'$ for some fixed $\epsilon_0'>0$. Globally on $\R^3$, the $L^\infty$ norm of the curvature $F'$ is bounded. 
	
	\item (Weak field) Away from the cluster centres, there are numbers $\alpha_1,\ldots \alpha_N\in \R$, such that 
	\[
	\begin{cases}
	| |\Phi' |(z)- 1 + \sum_1^N \alpha_i |z- z_i|^{-1} | \leq C(k') \sum_i \frac{1}{  |z-z_i|^2 },
	\\
	| (\nabla_{A'} \Phi'(z), \Phi'(z)) + \sum_1^N \alpha_i d( |z- z_i|^{-1}  ) |  \leq C(k') \sum_i \frac{1}{  |z-z_i|^3 },
	\\
	| [\Phi'(z), \nabla_{A'} \Phi'(z)] |  \leq C(k') \sum_1^N e^{ - |z-z_i|/2   },
	\\
	|F'|= |*_3 F'|= |\nabla_{A'}\Phi'|.
	\end{cases}
	\]
\end{itemize}
	In particular, taking a generic value of $s\in [ R_0, 100 R_0]$, we can arrange the sphere $|x|=s$ to be bounded away from the monopole cluster centres $z_i$ with distance $\gtrsim s$, so within an $O(s)$-tubular neighbourhood of the sphere disjoint from $z_i$, we have $\sup  |F| \leq Cs^{-2}$. Moreover, $\dashint_{B(2s)} (1-|\Phi'|^2) \leq C\Lambda^{-1}$, so the weak field case above implies $\sup_{|x|=s}(1- |\Phi'|^2) \leq C\Lambda^{-1} $. Using the $C^\infty_{loc}$ convergence, this shows conclusion \textbf{Item 2}.
	\end{proof}

	\subsection{Energy identity}

We identify the tubular neighbourhood $\mathcal{U}$ with a subset of the normal bundle, and use the fibrewise linear structure to define the vertical dilation maps on the normal fibres,
	\[
	\lambda_{p, m^{-1} }^{nor}: N_y Q\simeq \R^3\to \pi^{-1}(y)\cap \mathcal{U}, \quad x\mapsto p+ m^{-1} x.
	\]
		We now show that on most normal fibres the restriction of $(A,\Phi)$ is modelled on 3-dimensional monopoles, and these monopole bubbles account for almost all the limiting $L^2$-curvature density measure inside any fixed large open set.

	\begin{thm}(Quantitative energy identity)\label{thm:energyidentity}
	For any small $\delta>0$, the following holds for $m$ sufficiently large depending on $\delta$.

	Consider the region $\{    r(x)<r   \}\subset M$ for any fixed large number $r$.
	There is a subset $E\subset Q_{sm}$ with $\mathcal{H}^{n-3}(E)<\delta$, such that for every $y\in Q_{sm}\setminus E$, we can find finitely many disjoint balls $B(x_j, m^{-1} R_j)$ with $R_j\geq 1$ inside the normal fibre $\pi^{-1}(y)\cap \mathcal{U}$,  and monopoles $(A_j', \Phi_j')$ on $\R^3$ with mass one and charge $k_j\leq \Theta(y)$, such that

	\begin{enumerate}
		\item  Under the dilation maps, the rescaled  pullback of $(A,\Phi)$ is $C^\infty_{loc}$ close to $(A'_j, \Phi'_j)$:
		\[
		\norm{ (\lambda_{x_j, m^{-1}}^{nor*}A, m^{-1} \lambda_{x_j, m^{-1}}^{nor*} \Phi)-  (A'_j,\Phi_j')  }_{C^l (B(R_j)\cap N_yQ ) }  \leq C\delta,
		\]


		\item 
	For any $y\in Q_{sm}\setminus E$, the fibrewise $L^2$ curvature integral satisfies
		\[
		\begin{cases}
		|\sum_j	\int_{ \pi^{-1}(y)\cap B(x_j, m^{-1}R_j) }\frac{ |F|^2 }{2\pi m} - \Theta(y) |  \leq  C\delta,
			\\
				\int_{ \pi^{-1}(y)\setminus  \cup B(x_j, m^{-1}R_j) }\frac{ |F|^2 }{2\pi m} dvol_Q \leq  C\delta,
		\end{cases}
		\]

\item The curvature integral on the rest of the manifold
\[
\int_{ \{  r(x)<r \}\setminus (  \mathcal{U}\cap \pi^{-1}(Q_{sm}\setminus E)     ) } \frac{ |F|^2}{2\pi m} dvol(x) \leq  C\delta.
\]
	\end{enumerate}

	\end{thm}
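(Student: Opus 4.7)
\emph{Strategy.} The plan combines three ingredients: the measure-level concentration of Prop.~\ref{prop:measurelimit2}, the fibrewise $L^1$-convergence of the energy and average Chern number in Cor.~\ref{cor:fibrewisestrongL1}, and the monopole extraction criterion of Prop.~\ref{prop:monopoleextraction}, linked by a Fubini/Markov selection argument over the normal fibres of $\pi:\mathcal{U}\to Q_{sm}$.

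\emph{Good fibre selection.} Item 3 of the theorem is nearly immediate from Prop.~\ref{prop:measurelimit2} item 1: the weak limit $\mu_{|F|^2}$ is supported on $Q_{sm}$ with total mass $\leq\text{Mass}(Q\lfloor_{\{r(x)<r\}})$, so for any preassigned tubular neighbourhood of $Q_{sm}$ inside $\mathcal{U}$ the complementary curvature is $o(m)$ as $i\to\infty$. For items 1 and 2, I introduce the fibrewise defects
\begin{align*}
D_y &:= \int_{\pi^{-1}(y)\cap\mathcal{U}} m^{-1}\bigl(|F^{tt}|^2+|F^{tn}|^2+|\nabla^t\Phi|^2+|F^{nn}-*_3\nabla^n\Phi|^2\bigr)\,dvol_{N_y Q},\\
\Delta_y &:= \bigl|\tfrac{1}{2\pi m}E_y-\Theta(y)\bigr|,\qquad
\Delta_y' := \int_{t_0(y)}^{t_0(y)+1}\bigl|\textstyle\int_{S^2_{y,t}}\tilde F-\Theta(y)\bigr|\,dt.
\end{align*}
By Prop.~\ref{prop:measurelimit2} item 2 and Cor.~\ref{cor:fibrewisestrongL1}, each of $\int_{Q_{sm}} D_y\,dvol_Q$, $\int\Delta_y\,dvol_Q$ and $\int\Delta_y'\,dvol_Q$ tends to zero as $i\to\infty$. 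Markov's inequality then yields an exceptional set $E\subset Q_{sm}$ with $\mathcal{H}^{n-3}(E)<\delta$ outside which $D_y+\Delta_y+\Delta_y'\ll\delta^2$. I enlarge $E$ to exclude the singular set of $Q$ (Hausdorff dimension $\leq n-5$, hence negligible) and the Fubini-bad set where the Higgs mean-oscillation bound of Lemma~\ref{lem:Vitali} item 4 fails fibrewise.

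\emph{Extraction on each good fibre.} Fix $y\in Q_{sm}\setminus E$. On the 3-dimensional normal fibre $\pi^{-1}(y)\cap\mathcal{U}$ I run the Vitali construction of Lemma~\ref{lem:Vitali} with a fixed small $\kappa\in(0,1)$ and a large $\Lambda$, producing disjoint balls $B(x_j,s_j)$ with $s_j\geq m^{-1}$, maximal, and $\int_{B(x_j,s_j)}|F|^2\gtrsim\Lambda^{-1}s_j^{n-3+\kappa}m^{1+\kappa}$. Set $R_j:=m s_j$. Under the dilation $\lambda^{nor}_{x_j,m^{-1}}$, the rescaled pair $(A',\Phi'):=(\lambda^{nor*}A,\,m^{-1}\lambda^{nor*}\Phi)$ lives on $B(R_j)\subset\R^n$ with an almost-Euclidean metric (smooth normal coordinates plus $m\to\infty$), satisfies $|\Phi'|\leq 1$, and verifies the hypotheses of Prop.~\ref{prop:monopoleextraction}: the tangential smallness follows from $D_y\ll\delta^2$; the Higgs mean-oscillation bound from Lemma~\ref{lem:Vitali} item 4; and the charge bound on the slice $\{0\}\times P^\perp\cap B(R)$ from $\Delta_y\ll\delta$ together with $\Delta_y'\ll\delta$ for the topological identification on linking $2$-spheres. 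Prop.~\ref{prop:monopoleextraction} then yields a nontrivial $\R^3$-monopole $(A_j',\Phi_j')$ of mass $1$ and charge $k_j\leq\Theta(y)$, together with the $C^\infty_{loc}$-approximation in item 1. Item 2 follows by combining $\sum_j s_j^{n-3}\lesssim\Lambda m^{-1}E_y$ (Vitali item 3 and $s_j\geq m^{-1}$), the fibrewise energy identity $E_y\approx 2\pi m\Theta(y)$, and Vitali maximality to bound the residual fibrewise curvature outside $\bigcup_j B(x_j,s_j)$.

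\emph{Main obstacle.} The principal difficulty is localising Lemma~\ref{lem:Vitali}---an $n$-dimensional statement relying on the ambient monotonicity formula---to a \emph{single 3-dimensional normal fibre}. A naive fibrewise application loses a tangential factor; one must instead bootstrap using $D_y\ll\delta^2$ and item 2 of Prop.~\ref{prop:measurelimit2} to argue that fibrewise the curvature already behaves as though it were 3-dimensionally concentrated, so that the characteristic-radius analysis of Lemma~\ref{lem:characteristicradius} admits a fibrewise analogue with the right constants. A secondary subtlety is identifying $\sum_j k_j=\Theta(y)$: integrality of the $k_j$ together with item 2 of the theorem (once proved modulo $C\delta$) forces equality, but this requires Cor.~\ref{cor:fibrewisestrongL1} item 2 to pin down the Chern number on linking 2-spheres. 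Finally, one must track how fast $\delta\to 0$ may be chosen as $m\to\infty$ while preserving $\mathcal{H}^{n-3}(E)<\delta$, propagating the $o(1)$ and $o(m)$ errors from Prop.~\ref{prop:measurelimit2}, Cor.~\ref{cor:fibrewisestrongL1} and Prop.~\ref{prop:monopoleextraction} uniformly across the (possibly growing) family of Vitali balls on each good fibre.
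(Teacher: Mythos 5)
Your overall architecture (measure concentration $+$ fibrewise selection via Markov $+$ monopole extraction via Prop.~\ref{prop:monopoleextraction}) matches the paper's, but there are two genuine gaps. First, the covering. You propose to run the Vitali construction of Lemma~\ref{lem:Vitali} \emph{inside a single 3-dimensional normal fibre}, and you correctly flag this as your "main obstacle": the characteristic radius, the Green integrals and the monotonicity formula are all ambient $n$-dimensional objects, and there is no fibrewise analogue. The "bootstrap using $D_y\ll\delta^2$" you gesture at is not an argument. The paper's resolution is simpler: apply Lemma~\ref{lem:Vitali} in the ambient set $U=\mathcal{U}\cap\pi^{-1}(Q_{sm}\setminus E_1)$ to get ambient balls $B(p_j,s_j)$ with $m^{-1}\leq s_j\leq R_0m^{-1}$ (after discarding the larger balls, whose $\pi$-images have small $\mathcal{H}^{n-3}$-measure by $\sum s_j^{n-3+\kappa}\lesssim\Lambda m^{-\kappa}$), and only afterwards intersect these ambient balls with each normal fibre; the number of bubbles per good fibre is then bounded because each carries a definite fraction of the fibrewise energy $E_y\leq 2\pi m(\Theta(y)+\tfrac1{10})$. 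No fibrewise monotonicity is ever needed.

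Second, and more seriously, your derivation of item 2 is circular. You want to conclude $\sum_j k_j=\Theta(y)$ from "integrality plus item 2 of the theorem (once proved modulo $C\delta$)", but the residual bound in item 2 is exactly what is at stake: a priori the fibrewise energy $E_y\approx 2\pi m\Theta(y)$ could be dispersed outside the bubble balls, in which case $\sum_j k_j<\Theta(y)$ with no contradiction. The paper breaks the circle with a topological degree argument you omit: (i) an effective Bogomolny/Stokes computation on each $B_j$, using the good-radius property from conclusion item 2 of Prop.~\ref{prop:monopoleextraction} (curvature $\lesssim(R_jm^{-1})^{-2}$, $1-|\Phi|^2/m^2\lesssim\Lambda^{-1}$, exponentially small commutator on $\partial B_j$), gives $\bigl|\int_{B_j}\tfrac{|F|^2}{2\pi m}-\int_{\partial B_j}F_{U(1)}\bigr|\leq C\delta$; (ii) the bubble balls cover $\mathcal{C}_\Lambda\cap\pi^{-1}(y)\cap\mathcal{U}$, and since $\Phi^{-1}(0)\subset\mathcal{C}_{\Lambda_0}\subset\mathcal{C}_\Lambda$ (Lemma~\ref{lem:curvatureconcentrationlocus1}), the Chern form $F_{U(1)}$ is smooth and closed on $D^3\setminus\cup_jB_j$, so Stokes gives $\sum_j\int_{\partial B_j}F_{U(1)}=\int_{S^2_{y,t}}F_{U(1)}=\Theta(y)+O(\delta)$ by Cor.~\ref{cor:fibrewisestrongL1} item 2. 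Only then does the residual fibrewise bound follow, and item 3 follows from the global count (bubble energy $\geq(1-C\delta)\mathrm{Mass}(Q)$ versus the upper bound from Lemma~\ref{lem:monotonicity2} and the intermediate energy formula) — a route that also avoids the issue that your exceptional set $E$ depends on $i$, so you cannot directly test the weak limit $\mu_{|F|^2}$ against the $i$-dependent set in your "nearly immediate" derivation of item 3.
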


	\begin{rmk}
	For any fixed $m$, the total $L^2$-curvature integral $\int_M |F|^2=+\infty$, so for item 3 above to hold, we must restrict to a fixed open subset $\{  r(x)<r \}$ independent of $m$. This issue is closely related to the fact that (\ref{eqn:weaklimitmeasure}) only holds in the sense of \emph{weak convergence}; an infinite amount of curvature density measure disappears into the spatial infinity in taking this weak limit.

	\end{rmk}

	\begin{proof}
		\textbf{Step 1: removing some bad sets}. 
	After deleting a  closed subset $E_1$ with $\mathcal{H}^{n-3}$-measure $<\delta/3$, we can achieve the following properties when $i$ is sufficiently large:
	\begin{itemize}
		\item The set $Q_{sm}\setminus E_1$ is bounded away from the singular set in $Q$.
		
		\item  For each $y\in Q_{sm}\setminus E_1$, the fibrewise energy satisfies
		\[
		\int_{\mathcal{U}\cap \pi^{-1}(y)} \frac{ |F|^2}{2\pi m} dvol_{N_y Q} \leq \Theta(y)+ \frac{1}{10},
		\]
		and the mollified curvature satisfies
		\[
		\int_{t_0(y)}^{t_0(y)+1} | \int_{S^2_{y,t}} \tilde{F}- \Theta(y) |dt  \leq C\delta. 
		\]
		This is a consequence of the strong convergence in Cor. \ref{cor:fibrewisestrongL1}.

		\item For each $y\in Q_{sm}\setminus E_1$, the fibrewise energy satisfies
		\[
	 m^{-1}	\int_{\mathcal{U}\cap \pi^{-1}(y)} |F^{tt}|^2+ |F^{tn}|^2+ |F^{nn}-*_3 \nabla^n \Phi|^2  \leq C\delta.
		\]
		This is a consequence of item 1 in Prop. \ref{prop:measurelimit2}.
	\end{itemize}

	We fix any $0<\kappa<1$, and 
	let $\Lambda\gg 1$, $R_0\gg 1$ be some large parameters depending on $\delta$ to be determined, and we will always assume $m$ is sufficiently large depending on all the other parameters. Let $U$ be the subset $\mathcal{U}\cap \pi^{-1}(Q_{sm}\setminus E_1)$, then we can apply the \emph{Vitali covering argument}, to obtain a cover $B(p_j, 5s_j)$ for $\mathcal{C}_\Lambda\cap U$, with the properties listed in Lemma \ref{lem:Vitali}. In particular, by item 3 in Lemma \ref{lem:Vitali},
	\[
	\Lambda^{-1}m^{1+\kappa} \sum_j s_j^{ n-3+\kappa } \leq C\int_{U'} |F|^2 \leq Cm,
	\]
hence
$
	\sum_{s_j\geq R_0 m^{-1}}  s_j^{n-3}  \leq C\Lambda R_0^{-\kappa}.
$
We will assume that $R_0$ and $\Lambda$ satisfy that
$
\Lambda R_0^{-\kappa}\ll \delta, 
$
so under the projection map $\pi: \mathcal{U}\to Q_{sm}$, the image 
\[
E_2:= \text{image}(  \bigcup \{  B(p_j,5s_j) | s_j\geq R_0m^{-1}  \}       )
\]
has $(n-3)$-volume much smaller than $\delta$. After absorbing $E_2$ into the error set $E_1$, we may assume that in this covering, all $s_j\leq R_0 m^{-1}$. Since $s_j\geq m^{-1}$ by Lemma \ref{lem:Vitali}, we have $m^{-1}\leq s_j\leq R_0 m^{-1}$.

		\textbf{Step 2: monopole extraction criterion}. 
	We plan to apply  Prop. \ref{prop:monopoleextraction} to the rescaled $G_2$/Calabi-Yau monopole $(\lambda_{p_j, m^{-1} }^*  A,   m^{-1} \lambda_{p_j, m^{-1}}^* \Phi)$, where we recall that $\lambda_{p,s}(x)= \exp_p(s x)$ is the dilation map. We denote  $k= \Theta(y)$, which is a local constant, and let $P^\perp$ be the 3-plane tangent to the normal fibre at $p_j$, and $P$ be its orthogonal complement.  Let $\epsilon, R$ be the constants from Prop. \ref{prop:monopoleextraction}. We will consider the balls $B(p_j , R m^{-1})\subset M$  where 
	\begin{equation}\label{eqn:almostmonopoleintegral}
	\int_{B(p_j, R m^{-1})} |F^{tt}|^2+ |F^{tn}| ^2+ |\nabla^t \Phi|^2 +  |F^{nn}- *_3 \nabla^n \Phi|^2 < \epsilon m^{n-4}.
	\end{equation}

We collect together all the balls $B(p_j, Rm^{-1})$ such that (\ref{eqn:almostmonopoleintegral}) \emph{fails}. By the Vitali cover Lemma \ref{lem:Vitali}, the balls $B(p_j, s_j)$ are disjoint, and we arranged $R_0m^{-1}\geq s_j \geq m^{-1}$. Then
\[
\begin{split}
&\sum_{ j: fail}  s_j^{n-3} \leq R_0^{n-3}\sum_{j : fail} m^{3-n} 
\\
\leq & R_0^{n-3}\epsilon^{-1} m^{-1} \sum_{j:fail}\int_{B(p_j, Rm^{-1}) }   |F^{tt}|^2+ |F^{tn}| ^2 + |\nabla^t \Phi|^2+ |F^{nn}- *_3 \nabla^n \Phi|^2
\\
\leq & C R_0^{n-3}\epsilon^{-1} m^{-1} R^n  \int_{ B_{Rm^{-1}}(\mathcal{U} ) }|F^{tt}|^2+ |F^{tn}| ^2 + |\nabla^t \Phi|^2+  |F^{nn}- *_3 \nabla^n \Phi|^2.
\end{split}
\]
Here the third line uses that all $B(p_j, Rm^{-1})$ are contained inside $B_{Rm^{-1}}(\mathcal{U} )$, and each point in $B(p_j, Rm^{-1})$ falls into at most $CR^n$ balls $B(p_j, Rm^{-1})$ by the disjointenss of $B(p_j, m^{-1})$. On the other hand, Prop. \ref{prop:measurelimit2} implies that 
\[
\lim_{i\to +\infty} m^{-1} \int_{ B_{Rm^{-1}}(\mathcal{U} ) }|F^{tt}|^2+ |F^{tn}| ^2 +  |\nabla^t \Phi|^2+ |F^{nn}- *_3 \nabla^n \Phi|^2=0,
\]
so for fixed $\epsilon, R, R_0$, by taking $i$ sufficiently large, we can assume
$
\sum_{j : fail} s_j^{n-3} \ll \delta.
$
We let $E_3$ be the image under the $\pi$-projection,
\[
E_3:= \text{image}(  \bigcup \{  B(p_j,5s_j) | (\ref{eqn:almostmonopoleintegral}) \text{ fails} \}       ).
\]
After absorbing $E_3$ into the error set $E_1, E_2$, we may assume that (\ref{eqn:almostmonopoleintegral}) holds for the cover. Let $E=E_1\cup E_2\cup E_3$.

We can now verify the hypotheses of Prop. \ref{prop:monopoleextraction} for $(\lambda_{p_j, m^{-1} }^*  A,   m^{-1} \lambda_{p_j, m^{-1}}^* \Phi)$. The induced metric on the rescaled ball $B(R)$ is almost flat when $m$ is large enough. Hypothesis item 1 in Prop. \ref{prop:monopoleextraction} follows from $|\Phi|\leq m$, and the Higgs field estimate item 4 in Lemma \ref{lem:Vitali}. Hypothesis item 2 follows from (\ref{eqn:almostmonopoleintegral}). Hypothesis item 3 follows from Step 1 above, and the fact that
\[
\int_{B(p_j, R_0m^{-1})}  |F|^2\geq  \int_{B(p_j, s_j)} |F|^2 \geq C^{-1}\Lambda^{-1} s_j^{n-3+\kappa} m^{1+\kappa}\geq C^{-1}\Lambda^{-1} m^{4-n},
\]
which uses	item 2 in Lemma \ref{lem:Vitali}, and $m^{-1}\leq s_j\leq R_0 m^{-1}$ which we arranged in Step 1.

By the conclusion item 1 of Prop. \ref{prop:monopoleextraction}, we can find monopoles $(A_j',\Phi'_j)$ with mass one and charge $\leq k= \Theta(y)$ on $B(100 R_0)\subset \R^n$, such that 
	\begin{equation}
	\sum_{i=0}^l  R_0^i \norm{ \nabla_{A'}^i (\lambda_{p_j, m^{-1}}^* A-A', m^{-1}\lambda_{p_j, m^{-1}}^* \Phi-\Phi'_j) }_{C^0(B(100R_0))}  \leq C\Lambda^{-1} R_0^{-n}.
	\end{equation}
We will require $\Lambda\geq \delta^{-1}$. 
In particular, upon restriction to the normal fibres, this implies the estimate in our claim \textbf{item 1}.

The upshot is that near the curvature concentration locus, after deleting a sufficiently small set, the fibrewise geometry up to scaling is modelled on 3-dimensional monopoles in the smooth norm. For instance, the $L^2$-curvature of $F$ on these balls can be computed up to $O(\Lambda^{-1})$ error by the rescaled monopole models.

\textbf{Step 3: effective topological energy formula}. It remains to prove that the regions we have covered contain most of the $L^2$-curvature. As a caveat, the total volume in $M$ covered by the balls $B(p_i, 100 R_0 m^{-1})$ is roughly of order $O(m^{-1})$, which is much smaller than the volume of $\mathcal{U}$, so the main problem is that the $L^2$-curvature may a priori  be dispersed in $\mathcal{U}$.

Let $y\in Q_{sm}\setminus E$. The normal fibre $\mathcal{U}\cap \pi^{-1}(y)$ contains a bounded number of bubble regions $B(p_j, 10R_0m^{-1})\cap \mathcal{U}\cap \pi^{-1}(y)  $ with $\text{dist}(p_j, \mathcal{U}\cap \pi^{-1}(y))\leq 5s_j$, because the total fibrewise $L^2$-curvature satisfies
		\[
	\int_{\mathcal{U}\cap \pi^{-1}(y)} \frac{ |F|^2}{2\pi m} dvol_{N_y Q} \leq \Theta(y)+ \frac{1}{10},
	\]
	and each monopole bubble takes a definite portion of the energy. Up to merging a few bubbles which are separated by distance $\lesssim R_0m^{-1}$, 
	we 
	may assume that the balls $B_j:= B(p_j, R_j m^{-1})\cap \mathcal{U}\cap \pi^{-1}(y)$  inside the normal fibre are mutually disjoint, where $R_j>10R_0$ is a parameter comparable to $R_0$ satisfying a `good radius property' below. 
	(At this stage of the argument, these bubble regions on the normal fibre may a priori be empty.)

	We now count the energy within these bubble regions, which is based on the classical Bolgomolny trick for monopoles. By Step 1,
	\[
	 m^{-1}	\int_{\mathcal{U}\cap \pi^{-1}(y)} |F^{tt}|^2+ |F^{tn}|^2+ |F^{nn}-*_3 \nabla^n \Phi|^2  \leq C\delta,
	\]
hence we can compute the energy up to $O(\delta)$ error by the Stokes formula:
	\[
	\begin{split}
		\int_{  B_j } \frac{ |F|^2}{ 2\pi m }=& -  \int_{  B_j } \frac{ \Tr (\nabla \Phi\wedge F)}{ 4\pi m } +O(\delta)
		\\
		= &  -  \int_{  \partial B_j } \frac{ \Tr ( \Phi  F)}{ 4\pi m } +O(\delta).
	\end{split}
	\]
	However, by applying the rescaled version of conclusion item 2 in Prop. \ref{prop:monopoleextraction}, we can pick the $10 R_0< R_j< 100 R_0$ such that the `good radius property' holds: 
	\[
\sup_{\partial B_j } |F|\leq C(R_j m^{-1})^{-2}, \quad \sup_{\partial B_j } |1- \frac{ |\Phi|^2}{m^2} |\leq C\Lambda^{-1},
\quad 
\sup_{  \partial B_j} |[\nabla \Phi, \Phi ] |\leq Cm^3 e^{- c R_0}.
\]
We recall that the Chern form of the $U(1)$-connection is
	\[
	F_{U(1)}=   \frac{-1}{4\pi  } \Tr \left(    \frac{\Phi}{|\Phi|}	(F - \frac{1}{8} [ \nabla (  \frac{\Phi}{|\Phi|}  )\wedge \nabla  ( \frac{\Phi}{|\Phi|})   ])   \right),
	\]
	whence on $\partial B_j$ we have
	\[
	\begin{split}
	& \int_{\partial B_j  }	|F_{U(1)} + \frac{ \Tr ( \Phi  F)}{ 4\pi m } |
	\\
	\leq & C\Lambda^{-1} + C\int_{\partial B_j  } | \nabla (  \frac{\Phi}{|\Phi|}  ) |^2
	\\
	\leq  & C\Lambda^{-1} + C\int_{\partial B_j } \frac{ |[\nabla \Phi,  \Phi] |^2}{m^4} \leq C\Lambda^{-1} + C e^{-cR_0}.
	\end{split}
	\]
	As long as $\Lambda\geq \delta^{-1}$, and $e^{-cR_0}\leq \delta$, the RHS is bounded by $C\delta$. Combining the above, we arrive at
	\begin{equation}
		\int_{  B_j } \frac{ |F|^2}{ 2\pi m } - \int_{\partial B_j   }  F_{U(1)} |\leq C\delta.
	\end{equation}
	This can be viewed as an \emph{effective analogue of the topological energy formula for monopoles} on $\R^3$, which relates the total $L^2$-energy of the monopole to the degree of the asymptotic $U(1)$-bundle.

	\textbf{Step 4: topological degree}. 
	We now require that $\Lambda\geq C\Lambda_0$ be large enough so that $\mathcal{C}_{\Lambda}\supset B_{m^{-1}  } (\mathcal{C}_{\Lambda_0 }  ) $ by item 4 in Lemma \ref{lem:characteristicradius}. 	By the Vitali cover construction in Lemma \ref{lem:Vitali}, the union of $B(p_j, 5s_j)$ cover $\mathcal{U}\cap \mathcal{C}_\Lambda$, and using $R_j m^{-1}>5R_0m^{-1}\geq 5s_i$, we see
the union of $B_j= B(p_j, R_j m^{-1})\cap  \mathcal{U}\cap \pi^{-1}(y)$ covers the intersection $\mathcal{U}\cap \pi^{-1}(y)\cap \mathcal{C}_\Lambda$.

	By step 1, the normal fibre satisfies 
	\[
	\int_{t_0(y)}^{t_0(y)+1} | \int_{S^2_{y,t}} \tilde{F}- \Theta(y) |dt  \leq C\delta. 
	\]
Since there are only a bounded number of balls $B_j$, whose radii are comparable to $R_0 m^{-1}$,  we see that for most $t\in [t_0(y), 2t_0(y)]$, the 2-sphere $S^2_{y,t}$ is disjoint from $B_{m^{-1}}(\mathcal{C}_{\Lambda_0})$, so that $\tilde{F}=F_{U(1)}$, and
$
\int_{S^2_{y,t}} F_{U(1)}= \Theta(y) \in \Z.
$

Inside the normal fibre, this 2-sphere $S^2_{y,t}= \partial D^3$ encloses a bounded number of balls $B_j$ from Step 3 above. On $D^3\setminus \cup B_j$, there is no point of $\mathcal{C}_\Lambda$, and since the zero locus $\Phi^{-1}(0)$ is contained in $\mathcal{C}_{\Lambda_0}\subset \mathcal{C}_\Lambda$ by Lemma \ref{lem:curvatureconcentrationlocus1}, the Chern form $F_{U(1)}$ is a \emph{smooth} closed 2-form on $D^3\setminus \cup B_j$. Thus by Stokes formula,
\[
\Theta(y)= \int_{S^2_{y,t} } F_{U(1)} = \sum_j  \int_{\partial B_j } F_{U(1)}.
\]
Using the effective topological energy formula in Step 3,
\begin{equation}
|\sum_j \int_{ B_j } \frac{ |F|^2}{2\pi m} - \Theta(y) |\leq C\delta. 
\end{equation}
This is the first estimate in \textbf{Item 2}.

Now as $y$ varies over $Q_{sm}\setminus E$, the total integral of $\frac{|F|^2}{2\pi m}$ inside the region covered by the monopole bubbles, is at least
\[
\begin{split}
& \int_{Q_{sm}\setminus E} (1-C\delta) \Theta(y) dvol_Q
 \\
 \geq & (1-C\delta) \text{Mass}(Q)
\\
=& (1-C\delta) 		\begin{cases}
	\langle  \beta \cup \psi|_\Sigma, [\Sigma]\rangle, \quad & \text{$G_2$ case},
	\\
	\langle  \beta \cup \text{Re}(\Omega)|_\Sigma, [\Sigma]\rangle, \quad & \text{Calabi-Yau case}.
\end{cases}
\end{split}
\] 
On the other hand, the total integral inside $\{  r(x)<r  \}$, is bounded above by the monotonicity formula Lemma \ref{lem:monotonicity2},
\[
\int_{r(x)<r}   \frac{ |F|^2}{ 2\pi m  } dvol \leq 	Cr^{n-4}m^{-1} +\begin{cases}
 \frac{E^\psi}{2\pi m},\quad & \text{$G_2$ case},
	\\
 \frac{E^\Omega}{2\pi m},  \quad & \text{Calabi-Yau case}.
\end{cases}
\]
For fixed $r$, by taking $m$ large enough, we can ensure $r^{n-4} m^{-1}\leq \delta$. Using the intermediate energy formula (\ref{eqn:intermediateenergy2})(\ref{eqn:intermediateenergyCY2}), we obtain
\[
\int_{r(x)<r}   \frac{ |F|^2}{ 2\pi m  } dvol \leq  C\delta+ \begin{cases}
	\langle  \beta \cup \psi|_\Sigma, [\Sigma]\rangle, \quad & \text{$G_2$ case},
	\\
	\langle  \beta \cup \text{Re}(\Omega)|_\Sigma, [\Sigma]\rangle, \quad & \text{Calabi-Yau case}.
\end{cases}
\]
Contrasting these two bounds shows that the integral of $\frac{|F|^2}{2\pi m}$ in the complement of the monopole bubble regions inside $\{  r(x)<r  \}$ is bounded by $C\delta$, which is the content of the second estimate of item 2, together with \textbf{item 3}.

	We note that throughout the argument, we have imposed the following conditions on $R_0, \Lambda$:
	\[
	\Lambda R_0^{-\kappa} \ll \delta, \quad \Lambda\geq \delta^{-1}, \quad e^{-cR_0} \leq \delta,\quad \Lambda\geq C\Lambda_0.
		\]
	These choices are consistent. We conclude the energy identity theorem.
	\end{proof}


	\section{Discussions}\label{sect:open}

	\subsection{Bubbling phenomenon}

	This section contains semi-heuristic discussions on the bubbling phenomenon for Yang-Mills-Higgs connections. It is not directly used in the proof of the main results, but rather indicates why improving these statements may be subtle.

	\subsubsection{Convergence theory}\label{sect:YMcompactness}

	The best known general analytic convergence theory \cite{Tian}\cite{Tao}\cite{Naber} in the local setting can be summarized as follows; the notations are independent of the main text. Given a sequence of Yang-Mills connections $A_i$ on $B_2\subset \R^n$ with $n\geq 4$, assuming the uniform bound on $L^2$ energy $\int |F_{A_i}|^2 dvol\leq \Lambda$, then after passing to subsequences, the followings hold true on an interior ball $B_1$:
	
	\begin{itemize}
		\item (Codimension 4 convergence \cite{Tian}) Away from a possibly empty $(n-4)$-rectifiable closed subset $\mathcal{S}$ with $\mathcal{H}^{n-4}(\mathcal{S})\leq C(n,\Lambda)$, modulo gauge transforms $A_i$ converge in $C^\infty_{loc}$ topology to a limiting smooth YM connection $A_\infty$ on $B_1\setminus \mathcal{S}$. If $A_i$ are higher dimensional instantons (eg. Hermitian-Yang-Mills connections, $G_2$-instantons,  or $Spin(7)$-instantons), then so is $A_\infty$, and $\mathcal{S}$ is a calibrated current (resp. complex codimension 2 subvarieties, associative currents, Cayley currents).

		\item  (Removable singularity \cite{Tao}\cite{UhlenbeckSmith}) Assume further that $A_\infty$ is stationary on $B_1$. There is a possibly empty subset $Sing\subset \mathcal{S}$ with $\mathcal{H}^{n-4}(Sing)=0$, such that after gauge transform, $A_\infty$ extends to a smooth connection on $B_1\setminus Sing$. The smallest such $Sing$ is called the \emph{essential singular set} of $A_\infty$.

		\item
		(Transverse bubbles \cite{Tian}\cite{Naber})  A transverse bubble $A_x$ at $x\in \mathcal{S}$ is a smooth Yang-Mills connection over $\R^n$ isomorphic to the pullback of a YM connection on $\R^4\simeq \R^n/T_x\mathcal{S}$, such that for some sequence of centre points $x_i\to x$ and scale parameters $\epsilon_i$ depending on $x_i$, the blow up sequence $\lambda_{ x_i, \epsilon_i }^* A_i$ subsequentially converge to $A_x$ up to gauge transform, where $\lambda_{ x_i, \epsilon_i }$ refers to the scaling diffeomorphism $y\mapsto  x_i+\epsilon y$. Nontrivial transverse bubbles exist at $\mathcal{H}^{n-4}$-a.e $x\in \mathcal{S}$. If $A_i$ are higher dimensional instantons, then $A_x$ are isomorphic to the pullback of ASD connections on $\R^4$.

		\item  (Energy identity \cite{Tian}\cite{Naber}) The curvature density converges weakly as measures:
		\[
		|F_{A_i}|^2 dvol\to |F_{A_\infty}|^2 dvol +\nu, \quad \nu= \Theta \mathcal{H}^{n-4}\lfloor _\mathcal{S} ,
		\]
		where $\nu$ is a measure supported on $\mathcal{S}$ (called the defect measure), and the density $\Theta: \mathcal{S} \to \R_+$ is a bounded function. For $\mathcal{H}^{n-4}$-a.e. $x\in \mathcal{S}$, the function $\Theta(x)$ can be computed as the sum of $L^2$-energies associated with the transverse bubble connections, where one needs to account for multiple bubbling phenomenon.

		\item  (Curvature Hessian estimate \cite{Naber}) $\int_{B_1} |\nabla_{A_i}^2 F_{A_i}| \leq C(n, \Lambda)$.
	\end{itemize}

	These results have partially been extended to the Yang-Mills-Higgs setting. However, 
	in the context of the Donaldson-Segal programme, one major subtlety is that as $m\to +\infty$, the total $L^2$-curvature on the neighbourhood of the coassociative/special Lagrangian cycle in the model case is proportional to $m$, and in particular goes to infinity as $m\to +\infty$.

	\subsubsection{Bubbling along associative submanifolds}\label{sect:bubbling}

	Conversely,  Walpuski \cite{Walpuski} used gluing techniques to construct examples of $G_2$-instantons (namely $G_2$-monopole with zero Higgs field) on some compact $G_2$-manifolds $M$, exhibiting the codimension 4 bubbling phenomenon. In these examples, one has a sequence of $G_2$-instantons $A_i$ which   $C^\infty_{loc}$-converge away from a smooth associative submanifold $N^3\subset M$, to some smooth $G_2$-instanton $A_\infty$ over $M$ with smaller $L^2$-curvature. The curvature density of $A_i$ concentrates along the associative submanifold $N^3\subset M$, namely we have the weak convergence of measures
	\[
	|F_{A_i}|^2 dvol\to |F_{A_\infty}|^2 dvol+ {\Theta} \mathcal{H}^{n-4}\lfloor _{N^3} .
	\]
	The restrictions of the $G_2$-instantons to the normal planes along $N^3$ is modelled on ADHM instantons on $\R^4$. The variation of these ADHM instantons along $N^3$ is encoded by a section of an ADHM instanton moduli bundle over $N^3$, satisfying a nonlinear Dirac type equation known as the Fueter equation.

Morally speaking, when the length scale of curvature concentration is much smaller than the inverse of the mass, then the $G_2$-monopole equation is a perturbation of the $G_2$-instanton equation. This can be understood heuristically by a scaling analysis: suppose $(A,\Phi)$ is a $G_2$-monopole on the Euclidean ball of radius $r\ll m^{-1}$, with 
\[
|F_A|\sim r^{-2}, \quad |\Phi|\sim m,\quad |\nabla \Phi|\sim \frac{m}{r}.
\]
Upon the rescaling $\lambda_r: x\mapsto rx$, we obtain a new $G_2$-monopole $(\tilde{A},\tilde{\Phi})$ on the unit ball in $\R^7$, with $\tilde{A}= \lambda_r^* A$ and $\tilde{\Phi}= r \lambda_r^* \Phi$, so
\[
|F_{\tilde{A} } |\sim 1, \quad |\tilde{\Phi}|\sim m r \ll 1,\quad |\nabla_{\tilde{A}} \tilde{\Phi}  |\sim mr\ll 1, \quad | F_{\tilde{A}}\wedge \psi |= |  \nabla_{\tilde{A}}  \tilde{\Phi} |\ll 1,
\]
so the $G_2$-instanton equation $F_{\tilde{A}}\wedge \psi=0$ holds approximately. This suggests an affirmative answer to the following problem:

\begin{Question}
Can one extend the gluing technique in \cite{Walpuski} to the case of $G_2$-monopoles (resp. Calabi-Yau monopoles), to exhibit the ADHM instanton bubbling phenomenon along associative submanifolds (resp. holomorphic curves), such that the length scale of the ADHM instantons are much smaller than $m^{-1}$?
\end{Question}

\begin{rmk}
G. Oliveira and D. Fadel inform the author that there is some ongoing work with L. Foscolo, S. Esfahani and A. Nagy on this gluing construction.
\end{rmk}


	When the mass tends to infinity, the $L^2$-energy in a fixed ball  can also go to infinity, so in the convergence theory, we lose the uniform bound on the Hausdorff $(n-4)$-measure of the bubbling locus. This suggests that the bubbling locus can be arbitrarily complicated as $m\to +\infty$, at least in local examples on $B_1\subset \R^n$. In the local setting, a natural replacement for the mass $m$ is $\sup_{B_1} |\Phi|$.

	\begin{Question}
		On the standard Euclidean ball $B_1\subset \R^7$, can we find a double sequence of finite energy $G_2$-monopoles $(A_{i,j},\Phi_{i,j})$ with $\sup_{B_1} |\Phi_{i,j}|\to +\infty$, and a sequence of (possibly disconnected) associative subamanifolds $N^3_i\subset B_1$, such that for any fixed $i$, the $G_2$-monopoles $(A_{i,j},\Phi_{i,j})$ have $L^2$-curvature concentration along $N_i^3$ as $j\to +\infty$, namely 
		\[
			|F_{A_{i,j}}|^2 dvol\to |F_{A_{i,\infty}}|^2 dvol+ {\Theta_i} \mathcal{H}^{n-4}\lfloor _{N_i^3} ,\quad j\to +\infty,
		\]
		 and the Hausdorff measure $\mathcal{H}^{n-4}(N_i^3)\to +\infty$ as $i\to +\infty$? Furthermore, is it possible that the support of $N_i$ becomes increasingly dense in $B_1$, so that $B_1$ is the limit set of $N_i^3$ as $i\to +\infty$?

\end{Question}

\begin{rmk}
	In the bounded energy setting, an important ingredient to extract an $(n-4)$-rectifiable bubbling locus is the $\epsilon$-regularity theorem, which ensures that the defect measure $\Theta \mathcal{H}^{n-4}\lfloor \mathcal{S}$ has a uniform positive density lower bound $\Theta\geq \epsilon>0$. In the setting of the above Question, if the total energy in $B_1$ tends to infinity, then in order to extract a limit for the defect measures $\Theta_i \mathcal{H}^{n-4}\lfloor _{N_i^3}$, we should divide by the total $L^2$-curvature integral, which tends to infinity. After this normalisation, we no longer have a uniform lower density bound. A general sequence of real coefficient rectifiable $(n-4)$-currents in $B_1$, without a uniform lower density bound, can converge to a limiting current whose support is the entire ball $B_1$. The upshot is that there is no known measure theoretic mechanism to prevent the support of $N_i^3$ from becoming arbitrarily dense in $B_1$.

	\end{rmk}

	\begin{Question}
	In the global Setting of Section \ref{sect:largemasslimit}, after passing to subsequence and gauge transformations, do we have $C^\infty_{loc}$ convergence of the $G_2$-monopoles $(A_i,\Phi_i)$, away from some coassociative cycle in $M$?
	\end{Question}

If the bubbling locus can become arbitrarily dense in $B_1$, then in the \emph{local} version of the convergence theory, one \emph{cannot expect to have $C^\infty_{loc}$ convergence} for $G_2$-monopoles away from a proper closed subset, as the mass parameter tends to infinity.  The best one can hope for is convergence of the curvature in some $L^p$-topology.

 The status of globally defined $G_2$-monopoles on an asymptotically conical $G_2$-manifold is less clear. All the arguments in this paper fall short of ruling out that the curvature concentration locus may become arbitrarily dense as $m\to +\infty$ on some open subset of $M$, and in particular we failed to prove  $C^\infty_{loc}$ convergence of $(A,\Phi)$ on an open dense subset of $M$. We explained how each argument is inadequate in Remarks \ref{rmk:denseconcentration}, \ref{rmk:Cinftylimitnotsequence}, \ref{rmk:Cinftyconvergenceenergy}.

	\subsection{More open problems}

We will list a number of open problems on the analytic aspect of the $G_2$-monopoles.

\begin{Question}
What can be said if we replace the structure group $SU(2)$ by a general compact Lie group $G$?
\end{Question}

The special feature of $G=SU(2)$ (or $SO(3))$ in this paper is that the rank of its Cartan torus is one, so the centralizer of a nonzero element $\Phi\in \mathfrak{g}$ is just the Cartan subalgebra generated by $\Phi$, which is in particular abelian. Using the exponential decay of the commutator term $|[F,\Phi]|$, this `abelianization effect' explains why we can extract a singular abelian $G_2$-monopole (resp. Calabi-Yau monopole) in the limit. 

For other structure groups, a nonzero element $\Phi\in \mathfrak{g}$ may lie on some wall in a Cartan subalgebra $\mathfrak{h}$, so that its centralizer is a \emph{nonabelian} Lie subalgebra of $\mathfrak{g} $  containing $\mathfrak{h}$ properly. Thus we expect that the possibility to extract an abelian $G_2$-monopole, will strongly depend on the genericity of the Higgs field as an element of the Lie algebra. If this genericity fails, then the best hope is to extract a $G_2$-monopole with smaller structure group.

	\begin{Question}
When can a coassociative cycle (resp. special Lagrangian cycle)  be realised as the  $(n-3)$-current $Q= d\tilde{F}_\infty$ of a sequence of $G_2$-monopoles (resp. Calabi-Yau monopoles) with mass tending to infinity?
	\end{Question}

	As suggested by the adiabatic calculation in Donaldson-Segal \cite[Section 6.3]{DonaldsonSegal}, one expects that the coassociative cycle (resp. special Lagrangian cycle) $Q$ should be equipped with the extra data of a Fueter section, which governs how the monopoles in the normal 3-planes vary along $Q$. One can hope that the existence of the Fueter section may impose some further constraint on $Q$. The following is an expanded exposition on Fueter sections for monopole moduli bundles.

	We first discuss the $G_2$-monopole setting, focusing on the smooth locus of the coassociative cycle $Q$. By the coassociative condition, the normal bundle $NQ$ is naturally isomorphic to $\Lambda^2_+ Q$, via the contraction $v\mapsto \iota_v \phi$. Now $\Lambda^2_+Q$ can be viewed as an associated bundle for the frame bundle $Fr_Q$, via the first factor of the Lie group representation $SO(4)\to SO(3)_+\times SO(3)_-$. The singular abelian monopole solution on $M\setminus Q$ with Dirac singularity along $Q$ provides a $U(1)$-bundle $\mathcal{L}\to Q$. The fibred product $\mathcal{Q}=\mathcal{L}\times_Q Fr_Q$ gives an $SO(4)\times U(1)$-principal bundle over $Q$.

	Let $M_k$ be the moduli space of framed centred $SU(2)$-monopoles of charge $k$ over $\R^3$, which is a hyperk\"ahler manifold admitting an $SO(3)$ action rotating the 2-sphere of complex structures. The framing data refers to the identification for the infinity of the $SU(2)$-monopole with the standard $U(1)$-monopole on $\R^3$ with Dirac singularity, and this framing data admits a $U(1)$-action. We can then take the associated bundle
	\[
\underline{M}_k=	\mathcal{Q}\times_{SO(4)\times U(1)} M_k \to Q,
	\]
	using the representation $SO(4)\to SO(3)_+$ to make $SO(4)$ act on $M_k$. The fibres of this bundle are isomorphic to $M_k$. As an observation in linear algebra, we note that a quaternionic triple of complex structures $I_1, I_2,I_3$ on any tangent space of $Q$, corresponds to an oriented orthonormal basis of $\Lambda^2_+$, and in turn corresponds to a hyperk\"ahler triple of complex structures on the fibres of $\underline{M}_k$.

	The monopole moduli bundle $\underline{M}_k$ inherits a connection  $\tilde{\nabla}$ from 
	the Levi-Civita connection on $Q$, and the $U(1)$-connection on $\mathcal{L}\to Q$. Let $s$ be a section of $\underline{M}_k\to Q$. We say that $s$ is a \emph{Fueter section}, if for any tangent vector $v\in TQ$, we have
	\begin{equation}
		(\tilde{\nabla}_v - I_1 \tilde{\nabla}_{I_1 v} -I_2 \tilde{\nabla}_{I_2 v}- I_3\tilde{\nabla}_{I_3v} ) s=0.
		\end{equation}
	We note the equation is independent of the choice of the quaternionic triple $I_1,I_2,I_3$. Moreover, if we replace $v$ by $I_i v$, the new equation is equivalent to the old one by applying the quaternionic relations. The Fueter section is equivalent to the adiabatic equation of Donaldson-Segal \cite[eqn (46)]{DonaldsonSegal}, after unravelling the hyperk\"ahler structure on $M_k$.

	Next we sketch the modifications for the Calabi-Yau monopoles. Here $Q$ is a special Lagrangian cycle, so there is a natural bundle isomorphism $TQ\simeq NQ$ via the complex structure $v\mapsto Jv$. The fibred product $\mathcal{Q}=\mathcal{L}\times_Q Fr_Q$ gives an $SO(3)\times U(1)$-principal bundle over $Q$, and we can take the associated monopole moduli bundle
	\[
	\underline{M}_k=	\mathcal{Q}\times_{SO(3)\times U(1)} M_k \to Q,
	\]
	which inherits a connection $\tilde{\nabla}$. 
	As an observation in linear algebra, we note that an oriented orthonormal basis $e_1,e_2,e_3$ on any tangent space of $Q$, corresponds to an oriented orthonormal basis of the normal space, and in turn corresponds to a hyperk\"ahler triple of complex structures $I_1,I_2,I_3$ on the fibre of $\underline{M}_k$. We say that $s$ is a \emph{Fueter section}, if 
	\begin{equation}
		(I_1 \tilde{\nabla}_{e_1} + I_2 \tilde{\nabla}_{e_2} +I_3 \tilde{\nabla}_{e_3} ) s=0.
	\end{equation}
	This equation is independent of the choice of oriented orthonormal basis.

	\begin{rmk}
	Here the Fueter sections land inside the framed \emph{monopole moduli bundle} $\underline{M}_k$, which is not to be confused with the Fueter sections into the framed \emph{ADHM instanton moduli bundle} that occurs in Walpuski's gluing construction. These describe two distinct phenomena in gauge theory.
	\end{rmk}

	From the analytical viewpoint, one needs to address the compactification question for the space of Fueter sections (\cf \cite{LiSaman}). This is closely related to two basic phenomena:
	\begin{enumerate}
		\item  A sequence of area minimizing integral currents can converge to a current with higher multiplicity. 
		
		\item A sequence of monopoles on $\R^3$ can go off to the infinity in the moduli space $M_k$, by decomposing into several clusters of monopoles with smaller charge $k_i$, such that $k=\sum k_i$, and the cluster centres are far separated.
	\end{enumerate}
As a somewhat oversimplified picture of the compactification, one needs to introduce a multivalued section $\mathfrak{s}$ of the normal bundle of $Q$ to encode the centres of the individual monopole clusters. In the $G_2$ case, we expect $\mathfrak{s}$ to be a multivalued self-dual harmonic 2-form over the coassociative cycle $Q$, namely a multivalued section of $\Lambda^2_+Q\to Q$ satisfying $d\mathfrak{s}=0$. In the Calabi-Yau 3-fold case, we expect $\mathfrak{s}$ to be a multivalued harmonic 1-form on $Q$, namely a multivalued section of $T^*Q\simeq TQ\simeq NQ$ satisfying $d\mathfrak{s}= d^*\mathfrak{s}=0$. Moreover, for each branch of the multivalued section, we need a Fueter section $s_i$ into a monopole moduli bundle with smaller charge $k_i$ (here $\sum k_i=k$), to describe how the internal behaviour within the monopole cluster varies along $Q$. We call the combined data $(\mathfrak{s}, \{  s_i \})$ a `multivalued Fueter section'.

\begin{Question}\label{Q:Fuetercompactification}
Taking into account all the possibilities of partitions $k=\sum k_i$, and the phenomenon of `clusters within clusters', can one prove that these multivalued Fueter sections give a  way to compactify the space of Fueter sections? 
\end{Question}

The multivalued Fueter sections provide the following refined description of the $G_2$-monopoles (resp. Calabi-Yau monopole) near $Q$ in the large mass limit. The curvature density is concentrated near the various branches of the multivalued section $\epsilon \mathfrak{s}$ around $Q$, where $m^{-1}\ll \epsilon\ll 1$ is a scaling parameter. On the 3-planes normal to each branch, the restriction of the $G_2$-monopole is well approximated by monopoles with charge $k_i$. The variation of these monopoles along the branch is encoded by the Fueter section $s_i$.

The following question seems very challenging:
	
	\begin{Question}
	In the Setting of Section \ref{sect:largemasslimit},  can one extract a nontrivial  (multivalued) Fueter section from the sequence of $G_2$-monopoles (resp. Calabi-Yau monopoles) with mass tending to infinity?
	\end{Question}

	\section*{Appendix: Hodge theory on AC manifolds}

	In this appendix $M$ is an irreducible AC $G_2$-manifold (resp. AC Calabi-Yau 3-fold). In particular $M$ is Ricci-flat, so there is only one end at infinity with link $\Sigma$.  The Laplacian here will be the Hodge Laplacian, which agrees with the rough Laplacian on 1-forms by Ricci-flatness. We will review some well known facts about the mapping properties of weighted H\"older spaces (\cf \cite[Section 4,5, Appendix]{Haskins}), and then adapt them to spaces with non-standard $L^1$-type conditions.

	Recall the $L^2$-cohomology of $M$ is 
	\[
	L^2\mathcal{H}^k(M)= \{       \sigma \in \Omega^k(M)\cap L^2:  \Lap_{Hodge}\sigma=0      \}
	= \{       \sigma \in \Omega^k(M)\cap L^2: d\sigma=0  =d^*\sigma    \}.
	\]
	We have a long exact sequence in cohomology with real coefficient for the pair $(M,\Sigma)$,
	\begin{equation}\label{eqn:LES}
	\ldots \to H^{k-1}(\Sigma)\to H_c^k(M)\to H^k(M)\to H^k(\Sigma)\to \ldots. 
	\end{equation}
	There is a natural isomorphism
	\begin{equation}\label{eqn:L2cohomology}
	L^2\mathcal{H}^k(M)= \begin{cases}
		H^k_c(M),\quad & k<n/2,
		\\
		Im(H^k_c(M)\to H^k(M)),\quad & k=n/2,
		\\
		H^k(M),\quad& k>n/2.
	\end{cases}
	\end{equation}

	For $l\geq 0$, $0<\alpha<1$ and a given rate $\nu\in \R$, we complete the space of $k$-forms into the weighted H\"older space $C^{l,\alpha}_\nu$ with respect to the norm
	\[
	\norm{u}_{C^{l,\alpha}_\nu} =\sum_{j=0}^l \norm{r^{-\nu+j}  \nabla^j u}_{C^0}+ [r^{-\nu+l}\nabla^l u]_\alpha.
	\]
The $\nu=0$ case is denoted as the $C^{l,\alpha}$ norm. 	In our convention $ \min_M r(x)>1$.

	\begin{lem}\label{lem:Lap1form}
	Let $-n< \nu<-2$, then the Hodge Laplacian on 1-forms (resp. functions) $\Lap:  C^{l+2,\alpha}_{\nu+2}\to C^{l,\alpha}_\nu$ is an isomorphism, and $\norm{u}_{C^{l+2,\alpha}_{\nu+2}}\leq C \norm{\Lap u}_{C^{l,\alpha}_\nu}$. 
	\end{lem}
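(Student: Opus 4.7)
The plan is to combine the Lockhart--McOwen weighted Fredholm theory for elliptic operators on asymptotically conical manifolds with the specific geometric input of Ricci-flatness and the vanishing of $H^1(M)$ for irreducible $G_2$ or Calabi--Yau holonomy. Since $M$ is Ricci-flat, the Bochner identity gives $\Lap_{\text{Hodge}} u = \nabla^*\nabla u$ for any 1-form $u$, so the 1-form case reduces to the rough Laplacian, as does the scalar case trivially. I would invoke the standard weighted H\"older Fredholm theorem for geometric Laplacians on AC manifolds: the map $\Lap: C^{l+2,\alpha}_{\nu+2} \to C^{l,\alpha}_\nu$ is Fredholm whenever $\nu$ avoids a discrete set $\mathcal{D} \subset \R$ of critical rates, precisely the exponents $\mu$ for which the conical Laplacian $\Lap_{C(\Sigma)}$ admits a nonzero homogeneous solution $r^{\mu+2}\omega_0(\theta)$. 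Separation of variables shows that for functions $\mathcal{D}_{\mathrm{fun}}$ consists of $-\frac{n-2}{2} \pm \sqrt{\bigl(\frac{n-2}{2}\bigr)^2 + \lambda_k} - 2$, where $\{\lambda_k\}$ is the spectrum of $\Lap_\Sigma$ on functions; since $\lambda_0 = 0$ and $\lambda_k > 0$ for $k \geq 1$, one checks that $\mathcal{D}_{\mathrm{fun}} \cap (-n,-2) = \emptyset$. The analogous Hodge decomposition of 1-forms on $\Sigma$ into exact, coexact, and harmonic pieces, combined with the radial/tangential decomposition on the cone, similarly shows that the 1-form critical rates avoid the open interval $(-n,-2)$.

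For injectivity, suppose $u \in C^{l+2,\alpha}_{\nu+2}$ with $\Lap u = 0$. Since $\nu + 2 < 0$, $u$ decays at infinity. For functions, the strong maximum principle immediately gives $u \equiv 0$. For 1-forms, Bochner gives $\nabla^*\nabla u = 0$; an asymptotic eigenfunction expansion on the cone, using the absence of indicial roots in $(2-n,\nu+2]$, improves the decay to $u = O(r^{2-n})$. Since $n \geq 6$, this places $u$ in $L^2(M)$, so integration by parts is fully justified and yields $du = d^*u = 0$. Hence $u \in L^2\mathcal{H}^1(M) \simeq H^1_c(M)$ by the AC Hodge isomorphism (\ref{eqn:L2cohomology}); but $H^1_c(M)$ injects into $H^1(M) = 0$ by the vanishing of $b_1$ for irreducible $G_2$ or Calabi--Yau holonomy, so $u \equiv 0$.

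Surjectivity then follows from Lockhart--McOwen duality: the cokernel of $\Lap_\nu$ is identified via the $L^2$-pairing with the kernel of $\Lap$ on the dual weighted H\"older space of rate $-n-\nu$, which lies in $(-n+2,0) \subset \R_{<0}$. The same injectivity argument applied at this dual rate gives a trivial kernel, hence a trivial cokernel, and $\Lap_\nu$ is bijective. The quantitative estimate $\|u\|_{C^{l+2,\alpha}_{\nu+2}} \leq C\|\Lap u\|_{C^{l,\alpha}_\nu}$ then follows from the open mapping theorem applied to a continuous linear bijection of Banach spaces.

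The main obstacle is verifying that $\mathcal{D} \cap (-n, -2) = \emptyset$ in the 1-form case, which requires a careful spectral computation of the Hodge Laplacian on 1-forms on the nearly K\"ahler (in the $G_2$ case) or Sasaki--Einstein (in the Calabi--Yau case) link $\Sigma$, together with tracking the coupling between the scalar and 1-form components of the cone decomposition. A secondary subtle point is justifying the bootstrap of the decay of a harmonic 1-form from $r^{\nu+2}$ down to $r^{2-n}$; this relies on the eigenfunction expansion on the link combined with the absence of intermediate indicial roots, which is itself an output of the spectral computation above.
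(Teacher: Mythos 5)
Your proposal is correct in outline and follows the same skeleton as the paper's (admittedly very terse) proof: reduce to the rough Laplacian by Ricci-flatness, establish Fredholmness by excluding indicial roots from the weight range, prove injectivity for decaying solutions, and obtain surjectivity from the duality identification of the cokernel with the kernel at the dual rate $-n-\nu\in(2-n,0)$, which the same injectivity argument kills. The one place where you genuinely diverge is the injectivity step for $1$-forms. The paper disposes of it in one line with the maximum principle: if $\nabla^*\nabla u=0$ then $\Lap|u|^2=-2|\nabla u|^2\leq 0$, so $|u|^2$ is subharmonic and tends to zero at infinity, hence vanishes identically. Your route instead bootstraps the decay to $O(r^{2-n})$ via the indicial-root gap, places $u$ in $L^2$, integrates by parts, and then invokes $L^2\mathcal{H}^1(M)\simeq H^1_c(M)\hookrightarrow H^1(M)=0$. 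This works, but it is longer, it makes the injectivity step depend on the very spectral computation you flag as the main obstacle (the paper's maximum-principle argument needs no indicial-root information there), and it imports the topological input $b_1(M)=0$ unnecessarily --- once you have $u\in L^2$, integration by parts against $\nabla^*\nabla u=0$ already gives $\nabla u=0$, and a decaying parallel form is zero. The remaining gap common to both your write-up and the paper's sketch is the verification that the $1$-form Laplacian has no indicial roots with $\nu+2\in(2-n,0)$; this does require the Lichnerowicz--Obata type eigenvalue bounds for the Einstein link $\Sigma$ (as in the cited work of Foscolo--Haskins--Nordstr\"om), and you are right to single it out as the nontrivial point.
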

	
	\begin{proof}
	(Sketch) By the Ricci-flatness of $M$, the Hodge Laplacian agrees with the rough Laplacian on 1-forms, so there is no harmonic 1-form with negative growth rate on $M$ by a maximum principle argument. This shows injectivity. One can show that the Laplacian is Fredholm by ruling out indicial roots in the given range. The obstruction to surjectivity comes from harmonic 1-forms (resp. functions) with growth rate $O(r^{-\nu-n} )$, which likewise vanishes.
	\end{proof}

	\begin{lem}\label{lem:2form1}
		Let $-n< \nu<-3$. 
	Suppose $u$ is a 2-form, $(du, d^* u)\in C^{l,\alpha}_\nu,$ and $u=o( r^{-2})$. Then there is some $\sigma\in L^2\mathcal{H}^2$, such that 
	\[
	\norm{u- \sigma}_{C^{l+1,\alpha}_{\nu+1}} \leq C \norm{du}_{C^{l,\alpha}_\nu} + C\norm{d^*u}_{C^{l,\alpha}_\nu}.
	\]
	
	\end{lem}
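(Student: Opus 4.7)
The plan is to construct $\sigma$ and establish the bound via a gauge-fixed Hodge-type decomposition $u = d\alpha + v + \sigma$, where $\alpha$ is a $1$-form handling the coexact piece via Lemma~\ref{lem:Lap1form}, $v$ is a $2$-form solving a residual Laplacian equation, and $\sigma$ is the leftover $L^2$-harmonic piece.

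First I apply Lemma~\ref{lem:Lap1form} to solve $\Lap\alpha = d^*u$, obtaining a unique $1$-form $\alpha \in C^{l+2,\alpha}_{\nu+2}$ with $\norm{\alpha}_{C^{l+2,\alpha}_{\nu+2}} \leq C\norm{d^*u}_{C^{l,\alpha}_\nu}$. Taking $d^*$ of both sides gives $\Lap(d^*\alpha) = d^*d^*u = 0$, so $d^*\alpha$ is a harmonic function in $C^{l+1,\alpha}_{\nu+1}$ vanishing at infinity (as $\nu+1 < 0$); the maximum principle on the connected Ricci-flat AC manifold $M$ forces $d^*\alpha \equiv 0$. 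Consequently $d^*d\alpha = d^*u$, and the $2$-form $w := u - d\alpha$ is coclosed with $dw = du$, satisfies $w = o(r^{-2})$ (since $d\alpha = O(r^{\nu+1}) = o(r^{-2})$), and obeys $\norm{d\alpha}_{C^{l+1,\alpha}_{\nu+1}} \leq C\norm{d^*u}_{C^{l,\alpha}_\nu}$.

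Next, because $w$ is coclosed, $\Lap w = d^*dw = d^*(du) \in C^{l-1,\alpha}_{\nu-1}$. I invoke the Fredholm mapping property of the Hodge Laplacian on $2$-forms,
\[
\Lap \colon C^{l+1,\alpha}_{\nu+1}(\Omega^2) \to C^{l-1,\alpha}_{\nu-1}(\Omega^2),
\]
valid at the weight $\nu + 1 \in (-n+1, -2)$ by the same indicial-root analysis as underlies Lemma~\ref{lem:Lap1form}, adapted to $2$-forms on the nearly K\"ahler (respectively Sasaki-Einstein) link $\Sigma$. By self-adjointness, the cokernel coincides with the kernel, which in this weight range is $L^2\mathcal{H}^2$. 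The compatibility condition $\langle d^*du, \tau \rangle_{L^2} = 0$ for every $\tau \in L^2\mathcal{H}^2$ is automatic: integrating by parts with a cutoff and passing to the limit converts it to $\langle du, d\tau\rangle_{L^2} = 0$ since $\tau$ is closed, with the boundary terms at infinity vanishing thanks to $u = o(r^{-2})$ and the strong decay of $L^2$-harmonic forms. Hence there exists $v \in C^{l+1,\alpha}_{\nu+1}$ with $\Lap v = d^*du$ and $\norm{v}_{C^{l+1,\alpha}_{\nu+1}} \leq C\norm{du}_{C^{l,\alpha}_\nu}$.

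Setting $\sigma := w - v$, we have $\Lap\sigma = 0$ and $\sigma = o(r^{-2})$. An asymptotic analysis on the link shows that any harmonic $2$-form with strictly faster decay than $r^{-2}$ has its leading indicial term lying below the $L^2$ threshold $-n/2$ in the relevant dimensions $n = 6, 7$, so $\sigma \in L^2\mathcal{H}^2$. The identity $u - \sigma = d\alpha + v$ then combines with the previous bounds to give the claimed estimate. The main technical obstacle is the Fredholm statement for $\Lap$ on $2$-forms at the specified weight, which does not follow formally from Lemma~\ref{lem:Lap1form}: the Weitzenb\"ock curvature term on $2$-forms shifts the indicial roots relative to those on $1$-forms, and verifying that $\nu+1 \in (-n+1, -2)$ avoids these roots is precisely where the hypothesis $-n < \nu < -3$ is consumed. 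A closely related subtlety is the identification of the residual $\sigma$ as \emph{honestly} $L^2$, for which the strict ``$o$'' in $u = o(r^{-2})$ (rather than $O$) plays an essential role in ruling out a non-$L^2$ leading $r^{-2}$ asymptotic term.
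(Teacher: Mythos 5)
Your decomposition $u = d\alpha + v + \sigma$ is coherent in outline, and your first step (solving $\Lap\alpha = d^*u$ via Lemma \ref{lem:Lap1form} and killing $d^*\alpha$ by the maximum principle) is essentially the paper's treatment of the coexact part. But there is a genuine gap exactly where you flag ``the main technical obstacle'': you invoke a Fredholm mapping property for the Hodge Laplacian on \emph{2-forms} at weight $\nu+1$ with cokernel equal to $L^2\mathcal{H}^2$, and neither claim is available. The indicial roots of $\Lap$ on 2-forms are governed by the spectrum of the link Laplacian on forms on $\Sigma$, for which no estimate is established (unlike the 1-form case, where Ricci-flatness and the maximum principle suffice); indeed $-2$ is itself an indicial root whenever $H^2(\Sigma)\neq 0$, which is precisely why the hypothesis $u=o(r^{-2})$ appears. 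Moreover, by weighted duality the cokernel of $\Lap\colon C^{l+1,\alpha}_{\nu+1}\to C^{l-1,\alpha}_{\nu-1}$ is the kernel at the dual rate $1-\nu-n$, and for $n=6$ (all admissible $\nu$) and for $n=7$ with $\nu<-4$ this contains harmonic 2-forms asymptotic at rate $-2$ to harmonic 2-forms on $\Sigma$, which are \emph{not} in $L^2$ (the $L^2$ threshold is rate $-n/2<-2$); it may also contain further elements attached to unknown indicial roots in the interval. So the orthogonality check you perform against $L^2\mathcal{H}^2$ does not suffice to solve for $v$. Finally, your closing claim that a Hodge-harmonic 2-form which is $o(r^{-2})$ lies in $L^2\mathcal{H}^2$ is asserted via an ``asymptotic analysis on the link'' that presupposes the same missing spectral data, and an indicial analysis alone cannot in any case decide which rates are realized by global harmonic forms on $M$; this statement is the hard uniqueness half of the lemma, not a remark.

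The paper's route avoids the 2-form Laplacian entirely. The $du$-part is handled cohomologically: the Poincar\'e lemma on the cone reduces to $du$ compactly supported, the decay $u=o(r^{-2})$ forces $[du]=0$ in $H^3_c(M)$ via the long exact sequence (\ref{eqn:LES}), and one subtracts a compactly supported primitive; the $d^*u$-part is your step with Lemma \ref{lem:Lap1form}; the remaining closed and coclosed form has a class lifting to $H^2_c(M)\simeq L^2\mathcal{H}^2$, which produces $\sigma$; and the uniqueness statement (no nonzero exact coclosed 2-form that is $o(r^{-2})$) is proved by passing to a gauge-fixed 1-form primitive $v_4$ with $d^*v_4=0$, so that $\Lap v_4 = d^*u = 0$ and Lemma \ref{lem:Lap1form} forces $v_4=0$. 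If you wish to salvage your approach you would have to actually establish the weighted Fredholm theory for $\Lap$ on 2-forms over the specific nearly K\"ahler or Sasaki--Einstein links, together with the correct cokernel, which is a substantially heavier input than the lemma needs.
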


	\begin{proof}
		Since $du$ is exact,
	by the Poincar\'e lemma on the conical end, we can find a 2-form $v_1$ with $\norm{v_1}_{C^{l+1,\alpha}_{\nu+1}}\leq C \norm{du}_{C^{l,\alpha}_\nu}$, such that $du=dv_1$ outside a fixed compact set in $M$. We can replace $u$ by $u-v_1$, so without loss $du$ is compactly supported. Since $\nu<-3$ and $u\in o(r^{-2})$, under  the long exact sequence (\ref{eqn:LES}), we see that $[du]\in Im(H^2(\Sigma)\to H^3_c(M))$ vanishes. Thus we can find a compactly supported 2-form $v_2$  with $\norm{v_2}_{C^{l+1,\alpha}_{\nu+1}} \leq C \norm{du}_{C^{l,\alpha}_\nu}$, such that $dv_2=du$. We can replace $u$ by $u-v_2$, so without loss $du=0$.

	Next we solve the Poisson equation on 1-forms using Lemma \ref{lem:Lap1form} and the fact that $-n<\nu<-2$, to produce a 1-form $v_3$ with $\Lap v_3= d^* u$, satisfying the bound $\norm{v_3}_{C^{l+2,\alpha}_{\nu+2}} \leq C \norm{d^*u}_{C^{l,\alpha}_\nu}$. Thus $\Lap d^* v_3=d^*\Lap v_3=0$, and the function $d^*v_3=O(r^{\nu+1})$ decays at infinity, so $d^*v_3=0$ by the maximum principle. Hence $d^* d v_3= d^*u$. We can then replace $u$ by $u-dv_3$, so without loss $du=d^*u=0$.

	The closed 2-form $u$ restricts to the zero class in $H^2(\Sigma)$ because $u=o(r^{-2})$, so the long exact sequence shows that $[u] $ lifts to a class $ H^2_c(M) $. 
		In our setting $n=6$ or $7$, so $n/2>2$. By the natural isomorphism (\ref{eqn:L2cohomology}), we can find an $L^2$-harmonic form $\sigma\in L^2\mathcal{H}^k(M)$ representing the class $[u]$. By the $L^2$-integrability and the regularity of the harmonic forms, $\sigma\in C^{l+1,\alpha}_{-n/2}$, and in particular $|\sigma|=o(r^{-2})$. Without loss we replace $u$ by $u-\sigma$, so that the class $[u]$ vanishes in $H^2_c(M)$.

		Since $u$ is harmonic and $u=o(r^{-2})$, we can find some $\nu'<-2$ such that $u\in C^{l,\alpha}_{\nu'}$. Since $u$ is exact, by the Poincar\'e lemma argument above, we can find some 1-form $v_4$ with $dv_4=u$ and $\norm{v_4}_{C^{l+1,\alpha}_{\nu'+1}} \leq C \norm{u}_{C^{l,\alpha}_{\nu'}}$. By Lemma \ref{lem:Lap1form}, we can solve a Poisson equation on functions, to find a function $v_5$ with $\Lap v_5= d^* v_4$ and $\norm{v_5}_{C^{l+2,\alpha}_{\nu'+2}} \leq C \norm{d^* v_4}_{C^{l,\alpha}_{\nu'}}\leq    C \norm{u}_{C^{l,\alpha}_{\nu'}}$. Replacing $v_4$ by $v_4-dv_5$, we may assume without loss that $d^* v_4=0$. Thus by the coclosed property of $u$, we observe
		\[
	\Lap v_4=	(dd^* + d^*d) v_4= d^* dv_4= d^* u=0,
		\]
		so $v_4$ is a harmonic 1-form with growth control $v_4= O(r^{\nu'+1})$, whence $v_4=0$ and $u=dv_4=0$. This argument shows that there is no exact and coclosed 2-form with $u=o(r^{-2})$.
	\end{proof}

	We will need a variant involving two different weights. 
	
		\begin{lem}\label{lem:2form1'}
		Let $-n< \nu_1<\nu_2<-3$. 
		Suppose $u$ is a 2-form, such that 
		\[
		\norm{ (du,d^*u) }_{ C^{l,\alpha}(\{ r<r(x)<2r \}  ) } \leq     \max( A_1 r^{\nu_1},  A_2 r^{\nu_2}),\quad  \forall r>1,
		\]
		then there is some $\sigma\in L^2\mathcal{H}^2$, such that 
		\[
		r^{-1}\norm{   u-\sigma }_{ C^{l+1,\alpha}(\{ r<r(x)<2r \}  ) }
		 \leq  
		C	\max( A_1 r^{\nu_1},  A_2 r^{\nu_2}), \quad \forall r>1.
		\]
		
	\end{lem}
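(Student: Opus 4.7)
The plan is to bootstrap from Lemma \ref{lem:2form1} into a two-weight refinement, using a Green's function representation for the elliptic system $(d, d^*)$. Since $r \geq 1$ on $M$ and $\nu_1 < \nu_2$, we have $A_1 r^{\nu_1} \leq A_1 r^{\nu_2}$, hence $\norm{(du, d^*u)}_{C^{l,\alpha}(\{r<r(x)<2r\})} \leq (A_1 + A_2) r^{\nu_2}$, so the single-weight hypothesis of Lemma \ref{lem:2form1} is met. Here I assume implicitly the decay $u=o(r^{-2})$ (always available in the intended applications, e.g.\ Prop.\ \ref{prop:parallelcurvatureL1}, where $u$ is a difference of forms asymptotic to the same harmonic representative of the monopole class). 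Applying Lemma \ref{lem:2form1} with weight $\nu_2$ then yields some $\sigma \in L^2\mathcal{H}^2$ together with the crude bound $\norm{u-\sigma}_{C^{l+1,\alpha}_{\nu_2+1}} \leq C(A_1+A_2)$. Set $w := u-\sigma$.

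To refine to the two-weight bound, I would use the Green's function representation of the Dirac-type operator $D = d + d^*$ on $\bigoplus_k \Omega^k$. On the AC Ricci-flat manifold $M$, $D$ admits a matrix Green's function $K(x,y)$ with pointwise bound $|K(x,y)| \leq C|x-y|^{1-n}$, essentially the derivative of the scalar Green's function for $\Delta$, which behaves like $|x-y|^{2-n}$ by Lemma \ref{lem:Lap1form} and the AC asymptotics. Because $w$ decays faster than $r^{-2}$ and has been made orthogonal to $L^2\mathcal{H}^2$ via the subtraction of $\sigma$, one obtains the integral representation
\[
w(x) = \int_M K(x,y) \cdot (du, d^*u)(y)\, dvol(y).
\]
Combining the weighted Green integral estimate
\[
\int_M |x-y|^{1-n} r(y)^\nu\, dvol(y) \leq C r(x)^{\nu+1}, \qquad -n < \nu < -1,
\]
(verified by splitting into the inner region $|y|<r(x)/2$, the outer region $|y|>2r(x)$, and the middle annulus) with the pointwise decomposition $\max(A_1 r^{\nu_1}, A_2 r^{\nu_2}) \leq A_1 r^{\nu_1} + A_2 r^{\nu_2}$ yields, via linearity,
\[
|w(x)| \leq CA_1 r(x)^{\nu_1+1} + CA_2 r(x)^{\nu_2+1} \leq 2C\max(A_1 r(x)^{\nu_1+1}, A_2 r(x)^{\nu_2+1}).
\]
Scale-invariant interior Schauder estimates for the overdetermined elliptic system $(d, d^*)$ on $\{r<r(x)<2r\}$, namely
\[
r^{-1}\norm{w}_{C^{l+1,\alpha}(\{r<r(x)<2r\})} \leq C\norm{(dw, d^*w)}_{C^{l,\alpha}(\{r/2<r(x)<4r\})} + Cr^{-1}\norm{w}_{C^0(\{r/2<r(x)<4r\})},
\]
then upgrade this $C^0$ bound, together with the hypothesized bound on $(dw, d^*w) = (du, d^*u)$, to the claimed $C^{l+1,\alpha}$ estimate.

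The main technical obstacle is justifying the Green's function representation rigorously: (i) the existence and diagonal/asymptotic decay of $K(x,y)$ for $D$ on $2$-forms on an AC manifold; (ii) the integration by parts needed to pass from $\Delta w = d(d^*u) + d^*(du)$ to the representation involving only $(du, d^*u)$, which requires the faster-than-$r^{-2}$ decay from step one to kill boundary terms at infinity; and (iii) uniqueness of the decaying solution to $Dw = (du, d^*u)$ modulo $L^2$-harmonic forms. Item (iii) is precisely the content of the vanishing argument in Lemma \ref{lem:2form1}: harmonic 2-forms with $o(r^{-2})$ decay on $M$ must be $L^2$, which follows from the long-exact-sequence analysis together with the absence of indicial roots for $\Delta$ on 1-forms supplied by Lemma \ref{lem:Lap1form}. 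Items (i) and (ii) are standard for elliptic operators on AC manifolds once the spectral data on the link $\Sigma$ is fixed.
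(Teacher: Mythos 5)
Your first step is sound: for $r\geq 1$ one has $\max(A_1 r^{\nu_1}, A_2 r^{\nu_2})\leq (A_1+A_2)r^{\nu_2}$, so Lemma \ref{lem:2form1} with the single weight $\nu_2$ extracts $\sigma$ and the decay $u-\sigma=O(r^{\nu_2+1})=o(r^{-2})$. You are also right that the hypothesis $u=o(r^{-2})$, though omitted from the statement, must be assumed (otherwise a closed and coclosed 2-form asymptotic to a nonzero harmonic 2-form on $\Sigma$ is a counterexample). The weighted Green integral estimate $\int_M d(x,y)^{1-n} r(y)^{\nu}\,dvol(y)\leq Cr(x)^{\nu+1}$ for $-n<\nu<-1$ is correct, and the device $\max(a,b)\leq a+b\leq 2\max(a,b)$ is exactly the right way to feed a two-weight bound through a linear potential operator.

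The gap is the step you defer as standard: the existence of a Green kernel $K(x,y)$ for $d+d^*$ acting on 2-forms with the uniform off-diagonal bound $|K(x,y)|\leq C\,d(x,y)^{1-n}$, together with the representation $w=\int K(\cdot,y)(Dw)(y)$. On an AC manifold this is not standard for 2-forms: the Hodge Laplacian on $\Omega^2$ has indicial roots in the critical interval $(2-n,0)$ --- the rate $-2$ carried by harmonic 2-forms on the link $\Sigma$, and the rates carried by $L^2\mathcal{H}^2$ --- so a 2-form Green kernel, defined modulo these obstructions, has off-diagonal decay governed by the slowest-decaying harmonic 2-form rather than by $d(x,y)^{2-n}$. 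This is precisely why the proof of Lemma \ref{lem:2form1} never inverts the Laplacian on 2-forms, but instead routes through the Poincar\'e lemma, the long exact sequence, and Poisson equations on 1-forms and functions, where Lemma \ref{lem:Lap1form} guarantees the absence of indicial roots in $(2-n,0)$; your items (i)--(iii) therefore contain essentially the whole content of the lemma rather than a routine verification. (A patched parametrix as in Lemma \ref{lem:2form2} does have the kernel bound, but its error term $\mathcal{R}(Dw)$ is of the same two-weight size, so it does not close the loop.) The paper's proof avoids the issue entirely: it splits $M$ at the critical radius $r_{crit}$ where $A_1 r^{\nu_1}\sim A_2 r^{\nu_2}$ and reruns the single-weight argument of Lemma \ref{lem:2form1} in each region with whichever weight dominates there. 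Your approach can be repaired either by adopting that splitting, or by carrying the two-weight bookkeeping through each step of the proof of Lemma \ref{lem:2form1}, using your Green representation only for the Laplacian on functions and 1-forms, where Lemma \ref{lem:Lap1form} justifies it.
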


	\begin{proof}
	The proof is almost identical to Lemma \ref{lem:2form1}, with the following extra trick: there is a critical radius  $r_{crit}$ where $A_1 r^{\nu_1}$ is comparable to $A_2 r^{\nu_2}$, and depending on whether we are working in $\{ r(x)\gtrsim r_{crit}\}$ or 
$ \{   r(x)\lesssim r_{crit} \}$, we apply the results about usual weighted H\"older spaces, using the weight that dominates. 
	\end{proof}

	We will now adapt the above mapping property to some $L^1$-type spaces.

	\begin{lem}\label{lem:2form2}
		Let $-n< \nu<-3$. 
	Suppose $u$ is a 2-form with locally $L^1$-coefficients, $\dashint_{r<r(x)<2r} |u|=o( r^{-2})$, and for any $r\geq 1$, 
	\[
\dashint_{ r< r(x)< 2r } | du|+ |d^*u| \leq  \max( A_1 r^{\nu_1},  A_2 r^{\nu_2}).
	\]
	Then there is some $\sigma\in L^2\mathcal{H}^2$, such that 
	\[
  r^{-1} \dashint_{r<r(x)< 2r } |u-\sigma| \leq C   \max( A_1 r^{\nu_1},  A_2 r^{\nu_2}) .
	\]

	\end{lem}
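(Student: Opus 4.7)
The plan is to imitate the proof of Lemma \ref{lem:2form1'} in the setting of $L^1$-averages on dyadic annuli, using Fubini at each step in place of pointwise H\"older estimates. Each of the three steps from the proof of Lemma \ref{lem:2form1} --- removing $du$ via a radial Poincar\'e primitive, removing $d^*u$ via a Green-function solution of a Poisson equation, and extracting an $L^2$-harmonic representative of the harmonic remainder --- transfers to the $L^1$-average setting because averaging is linear and commutes nicely with integration against decay kernels.

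First I would eliminate the $du$-piece. On the conical end, apply the radial Poincar\'e homotopy
\[
v_1(x) = -\int_{r(x)}^{\infty} \iota_{\partial_r}(du)\big|_{(s,\theta(x))}\, ds,
\]
which converges because the $L^1$-average of $du$ decays faster than $r^{-3}$. By Fubini,
\[
\dashint_{r<r(x)<2r} |v_1| \leq C\int_r^\infty \dashint_{s<r(x)<2s} |du|\, ds \leq Cr\max(A_1 r^{\nu_1}, A_2 r^{\nu_2}).
\]
In the interior compact region, the cohomology class $[du]\in H^3_c(M)$ agrees with the image of $[u|_\Sigma]\in H^2(\Sigma)$ under the connecting map of (\ref{eqn:LES}), and the hypothesis $\dashint_{A_r}|u|=o(r^{-2})$ (combined with a Fubini-smoothed restriction to large spheres $\{r(x)=R\}$) forces the latter class to vanish, so we can correct by a compactly supported primitive with the same $L^1$-average bound. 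After subtracting $v_1$, we may assume $du=0$.

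Next, to eliminate $d^*u$, use Lemma \ref{lem:Lap1form} and its Green-function representation to solve $\Delta w = d^*u$ for a 1-form $w$ decaying at infinity. The Green bound $G(x,y)\lesssim |x-y|^{2-n}$ together with Fubini gives
\[
\dashint_{r<r(x)<2r} |w| \leq C\sum_{s \text{ dyadic}} s^2\, \max(r,s)^{-2}\,\dashint_{A_s}|d^*u| \leq Cr^2\max(A_1 r^{\nu_1}, A_2 r^{\nu_2}),
\]
where $\nu_i<-3$ ensures convergence of the dyadic sum. Applying $d^*$ to $\Delta w=d^*u$ and using decay of $w$ shows $d^*w$ is harmonic with decay, hence $d^*w=0$, so $d^*(dw)=\Delta w=d^*u$ and $d(dw)=0$. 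The analogous Fubini calculation with $|dG|\lesssim |x-y|^{1-n}$ gives $\dashint_{A_r}|dw|\leq Cr\max(A_1 r^{\nu_1}, A_2 r^{\nu_2})$. Subtracting $dw$ we may assume both $du=0$ and $d^*u=0$ distributionally, so that $u$ is smooth and harmonic. The mean-value property for harmonic forms on the AC manifold converts the averaged condition $\dashint_{A_r}|u|=o(r^{-2})$ into the pointwise statement $|u|=o(r^{-2})$, so $u\in L^2$, and we extract $\sigma\in L^2\mathcal{H}^2(M)$ via the isomorphism (\ref{eqn:L2cohomology}) exactly as at the end of the proof of Lemma \ref{lem:2form1}. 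Tracking the subtractions in the previous two steps yields the desired $L^1$-average bound on $u-\sigma$.

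The main obstacle is the bookkeeping of the two competing weights $A_1 r^{\nu_1}$ and $A_2 r^{\nu_2}$: exactly as in the proof of Lemma \ref{lem:2form1'}, I would identify the critical radius $r_{\mathrm{crit}}$ at which the two weights balance and run the above constructions separately in the regimes $r(x)\gtrsim r_{\mathrm{crit}}$ and $r(x)\lesssim r_{\mathrm{crit}}$, using the dominant weight on each side. The two local solutions are then patched across $r_{\mathrm{crit}}$, and any patching errors (being supported on a bounded annulus and estimated by both weights simultaneously) can be absorbed into the harmonic representative $\sigma$. A secondary technicality is ensuring that the dyadic geometric series in the Green-function estimate are dominated by the term with $s\sim r$ under both weight regimes, which is straightforward once one checks $\nu_i+1<0$ so that the tail from $s\gg r$ converges geometrically.
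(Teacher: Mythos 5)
Your proposal is essentially correct, but it takes a genuinely different route from the paper. The paper does not re-run the construction of Lemma \ref{lem:2form1} in the $L^1$-average category; instead it builds a patched pseudodifferential parametrix $\mathcal{P}$ for $d+d^*$ out of cutoff Calder\'on--Zygmund parametrices on the dyadic annuli, bounds the $L^1$-averages of $\mathcal{P}\bigl((d+d^*)u\bigr)$ directly from the order $-1$ property, and exploits the smoothing effect (\ref{eqn:smoothing}) of the remainder $\mathcal{R}$ to place $(d+d^*)(u-\mathcal{P}f)$ in a weighted H\"older space, so that the already-proved H\"older Lemmas \ref{lem:2form1}/\ref{lem:2form1'} apply verbatim to $u-\mathcal{P}f$. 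Your approach instead redoes each of the three steps of Lemma \ref{lem:2form1} with Fubini in place of pointwise estimates: a radial Poincar\'e primitive for $du$, a Green-function solution of $\Lap w=d^*u$ on 1-forms, and a harmonic remainder upgraded to pointwise decay by the mean-value property. Both work; the paper's reduction buys a single black-box smoothing estimate in exchange for not having to re-derive the $L^1\to L^1$ mapping properties of the homotopy and Green operators, while your route is more hands-on and makes the final decomposition $u-\sigma=v_1+dw$ explicit.

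Two soft spots to tighten. First, you need an actual bound $|G(x,y)|\lesssim d(x,y)^{2-n}$ (and $|\nabla_xG|\lesssim d^{1-n}$) for the Green kernel of the Hodge Laplacian on \emph{1-forms}; this follows from Ricci-flatness (Hodge $=$ rough Laplacian) plus Kato domination by the scalar Green function, but it is not contained in Lemma \ref{lem:Lap1form} as stated, and you should also justify the homotopy identity $dv_1=du$ distributionally when $du$ has only $L^1$ coefficients (mollify). Second, the sentence about patching the two weight regimes across $r_{\mathrm{crit}}$ and ``absorbing patching errors into $\sigma$'' is both unnecessary and, as stated, wrong: a patching discrepancy supported on an annulus is not an $L^2$-harmonic form and cannot be absorbed into $\sigma$. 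Fortunately your own dyadic-sum estimates already carry the weight $\max(A_1r^{\nu_1},A_2r^{\nu_2})$ through every step (the sums converge because $-n<\nu_i<-3$ puts both exponents in the good range on each side of $s\sim r$), so no splitting at a critical radius is needed; simply delete that paragraph.
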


	\begin{proof}
			Let $\{   \phi_i \}$ be a partition of unity on $M$, adapted to the dyadic annuli regions $\{  2^{i-1}\lesssim  r(x)\lesssim 2^i  \}$, and let $\tilde{\phi}_i$ be cutoff functions supported on these regions, such that $\tilde{\phi}_i=1$ on the support of $\phi_i$. We construct a pseudodifferential operator by patching together cutoff versions of approximate Green operators:
	\[
	\mathcal{P} (f)= \sum \tilde{\phi}_i P_i (\phi_i (d+d^*)f) ,
	\]
	where $P_i$ is a parametrix for the inverse of the Hodge Laplacian on the $i$-th annulus region (or the compact ball), with leading order symbol $\frac{1}{|\xi|_g^2} $ for any large enough cotangent vector $\xi$.   For any given  $l\geq 0$, $0<\alpha<1$, we can arrange that the error term $\mathcal{R}= (d+d^*) \mathcal{P}- Id$ has the following smoothing effect: 
	\begin{equation}\label{eqn:smoothing}
		\norm{	\mathcal{R}f }_{  C^{l,\alpha}(\{ r<r(x)<2r  \}) } \leq C  \dashint_{r/2< r(x)< 4r } | f| dvol.
	\end{equation}
	The estimate is manifestly invariant under rescaling, which is how we see the $r$-dependence.

	We choose $f=(d+d^*) u$ where $u$ is a 2-form, then $(d+d^*)f= \Lap_{Hodge} u$ is again a 2-form (in general with distributional coefficients), so $\mathcal{P}(f)$ is still a 2-form. Applying the rescaling trick and using the fact that $ \tilde{\phi}_i P_i (\phi_i (d+d^*))$ is a Calderon-Zygmund operator of order $-1$, we deduce for $r\sim 2^i$,
	\[
	r^{-n-1}	\int_M  |  \tilde{\phi}_i P_i (\phi_i (d+d^*)f)  | dvol \leq C \dashint_{\text{supp}(\phi_i)} |f| dvol \leq C\max( A_1 r^{\nu_1},  A_2 r^{\nu_2}).
	\]
	Since each dyadic annulus supports only a bounded number of cutoff functions $\phi_i, \tilde{\phi}_i$, we can sum over the contributions to obtain the $L^1$-decay estimate on $\mathcal{P}(f)$,
	\begin{equation*}
	r^{-1}	\dashint_{ r<r(x)<2 r}  |\mathcal{P}(f)| \leq C \max( A_1 r^{\nu_1},  A_2 r^{\nu_2})   .
	\end{equation*}

	Now $u-\mathcal{P}f$ is a still a 2-form, and 
	\[
	(d+ d^*) (u- \mathcal{P}f)= -\mathcal{R}f 
	\]
satisfies the weighted H\"older estimates by the above smoothing effect,
\[
	\norm{	\mathcal{R}f }_{  C^{l,\alpha}(\{ r<r(x)<2r  \}) } \leq C  \max( A_1 r^{\nu_1},  A_2 r^{\nu_2})   .
\]
By the elliptic regularity estimate for $u-\mathcal{P}f$ on the large annuli,
	\[
	\norm{ u-\mathcal{P}f }_{C^0 ( \{  r< r(x)<2r\}  )  }    \leq C  \dashint_{r/2< r(x)<4 r} |u-\mathcal{P}f| +C r \norm{  \mathcal{R}f }_{  C^{l,\alpha}(  \{ r/2< r(x)<4 r\}  ) }  = o(r^{-2}),
	\]
	where the second inequality uses the $L^1$ decay assumption on $u$, the $L^1$ decay estimate on $\mathcal{P}f$, and the fact that $\max(\nu_1,\nu_2)<-3$. We can now apply Lemma \ref{lem:2form1} to $u-\mathcal{P}f$, to find some $\sigma\in L^2\mathcal{H}^2$ so that 
	\[
	r^{-1}	\norm{    u-\mathcal{P}f-\sigma  }_{  C^{l+1,\alpha}(\{ r<r(x)<2r  \}) } \leq C  \max( A_1 r^{\nu_1},  A_2 r^{\nu_2})   .
	\]
 The Lemma follows.
	\end{proof}

	


	\begin{Acknowledgement}
	The author is a Royal Society University Research Fellow based at Cambridge University. He would like to thank G. Oliveira, D. Fadel, D. Stern, A. Pigati and D. Parise for their interest and useful comments on this paper, and S. Esfahani for related discussions on the Donaldson-Segal programme and Fueter sections.
	\end{Acknowledgement}

\end{document}